\documentclass[11pt]{article}

\usepackage[margin=1in]{geometry}
\usepackage{amssymb, color}
\usepackage{hyperref}
\usepackage{graphicx,xcolor}
\usepackage{amsthm,amsfonts,euscript,amsmath,comment,slashed,here,todonotes}
\usepackage[affil-it]{authblk}

%\providecommand*\showkeyslabelformat[1]{{\normalfont \small #1}}
%\usepackage[notref,notcite,color]{showkeys}
%\definecolor{labelkey}{rgb}{0,0,1}

\newtheorem{theorem}{Theorem}[section]
\newtheorem{lemma}[theorem]{Lemma}
\newtheorem{assumption}[theorem]{Assumption}
\newtheorem{proposition}{Proposition}[section]
\newtheorem{corollary}{Corollary}[section]

\theoremstyle{definition}

\theoremstyle{remark}
\newtheorem{remark}[theorem]{Remark}

\numberwithin{equation}{section}

\newcommand\be{\begin{equation}}
\newcommand\ee{\end{equation}}
\newcommand\bea{\begin{eqnarray}}
\newcommand\eea{\end{eqnarray}}
\newcommand\bi{\begin{itemize}}
\newcommand\ei{\end{itemize}}
\newcommand\ben{\begin{enumerate}}
\newcommand\bena{\begin{enumerate}[(a)]}
\newcommand\een{\end{enumerate}}

\newcommand\bp{\begin{proof}}
\newcommand\ep{\end{proof}}

\newcommand{\ud}{\,\mathrm{d}}
\newcommand{\p}{\ensuremath{\partial}}
\newcommand{\n}{\ensuremath{\nonumber}}
\newcommand{\eps}{\ensuremath{\varepsilon}}

\newcommand{\sameer}[1]{\textcolor{magenta}{#1}}

\allowdisplaybreaks

\title{Global-in-$x$ Stability of Steady Prandtl Expansions \\ for 2D Navier-Stokes Flows}
\author{Sameer Iyer \footnote{Department of Mathematics, Princeton University,  Fine Hall, 304 Washington Rd, Princeton, NJ 08544, USA. Partially supported by NSF grant DMS-1802940.  \url{ssiyer@math.princeton.edu}} \qquad Nader Masmoudi \footnote{ NYUAD Research Institute, New York University Abu Dhabi, PO Box 129188, Abu Dhabi,   United Arab Emirates.
Courant Institute of Mathematical Sciences, New York University, 251 Mercer Street, New York, NY 10012, USA,  }}
%\author[1]{John Anderson\thanks{jranders@math.princeton.edu}}
%\author[1]{Federico Pasqualotto\thanks{fp2@math.princeton.edu}}
%\affil[1]{\small  Department of Mathematics, Princeton University, Washington~Road,~Princeton~NJ~08544,~United~States~of~America \vskip.1pc \ }

\begin{document}

\maketitle

\begin{abstract}
In this work, we establish the convergence of 2D, stationary Navier-Stokes flows, $(u^\eps, v^\eps)$ to the classical Prandtl boundary layer, $(\bar{u}_p, \bar{v}_p)$, posed on the domain $(0, \infty) \times (0, \infty)$:
\begin{align*}
\| u^\eps - \bar{u}_p \|_{L^\infty_y} \lesssim \sqrt{\eps} \langle x \rangle^{- \frac 1 4 + \delta},   \qquad \| v^\eps - \sqrt{\eps} \bar{v}_p \|_{L^\infty_y} \lesssim \sqrt{\eps} \langle x \rangle^{- \frac 1 2}.
\end{align*}
This validates Prandtl's boundary layer theory \textit{globally} in the $x$-variable for a large class of boundary layers, including the entire one parameter family of the classical Blasius profiles, with sharp decay rates. The result demonstrates asymptotic stability in two senses simultaneously: (1) asymptotic as $\eps \rightarrow 0$ and (2) asymptotic as $x \rightarrow \infty$. In particular, our result provides the first rigorous confirmation for the Navier-Stokes equations that the boundary layer cannot ``separate" in these stable regimes, which is very important for physical and engineering applications. 
\end{abstract}

\section{Introduction}

\hspace{5 mm} A major problem in mathematical fluid mechanics is to describe the inviscid limit of Navier-Stokes flows in the presence of a boundary. This is due to the mismatch of boundary conditions for the Navier-Stokes velocity field (the ``no-slip" or Dirichlet boundary condition), and that of a generic Euler velocity field (the ``no penetration" condition). In order to aptly characterize the inviscid limit (in suitably strong norms), in \cite{Prandtl} Prandtl proposed, in the precise setting of 2D, stationary flows considered here, the existence of a thin ``boundary layer", $(\bar{u}_p, \bar{v}_p)$, which transitions the Dirichlet boundary condition to an outer Euler flow. 

The introduction of the Prandtl ansatz has had a monumental impact in physical and engineering applications, specifically in the 2D, steady setting, which is used to model flows over an airplane wing, design of golf balls, etc... (see \cite{Schlicting}, for instance). However, its mathematical validity has largely been in question since its inception. Validating the Prandtl ansatz is an issue of \textit{asymptotic stability of the profiles $(\bar{u}_p, \bar{v}_p)$}: $u^\eps \rightarrow \bar{u}_p$ and $v^\eps \rightarrow \bar{v}_p$ as the viscosity, $\eps$, tends to zero (again, in an appropriate sense). Establishing this type of stability (or instability) has inspired several works, which we shall detail in Section \ref{existing}. The main purpose of this work is to provide an affirmation of Prandtl's ansatz, in the precise setting of his seminal 1904 work (2D, stationary flows), most notably \textit{globally} in the variable, $x$, (with asymptotics as $x \rightarrow \infty$) which plays the role of a ``time" variable in this setting. 

The role that $x$ plays as a ``time" variable will be discussed from a mathematical standpoint in Section \ref{subsection:asy:x}. Physically, the importance of this ``time" variable dates back to Prandtl's original work, in which he says:  

\begin{quote}
``The most important practical result of these investigations is that, in certain cases, the flow separates from the surface at a point [$x_\ast$] entirely determined by external conditions... As shown by closer consideration, the necessary condition for the separation of the flow is that there should be a pressure increase along the surface in the direction of the flow." (L. Prandtl, 1904, \cite{Prandtl})
\end{quote}

In this work, we provide the first rigorous confirmation, for the Navier-Stokes equations, that in the  conjectured stable regime (in the absence of a pressure increase), the flow does not separate as $x \rightarrow \infty$. Rather, we prove that the flow relaxes back to the classical self-similar Blasius profiles, introduced by H. Blasius in \cite{Blasius}, which we also introduce and discuss in Section \ref{subsection:asy:x}.   

\subsection{The Setting}

\hspace{5 mm} First, we shall introduce the particular setting of our work in more precise terms. We consider the Navier-Stokes (NS) equations posed on the domain $\mathcal{Q} := (0, \infty) \times (0, \infty)$: 
\begin{align} \label{NS:1}
&u^\eps u^\eps_x + v^\eps u^\eps_Y + P^\eps_x = \eps \Delta u^\eps, \\ \label{NS:2}
&u^\eps v^\eps_x + v^\eps v^\eps_Y + P^\eps_Y = \eps \Delta v^\eps, \\ \label{NS:3}
&u^\eps_x + v^\eps_Y = 0,
\end{align}
We are taking the following boundary conditions in the vertical direction
\begin{align} \label{BC:vert:intro}
&[u^\eps, v^\eps]|_{Y = 0} = [0,0], \qquad [u^\eps(x,Y), v^\eps(x,Y)] \xrightarrow{Y \rightarrow \infty} [u_E(x,\infty), v_E(x,\infty)].
\end{align}
which coincide with the classical no-slip boundary condition at $\{Y =0\}$ and the Euler matching condition as $Y \uparrow \infty$. We now fix the vector field 
\begin{align}
[u_E, v_E] := [1, 0], \qquad P_E  = 0
\end{align}
as a solution to the steady, Euler equations ($\eps = 0$ in \eqref{NS:1} - \eqref{NS:3}), upon which the matching condition above reads $[u^\eps, v^\eps] \xrightarrow{Y \rightarrow \infty} [1, 0]$.  

Generically, there is a mismatch at $Y = 0$ between the boundary condition \eqref{BC:vert:intro} and that of an inviscid Eulerian fluid, which typically satisfies the no-penetration condition, $v^E|_{Y = 0} = 0$. Given this, it is not possible to demand convergence of the type $[u^\eps, v^\eps] \rightarrow [1, 0]$ as $\eps \rightarrow \infty$ in suitably strong norms, for instance in the $L^\infty$ sense. To rectify this mismatch, Ludwig Prandtl proposed in his seminal 1904 paper, \cite{Prandtl}, that one needs to \textit{modify} the limit of $[u^\eps, v^\eps]$ by adding a corrector term to $[1, 0]$, which is effectively supported in a thin layer of size $\sqrt{\eps}$ near $\{Y = 0\}$. Mathematically, this amounts to proposing an asymptotic expansion of the type 
\begin{align} \label{ansatz:1:1}
u^\eps(x, Y) = 1 + u^0_p(x, \frac{Y}{\sqrt{\eps}}) + O(\sqrt{\eps}) = \bar{u}^0_p(x, \frac{Y}{\sqrt{\eps}}) + O(\sqrt{\eps}), 
\end{align} 
where the rescaling $\frac{Y}{\sqrt{\eps}}$ ensures that the corrector, $u^0_p$, is supported effectively in a strip of size $\sqrt{\eps}$. The quantity $\bar{u}^0_p$ is classically known as the Prandtl boundary layer, whereas the $O(\sqrt{\eps})$ term will be referred to in our paper as ``the remainder". Motivated by this ansatz, we introduce the Prandtl rescaling 
\begin{align}
y := \frac{Y}{\sqrt{\eps}}. 
\end{align}
We now rescale the solutions via 
\begin{align}
U^\eps(x, y) := u^\eps(x, Y), \qquad V^\eps(x, y) := \frac{v^\eps(x, Y)}{\sqrt{\eps}}
\end{align}
which satisfy the following system 
\begin{align} \label{eq:NS:1}
&U^\eps U^\eps_x + V^\eps U^\eps_y + P^\eps_x = \Delta_\eps U^\eps, \\  \label{eq:NS:2}
&U^\eps V^\eps_x + V^\eps V^\eps_y + \frac{P^\eps_y}{\eps} = \Delta_\eps V^\eps, \\  \label{eq:NS:3}
&U^\eps_x + V^\eps_y = 0. 
\end{align}

\noindent Above, we have denoted the scaled Laplacian operator, $\Delta_\eps := \p_y^2 + \eps \p_x^2$. 

The effectiveness of the ansatz \eqref{ansatz:1:1}, and the crux of Prandtl's revolutionary idea, is that the leading order term $\bar{u}^0_p$ (and its divergence-free counterpart, $\bar{v}^0_p$) satisfy a much simpler equation than the full Navier-Stokes system, known as the Prandtl system, 
\begin{align}  \label{BL:0:intro:intro}
&\bar{u}^0_p \p_x \bar{u}^0_p + \bar{v}^0_p \p_y \bar{u}^0_p - \p_y^{2} \bar{u}^0_p + P^{0}_{px} = 0, \qquad P^0_{py} = 0, \qquad \p_x \bar{u}^0_p + \p_y \bar{v}^0_p = 0,  
\end{align}
which are supplemented with the boundary conditions
\begin{align} \label{BL:1:intro:intro}
&\bar{u}^0_p|_{x = 0} = \bar{U}^0_p(y), \qquad \bar{u}^0_p|_{y = 0} = 0, \qquad \bar{u}^0_p|_{y = \infty} = 1, \qquad \bar{v}^0_p = - \int_0^y \p_x \bar{u}^0_p. 
\end{align}
This system is simpler than \eqref{eq:NS:1} - \eqref{eq:NS:3} in several senses. First, due to the condition $P^0_{py} = 0$, we obtain that the pressure is constant in $y$ (and then in $x$ due to the Bernoulli's equation), and hence \eqref{BL:0:intro:intro} is really a scalar equation. 

In addition to this, by temporarily omitting the transport term $\bar{v}^0_p \p_y \bar{u}^0_p$, one can make the formal identification that $\bar{u}^0_p \p_x \approx \p_{yy}$, which indicates that \eqref{BL:1:intro:intro} is really a \textit{degenerate, parabolic} equation, which is in stark contrast to the elliptic system \eqref{eq:NS:1} - \eqref{eq:NS:3}. From this perspective, $x$ acts as a time-like variable, whereas $y$ acts as a space-like variable. We thus treat \eqref{BL:0:intro:intro} - \eqref{BL:1:intro:intro} as one would a typical Cauchy problem. Indeed, one can ask questions of local (in $x$) wellposedness, global (in $x$) wellposedness, finite-$x$ singularity formation, decay and asymptotics, etc,... This perspective will be emphasized more in Section \ref{subsection:asy:x}.

\subsection{Asymptotics as $\eps \rightarrow 0$ (Expansion \& Datum)} \label{section:viscosity}
 We expand the rescaled solution as 
\begin{align}
\begin{aligned} \label{exp:base}
&U^\eps := 1 + u^0_p + \sum_{i = 1}^{N_{1}} \eps^{\frac{i}{2}} (u^i_E + u^i_P) + \eps^{\frac{N_2}{2}} u =: \bar{u} +  \eps^{\frac{N_2}{2}} u \\
&V^\eps := v^0_p + v^1_E + \sum_{i = 1}^{N_{1}-1} \eps^{\frac i 2} (v^i_P + v^{i+1}_E) + \eps^{\frac{N_{1}}{2}} v^{N_{1}}_p +  \eps^{\frac{N_2}{2}} v =: \bar{v} + \eps^{\frac{N_2}{2}} v, \\
&P^\eps := \sum_{i = 0}^{N_1+1} \eps^{\frac{i}{2}} P^i_p +  \sum_{i = 1}^{N_{1}} \eps^{\frac{i}{2}} P^i_E + \eps^{\frac{N_2}{2}} P = \bar{P} + \eps^{\frac{N_2}{2}}P.
\end{aligned}
\end{align}

\noindent Above, 
\begin{align}
[u^0_E, v^0_E] := [1, 0], \qquad [u^i_p, v^i_p] = [u^i_p(x, y), v^i_p(x, y)], \qquad [u^i_E, v^i_E] = [u^i_E(x, Y), v^i_E(x, Y)], 
\end{align}
and the expansion parameters $N_{1}, N_2$ will be specified in Theorem \ref{thm.main} for the sake of precision. We note that these are not optimal choices of these parameters, but we chose them large for simplicity. Certainly, it will be possible to bring these numbers significantly smaller.  

First, we note that, given the expansion \eqref{exp:base}, we enforce the vertical boundary conditions for $i = 0,...N_1$, $j = 1, ... N_1$,
\begin{align}
&u^i_p(x, 0) = - u^i_E(x, 0),  & &v^j_E(x, 0) = - v^{j-1}_p(x, 0), & &[u, v]|_{y = 0} = 0, \\
&v^i_p(x, \infty) = u^i_p(x, \infty) = 0, & & u^j_E(x, \infty) = v^j_E(x, \infty) = 0, & &[u, v]|_{y = \infty} = 0. 
\end{align}
which ensure the no-slip boundary condition for $[U^\eps, V^\eps]$ at $y = 0, \infty$. As we do not include an Euler field to cancel out last $v^{N_1}_p$, we need to also enforce the condition $v^{N_1}_p|_{y = 0} = 0$, which is a particularity for the $N_1$'th boundary layer. 

The side $\{x =0\}$ has a distinguished role in this setup as being where ``in-flow" datum is prescribed. This datum is prescribed ``at the level of the expansion", in the sense described below (see the discussion surrounding equations \eqref{datum:given}). Moreover, as $x \rightarrow \infty$, one expects the persistence of just one quantity from \eqref{exp:base}, which is the leading order boundary layer, $[\bar{u}^0_p, \bar{v}^0_p]$, and the remaining terms from \eqref{exp:base} are expected to decay in $x$. This decay will be established rigorously in our main result. 

As is standard in this type of problem, the expansion \eqref{exp:base} is part of the ``prescribed data". Indeed, the point is that we assume that the Navier-Stokes velocity field $[U^\eps, V^\eps]$ attains the expansion \eqref{exp:base} initially at $\{x = 0\}$, and then aim to prove that this expansion propagates for $x > 0$. Given this, there are two categories of ``prescribed data" for the problem at hand: 
\begin{itemize}

\item[(1)] the inviscid Euler profiles,

\item[(2)] initial datum at $\{x = 0\}$ (for relevant quantities from \eqref{exp:base}).
\end{itemize}

We shall now describe the datum that we take for our setting.  First, we take the inviscid Euler profile to be 
\begin{align} \label{choice:Euler}
[u^0_E, v^0_E, P^0_E] := [1, 0, 0]
\end{align}
as given. We have selected \eqref{choice:Euler} as the simplest shear flow with which to work. Our analysis can be extended with relatively small (but cumbersome) modifications to general Euler shear flows of the form 
\begin{align}
[u^0_E, v^0_E, P^0_E] := [u^0_E(Y), 0, 0],
\end{align}
under mild assumptions on the shear profile $u^0_E(Y)$. 

Apart from prescribing the outer Euler profile, we also get to pick ``Initial datum", that is at $\{x = 0\}$ of various terms in the expansion \eqref{exp:base}. Specifically, the prescribed initial datum comes in the form of the functions: 
\begin{align} \label{datum:given}
u^i_P|_{x = 0} =: U^i_P(y), \qquad v^j_E|_{x = 0} =: V^j_E(Y),
\end{align}
for $i = 0,...,N_1$, and $j = 1,..,N_1$. Note that we do not get to prescribe, at $\{x = 0\}$, all of the terms appearing in \eqref{exp:base}. On the one hand, to construct $[u^i_p, v^i_p]$, we use that   $u^i_p$ obeys a degenerate parabolic equation, with $x$ occupying the time-like variable and $y$ the space-like variable and that 
  $v^i_p$ can be  recovered from $u^i_p$ via the divergence-free condition. Therefore, only $u^i_p|_{x = 0}$ is necessary to determine these quantities.  

On the other hand,  to construct  the Euler profiles $[u^i_E, v^i_E]$ for $i = 1,..,N_1$, we use an  elliptic problem for $v^i_E$ (in the special case of \eqref{choice:Euler}, it is in fact $\Delta v^i_E = 0$). As such, we prescribe the datum for $v^i_E$, as is displayed in \eqref{datum:given}, and then recover $u^i_E$ via the divergence-free condition. Therefore, only $v^i_E|_{x = 0}$ is necessary to determine these quantities. 

We wish to emphasize that although \eqref{exp:base} is a classical ansatz (in fact, Prandtl's original proposal \cite{Prandtl} was in precisely this setting of 2D stationary flows over a plate), the rigorous justification of \eqref{exp:base} in the steady setting has only been obtained very recently, see \cite{GI1}, \cite{GI3}, \cite{Varet-Maekawa}, \cite{GN} for instance. Moreover, all of these works require crucially that $x << 1$ (the analog of ``short-time"), in order to extract a subtle and delicate stability mechanism as $\eps \rightarrow 0$ of $U^\eps \rightarrow \bar{u}$. This stability mechanism was based, in \cite{GI1}, \cite{GI3}, \cite{GN}, on virial-type estimates for certain carefully selected weighted quantities \textit{that are compatible with the system aspect of the Navier-Stokes equations.} In the case of \cite{Varet-Maekawa}, it was based on a novel Rayleigh-Airy-Airy iteration, which also required $x << 1$. Prior to the present work, it is far from clear how to extract a stability mechanism (again, $U^\eps \rightarrow \bar{u}$ as $\eps \rightarrow 0$) that applies for even $x \sim O(1)$, let alone as $x \rightarrow \infty$ with precise asymptotics.

An alternative point of view is on the Fourier side in the tangential variable $x$. The results \cite{GI1}, \cite{GI3}, \cite{Varet-Maekawa}, \cite{GN} extract a stability mechanism, $U^\eps \rightarrow \bar{u}$, which relies crucially on including only high frequencies in $x$ (due to the requirement of $x << 1$) and excluding even frequencies of order $1$. Prior to our work, it was not known whether these known stability mechanisms can work in the presence of frequencies of order $1$ ($x \sim 1$), let alone allowing for zero frequency issues ($x \rightarrow \infty$).

\subsection{Asymptotics as $x \rightarrow \infty$} \label{subsection:asy:x}

We will now discuss more precisely the role of the $x$-variable, specifically emphasizing the role that $x$ plays as a ``time-like" variable, controlling the ``evolution" of the fluid. The importance of studying the large $x$ behavior of both the Prandtl equations and the Navier-Stokes equations is not just mathematical (in analogy with proving global wellposedness/ decay versus finite-$x$ blowup), but is also of importance physically due to the possibility of boundary layer separation, which is a large $x$ phenomena (which was noted by Prandtl himself in his original 1904 paper). 

We shall discuss first the large-$x$ asymptotics at the level of the Prandtl equations, \eqref{BL:0:intro:intro} - \eqref{BL:1:intro:intro}, which govern $[\bar{u}^0_p, \bar{v}^0_p]$. It turns out that there are two large-$x$ regimes for $[\bar{u}^0_p, \bar{v}^0_p]$, depending on the sign of the Euler pressure gradient: 

\begin{itemize}
\item[(1)] Favorable pressure gradient, $\p_x P^E \le 0$: $[\bar{u}^0_p, \bar{v}^0_p]$ exists globally in $x$, and becomes asymptotically self-similar, 

\item[(2)] Unfavorable pressure gradient, $\p_x P^E > 0$, $[\bar{u}^0_p, \bar{v}^0_p]$ may form a finite-$x$ singularity, known as ``separation".
\end{itemize}
In this work, our choice of $[1, 0]$ for the outer Euler flow guarantees that we are in setting (1), that of a favorable pressure gradient. 

This dichotomy above was introduced by Oleinik, \cite{Oleinik}, \cite{Oleinik1}, who established the first mathematically rigorous results on the Cauchy problem \eqref{BL:0:intro:intro} - \eqref{BL:1:intro:intro}. Indeed, Oleinik established that solutions to  \eqref{BL:0:intro:intro} - \eqref{BL:1:intro:intro} are locally (in $x$) well-posed in both regimes (1) and (2), and globally well-posed in regime (1) (under suitable hypothesis on the datum, which we do not delve into at this stage). 

Now, we investigate what  it means for $[\bar{u}^0_p, \bar{v}^0_p]$ to become asymptotically self-similar.  In order to describe this behavior more quantitatively, we need to introduce the Blasius solutions. Four years after Prandtl's seminal 1904 paper, H. Blasius introduced the (by now) famous ``Blasius boundary layer" in \cite{Blasius}, which takes the following form 
\begin{align} \label{Blasius:1}
&[\bar{u}_\ast^{x_0}, \bar{v}_\ast^{x_0}] = [f'(z), \frac{1}{\sqrt{x + x_0}} (z f'(z) - f(z)) ], \\ \label{Blasius:2}
&z := \frac{y}{\sqrt{x + x_0}} \\ \label{Blasius:3}
&ff'' + f''' = 0, \qquad f'(0) = 0, \qquad f'(\infty) = 1, \qquad \frac{f(z)}{z} \xrightarrow{\eta \rightarrow \infty} 1, 
\end{align}
where above, $f' = \p_z f(z)$ and  $x_0$ is a free parameter. Physically, $x_0$ has the meaning that at $x = - x_0$, the fluid interacts with the ``leading edge" of, say, a plate (hence the singularity at $x = - x_0$). Our analysis will treat any fixed $x_0 > 0$ (one can think, without loss of generality, that $x_0 = 1$). In fact, we will make the following notational convention which enables us to omit rewriting $x_0$ repeatedly:
\begin{align}
[\bar{u}_\ast, \bar{v}_\ast] := [\bar{u}_\ast^{1}, \bar{v}_\ast^{1}]. 
\end{align}
We emphasize that the choice of $1$ above could be replaced with any positive number, without loss of generality. 

The Blasius solutions, $[\bar{u}_\ast^{x_0}, \bar{v}_\ast^{x_0}]$ are distinguished solutions to the Prandtl equations in several senses. First, physically, they have demonstrated remarkable agreement with experiment (see \cite{Schlicting} for instance). Mathematically, their importance is two-fold. First, they are self-similar, and second, they act as large-$x$ attractors for the Prandtl dynamic. Indeed, a classical result of Serrin, \cite{Serrin}, states: 
\begin{align} \label{simple:Prandtl}
\lim_{x \rightarrow \infty} \| \bar{u}^0_p - \bar{u}_\ast \|_{L^\infty_y} \rightarrow 0 
\end{align}
for a general class of solutions, $[\bar{u}^0_p, \bar{v}^0_p]$ of \eqref{BL:0:intro:intro}.

This was revisited by the first author in the work \cite{IyerBlasius}, who established a refined description of the above asymptotics, in the sense 
\begin{align} \label{asy:blas:1}
\| \bar{u}^0_p - \bar{u}_\ast \|_{L^\infty_y} \lesssim o(1) \langle x \rangle^{- \frac 1 2 + \sigma_\ast}, \text{ for any } 0 < \sigma_\ast << 1,  
\end{align}
which is the essentially optimal decay rate from the point of view of regarding $\bar{u}^0_p$ as a parabolic equation with one spatial dimension. The work of \cite{IyerBlasius} used energy methods and virial-type identities, whereas the work of \cite{Serrin} was based on maximum principle methods. 

The case of (2) above (the setting of unfavorable pressure gradient) has been treated in the work of \cite{MD} as well as in the paper of \cite{Zhangsep} for the Prandtl equation with $\p_x P^E > 0$ (which appears as a forcing term on the right-hand side of \eqref{BL:0:intro:intro} - \eqref{BL:1:intro:intro} in their setting). These results establish the physically important phenomenon of separation, which occurs when $\p_y \bar{u}^0_p(x, 0) = 0$ for some $x > 0$, even though the datum starts out with the monotonicity $\p_y \bar{u}^0_p(0, 0) > 0$. 

Our main theorem below establishes two pieces of asymptotic information as $x \rightarrow \infty$:  
\begin{align} \label{main:estimate}
\| u^\eps - \bar{u}^0_p \|_{L^\infty_y} \lesssim \sqrt{\eps} \langle x \rangle^{- \frac 1 4 + \sigma_\ast}, \qquad \| \bar{u}^0_p - \bar{u}_\ast \|_{L^\infty_y} \le o(1) \langle x \rangle^{- \frac 1 4 + \sigma_\ast}.
\end{align}
This means that the full Navier-stokes velocity field undergoes this type of stabilization as $x \rightarrow \infty$. As a by-product, it recovers (using different techniques than \cite{Serrin} and \cite{IyerBlasius}) stability information of the form \eqref{asy:blas:1}. \textit{We emphasize that this is the first result which characterizes the asymptotic in $x$ behavior for the full Navier-Stokes velocity field.} 

We would also like to emphasize that establishing a large $x$ stabilization and decay mechanism for the Navier-Stokes equations, uniformly in the viscosity, requires a completely new functional framework for the Navier-Stokes equations which is absent from the Prandtl setting. Seeing as the stability for $x << 1$ (``short-time") of the boundary layer was only recently established in \cite{GI1}, \cite{GI3}, \cite{Varet-Maekawa}, it is far from clear that stability mechanism as $\eps \rightarrow 0$ (which from the various works \cite{GI1}, \cite{GI3}, \cite{Varet-Maekawa}, \cite{GN} is already quite involved and delicate for small $x << 1$) is at all consistent with the refined asymptotic in $x$ behavior predicted by \eqref{asy:blas:1}.

Therefore, we interpret our result, stated below in Theorem \ref{thm.main}, as a confluence of two stability mechanisms: stability as $U^\eps \rightarrow \bar{u}$, and the large $x$ stability observed in the substantially simpler Prandtl equations, \eqref{simple:Prandtl}. Prior to our work, it is far from clear how these two mechanisms interact (or if they even \textit{can} interact favorably), specifically because prior known results which have demonstrated stability $U^\eps \rightarrow \bar{u}$ require crucially that $x << 1$.

\begin{remark} One may notice when comparing the estimate \eqref{asy:blas:1} with the second estimate in \eqref{main:estimate} that the estimate from \eqref{main:estimate} is weaker by a decay factor of $\langle x \rangle^{- \frac 1 4}$. This is simply due to the fact that in the present paper, our aim is not to obtain the sharpest possible asymptotics of $\bar{u}^0_p \rightarrow \bar{u}_\ast$, but rather to close the first estimate from \eqref{main:estimate}. It is certainly possible to optimize our arguments for $\bar{u}^0_p$ to recover the sharper decay rate, but we have opted for simplicity in this matter, as the second estimate from \eqref{main:estimate} suffices to carry out our analysis of $u^\eps \rightarrow \bar{u}^0_p$.  
\end{remark}

\subsection{Main Theorem}

The main result of our work is the following. 

\begin{theorem} \label{thm.main} Fix $N_{1} = 400$ and $N_2 = 200$ in \eqref{exp:base}. Fix the leading order Euler flow to be 
\begin{align}
[u^0_E, v^0_E, P^0_E] := [1, 0, 0].
\end{align}

Assume the following pieces of initial data at $\{x = 0\}$ are given for $i = 0,...,N_{1}$, and $j = 1,...N_1$,
\begin{align} \label{datum:in}
 u^i_p|_{x = 0} =: U^i_p(y), && v^j_E|_{x = 0} =: V_E^j(Y)
\end{align}
where we make the following assumptions on the initial datum \eqref{datum:in}: 
\begin{itemize}
\item[(1)] For $i = 0$, the boundary layer datum $\bar{U}^0_p(y)$ is in a neighborhood of  Blasius, defined in \eqref{Blasius:1}. More precisely, we will assume 
\begin{align} \label{near:blasius}
\| (\bar{U}^0_p(y) - \bar{u}_\ast(0, y) ) \langle y \rangle^{m_0} \|_{C^{\ell_0}} \le \delta_\ast, 
\end{align}
where $0 < \delta_\ast << 1$ is small relative to universal constants, where $m_0, \ell_0$, are large, explicitly computable numbers. Assume also the difference $\bar{U}^0_p(y) - \bar{u}_\ast(0, y)$ satisfies generic parabolic compatibility conditions at $y = 0$.

\item[(2)] For $i = 1,..,N_1$, the boundary layer datum, $U^i_p(\cdot)$ is sufficiently smooth and decays rapidly:
\begin{align} \label{hyp:1}
\| U^i_p \langle y \rangle^{m_i} \|_{C^{\ell_i}} \lesssim 1,
\end{align}
where $m_i, \ell_i$ are large, explicitly computable constants (for instance, we can take $m_0 = 10,000$, $\ell_0 = 10,000$ and $m_{i+1} = m_i - 5$, $\ell_{i + 1} = \ell_i - 5$), and satisfies generic parabolic compatibility conditions at $y = 0$.

\item[(3)] The Euler datum $V^i_E(Y)$ satisfies generic elliptic compatibility conditions. 

\item[(4)] Assume Dirichlet datum for the remainders, that is 
\begin{align} \label{Dirichlet}
[u, v]|_{x = 0} = [u, v]|_{x = \infty} = 0.
\end{align}
\end{itemize}

Then there exists an $\eps_0 << 1$ small relative to universal constants such that for every $0 < \eps \le \eps_0$, there exists a unique solution $(u^\eps, v^\eps)$ to system  \eqref{NS:1} - \eqref{NS:3}, which satisfies the expansion \eqref{exp:base} in the quadrant, $\mathcal{Q}$. Each of the intermediate quantities in the expansion \eqref{exp:base} satisfies the following estimates for $i = 1,...,N_1$
\begin{align}
&\| \p_x^k \p_y^j u^i_p \langle z \rangle^M \|_{L^\infty_y} \le C_{M, k, j} \langle x \rangle^{- \frac 1 4 - k - \frac j 2 + \sigma_{\ast}} && \| v^i_p \|_{L^\infty_y} \le C_{M, k, j} \langle x \rangle^{- \frac 3 4 - k - \frac j 2 + \sigma_\ast} \\
&\| \p_x^k \p_Y^j u^i_E \|_{L^\infty_y} \le C_{k,j} \langle x \rangle^{- \frac 1 2 - k - j} && \| v^i_E \|_{L^\infty_y}  \le C_{k,j} \langle x \rangle^{- \frac 1 2 - k - j},
\end{align} 
where $\sigma_\ast := \frac{1}{10,000}$. Finally, the remainder $(u, v)$ exists globally in the quadrant, $\mathcal{Q}$, and satisfies the following estimates 
\begin{align}
\| u, v \|_{\mathcal{X}} \lesssim 1, 
\end{align}
where the space $\mathcal{X}$ will be defined precisely in \eqref{X:norm}. 
\end{theorem}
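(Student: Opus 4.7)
The plan is to separate the argument into profile construction and remainder analysis, with a self-similar weight in $z = y/\sqrt{x+1}$ threaded through every estimate so that the large-$x$ decay survives.

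First I would build the leading Prandtl layer $[\bar u^0_p, \bar v^0_p]$ via Oleinik's theory, then use the attractivity statement \eqref{asy:blas:1} to place it in a $\langle x\rangle^{-1/4+\sigma_*}$-neighborhood of the Blasius profile $[\bar u_\ast, \bar v_\ast]$. Working in self-similar variables $z = y/\sqrt{x+1}$ and testing the parabolic equation for $\bar u^0_p - \bar u_\ast$ against weights of the form $\langle z\rangle^{2M}$ yields the decay bound claimed for $i=0$. Next, by descending induction on $i$ I would construct the higher-order Prandtl correctors $[u^i_p, v^i_p]$ from a linear degenerate-parabolic equation around $\bar u^0_p$ with forcing built from lower-order terms, closing the same weighted energy estimates to obtain the $\langle x \rangle^{-1/4 - k - j/2 + \sigma_*}$ rates. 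The Euler correctors $[u^i_E, v^i_E]$ satisfy $\Delta v^i_E = 0$ in the half-plane with data \eqref{datum:given} plus Dirichlet boundary values inherited from $v^{i-1}_p|_{y=0}$, and classical Poisson-kernel estimates give the algebraic $\langle x\rangle^{-1/2-k-j}$ decay.

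Having fixed $\bar u, \bar v, \bar P$, I would substitute the expansion \eqref{exp:base} into \eqref{eq:NS:1}--\eqref{eq:NS:3} and derive the remainder system for $(u,v,P)$. The boundary-layer/Euler matching conditions are precisely those needed to make the forcing $R$ smaller by a full factor of $\eps^{N_2/2}$ and to inherit $x$-decay from the profile hierarchy. Because $N_1 = 400$ and $N_2 = 200$ are generously chosen, verifying that $R$ carries an algebraic power of $\langle x \rangle$ large enough to absorb every weight appearing in the a priori estimate is a routine (if long) check.

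The core of the argument is the linear a priori estimate for the remainder, which requires designing the space $\mathcal{X}$. I would take $\mathcal{X}$ to be a sum of three weighted pieces: a stream-function energy $\int \eps |\nabla \psi|^2 + |\p_y \psi|^2$ carrying a self-similar weight $\langle z \rangle^{2M}\langle x\rangle^{2\alpha}$, a weighted vorticity-type norm $\|\omega \langle z\rangle^M \langle x\rangle^\beta\|_{L^2}$ needed to recover pointwise control on $u$ and $v$, and a low-order near-wall quotient $\|v/\bar u^0_p\|$-type term that handles the degeneracy of the transport operator at $y = 0$. The bilinear estimate would proceed by testing the momentum equations against a carefully chosen multiplier — morally $u/\bar u^0_{p,y}$ in the spirit of the virial multipliers of \cite{GI1, GI3}, but modified by the self-similar change of variables so that its commutator with $\bar u^0_p \p_x + \bar v^0_p \p_y$ is sign-definite after integration by parts. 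The dissipation term $\eps \Delta_\eps$ and the transport both must produce coercive contributions against the same weight simultaneously.

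The hard step is the design of this weighted multiplier. Prior stability proofs for $\eps \to 0$ relied crucially on $x \ll 1$, which in Fourier language excludes zero tangential frequency; here we must handle all frequencies in $x$, including the stationary mode, while maintaining uniformity in $\eps$. The multiplier must respect Blasius scaling so that after the substitution $z = y/\sqrt{x+1}$ the virial identity becomes $x$-translation-invariant modulo lower-order terms, and simultaneously it must preserve the sign structure that drove the virial estimates of \cite{GI1, GI3}. Once this linear estimate is in place, the theorem follows from a contraction-mapping iteration in $\mathcal{X}$: the nonlinear terms $u u_x + v u_y$ and $u v_x + v v_y$ are handled by Sobolev embedding together with the weighted $L^\infty$ content of $\mathcal{X}$, and the $\eps^{N_2/2}$ smallness of the forcing produces a self-map and contraction on a ball of radius $O(1)$ in $\mathcal{X}$, delivering the global-in-$x$ remainder and the final pointwise bounds \eqref{main:estimate}.
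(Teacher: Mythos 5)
Your outline of the profile construction (Oleinik plus weighted self-similar energies for the Prandtl hierarchy, harmonic extension for the Euler correctors) is consistent with what is done, but that entire half is delegated to the companion paper \cite{IM21} and is not where the difficulty lies. The gap is in the stability step, and it is a gap of mechanism, not of bookkeeping. You propose to test the remainder equations with a multiplier ``morally $u/\bar{u}^0_{py}$ in the spirit of the virial multipliers of \cite{GI1}, \cite{GI3}, modified by the self-similar change of variables.'' That is precisely the local-in-$x$ machinery, and it does not globalize: those quotient/virial estimates produce a loop of the form $\|U,V\|_{X_0}\lesssim L\|U,V\|_{X_{1/2}}$ and $\|U,V\|_{X_{1/2}}\lesssim\|U,V\|_{X_0}$, which closes only for $L\ll 1$. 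The obstruction at $x\sim 1$ and $x\to\infty$ is not a scaling mismatch that a change to $z=y/\sqrt{x+1}$ can repair; it is the absence of any damping at zero tangential frequency. What actually makes the global estimate work is a specific renormalized unknown, $U=\frac{1}{\bar{u}}\bigl(u-\frac{\bar{u}_y}{\bar{u}}\psi\bigr)$ (a velocity-level avatar of the von Mises transform), for which the dangerous Rayleigh contribution $\frac{3}{2}\int\bar{u}_{yy}U^2$ is cancelled by the diffusive commutator $-2\int\bar{u}_{yy}U^2$, leaving the coercive term $-\frac{1}{2}\int\bar{u}_{yy}U^2\ge 0$ by concavity of the Blasius profile — see \eqref{est:lin:intro:crucial}. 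Without identifying this transport–diffusion cancellation, the claim that ``the dissipation and the transport both produce coercive contributions against the same weight'' is unsupported, and the argument fails exactly where all prior steady results failed.

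Two further points would sink the proposal even granting the right unknown. First, the pressure: a scalar multiplier such as $u/\bar{u}^0_{py}$ does not annihilate $\nabla_\eps P$; every multiplier must be assembled into an $\eps$-divergence-free vector field with vanishing normal trace (e.g. $[Ug^2,\,\eps Vg^2-\eps q\,\p_x(g^2)]$), and for the Cauchy–Kovalevskaya-type weights this forces correction terms of competing sign that are only controlled via sharp-constant Hardy inequalities as in \eqref{precise:1}. Second, a single weighted space plus contraction mapping cannot close, because the scheme genuinely loses $x\p_x$ derivatives: the half-integer estimates lose half a derivative through terms like $\int x\,\bar{u}_y U_xU_y$ (see \eqref{est:Xhalf:loss:deriv}), while the trilinear terms lose half a derivative at integer levels, cf. \eqref{turn:1}--\eqref{turn:3} and \eqref{nonlinear:est:intro}. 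Reconciling these requires truncating the hierarchy at $X_{11}$ and performing a further nonlinear change of unknown $\tilde{U}=\p_y(\psi^{(11)}/\mu_s)$ with $\mu_s=\bar{u}+\eps^{N_2/2}u$, measured in the nonlinearly modified norm $\Theta_{11}$. None of this is visible from your sketch, and the self-similar weights $\langle z\rangle^M$ you thread through the remainder norms play no role in the actual stability argument — they belong only to the profile estimates.
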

As an immediate corollary, we obtain the following asymptotics, which are valid uniformly for $\eps \le \eps_0$ and for all $x > 0$. 
\begin{corollary} The solution $(u^\eps, v^\eps)$ to \eqref{NS:1} - \eqref{NS:3} satisfies the following asymptotics 
\begin{align}
\| u^\eps - (1 + u^0_p) \|_{L^\infty_y} \lesssim \sqrt{\eps} \langle x \rangle^{- \frac 1 4 + \sigma_\ast}   \qquad \| v^\eps - \sqrt{\eps} (v^0_p + v^1_E) \|_{L^\infty_y} \lesssim \eps \langle x \rangle^{- \frac 1 2}, 
\end{align}
\end{corollary}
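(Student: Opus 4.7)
The plan is to deduce the corollary directly from the expansion \eqref{exp:base} together with the pointwise estimates already contained in Theorem \ref{thm.main}. Recalling that $u^\eps(x,Y) = U^\eps(x, Y/\sqrt{\eps})$ and $v^\eps(x,Y) = \sqrt{\eps}\, V^\eps(x, Y/\sqrt{\eps})$, substituting \eqref{exp:base} and subtracting the two leading terms on each side yields the algebraic identities
\begin{align*}
u^\eps - (1 + u^0_p) &= \sum_{i=1}^{N_1} \eps^{i/2}\bigl(u^i_E + u^i_p\bigr) + \eps^{N_2/2} u, \\
v^\eps - \sqrt{\eps}\,(v^0_p + v^1_E) &= \sqrt{\eps}\sum_{i=1}^{N_1-1}\eps^{i/2}\bigl(v^i_p + v^{i+1}_E\bigr) + \eps^{(N_1+1)/2} v^{N_1}_p + \eps^{(N_2+1)/2} v.
\end{align*}
Both corollary estimates therefore reduce to bounding each summand in $L^\infty_y$ and invoking the triangle inequality, where the overall factor $\sqrt{\eps}$ (resp.\ $\eps$) comes from the first nontrivial term.

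For the first identity, I would isolate the $i=1$ contribution $\sqrt{\eps}(u^1_E + u^1_p)$ and apply the estimates of Theorem \ref{thm.main} with $k=j=0$ and $M=0$, giving $|u^1_p| \lesssim \langle x\rangle^{-\frac14+\sigma_\ast}$ and $|u^1_E| \lesssim \langle x\rangle^{-\frac12}$. Since $\langle x\rangle^{-\frac12} \le \langle x\rangle^{-\frac14 + \sigma_\ast}$, the slower-decaying boundary-layer term dominates. The higher-order terms $i\ge 2$ carry a prefactor $\eps^{i/2} \le \sqrt{\eps_0}\cdot\sqrt{\eps}$ and so are absorbed into the leading $\sqrt{\eps}\langle x\rangle^{-\frac14+\sigma_\ast}$ after choosing $\eps_0$ small. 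The remainder contribution $\eps^{N_2/2} u = \eps^{100} u$ is negligible: the $\mathcal{X}$-norm defined in \eqref{X:norm} controls $u$ in $L^\infty$ (up to polynomial weights in $x$), and $\eps^{100}$ easily dominates the polynomial factor to produce a bound far better than $\sqrt{\eps}\langle x\rangle^{-\frac14+\sigma_\ast}$.

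The $v^\eps$ estimate proceeds identically. The leading correction after $v^0_p + v^1_E$ is $\sqrt{\eps}(v^1_p + v^2_E)$; Theorem \ref{thm.main} provides $|v^1_p| \lesssim \langle x\rangle^{-\frac34 + \sigma_\ast}$ and $|v^2_E| \lesssim \langle x\rangle^{-\frac12}$, so the envelope is $\langle x\rangle^{-\frac12}$. Combining with the outer $\sqrt{\eps}$ from the rescaling gives exactly $\eps\langle x\rangle^{-\frac12}$; all higher-order profile terms and the remainder $\eps^{(N_2+1)/2} v$ are strictly smaller by the same reasoning as above. Since every step is a direct application of bounds already certified by Theorem \ref{thm.main}, the only thing to check carefully is that the $\mathcal{X}$-norm embedding sends $(u,v)$ into $L^\infty_y$ with at most polynomial growth in $x$, which is the unique item not purely arithmetical; once \eqref{X:norm} is in hand this is routine, and no further PDE analysis is required.
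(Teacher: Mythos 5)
Your proposal is correct and is exactly the ``immediate'' argument the paper intends (the corollary is stated without proof as a direct consequence of Theorem \ref{thm.main} and the expansion \eqref{exp:base}). The only imprecision is in the last step: the relevant embedding is \eqref{Linfty:wo}, $\| u \langle x \rangle^{1/4}\|_\infty + \|\sqrt{\eps}\, v \langle x \rangle^{1/2}\|_\infty \lesssim \eps^{-M_1}\|U,V\|_{\mathcal{X}}$, so the loss to be absorbed by $\eps^{N_2/2}$ is the factor $\eps^{-M_1} = \eps^{-24}$ (together with $\|U,V\|_{\mathcal{X}}\lesssim \eps^{5}$), not a polynomial weight in $x$ --- the conclusion $\eps^{100-24+5}\langle x\rangle^{-1/4} \ll \sqrt{\eps}\langle x\rangle^{-1/4+\sigma_\ast}$ holds all the same.
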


\begin{remark}[Generalizations] There are several ways in which our result, as stated in Theorem \ref{thm.main}, can easily be generalized, but we have foregone this generality for the sake of clarity and concreteness. We list these here. 
\begin{itemize}
\item[(1)] As stated, the leading order boundary layer, $\bar{u}^0_p$ will be near the self-similar Blasius profile. This is ensured by the assumption \eqref{near:blasius} (and proven rigorously in our construction). In reality, our proof uses only mild properties of the Blasius profile, and we can therefore generalize the type of boundary layers we consider. The most general class of boundary layers that we can treat would be those (globally defined) $\bar{u}^0_p$ that can be expressed as $\bar{u}^0_p = \tilde{u}^0_p + \hat{u}^0_{p}$, where $\tilde{u}_{pyy} \le 0$, $\tilde{u}^0_{py} \ge 0$, $\tilde{u}^0_p$ satisfies estimates \eqref{water:1} - \eqref{v:blasius:2}, and $\hat{u}^0_p$ satisfies estimates \eqref{water:65}.

\item[(2)] We have taken, again to make computations simpler, the leading order Euler vector field $[u_E, v_E] = [1, 0]$. It would not require new ideas to generalize our work to general shear flows $[u_E, v_E] = [b(Y), 0]$, where $b(Y)$ satisfies mild hypotheses (for instance, $b \in C^\infty, \frac 1 2 \le b \le \frac 3 2$, $\p_Y^k b(Y)$ decays rapidly as $Y \rightarrow \infty$ for $k \ge 1$), though it would make the expressions more complicated. The case of Euler flows that are not shear flows, however, poses more challenges and would require some new ideas.

\item[(3)] The datum assumed at $\{x = 0\}$ for the remainders, \eqref{Dirichlet}, can easily be generalized to any smooth vector-field, $[u, v]|_{x = 0} = [b_1(y), \sqrt{\eps} b_2(y)]$, where $b_1, b_2$ are smooth, rapidly decaying functions. 
\end{itemize} 
\end{remark}

It is convenient to think of Theorem \ref{thm.main} in two parts: the first is a result on the \textit{construction of the approximate solution}, $[\bar{u}, \bar{v}]$ from \eqref{exp:base}, given all of the necessary initial/ boundary datum described in Theorem \ref{thm.main}. The second is a result on \textit{asymptotic stability} of $[\bar{u}, \bar{v}]$, which amounts to controlling the differences from \eqref{exp:base}, $u := \eps^{- \frac{N_2}{2}} (U^\eps - \bar{u}), v := \eps^{- \frac{N_2}{2}} (V^\eps - \bar{v})$. We state these two steps as two distinct theorems, which, when combined, yield the result of Theorem \ref{thm.main}. 

\begin{theorem}[Construction of Approximate Solution] \label{thm:approx} Assume the boundary and initial conditions are prescribed as specified in Theorem \ref{thm.main}. Define $\sigma_\ast = \frac{1}{10,000}$. Then for $i = 1,...,N_1$, for $M \le m_i$ and $2k+j \le \ell_i$, the quantities $[u^i_p, v^i_p]$ and $[u^i_E, v^i_E]$ exist globally, $x > 0$, and the following estimates are valid
\begin{align} \label{blas:conv:1}
&\| \p_x^k \p_y^j ( \bar{u}^0_p - \bar{u}_\ast) \langle z \rangle^M \|_{L^\infty_y} \lesssim \delta_\ast \langle x \rangle^{- \frac 1 4 - k - \frac j 2 + \sigma_\ast}, \\ \label{blas:conv:2}
&\| \p_x^k \p_y^j ( \bar{v}^0_p - \bar{v}_\ast) \langle z \rangle^M \|_{L^\infty_y} \lesssim \delta_\ast \langle x \rangle^{- \frac 3 4 - k - \frac j 2 + \sigma_\ast} \\ \label{water:65}
&\| \langle z \rangle^M \p_x^k \p_y^j u_p^{i} \|_{L^\infty_y} \lesssim \langle x \rangle^{- \frac 1 4 - k - \frac j 2 + \sigma_\ast} \\ \label{water:54}
&\| \langle z \rangle^M \p_x^k \p_y^j v_p^{i} \|_{L^\infty_y} \lesssim \langle x \rangle^{- \frac 3 4 - k - \frac j 2 + \sigma_\ast}, \\  \label{water:78}
&\|(Y \p_Y)^l \p_x^k \p_Y^j v^{i}_E \|_{L^\infty_Y} \lesssim \langle x \rangle^{- \frac 1 2 - k - j}, \\ \label{water:88}
&\| (Y \p_Y)^l \p_x^k \p_Y^j u^{i}_E \|_{L^\infty_Y} \lesssim \langle x \rangle^{- \frac 1 2 - k - j}.
\end{align}
Moreover, the more precise estimates stated in Assumptions \eqref{assume:1} -- \eqref{assume:3} are valid. Finally, define the contributed forcing by:
\begin{align}
\begin{aligned} \label{forcing:remainder}
F_R := &(U^\eps U^\eps_x + V^\eps U^\eps_y + P^\eps_x - \Delta_\eps U^\eps ) - (\bar{u} \bar{u}_x + \bar{v} \bar{u}_y + \bar{P}_x - \Delta_\eps \bar{u}) \\
G_R := &(U^\eps V^\eps_x + V^\eps V^\eps_y + \frac{P^\eps_y}{\eps} - \Delta_\eps V^\eps ) -  (\bar{u} \bar{v}_x + \bar{v} \bar{v}_y + \frac{\bar{P}_y}{\eps} - \Delta_\eps \bar{v}). 
\end{aligned}
\end{align}
Then the following estimates hold on the contributed forcing: 
\begin{align} \label{est:forcings:part1}
\| \p_x^j \p_y^k F_R \langle x \rangle^{\frac{11}{20}+ j + \frac k 2} \| + \sqrt{\eps} \|  \p_x^j \p_y^k G_R \langle x \rangle^{\frac{11}{20}+ j + \frac k 2} \| \le \eps^{5}. 
\end{align}
\end{theorem}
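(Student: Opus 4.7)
The plan is to build the pieces of \eqref{exp:base} iteratively and close the bounds \eqref{blas:conv:1}--\eqref{water:88} as a simultaneous induction on $i = 0, 1, \ldots, N_1$, using the self-similar variable $z = y/\sqrt{x+1}$ throughout to reveal the correct long-$x$ decay. For the base case $i = 0$, I would write $\bar{u}^0_p = \bar{u}_\ast + \tilde{u}$ and study the nonlinear perturbation $\tilde{u}$, which satisfies a linearized Prandtl equation around Blasius. In self-similar coordinates the principal part becomes a Fokker--Planck-type operator with a spectral gap, and weighted energy/maximum-principle estimates on $\langle z \rangle^M \tilde{u}$ close the decay \eqref{blas:conv:1}--\eqref{blas:conv:2} at rate $\langle x \rangle^{-1/4+\sigma_\ast}$, provided $\delta_\ast$ in \eqref{near:blasius} is small relative to universal constants. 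This is a weaker rate than the sharp one from \cite{IyerBlasius} but suffices for what follows; the derivative bounds with extra decay $\langle x \rangle^{-k - j/2}$ come from parabolic regularization combined with the scaling $\p_x \sim \langle x \rangle^{-1}$, $\p_y \sim \langle x \rangle^{-1/2}$ at the boundary-layer scale.

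For the inductive step at order $i \geq 1$, first construct the Euler corrector $v^i_E$ by solving $\Delta v^i_E = 0$ on the quadrant with Dirichlet data $v^i_E(x,0) = -v^{i-1}_p(x, 0)$, initial data $V^i_E$ at $\{x=0\}$, and decay as $Y \to \infty$. Half-plane Poisson representation, combined with the inductive decay of $v^{i-1}_p|_{y=0}$, yields \eqref{water:78}, and \eqref{water:88} follows by recovering $u^i_E$ from the divergence-free condition and integrating in $Y$. The Prandtl layer $[u^i_p, v^i_p]$ then solves the linearized Prandtl problem
\begin{align*}
\bar{u}^0_p \p_x u^i_p + u^i_p \p_x \bar{u}^0_p + \bar{v}^0_p \p_y u^i_p + v^i_p \p_y \bar{u}^0_p - \p_y^2 u^i_p = F^i_p, \qquad \p_x u^i_p + \p_y v^i_p = 0,
\end{align*}
with boundary condition $u^i_p(x,0) = -u^i_E(x, 0)$ and forcing $F^i_p$ assembled from lower-order Prandtl--Prandtl, Euler--Prandtl, and pressure interactions. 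After subtracting off a suitable lift of the wall datum, this reduces to a linear Prandtl problem with zero Dirichlet datum, which can be treated in self-similar variables by the same Fokker--Planck/weighted-energy framework as $\tilde{u}$. Global existence in $x$ and the estimates \eqref{water:65}--\eqref{water:54} are then obtained simultaneously by an a priori/continuation argument, matching the inductively-known decay of $F^i_p$ against the self-similar scaling of the operator.

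The hardest step is closing the weighted $L^\infty$ bounds for this inhomogeneous linearized Prandtl problem uniformly in $x$ at the sharp rate $\langle x \rangle^{-1/4+\sigma_\ast}$ with strong polynomial weights in $z$: the mismatch between the $y$-scale of the Euler trace at the wall and the $z$-scale of the interior forces a careful choice of lift, and the degeneracy $\bar{u}^0_p(x, 0) = 0$ of the transport coefficient must be reconciled with the parabolic compatibility at $\{x = 0, y = 0\}$, which is precisely what the compatibility hypotheses on the data guarantee. Once this is done, the forcing estimate \eqref{est:forcings:part1} is essentially algebraic bookkeeping: expanding $F_R, G_R$ about the truncated approximation produces only terms of total $\eps$-order at least $(N_1+1)/2$, and every such term carries at most $\langle x \rangle^{-k/2 - j - 1/2}$ decay by the bounds above, so the weighted norms are bounded by $C \eps^{(N_1+1)/2}$, which for $N_1 = 400$ is vastly smaller than $\eps^5$. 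The $\eps \p_x^2$ piece of $\Delta_\eps$ consumes two extra $x$-derivatives, which are available thanks to the large exponents $\ell_i$ in the hypotheses.
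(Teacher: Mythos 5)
The paper does not prove Theorem \ref{thm:approx} at all: it is explicitly taken as a ``black box,'' with the proof deferred to the companion paper \cite{IM21} (see the discussion immediately after Theorem \ref{thm:2} and again in the ``Plan of the paper''). So there is no in-paper proof to compare your proposal against. What can be said is that your outline is consistent with the construction the paper \emph{describes} in Section \ref{section:viscosity}: the Prandtl layers $u^i_p$ are built as solutions of degenerate parabolic problems with datum $U^i_p$ at $\{x=0\}$ and $v^i_p$ recovered from the divergence-free condition, the Euler correctors solve $\Delta v^i_E = 0$ with datum $V^i_E$ and wall trace $-v^{i-1}_p(x,0)$, and the leading-order asymptotics to Blasius follow the energy/virial strategy of \cite{IyerBlasius} (at the weaker rate $\langle x\rangle^{-1/4+\sigma_\ast}$ rather than $\langle x\rangle^{-1/2+\sigma_\ast}$, as the paper itself remarks). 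As a proof, however, the proposal has real gaps rather than omitted routine details.

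Two concrete issues. First, describing the linearization around Blasius in self-similar variables as ``a Fokker--Planck-type operator with a spectral gap'' elides the central difficulty: the transport coefficient $\bar u_\ast$ degenerates at $z=0$, so the operator is not uniformly parabolic in the relevant sense, and the uniform-in-$x$ weighted $L^\infty$ bounds \eqref{water:65}--\eqref{water:54} for the \emph{inhomogeneous} linearized problems with nonzero wall trace $-u^i_E(x,0)$ are exactly the hard part; asserting that ``the same framework'' closes them is not a proof. Second, the forcing estimate \eqref{est:forcings:part1} is not ``essentially algebraic bookkeeping'' under the decay you quote: the norm $\|\cdot\|$ is $L^2_{xy}$, and a term with pointwise decay $\langle x\rangle^{-1/2-j-k/2}$ multiplied by the weight $\langle x\rangle^{11/20+j+k/2}$ behaves like $\langle x\rangle^{1/20}$, which is not square-integrable in $x$ even before accounting for the $x^{1/4}$ loss from integrating $\langle z\rangle^{-M}$-type profiles in $y$ over the layer of width $\sqrt{x}$. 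Closing \eqref{est:forcings:part1} requires using the actual structure of the residual (each surviving term carries either a $v^i_p\sim\langle x\rangle^{-3/4}$ factor, an extra $\p_x$, or a fast-decaying Euler trace), not just the worst-case product decay. Relatedly, your $\eps$-power count ignores that $F_R,G_R$ are normalized as forcings for the $\eps^{N_2/2}$-rescaled remainder, so the relevant gain is $\eps^{(N_1+1-N_2)/2}$ rather than $\eps^{(N_1+1)/2}$; with $N_1=400$, $N_2=200$ the conclusion survives, but the accounting as written is off.
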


\begin{theorem}[Stability of Approximate Solution]\label{thm:2} Assume the boundary and initial conditions are prescribed as specified in Theorem \ref{thm.main}. There exists a unique, global solution, $[u, v]$, to the problem \eqref{vel:form} - \eqref{vel:eqn:2}, where the modified unknowns $(U, V)$ defined through \eqref{vm:1} satisfy the estimate 
\begin{align}
\| U, V \|_{\mathcal{X}} \lesssim \eps^{5},
\end{align} 
where the $\mathcal{X}$ norm is defined precisely in \eqref{X:norm}. 
\end{theorem}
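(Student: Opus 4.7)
The plan is to treat Theorem \ref{thm:2} as the nonlinear stability of a linearization around the approximate profile $[\bar{u},\bar{v}]$ produced by Theorem \ref{thm:approx}. First, I would write down the remainder system \eqref{vel:form}--\eqref{vel:eqn:2} satisfied by $(u,v)=\eps^{-N_2/2}(U^\eps-\bar{u},V^\eps-\bar{v})$, in which the inhomogeneous source terms are controlled by \eqref{est:forcings:part1} and the quadratic nonlinearity carries an extra factor of $\eps^{N_2/2}$. I would then introduce the good unknowns $(U,V)$ prescribed through \eqref{vm:1}; these are almost certainly a Prandtl-compatible transformation (division by, or subtraction of a multiple of, $\bar{u}^0_p$, together with a vorticity-type variable) chosen so that the advective operator $\bar{u}^0_p\partial_x+\bar{v}^0_p\partial_y$ commutes cleanly with the unknowns and the worst commutator terms are absorbed into the principal part.

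The heart of the argument is the linear theory in the norm $\mathcal{X}$. Because $x$ plays the role of time and the leading order boundary layer is asymptotically the self-similar Blasius profile in a favorable pressure gradient regime, I would pick the weights in $\mathcal{X}$ to respect the similarity variable $z=y/\sqrt{x+1}$ and to track precisely the polynomial $x$-decay rates exhibited in \eqref{blas:conv:1}--\eqref{water:88}. The basic stability mechanism should be a virial/weighted energy identity in which the monotonicity $\bar{u}^0_{p,y}\ge 0$ and concavity $\bar{u}^0_{p,yy}\le 0$ of the near-Blasius shear produce a positive damping term, matching the favorable pressure gradient decay responsible for \eqref{simple:Prandtl} at the Prandtl level. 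I would then complement this with $x$-weighted estimates on $\partial_y U$, $\sqrt{\eps}\,\partial_x U$ and on $V$, recovering $V$ from $U$ via $V=-\int_0^y \partial_x U$ and using the Dirichlet condition \eqref{Dirichlet} at $x=0$ and $x=\infty$ to tame its tangential zero mode. Commuting with $\partial_x$ and $\partial_y$ yields higher-derivative versions sufficient to absorb the quadratic nonlinearity.

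The main obstacle, as emphasized in Section \ref{section:viscosity}, is that all previously known $U^\eps\to\bar{u}$ mechanisms required $x\ll 1$ and excluded $O(1)$ and zero tangential frequencies. What I expect to be genuinely new here is the choice of multipliers compatible with the decaying Blasius shear: the weights must produce a sign-definite commutator with $\bar{u}^0_p\partial_x+\bar{v}^0_p\partial_y-\partial_y^2-\eps\partial_x^2$ uniformly for all $x\in(0,\infty)$, which forces a delicate $x$-dependence tuned to the rate $\langle x\rangle^{-1/4+\sigma_\ast}$ appearing in \eqref{main:estimate}. A further subtlety is that the no-slip boundary at $y=0$ interacts non-trivially with the good-unknown transformation: the boundary commutators generated by the multipliers must either vanish by parity or be absorbed via Hardy-type inequalities, and the degeneracy of $\bar{u}^0_p$ at $y=0$ is precisely where such Hardy bounds are unavoidable. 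Balancing the $z$-localized Blasius structure against the elliptic regularization $\eps\partial_x^2$ in a single coherent multiplier scheme is, I expect, the technical crux.

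Once the linear solution map $\mathcal{L}^{-1}$ from forcings to $\mathcal{X}$ is shown to be bounded with a norm independent of $\eps$, the full nonlinear statement follows by a contraction argument in $\mathcal{X}$. The inhomogeneity has size $\eps^{5}$ by \eqref{est:forcings:part1}, and the quadratic nonlinearity gains the additional factor $\eps^{N_2/2}=\eps^{100}$, so for $\eps\le\eps_0$ the Banach fixed point theorem produces a unique global-in-$x$ solution $(U,V)$ with $\|U,V\|_{\mathcal{X}}\lesssim\eps^{5}$, as required.
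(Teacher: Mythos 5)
Your high-level strategy is the right one and matches the paper in outline: you pass to the remainder system, introduce good unknowns built from the quotient $q=\psi/\bar{u}$ (the paper's \eqref{vm:1} is exactly $U=\p_y q$, $V=-\p_x q$), exploit the concavity $\bar{u}_{yy}\le 0$ of the near-Blasius profile to produce a damping term in a weighted virial identity, use Hardy inequalities to handle the degeneracy of $\bar{u}$ at $y=0$, and close by smallness of the forcing \eqref{est:forcings:part1} and the extra factor $\eps^{N_2/2}$ in the nonlinearity. However, there are two genuine gaps between your sketch and an argument that actually closes.

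First, your scheme is ``prove the linear solution map $\mathcal{L}^{-1}$ is bounded on $\mathcal{X}$, then contract.'' The paper's estimates do not, and cannot, take that form: the linearized estimates themselves \emph{lose an $x\p_x$ derivative}. Concretely, the half-integer level estimate \eqref{turn:2} requires control of $\|U,V\|_{X_{n+1}}$ (with a small constant) on its right-hand side, because terms like $\int x\,\bar{u}_y U_x U_y$ cannot be absorbed using only the degenerate weights $\bar{u}$ available at level $n+\tfrac12$; this loss is specific to the combination of global-in-$x$ weights and the degeneracy of $\bar{u}$ at $y=0$, and it is created by the singular homogeneity $1/\bar{u}$ in the good unknowns, which in turn is forced by the pressure (your proposal never explains how the pressure is eliminated; the paper needs divergence-free vector-field multipliers such as $[Ug^2,\,\eps Vg^2-\eps q\,\p_x(g^2)]$, and the Cauchy--Kovalevskaya weights must be corrected by stream-function terms to remain divergence-free, producing competing-sign terms handled only via Hardy inequalities with sharp constants, Lemma \ref{precise:1}). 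Because of this loss, the hierarchy must be truncated at $X_{11}$ rather than at the natural stopping point $X_{11.5}\cap Y_{11.5}$, and ``commuting with $\p_x$'' does not simply yield higher-derivative versions: the scheme \eqref{turn:1}--\eqref{turn:3} closes only because integer levels recover what half-integer levels lose.

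Second, that truncation breaks your final contraction step as stated. The top-order trilinear terms (schematically $\int u_y\,\p_x^{11}v\,\p_x^{11}U\,\langle x\rangle^{22}$) are \emph{not} controlled by $\|U,V\|_{\mathcal{X}}$ once $X_{11.5}$ is unavailable, no matter how many powers of $\eps$ they carry, because they grow as $x\to\infty$ relative to the available norms. The paper resolves this with a further \emph{nonlinear} change of variables $Q=\psi^{(11)}/\mu_s$ with $\mu_s=\bar{u}+\eps^{N_2/2}u$ (\eqref{Chan:var:2}), which cancels the worst top-order terms, together with nonlinearly modified norms $\Theta_{11}$ (\eqref{def:Theta:11}) whose equivalence to $X_{11}$ must itself be proved using the full strength of $\mathcal{X}$, including the sharp mixed-norm and pointwise decay embeddings of Sections \ref{Lpq:embed:section}--\ref{pw:section:1}. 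The resulting a priori bound is quasilinear, $\|U,V\|_{\mathcal{X}}^2\lesssim\eps^5+\eps^{N_2/2-2M_1-5}\|U,V\|_{\mathcal{X}}^3$, and only then does a contraction argument apply. Without these two ingredients your proposed proof stalls exactly at the higher-derivative commutator stage.
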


Of these two steps, by far the most challenging is the second step, the stability analysis of Theorem \ref{thm:2}. \textit{This paper is devoted exclusively to this stability analysis,} whereas the first step, construction of approximate solutions, is obtained in a companion paper, \cite{IM21}. As a result, for the remainder of this paper, we will take Theorem \ref{thm:approx} as given and use it as a ``black-box". This serves to more effectively highlight the specific new techniques we develop for this stability result.

\subsection{Existing Literature} \label{existing}

The boundary layer theory originated with Prandtl's seminal 1904 paper, \cite{Prandtl}. First, we would like to emphasize that this paper presented the boundary layer theory in precisely the present setting: for 2D, steady flows over a plate (at $Y = 0$). In addition, Prandtl's original paper discussed the physical importance of understanding \eqref{exp:base} for large $x$, due to the possibility of boundary layer separation.

We will distinguish between two types of questions that are motivated by the ansatz, \eqref{ansatz:1:1}. First, there are questions regarding the description of the leading order boundary layer, $[\bar{u}^0_p, \bar{v}^0_p]$, and second, there are questions regarding the study of the $O(\sqrt{\eps})$ remainder, which, equivalently, amounts to questions regarding the validity of the asymptotic expansion \eqref{ansatz:1:1}.

A large part of the results surrounding the system \eqref{BL:0:intro:intro} - \eqref{BL:1:intro:intro} were already discussed in Section \ref{subsection:asy:x}, although the results discussed there were more concerned with the large $x$ asymptotic  behavior. We point the reader towards \cite{MD} for a study of separation in the steady setting, using modulation and blowup techniques. For local-in-$x$ behavior, let us supplement the references from Section \ref{subsection:asy:x} with the results of \cite{GI2}, which established higher regularity for \eqref{BL:0:intro:intro} - \eqref{BL:1:intro:intro} through energy methods, and the recent work of \cite{Zhifei:smooth} which obtains global $C^\infty$ regularity using maximum principle methods. 

%\textcolor{blue}{Of these results, the most relevant to the present work is that of \cite{IyerBlasius}. Indeed, the work of \cite{IyerBlasius} was the first to introduce the cancellation for the linearized operator $\bar{u} u_x + \bar{u}_y v - u_{yy}$ that is due to the classical von-Mise change of variables in an energetic setting. 
%%  (and it was mentioned in this work that this cancellation be utilized as one ingredient to prove the present result). 
% This cancellation pointed out in the work \cite{IyerBlasius} is precisely what motivates our ``Good Variables", seen in \eqref{good:variables:2}. }

We now discuss the validity of the ansatz (\ref{ansatz:1:1}). While rigorous results on the Prandtl equation itself date all the way back to Oleinik, proving \textit{stability} of the boundary layer in the inviscid limit has proven to be substantially more difficult and has only recently been achieved in the steady setting. Moreover, all of the known stability results have been for $x << 1$ (the analog of ``short time").  The classical setting we consider here, with the no-slip condition, was first treated, locally in the $x$ variable by the works \cite{GI1} - \cite{GI2}, \cite{Varet-Maekawa}, and the related work of \cite{GI3}. These works of \cite{GI1} - \cite{GI2} are distinct from that of \cite{Varet-Maekawa} in the sense that the main concern of \cite{GI1} - \cite{GI2} are $x$-dependent boundary layer profiles, and in particular addresses the classical Blasius solution. On the other hand, the work of \cite{Varet-Maekawa} is mainly concerned with shear solutions $(U(y), 0)$ to the forced Prandtl equations (shears flows are not solutions to the homogeneous Prandtl equations), which allows for Fourier analysis in the $x$ variable. Both of these works are \textit{local-in-$x$} results, which can demonstrate the validity of expansion \eqref{exp:base} for $0 \le x \le L$, where $L << 1$ is small (but of course, fixed relative to the viscosity $\eps$). We also mention the recent work of \cite{Gao-Zhang}, which generalized the work of \cite{GI1} - \cite{GI2} to the case of Euler flows which are not shear for $0 < x < L << 1$. 

%The central idea in \cite{GI1} - \cite{GI2} is to introduce a quotient estimate to obtain coercivity of the linearized operator $\bar{u} u_x + \bar{u}_y v - u_{yy}$ which circumvents the degeneracy of $\bar{u}$ at $y = 0$ for $L << 1$. More specifically, the crucial quotient estimate introduced in \cite{GI1} - \cite{GI2} has the advantage of controlling the quantity $\p_y (\frac{v}{\bar{u}})|_{y = 0}$, which is a quantity had thus far been out of reach and a serious obstacle for controlling the remainder solution. 
%
%\textcolor{blue}{We note that for the present work, it is also the case that these same two ingredients play a role: (1) a  quotient-type estimate introduced by \cite{GI1} and (2) the cancellation introduced by \cite{IyerBlasius} are two of the ingredients that initiate the analysis. However, these two ingredients need to be supplemented with several more ideas to achieve our complete global-in-$x$ result (with, of course, asymptotics as $x \rightarrow \infty$). These ideas will all be discussed below in Section \ref{section:ideas}. }

We also point the reader towards the works \cite{GN}, \cite{Iyer}, \cite{Iyer2a} - \cite{Iyer2c}, and \cite{Iyer3}. All of these works are under the assumption of a moving boundary at $\{Y = 0\}$, which while they face the difficulty of having a transition from $Y = 0$ to $Y = \infty$, crucially eliminate the degeneracy of $\bar{u}^0_p$ at $\{Y =0\}$, which is a major difficulty posed by the boundary layer theory. The work of \cite{Iyer2a} - \cite{Iyer2c} is of relevance to this paper, as the question of global in $x$ stability was considered (again, under the assumption of a moving $\{Y = 0\}$ boundary, which significantly simplifies matters). 

For unsteady flows, there is also a  large literature studying  expansions of the type \eqref{exp:base}. We refrain from discussing this at too much length because the unsteady setting is quite different from the steady setting. Rather, we point the reader to (an incomplete) list of references. First, in the analyticity setting, for small time, the seminal works of \cite{Caflisch1}, \cite{Caflisch2} establish the stability of expansions \eqref{exp:base}. This was extended to the Gevrey setting in \cite{DMM}, \cite{DMM20}. The work of \cite{Mae} establishes stability under the assumption of the initial vorticity being supported away from the boundary.  The reader should also see the related works \cite{LXY}, \cite{Taylor}, \cite{TWang}, \cite{Wang}. 

When the regularity setting goes from analytic/ Gevrey to Sobolev, there have also been several works in the opposite direction, which demonstrate, again in the unsteady setting, that expansion of the type \eqref{exp:base} should not be expected. A few works in this direction are \cite{Grenier}, \cite{GGN1}, \cite{GGN2}, \cite{GGN3}, \cite{GN2}, as well as the remarkable series of recent works of \cite{GrNg1}, \cite{GrNg2}, \cite{GrNg3} which settle the question and establish invalidity in Sobolev spaces of expansions of the type (\ref{exp:base}). The related question of $L^2$ (in space) convergence of Navier-Stokes flows to Euler has been investigated by many authors, for instance in \cite{CEIV}, \cite{CKV}, \cite{CV}, \cite{Kato}, \cite{Masmoudi98}, and \cite{Sueur}. 

There is again the related matter of wellposedness of the unsteady Prandtl equation. This investigation was initiated by \cite{Oleinik}, who obtained global in time solutions on $[0, L] \times \mathbb{R}_+$ for $L << 1$ and local in time solutions for any $L < \infty$, under the crucial monotonicity assumption $\p_y u|_{t = 0} > 0$. The $L << 1$ was removed by \cite{Xin} who obtained global in time weak solutions for any $L < \infty$. These works relied upon the Crocco transform, which is only available under the monotonicity condition. Also under the monotonicity condition, but without using the Crocco transform, \cite{AL} and \cite{MW} obtained local in time existence. \cite{AL} introduced a Nash-Moser type iterative scheme, whereas \cite{MW} introduced a good unknown which enjoys an extra cancellation and obeys good energy estimates. The related work of \cite{KMVW} removes monotonicity and replaces it with multiple monotonicity regions.  

Without monotonicity of the datum, the wellposedness results are largely in the analytic or Gevrey setting. Indeed, \cite{GVDie},  \cite{GVM}, \cite{Vicol}, \cite{Kuka}, \cite{Lom}, \cite{LMY}, \cite{Caflisch1} - \cite{Caflisch2}, \cite{IyerVicol} are some results in this direction. Without assuming monotonicity, in Sobolev spaces, the unsteady Prandtl equations are, in general, illposed: \cite{GVD}, \cite{GVN}. Finite time blowup results have also been obtained in \cite{EE}, \cite{KVW}, \cite{Hunter}. Moreover, the issue of boundary layer separation in the unsteady setting has been tackled by the series of works \cite{Collot1}, \cite{Collot2}, \cite{Collot3} using modulation and blowup techniques.  

The above discussion is not comprehensive, and we have elected to provide a more in-depth of the steady theory due to its relevance to the present paper. We refer to the review articles, \cite{E}, \cite{Temam} and references therein for a more complete review of other aspects of the boundary layer theory. 

We would, however, like to point out that going from results on Prandtl (a \textit{significantly simplified model equation}) to a corresponding stability theorem about Navier-Stokes has proven to be highly nontrivial (and in fact, generically false in the unsteady setting, \cite{GrNg1} -- \cite{GrNg3}). In the time-dependent case, the well-known Tollmien-Schlichting instability (see for instance \cite{GGN2}) shows that there is a \textit{destabilizing effect of the viscosity}, which creates growing modes at the Navier-Stokes level (thus requiring Gevrey/ analytic spaces to prove Navier-Stokes to Prandtl type stability results). Part of the novelty of our work is to provide a framework which is robust enough to simultaneously (1) prove the stability of the boundary layer in Sobolev spaces as $\eps \rightarrow 0$ which, even for $x << 1$ in the steady setting, has been done in very few cases (\cite{GI1}, \cite{GI3}, \cite{Varet-Maekawa}, \cite{GN}) and (2) capture the detailed asymptotic in $x$ (analogous in this setting to ``large time") behavior that is known at the level of the Prandtl system.

It is also striking to compare our result to what is expected in the unsteady setting. In the unsteady setting, certain well-known ``geometric" criteria exist, for instance monotonicity, to ensure (local in time) wellposedness in Sobolev spaces for the Prandtl equations. However, these criteria (even one adds concavity) are not enough to establish a stability result of the type $U^\eps \rightarrow \bar{u}$ due to the destabilizations due to the viscosity. For this reason, the sharpest known stability results are in Gevrey spaces (see for instance \cite{DMM}, \cite{DMM20}). Remarkably, in the steady setting, we see that the \textit{same} geometric criteria which guarantee regularity of Prandtl equation (monotonicity and concavity) suffice to prove global stability \textit{in Sobolev spaces} of $U^\eps \rightarrow \bar{u}$.

As we will explain below, we will be seeing a ``destabilizing effect" of the Navier-Stokes equations, although it is distinct from the instabilities of the unsteady setting. In our setting, the Navier-Stokes system requires the appearance of certain singular quantities at $\{y = 0\}$ that do not appear for any analysis of the Prandtl equations. In turn, these quantities correspond to a loss of derivative (specifically of the derivative $x \p_x$) in our energy scheme, for instance seen through \eqref{turn:2} and \eqref{nonlinear:est:intro}. This destabilizing effect is due to the pressure in the Navier-Stokes system, and therefore would clearly not have been seen at the Prandtl level (for instance in \cite{IyerBlasius}). Since the apparent manifestation of this is a loss of $x \p_x$ (in some sense the natural scaling field in the tangential direction), this effect will also not have been observed in any of the $x << 1$ stability results (\cite{GI1}, \cite{Varet-Maekawa}, \cite{GN}). Given the (in a sense, \textit{expected}) destabilizing effect of viscosity, a striking aspect of our work is to, despite this, find a framework that establishes stability both as $\eps \rightarrow 0$ and as $x \rightarrow 0$ in Sobolev spaces. 

\subsection{Main Ingredients} \label{section:ideas}

We will now describe the main ideas that enter our analysis. In order to do so, we need to describe the equation that is satisfied by the remainders, $(u, v)$, in the expansion \eqref{exp:base}, which is the following, 
\begin{align} \label{vel:eqn:1:intro}
&\mathcal{L}[u, v] + \begin{pmatrix} \p_x \\ \frac{\p_y}{\eps} \end{pmatrix} P = \mathcal{N}(u,v) +  \text{Forcing }, \qquad u_x + v_y = 0, \text{ on } \mathcal{Q},
\end{align} 
where $F$ is a forcing term that exists due to the fact that $(\bar{u}, \bar{v})$ is not an exact solution to the Navier-Stokes equations, (we leave it undefined now, as it does not play a central role in the present discussion), and $\mathcal{N}(u, v)$ contain quadratic terms. The operator $\mathcal{L}[u, v]$ is a vector-valued linearized operator around $[\bar{u}, \bar{v}]$, and is defined precisely via  
\begin{align}
\begin{aligned} \label{vel:form:intro}
\mathcal{L}[u, v] := \begin{cases} \mathcal{L}_1 :=  \bar{u} u_x + \bar{u}_{y} v + \bar{u}_{x} u + \bar{v} u_y - \Delta_\eps u   \\ \mathcal{L}_2 := \bar{u} v_x + u \bar{v}_{x} + \bar{v} v_y + \bar{v}_{y} v - \Delta_\eps v.\end{cases}
\end{aligned}
\end{align}
The main goal in the study of \eqref{vel:eqn:1:intro} is to obtain an estimate of the form $\| u, v \|_{\mathcal{X}} \lesssim 1$, for an appropriately defined space $\mathcal{X}$ (which importantly needs to control the $L^\infty$ norm). We note that this is a variable-coefficients operator, for which Fourier analysis is not conducive for obtaining estimates. 

\subsubsection{The New Point of View}

We will discuss our point of view as compared to prior works on the Navier-Stokes to Prandtl stability (\cite{GI1}, \cite{GI3}, \cite{Varet-Maekawa}, \cite{GN}). These prior works all in the small $x$ regime (equivalently, the high-frequencies in $x$ regime). In some sense, these papers all introduce a transform or change of unknown which factors the vectorial Rayleigh operator, $\begin{pmatrix} \bar{u} \p_x u + \bar{u}_y v \\ \bar{u} \p_x v + \bar{v}_y v \end{pmatrix} = \begin{pmatrix} - \bar{u}^2 \p_y \{ \frac{v}{\bar{u}} \} \\ \bar{u}^2 \p_x \{ \frac{v}{\bar{u}} \} \end{pmatrix}$ as a (almost) divergence-form operator on a modified unknown, $\frac{v}{\bar{u}}$.  The primary purpose of the change of unknown is to avoid losing $x$ derivatives, (since high frequencies in $x$ were being considered). This gives one starting point to study the vector valued operator \eqref{vel:form:intro} (for instance, to design virial type multipliers), though this point of view breaks down once $x \sim 1$ (or equivalently, when $O(1)$ frequencies in $x$ are introduced). The breakdown occurs due to low-frequency commutators which arise in this process from the diffusive terms. 

The new point of view we introduce in this work is the following: there is a mechanism by which the linearized transport operator (Rayleigh) and the diffusion actually ``talk to each-other" in \eqref{vel:form:intro} and, in fact, produce a damping effect as $x \rightarrow \infty$. This mechanism is known in the significantly simpler setting of the Prandtl equations through a change of variables, ``the von-Mise transform", and will be discussed in Point (1) below. Most importantly, we develop a point of view on the Navier-Stokes equations which shows that this mechanism is consistent and cooperates with the mechanism of \textit{stability in $\eps \rightarrow 0$} of the boundary layer, which itself, even for $x << 1$, has only recently been understood (see \cite{GI1}, \cite{GI3}, \cite{Varet-Maekawa}, \cite{GN}) through a variety of techniques.

Prior works on the Navier-Stokes to Prandtl stability, \cite{GI1}, \cite{GI3}, \cite{Varet-Maekawa}, \cite{GN} do not capitalize on this feature because the von-Mise mechanism appears much too delicate to interact favorably with the pressure of Navier-Stokes (in fact, this difference between Prandtl and Navier-Stokes is a central simplification of the boundary layer theory)! Indeed, changing coordinates in the Navier-Stokes equations and trying to emulate a von-Mise type transform seems completely fruitless, as the new coordinate system interacts particularly poorly with the pressure terms. 

We establish in this work that indeed, the way these two operators ``talk to eachother" is robust enough to carry through to the Navier-Stokes setting, which distinguishes our work from the prior results on the stability of boundary layers. However, to carry through this mechanism to the Navier-Stokes setting, we need to take significant steps which are not present in (1) any prior analysis on boundary layer stability (for instance, \cite{GI1}, \cite{GI3}, \cite{Varet-Maekawa}, \cite{GN}) and (2) any Prandtl analysis (in particular, from \cite{IyerBlasius}). 

We first list these new steps, and then itemize them for a more thorough discussion. Our first task is to extract, in $(x, y)$ coordinate system, the main mechanism behind the Transport-Diffusion interaction of the von-Mise transform. This is discussed in point (1) below. We identify that the transform \textit{does not work} well with the pressure terms of Navier-Stokes due to a mismatch in the homogeneity of $\bar{u}$, which we correct in our ``renormalized unknowns" \eqref{good:variables:2}, which contain a renormalized version of the von-Mise velocity unknowns, \eqref{real:phi}. Even at this first step, we need to deviate from what is done at the Prandtl level, for instance in \cite{IyerBlasius}. We establish first that these renormalized velocities \textit{still keep enough of the cancellation enjoyed by the traditional von-Mise velocity, \eqref{real:phi}.}

This change in homogeneity, which is particular to the Navier-Stokes setting, subsequently requires the development of a completely new functional framework to close our analysis that has been absent from all prior works. The central details of this framework are discussed in points (4), (5), (6) below. At a very high-level, one can see that certain necessary estimates from \eqref{turn:1} -- \eqref{turn:3} lose derivatives in $x \p_x$, which is, again, absent from any prior Prandtl-type analyses and any local-in-$x$ Navier-Stokes analyses. The loss of derivative is created by excess powers of $\frac{1}{\bar{u}}$ in certain crucial quantities, which again, is due specifically to the Navier-Stokes setting, and can be thought of as an unavoidable ``destabilizing effect" of the viscosity (which is well-known in the unsteady setting and actually precludes a result of the type we are proving from holding in the time-dependent setting). Given this well-known \textit{destabilizing effect of viscosity} from other settings, it is completely non-obvious that this type of loss can be recovered in any framework consistent with the Navier-Stokes equations.

\subsubsection{Specific Discussion of New Ideas}

\begin{itemize}

\item[(1)] \underline{Damping Mechanism of ``Twisted Differences"}: We will extract the ``main part" of $\mathcal{L}_1$, \eqref{vel:form:intro}, as the operator
\begin{align} \label{Lmain:def}
\mathcal{L}_{main} := \bar{u} u_x + \bar{u}_y v - u_{yy}.
\end{align}
One can think of the (scalar-valued) $\mathcal{L}_{main}$ as the ``Prandtl" component of \eqref{vel:form:intro}, though it is simpler than \eqref{vel:form:intro} for a variety of reasons (for instance, it is scalar-valued). To ease the discussion, we also make the assumption that $\bar{u}, \bar{v}$ solve the Prandtl equation, even though in reality this is only true to leading order in $\sqrt{\eps}$. That this is a central object in the study of $\mathcal{L}$ itself is well known, and has been discussed extensively in the works of the past several years, such as \cite{GI1}, \cite{Varet-Maekawa}, \cite{IyerBlasius}. 

The perspective taken in \cite{GI1} is to view the operator $\mathcal{L}_{main}$ as being comprised of two separate operators, the Rayleigh piece, $\bar{u} u_x + \bar{u}_y v$, and the diffusion, $- u_{yy}$. Crucially, \cite{GI1} was able to establish that one could obtain coercivity of $\mathcal{L}_{main}$ for $0 \le x \le L$ for $L$ small, using a new ``quotient estimate" by applying the multiplier $\p_x \frac{v}{\bar{u}}$ to the $x$-differentiated version of $\mathcal{L}_{main}$. We also draw a parallel to the approach of \cite{Varet-Maekawa} where their ``Rayleigh-Airy iteration" is comprised of viewing the Rayleigh piece of $\mathcal{L}_{main}$, and the Airy (or diffusive) piece as two separate operators.  

Using this perspective, these prior results are able to generate inequalities of the form $\|U, V \|_{X_0} \lesssim L \|U, V \|_{X_{\frac 1 2}}$ and $\|U, V \|_{X_{\frac 1 2}} \lesssim \|U, V \|_{X_0}$ (for appropriately defined spaces $X_0, X_{\frac 1 2}$, not necessarily the ones we have selected here). For this reason, the work \cite{GI1} requires $0 < L << 1$ to close their scheme.  

There is, in fact, a mechanism by which both components of $\mathcal{L}_{main}$ actually ``talk to each other" (and, in fact, this is a damping mechanism as $x \rightarrow \infty$). This link between the two components of $\mathcal{L}_{main}$ is provided by way of a \textit{change of unknown} at the velocity level.

\hspace{3 mm} We may re-interpret $\mathcal{L}_{main}$ as the same operator which controls the decay displayed in the Prandtl system, namely \eqref{asy:blas:1}. That is, we temporarily regard the $\bar{u}$ as a background Prandtl profile, say Blasius, and $u$ is to represent the difference between Blasius and some other Prandtl profile, $\bar{u}^0_p$. Schematically, replace  
\begin{align}
&\bar{u} \text{ in } \eqref{Lmain:def} \rightarrow \bar{u}_\ast, \qquad u \text{ in } \eqref{Lmain:def} \rightarrow \bar{u}^0_p - \bar{u}_\ast. 
\end{align}

 To obtain \eqref{asy:blas:1}, one introduces the change of variables and change of unknowns:
\begin{align} \label{def:phi:1:1}
\phi(x, \psi) := |\bar{u}^0_p(x, \psi)|^2 - |\bar{u}_\ast(x, \psi)|^2, 
\end{align}
where $\psi$ is the associated stream function. First, we will clarify the above abuse of notation. Indeed, to understand \eqref{def:phi:1:1}, we need to define the inverse map via the relation
\begin{align}
(\psi, \bar{u}^0_p) \mapsto y = y(\psi; \bar{u}^0_p) \iff \psi = \int_0^{y(\psi; \bar{u}^0_p)} \bar{u}^0_p. 
\end{align}
Then the abuse of notation from \eqref{def:phi:1:1} really means the following
\begin{align} \label{compare:y}
 \phi(x, \psi) := |\bar{u}^0_p(x, y(\psi; \bar{u}^0_p))|^2 - |\bar{u}_\ast(x, y(\psi; \bar{u}_\ast))|^2,
\end{align}
which we interpret as a \textit{``twisted subtraction"} because we wish to compare $\bar{u}^0_p$ and $\bar{u}_\ast$ at two different $y$ values, depending on the solutions themselves. 

In the new, nonlinear, coordinate system one obtains the equation, 
\begin{align} \label{coordinates:NL}
\p_x \phi - \bar{u}^0_p \p_{\psi \psi} \phi + A \phi = 0, \qquad A := - 2 \frac{\p_{yy} \bar{u}_\ast}{\bar{u}_\ast \bar{u}^0_p},
\end{align}
whenever $\bar{u}^0_p$ and $\bar{u}_\ast$ satisfy the Prandtl equation. Assuming now $\p_{yy} \bar{u}_\ast \le 0$, which is true for the Blasius solution $\bar{u}_\ast$ and that $\p_{yy} \bar{u}^0_p \le 0$ (which \textit{a-posteiori} becomes true, up to harmless remainders, upon controlling $\phi$ in sufficiently strong norms), the above equation admits a good energy estimate: 
\begin{align} \label{energy:damping:1}
\frac{\p_x}{2} \int \phi^2 \ud \psi + \int \bar{u}^0_p |\p_\psi \phi|^2 \ud \psi - \frac{1}{2} \int \p_{\psi \psi} \bar{u}^0_p |\phi|^2 \ud \psi + \int A \phi^2 \ud \psi = 0. 
\end{align} 
Note that both coefficients $- \frac 1 2 \p_{\psi \psi} \bar{u}^0_p$ and $A$ are nonnegative, and hence contribute damping terms. This is precisely the ``von-Mise" damping mechanism, and at the crux is that all the pieces of $\mathcal{L}_{main}$ talk to each-other.

To extend this to the more complicated setting of NS, we need to extract the two crucial points from this mechanism: 

\begin{itemize}
\item[(1a)] The precise manner in which one forms the difference between two solutions $\bar{u}^0_p, \bar{u}_\ast$ is via a ``twisted subtraction"; 

\item[(1b)] This ``twisted subtraction" leads to a damping mechanism as $x \rightarrow \infty$, as in \eqref{energy:damping:1}. We note that one manifestation of \eqref{energy:damping:1} is that one should expect the basic energy norm (called $\| \cdot \|_{X_0}$ here) to be ``stand-alone", as is shown in \eqref{turn:1}. 
\end{itemize}
We now aim to make (1a) and (1b) more robust, in a setting that will work well with linearized and nonlinear energy estimates. In particular, we prefer to work in the original coordinate system, $(x, y)$, as this is more suitable for the Navier-Stokes system. 

We begin with $(1a)$. Let now $y_1 := y(\psi; \bar{u}^0_p)$ and $y_2 := y(\psi; \bar{u}_\ast)$. A simple computation can show that, to leading order in the difference, $u$, between $\bar{u}^0_p$ and $\bar{u}_\ast$, 
\begin{align}
y_1 - y_2 = - \frac{1}{\bar{u}_\ast} \int_0^y u + O(u^2). 
\end{align}
Using this linearization, we can re-visit the quantity $\phi$ from \eqref{def:phi:j} and express it as 
\begin{align} \label{real:phi}
\phi = \bar{u}_\ast (\bar{u}^0_p(x, y_1) - \bar{u}_\ast(x, y_2)) + O(u^2) = \bar{u}_\ast (u - \frac{\p_y \bar{u}_\ast}{\bar{u}_\ast} \psi) + O(u^2),
\end{align}
where $\psi := \int_0^y u$. 

The leading order of the formula \eqref{real:phi} then motivates our introduction of the ``Good Variables", at the velocity level. More specifically, we define 
\begin{align} \label{good:variables:2}
 U := \frac{1}{\bar{u}} (u - \frac{\bar{u}_y}{\bar{u}} \psi), \qquad V := \frac{1}{\bar{u}} (v + \frac{\bar{u}_x}{\bar{u}} \psi ).
\end{align}
We note that this selection of $(U, V)$ compared to the right-hand side of \eqref{real:phi} differs in homogeneity of $\bar{u}^2$ (temporarily ignoring the matter of replacing $\bar{u}_\ast$ (Blasius) by the more general background profile used in practice, $\bar{u}$). This is because the homogeneity selected in \eqref{good:variables:2} also works well with the full Navier-Stokes system.

 We note that the choice of homogeneity of $\bar{u}$ in \eqref{good:variables:2} is a serious difference between Prandtl and Navier-Stokes \textit{at the very outset of the analysis}. In analyses of the scalar Prandtl equation (for instance \cite{IyerBlasius}), the natural quantity is actually \eqref{real:phi}. The Navier-Stokes equations, however, are severely constraining due to the presence of the pressure (again, the lack of a pressure is one of the \textit{main simplifications} of the boundary layer theory, and it is bound to have a significant impact on the stability analysis $U^\eps \rightarrow \bar{u}$). This comparatively \textit{singular} $U$ for Navier-Stokes, \eqref{good:variables:2}, does create losses of derivatives in our energy scheme, \eqref{turn:1} - \eqref{turn:3}, since singular weights $\frac{1}{\bar{u}}$ cost excess derivatives to control. Due to this, it is \textit{a-priori} unclear whether the von-Mise mechanism described above is actually consistent with the Navier-Stokes equations and with the stability as $\eps \rightarrow 0$. Our analytic work is devoted to designing and controlling several new norms in order to overcome this destabilizing effect (see points (4), (5), (6) below).

We now address $(1b)$. For the unknowns $(U, V)$, the operator $\mathcal{L}_{main}$ reads (upon invoking the Prandtl equation for the coefficients $\bar{u}, \bar{v})$
\begin{align} \label{reform}
\mathcal{L}_{main}[u, v] := \mathcal{T}[U] - u_{yy}, 
\end{align}
where the transport operator $\mathcal{T}[U] \sim \bar{u}^2 U_x + \bar{u} \bar{v} U_y + 2 \bar{u}_{yy} U$. The ``von-Mise" mechanism, which was observed in \cite{IyerBlasius} in an energetic context, is that upon taking inner product with $U$ and invoking the condition $\bar{u}_{yy} < 0$ (which is true for the Blasius profile), one obtains coercivity of $\mathcal{L}_{main}$. Indeed, computing now (entirely in $(x, y)$ coordinates), we obtain (after integration by parts) 
\begin{align} \n
\int \mathcal{L}_{main}[u,v] U \ud y = & \int \mathcal{T}[U] U \ud y + \int \p_y (\bar{u} U + \frac{\bar{u}_y}{\bar{u}} \psi) U_y \ud y \\ \n
\approx & \frac{\p_x}{2} \int \bar{u}^2 U^2 \ud y + \frac 3 2\int \bar{u}_{yy} U^2 \ud y + \int \bar{u} U_y^2 \ud y - \int 2 \bar{u}_{yy} U^2 \ud y  \\ \label{est:lin:intro:crucial}
\approx &  \frac{\p_x}{2} \int \bar{u}^2 U^2 \ud y + \int \bar{u} U_y^2 \ud y - \int \frac 1 2 \bar{u}_{yy} U^2 \ud y,  
\end{align}
where we have omitted several harmless terms (hence the $\approx$). The point is that, as in \eqref{energy:damping:1}, the dangerous Rayleigh contribution $\int \frac 3 2 \bar{u}_{yy}U^2$ is cancelled out by the diffusive commutator term $- 2 \bar{u}_{yy} U^2$, leaving an excess factor of $- \frac 1 2 \int \bar{u}_{yy} U^2$. This term acts as a damping term as $x \rightarrow \infty$ due to the property that $\bar{u}_{yy} \le 0$, analogously to \eqref{energy:damping:1}. Note that, if this were not the case, we would see growth as $x \rightarrow \infty$. The calculation \eqref{est:lin:intro:crucial} is precisely the version of \eqref{energy:damping:1} in $(x, y)$ coordinates. 

Therefore, our starting point is working with the ``good variables" \eqref{good:variables:2}, but in a robust enough manner to extend to the full Navier-Stokes system, and to capture large-$x$ dynamics.

\item[(2)] \underline{Sharp $L^\infty$ decay and the design of the space $\|U, V \|_{\mathcal{X}}$:} A consideration of the nonlinear part of $\mathcal{N}(u, v)$ in \eqref{vel:eqn:1:intro} demonstrates that, at the very least, one needs to control a final norm that is strong enough to encode pointwise decay of the form 
\begin{align} \label{decay:v:intro}
|\sqrt{\eps} v | \langle x \rangle^{\frac 1 2} \lesssim \|U, V \|_{\mathcal{X}}.
\end{align}
This is due to having to control trilinear terms of the form $\int B(x, y) v u_y u_x \langle x \rangle$, where $B(x, y)$ is a bounded function. This baseline requirement motivates our choice of space $\mathcal{X}$, defined in \eqref{X:norm}, which contains enough copies of the $x \p_x$ scaling vector-field of the solution in order to obtain the crucial decay estimate \eqref{decay:v:intro}. This is a sharp requirement, and our norm $\mathcal{X}$ is just barely strong enough to control this decay.  

\item[(3)] \underline{Cauchy-Kovalevskaya weighted energy}: Upon reformulation into \eqref{reform}, and to deal with the large $x$ problem, we need to apply a Cauchy-Kovalevskaya type weighted multiplier of the form $(U \langle x \rangle^{- \delta}, \eps V \langle x \rangle^{- \delta})$ for a $\delta$ chosen small enough. The purpose is to produce the positive terms $\| \bar{u} U \langle x \rangle^{-\frac{1}{2} - \frac{\delta}{2} } \|^2 + \| \sqrt{\eps} \bar{u} V \langle x \rangle^{- \frac 1 2 - \frac{\delta}{2}} \|$ that appear in the $X_0$ energy (see the precise definition of $X_0$ below in \eqref{def:X0}, in which we made a specific choice of $\delta = \frac{1}{100}$ for the sake of concreteness). In turn, these are crucially used to control error terms in the $X_0$ energy estimate. 

The idea of using Cauchy-Kovalevskaya weights has been employed before in the setting of steady Prandtl even for bounded $x$ (see, for instance, in chronological order: \cite{Iyer3}, \cite{IyerBlasius}, \cite{Gao-Zhang} as some examples). However our setting of asymptotic in $x$ is particularly tricky and has not been encountered in either of the aforementioned works due to the confluence of two reasons. First for infinite $x$, one \textit{cannot use} linear CK weights. Second, due to the requirement that the vector-field multiplier must be divergence-free, we need to select a multiplier of the form $(U \langle x \rangle^{- \delta}, \eps V \langle x \rangle^{- \delta} + \eps \delta \frac{\psi}{\bar{u}} \langle x \rangle^{-\delta-1} )$. 

Due to the fact that $\p_{xx} \langle x \rangle^{-\delta} > 0$ (which is exactly false for linear CK weights), the combination $\eps V \langle x \rangle^{- \delta} + \eps \delta \frac{\psi}{\bar{u}} \langle x \rangle^{-\delta-1}$ produces terms of competing sign, which are not obviously positive. We are able to handle such contributions by selecting CK weights that in turn enable us to employ Hardy-type inequalities \textit{with sharp constants} that allow for coercivity (see Lemma \ref{precise:1}).  

\item[(4)] \underline{Linearized energy estimates and the scaling vector field $S = x \p_x$:} In order to control the designer norm $\| U, V \|_{\mathcal{X}}$, we perform a sequence of estimates which results in the following loop (the reader is encouraged to consult \eqref{def:X0} - \eqref{def:Xn} for the definitions of these spaces), for $n= 0, 1, ..., 10$:
\begin{align} \label{turn:1}
&\| U, V \|_{X_0}^2 \lesssim \mathcal{F}_0 + \mathcal{N}_0, \\  \label{turn:2}
&\| U, V \|_{X_{n + \frac 1 2} \cap Y_{n + \frac 1 2}}^2 \lesssim C_\delta \| U, V \|_{X_{\le n}}^2  + \delta \|U, V \|_{X_{n+1}}^2 + \mathcal{F}_{n+ \frac 1 2} + \mathcal{N}_{n + \frac 1 2}, \\  \label{turn:3}
&\|U, V \|_{X_{n+1}}^2 \lesssim \|U, V \|_{X_{\le n + \frac 1 2}}^2  + \mathcal{F}_{n + 1} + \mathcal{N}_{n+1}, 
\end{align}
for a small number $0 < \delta << 1$, and where the implicit constant above is independent of the chosen $\delta$. Above the $\mathcal{F}$ terms represent forcing terms, which depend on the approximate solution, and the $\mathcal{N}$ terms represent quadratic terms. The coupling of these estimates is required by the vector aspect of the   the full linearized Navier-Stokes operator $\mathcal{L}$. To keep matters simple, the reader can identify these spaces with ``regularity of $x \p_x$". That is, $X_0$ is a baseline norm, $X_{\frac 1 2}, Y_{\frac 1 2}$ contain (in a sense made precise by the definitions of these norms) estimates on $(x\p_x)^{\frac 1 2}$ of the quantities in $X_0$, $X_1$ is basically $\| (x \p_x) U, (x \p_x) V\|_{X_0}$. 

It turns out that estimation of the nonlinear terms schematically work in the following manner: 
\begin{align} \label{nonlinear:est:intro}
|\mathcal{N}_n| \lesssim \eps \|U, V \|_{X_n}^2 \|U, V \|_{X_{n+\frac 1 2}} , \qquad |\mathcal{N}_{n + \frac 1 2}| \lesssim \eps \|U, V \|_{X_{\le n + \frac 1 2}}^3
\end{align}

The difficulty in closing our scheme becomes clear upon comparing the linear estimates in \eqref{turn:1} - \eqref{turn:3} and the estimation of the nonlinearity from \eqref{nonlinear:est:intro}: \textit{the linear estimates lose half $x \p_x$ derivative for half-integer order spaces, whereas the nonlinear estimates lose a half $x \p_x$ derivative for integer order spaces.} This is a new obstacle that has only appeared in the present work. We shall now discuss the origin of both the ``linear loss of derivative", appearing in \eqref{turn:2}, and the ``nonlinear loss of derivative", appearing in \eqref{nonlinear:est:intro}, together with our technique to eliminate these losses.  

\item[(5)] \underline{Loss of $x \p_x$ derivative due to degeneracy of $\bar{u}$ and weights of $x$:} The ``loss of half-$x\p_x$ derivative" at the linearized level, that is \eqref{turn:2}, is due to degeneracy of $\bar{u}$ at $y= 0$. The reader is invited to consult, for instance, the estimation of term \eqref{est:Xhalf:loss:deriv} in our energy estimates, which displays such a loss. To summarize, we have to estimate the following integral when performing the estimate of $X_{\frac 12}$, \eqref{turn:2}
\begin{align}
I_{sing} := |\int x \bar{u}_y U_x U_y \ud y \ud x|. 
\end{align}
A consultation with the $X_{\frac 1 2}$ and the previously controlled $X_0$ norm shows that this quantity is out of reach due to the confluence of two issues: the degeneracy of the weight $\bar{u}$ (and the lack of degeneracy of the coefficient $\bar{u}_y$) and the weight of $x$ appearing in $I_{sing}$. We emphasize that this type of ``loss-of-$x \p_x$" is a confluent issue: it only appears if one is concerned with global-in-$x$ matters in the presence of the degenerate weights $\bar{u}$, and hence appears for the first time in this work. 

For bounded $x$ values, one can simply appeal to the extra positive terms appearing from the viscosity in the $X_{n+ \frac 1 2}, Y_{n + \frac 1 2}$ norms. However, as we are here concerned with large $x$, a general principle we uncover is that we often \textit{``lose $x \p_x$ derivative, but do not lose $\sqrt{\eps} \p_x$ derivative"}. Therefore, it is almost never advantageous, in large $x$ matters, to invoke the tangential diffusion $- \eps \p_{xx}$. 

Due to this peculiarity, we design the scheme (and our norm $\mathcal{X}$, see \eqref{X:norm}) to terminate at the $X_{11}$ stage, as opposed to what appears to the be the more natural stopping point of $X_{11.5}, Y_{11.5}$. 

\item[(6)] \underline{Nonlinear Change of Variables (and nonlinearly modified norms)}: There is a major price to pay for the truncation of our energy scheme at the $X_{11}$ level, which comes from the nonlinear loss of $x \p_x$ derivative displayed in \eqref{nonlinear:est:intro}. Let us explain further the reason for this loss, temporarily setting $n = 11$ in the first inequality of \eqref{nonlinear:est:intro}.  Indeed, considering the trilinear quantity 
\begin{align}
T_{sing} := \int u_y \p_x^{11} v \p_x^{11} U \langle x \rangle^{22} \ud y \ud x,
\end{align}
and comparing to the controls provided by the $\| \cdot \|_{\mathcal{X}}$ norm, such a quantity is out of reach (due to growth as $x \rightarrow \infty$). Estimating this type of quantity would not be out of reach if we had the right to include $X_{11.5}, Y_{11.5}$ into the $\mathcal{X}$-norm, but due to the issue raised above, we must truncate our energy scheme at the $X_{11}$ level. 

To contend with this difficulty, we introduce a further \textit{nonlinear change of variables} that has the effect of cancelling out these most singular terms at the top order. This amounts to replacing the linearized good variables \eqref{good:variables:2} with another, nonlinear version, which is defined in \eqref{Chan:var:2}. We note that this difficulty does not appear in any previous work, due to the ability, for bounded values of $x$, to appeal to the positive contributions of the tangential viscosity. 

In turn, the energy estimate for the new nonlinear good unknown requires estimation of several trilinear terms in the nonlinearly modified norms $\Theta_{11}$, defined in \eqref{def:Theta:11}. We subsequently establish the equivalence of measuring the nonlinear good unknown in the modified norm $\Theta_{11}$ to measuring the original good unknown in our original space $\mathcal{X}$ (see for instance, the analysis in Section \ref{subsection:NLMN}). 

We emphasize that, in order to establish the equivalence of measuring the new nonlinear good unknowns in the nonlinearly modified norm to the full $\mathcal{X}$ norm, we need to rely upon the full strength of the $\mathcal{X}$-norm. 

\item[(7)] \underline{Weighted in $x$ and $\bar{u}$ mixed $L^p_xL^q_y$ embeddings:} Due to the inherent nonlinear nature of this top order analysis for $\Theta_{11}$, we rely upon several mixed-norm estimates of $L^p_x L^q_y$ with precise weights in $x$ and $\bar{u}$ in order to close our analysis of $\Theta_{11}$. This requirement is amplified upon noting that $u_{yy}$ (1) lacks the regularity in $y$ that the background $\bar{u}_{yy}$ has, due to the lack of higher-order $y$ derivative quantities in our $\|\cdot \|_{\mathcal{X}}$ (which appear to be difficult to achieve due to $\{y = 0\}$ boundary effects) and (2) lacks decay as $y \rightarrow \infty$ (we do not control weights in $y$ in our $\mathcal{X}$ space). Thus we must always place $u_{yy}$ in $L^2_y$ in the vertical direction, and develop the appropriate weighted in $\bar{u}$ and $x$ $L^p_x L^q_y$ embeddings. These, in turn, are developed in Sections \ref{Lpq:embed:section}, \ref{pw:section:1}, \ref{subsection:NLMN}, and again, rely upon the full strength of our $\mathcal{X}$ norm in order to close. Again, we emphasize that these analyses are completely new. 

%\item[(8)] \underline{Sharp decay of approximate solutions \& asymptotic stability of Blasius}: Several error terms that arise in our analysis require the sharp $L^\infty_y$ decay of the terms that comprise the approximate solution from \eqref{exp:base}. Moreover, the sharp decay and asymptotics of these approximate solutions is required in order to translate between the nonlinear good unknown, measured in the $\Theta_{11}$ norm, versus the original good unknown, measured in the $\mathcal{X}$ norm. 
%
%This analysis is conducted in Section \ref{section:PL} and results in Theorem \ref{thm:approx}. We remark that our analysis from Section \ref{section:PL} and our nonlinear change of variables from Section \ref{section:top:order} in fact results in yet another proof of the asymptotic stability of Blasius, \eqref{asy:blas:1} (see Proposition \ref{Prop:nonlinear}). In fact, the method developed in the present work is even more robust than that of \cite{Serrin} and \cite{IyerBlasius}, because we are able to work entirely in $(x, y)$ coordinates. Moreover, one can think of the construction of the approximate solutions as a ``preliminary version" of our technique in a simpler context (the linearized Prandtl equations are simpler than the full system of the Navier-Stokes equations). 
\end{itemize}

\subsection{Notational Conventions}

We first define (in contrast with the typical bracket notation) $\langle x \rangle := 1+ x$. We also define the quantity 
\begin{align} \label{z:choice}
z := \frac{y}{\sqrt{x + 1}} = \frac{y}{\sqrt{\langle x \rangle}},
\end{align}
due to our choice that $x_0 = 1$ (which we are again making without loss of generality). The cut-off function $\chi(\cdot): \mathbb{R}_+ \rightarrow \mathbb{R}$ will be reserved for a particular decreasing function, $0 \le \chi \le 1$, satisfying 
\begin{align} \label{def:chi}
\chi(z) = \begin{cases} 1 \text{ for } 0 \le z \le 1 \\ 0 \text{ for } 2 \le z < \infty  \end{cases}
\end{align}
Regarding norms, we define for functions $u(x, y)$, 
\begin{align}
\| u \| := \|u \|_{L^2_{xy}} = \Big( \int u^2 \ud x \ud y \Big)^{\frac 1 2}, \qquad \|u \|_\infty := \sup_{(x,y) \in \mathcal{Q}} |u(x, y)|. 
\end{align}
We will often need to consider ``slices", whose norms we denote in the following manner
\begin{align}
\| u \|_{L^p_y} := \Big( \int u(x, y)^p \ud y \Big)^{\frac 1 p}.
\end{align}
We use the notation $a \lesssim b$ to mean $a \le Cb$ for a constant $C$, which is independent of the parameters $\eps, \delta_\ast$. We define the following scaled differential operators 
\begin{align}
\nabla_\eps := \begin{pmatrix} \p_x \\ \frac{\p_y}{\sqrt{\eps}} \end{pmatrix}, \qquad \Delta_\eps := \p_{yy} + \eps \p_{xx}.
\end{align}
For derivatives, we will use both $\p_x f$ and $f_x$ to mean the same thing. For integrals, we will use $\int f := \int_0^\infty \int_0^\infty f(x, y) \ud y \ud x$. These conventions are taken unless otherwise specified (by appending a $\ud y$ or a $\ud x$), which we sometimes need to do. 

We will often use the parameter $\delta$ to be a generic small parameter, that can change in various instances. The constant $C_\delta$ will refer to a number that may grow to $\infty$ as $\delta \downarrow 0$. 

\subsection{Plan of the paper}

\hspace{5 mm} The plan of the paper is as follows. Throughout the paper, we take Theorem \ref{thm:approx} as given and use it as a ``black box". Indeed, Theorem \ref{thm:approx} is proven in the companion paper, \cite{IM21}. Section \ref{remainder:section:1} is devoted to introducing the Navier-Stokes system satisfied by the remainders, $(u, v)$, and proving basic properties of the associated linearized operator. Section \ref{remainder:section:2} is devoted to developing the functional framework, notably the various components of the space $\mathcal{X}$, in which we analyze the remainders, $(u, v)$. Section \ref{remainder:section:3} is devoted to providing the energy estimates to control $\|U, V \|_{X_n}$ and $\|U, V \|_{X_{n + \frac 1 2} \cap Y_{n + \frac 1 2}}$ for $n = 0,...,10$. Section \ref{section:top:order} contains our top ($11$'th order) analysis, which notably includes our nonlinear change of variables and nonlinearly modified norms. Section \ref{section:NL} contains the nonlinear analysis to close the complete $\mathcal{X}$ norm estimate for $(U, V)$.

\section{The Remainder System} \label{remainder:section:1}

\subsection{Presentation of Equations} \label{subsection:background}

\subsubsection{Background Profiles, $[\bar{u}, \bar{v}]$}

We recall the definition of $[\bar{u}, \bar{v}]$ from \eqref{exp:base}. In addition, for a few of the estimates in our analysis, we will require slightly more detailed information on these background profiles, in the form of decomposing into an Euler and Prandtl component. Indeed, define 
\begin{align} \label{split:split:1}
\bar{u}_P &:= \bar{u}^0_P + \sum_{i = 1}^{N_1} \eps^{\frac i 2} u^i_P  , &  \bar{u}_E &:=  \sum_{i = 1}^{N_{1}} \eps^{\frac{i}{2}}  u^i_E.
\end{align}
We will now summarize the quantitative estimates on $[\bar{u}, \bar{v}]$ that we will be using in the analysis of \eqref{vel:eqn:1} - \eqref{vel:eqn:2}.

First, let us recall now the Blasius profiles, defined in \eqref{Blasius:1} - \eqref{Blasius:3}, which are a family (due to the parameter $x_0$) of exact solutions to \eqref{BL:0:intro:intro} - \eqref{BL:1:intro:intro}. Recall also that, without loss of generality, we set $x_0 = 1$. We now record the following quantitative estimates on the Blasius solution: 
\begin{lemma} For any $k, j, M \ge 0$, 
\begin{align} \label{water:1}
&\| \langle z \rangle^M \p_x^k \p_y^j (\bar{u}_\ast - 1) \|_{L^\infty_y} \le  C_{M, k, j} \langle x \rangle^{- k - \frac j 2}, \\ \label{v:blasius}
&\|  \langle z \rangle^M \p_x^k \p_y^j (\bar{v}_\ast - \bar{v}_\ast(x, \infty)) \|_{L^\infty_y} \le C_{M, k, j} \langle x \rangle^{ - \frac 1 2- k - \frac j 2}, \\ \label{v:blasius:2}
&\| \p_x^k \p_y^j \bar{v}_\ast \|_{L^\infty_y} \le C_{k,j} \langle x \rangle^{- \frac 1 2 -  k - \frac j 2}.
\end{align}
\end{lemma}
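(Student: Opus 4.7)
The proof relies on the self-similar form of Blasius and on sharp super-polynomial (in fact Gaussian) decay of the derivatives of $f$. I would organize it as follows.

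\emph{Step 1: Pointwise decay of $f$, $f'$, $g(z) := z f'(z) - f(z)$, and their derivatives.} The ODE $f''' + f f'' = 0$ rewrites as $(\log f'')' = -f$, hence
\[
f''(z) = f''(0) \exp\Bigl(-\int_0^z f(s)\,ds\Bigr).
\]
The classical Blasius asymptotic $f(z) = z - \beta + o(1)$ for some constant $\beta > 0$ (see \cite{Schlicting}) then implies $|f''(z)| \le C e^{-z^2/5}$ for $z$ large. Differentiating the ODE and bootstrapping propagates super-polynomial decay to every $f^{(\ell)}$ with $\ell \ge 2$. The identities $f'(z) - 1 = -\int_z^\infty f''(s)\,ds$ and $g(z) - \beta = -\int_z^\infty s f''(s)\,ds$ then transfer this decay to $f' - 1$ and $g - \beta$. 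So for every $M \ge 0$ and $\ell \ge 2$,
\[
\langle z \rangle^M \bigl(|f'(z) - 1| + |g(z) - \beta| + |f^{(\ell)}(z)|\bigr) \le C_{M, \ell}.
\]

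\emph{Step 2: Self-similar chain rule.} Since $\p_y z = \langle x \rangle^{-1/2}$ and $\p_x z = - z/(2 \langle x \rangle)$, a straightforward induction on $k + j$ yields the structural identity
\[
\p_x^k \p_y^j \bigl(f'(z)\bigr) = \langle x \rangle^{-k - j/2} \sum_{\ell \le k + j} p_{k,j,\ell}(z)\, f^{(\ell + 1)}(z),
\]
with each $p_{k,j,\ell}$ an explicit polynomial. An analogous formula holds for $\p_x^k \p_y^j \bigl(\langle x \rangle^{-1/2} g(z)\bigr)$, with an additional overall prefactor $\langle x \rangle^{-1/2}$ and terms also involving $g$ and $g'$. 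When $k + j \ge 1$, every term in the sum involves some $f^{(\ell + 1)}$ with $\ell + 1 \ge 2$, whose super-polynomial decay from Step~1 absorbs the polynomial $p_{k,j,\ell}(z)$ together with any weight $\langle z \rangle^M$. This immediately produces the $\langle x \rangle$-power in \eqref{water:1}, \eqref{v:blasius}, and \eqref{v:blasius:2} for $k + j \ge 1$.

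\emph{Step 3: Borderline case $k = j = 0$.} For \eqref{water:1}, $\bar{u}_\ast - 1 = f'(z) - 1$ and the super-polynomial bound on $f' - 1$ from Step~1 supplies the weight $\langle z \rangle^M$. For \eqref{v:blasius}, note $\bar{v}_\ast(x, \infty) = \beta \langle x \rangle^{-1/2}$, so $\bar{v}_\ast - \bar{v}_\ast(x, \infty) = \langle x \rangle^{-1/2}(g(z) - \beta)$, and the bound on $g - \beta$ finishes the argument. For \eqref{v:blasius:2} at $k = j = 0$, boundedness of $g$ alone suffices since no $\langle z \rangle^M$ weight appears.

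The only nontrivial analytic input is the precise asymptotic $f(z) = z - \beta + o(1)$ required to extract the Gaussian decay of $f''$ in Step~1; I would invoke this from the classical Blasius literature rather than reprove it. Everything else is self-similar bookkeeping through the chain rule and the ODE.
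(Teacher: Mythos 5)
The paper does not prove this lemma: it records these bounds as classical facts about the Blasius profile (citing no argument), so there is no in-paper proof to compare against. Your argument is correct and is the standard derivation. The integrating-factor identity $f''(z) = f''(0)\exp(-\int_0^z f)$ together with $f(z) = z - \beta + o(1)$ gives Gaussian decay of $f''$, the ODE bootstraps this to all $f^{(\ell)}$, $\ell \ge 2$ (since $f$ grows linearly and $f'$ is bounded, each higher derivative is a polynomially bounded combination containing a factor with Gaussian decay), and the self-similar chain rule with $\p_y z = \langle x \rangle^{-1/2}$, $\p_x z = -z/(2\langle x\rangle)$ produces exactly the claimed powers of $\langle x \rangle$; the only terms without a rapidly decaying factor of $f^{(\ell)}$, $\ell \ge 2$, or $g^{(\ell)}$, $\ell \ge 1$, are the underived $f'-1$, $g-\beta$, and $g$ terms, which you handle correctly in Step 3. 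Two minor points worth making explicit if you write this up: (i) the identity $(\log f'')' = -f$ requires $f'' > 0$ everywhere, which is itself a classical property of the Blasius solution (positivity of the shooting parameter $f''(0)$ propagates by the explicit exponential formula); and (ii) the asymptotic $f(z) = z - \beta + o(1)$ that you import is equivalent to convergence of $\int_0^\infty (1 - f')$, which your own Step 1 shows is rapid, so the circularity is only apparent — one first gets $f(z) \ge cz$ from $f' \to 1$, which already yields the Gaussian bound.
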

We also have the following properties of the Blasius profile, which are well known and which will be used in our analysis.
\begin{lemma} For $[\bar{u}_\ast, \bar{v}_\ast]$ defined in \eqref{Blasius:1}, the following estimates are valid
\begin{align} \label{Blas:prop:1}
&|\p_y \bar{u}_\ast(x, 0)| \gtrsim \langle x \rangle^{- \frac 1 2}, \\  \label{Blas:prop:2}
&\p_{yy}\bar{u}_{\ast} \le 0. 
\end{align}
\end{lemma}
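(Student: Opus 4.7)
The plan is to reduce both estimates to elementary properties of the self-similar profile $f$ and then extract them from the Blasius ODE $f''' + ff'' = 0$. Using the definition $\bar u_\ast(x,y) = f'(z)$ with $z = y/\sqrt{x+1}$, the chain rule gives
\begin{align*}
\p_y \bar u_\ast(x,0) = \frac{f''(0)}{\sqrt{x+1}}, \qquad \p_{yy}\bar u_\ast(x,y) = \frac{f'''(z)}{x+1}.
\end{align*}
Thus \eqref{Blas:prop:1} is equivalent to the strict positivity $f''(0) > 0$, while \eqref{Blas:prop:2} is equivalent to $f'''(z) \le 0$ for all $z \ge 0$, which by the ODE reduces further to $f(z)f''(z) \ge 0$ on $[0,\infty)$.

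The central observation that drives both statements is that $g := f''$ solves the \emph{linear} first-order ODE $g' + fg = 0$, so
\begin{align*}
f''(z) = f''(0)\exp\Bigl(-\int_0^z f(s)\,ds\Bigr).
\end{align*}
This representation is sign-preserving: $f''$ has constant sign on $[0,\infty)$ determined by $f''(0)$. I would rule out $f''(0) \le 0$ by contradiction using the matching condition $f'(\infty) = 1$. If $f''(0) = 0$ the formula forces $f'' \equiv 0$, hence $f'(z) = f'(0) = 0$, contradicting $f'(\infty) = 1$; if $f''(0) < 0$, then $f'' < 0$ everywhere and $f'$ is strictly decreasing, which together with $f'(0) = 0$ forces $f'(\infty) < 0$, again a contradiction. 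Hence $f''(0) > 0$ and $f'' > 0$ throughout $[0,\infty)$, which immediately yields \eqref{Blas:prop:1}.

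For \eqref{Blas:prop:2}, I first note that the no-penetration condition $\bar v_\ast(x,0) = 0$, combined with the formula $\bar v_\ast(x,0) = -f(0)/\sqrt{x+1}$, forces $f(0) = 0$. Since $f'(0) = 0$ and $f'' > 0$, $f'$ is strictly increasing from $0$, so $f' \ge 0$ on $[0,\infty)$ and hence $f$ is nondecreasing with $f(z) \ge f(0) = 0$. Combined with $f'' > 0$, this gives $f(z)f''(z) \ge 0$, so $f'''(z) = -f(z)f''(z) \le 0$, which is \eqref{Blas:prop:2}.

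No serious obstacle is anticipated: the potential difficulty of analyzing the nonlinear ODE is circumvented by the linear reformulation $g' + fg = 0$, which turns the qualitative sign analysis into elementary arithmetic. Both conclusions are classical in the Blasius literature, but the short self-contained argument above is the cleanest route given only the defining system \eqref{Blasius:1}--\eqref{Blasius:3}.
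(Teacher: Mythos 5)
Your proof is correct. The paper does not actually prove this lemma --- it records \eqref{Blas:prop:1}--\eqref{Blas:prop:2} as ``well known'' properties of the Blasius profile and uses them as given --- so there is no argument in the text to compare against, but yours is the standard and complete one. The key step, integrating the linear equation $g' + fg = 0$ for $g = f''$ to get $f''(z) = f''(0)\exp\bigl(-\int_0^z f\bigr)$ and reading off the constant sign of $f''$, is exactly the classical route, and your elimination of $f''(0) \le 0$ via the matching condition $f'(\infty) = 1$ is airtight. One detail worth flagging: the boundary conditions listed in \eqref{Blasius:3} do not include $f(0) = 0$, which you need for \eqref{Blas:prop:2}; you correctly recover it from the no-penetration condition $\bar v_\ast|_{y=0} = 0$ via the formula for $\bar v_\ast$ in \eqref{Blasius:1}, which is the right reading of the (implicit) conventions of the paper. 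The only thing taken on faith is global existence and smoothness of $f$ on $[0,\infty)$, which is part of the definition of the Blasius solution and is appropriately assumed.
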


We will now state the estimates we will be using about our approximate solution. Note that we state these estimates as \textit{assumptions} for the purpose of this present paper. However, they are established rigorously in our companion paper according to Theorem \ref{thm:approx}.
\begin{assumption} \label{assume:1} For $0 \le j,m,k, M, l \le 20$, the following estimates are valid
\begin{align} \label{prof:u:est}
& \| \p_x^j (y\p_y)^m \p_y^k \bar{u} x^{j + \frac k 2} \|_\infty + \| \frac{1}{\bar{u}} \p_x^j \bar{u} x^j  \|_{\infty}  \le  C_{k,j} , \\ \label{prof:v:est}
& \| \p_x^j (y\p_y)^m \p_y^k \bar{v} x^{j + \frac k 2 + \frac 1 2}  \|_\infty   + \| \frac{1}{\bar{u}} \p_x^j \bar{v} x^{j + \frac 1 2}  \|_\infty  \le  C_{k,j}, \\ \label{est:Eul:piece}
&\eps^{- \frac 1 2}\| \p_x^j (Y \p_Y)^l \p_Y^k \bar{u}_E \langle x \rangle^{j+k + \frac 1 2} \|_\infty + \| \p_x^j (Y \p_Y)^l \p_Y^k \bar{v}_E \langle x \rangle^{j+k + \frac 1 2} \|_\infty \le  C_{k,j} , \\ \label{est:Pr:piece}
&\| \p_x^j (y\p_y)^k \p_y^l \bar{u}_P \langle z \rangle^M  \langle x \rangle^{j + \frac l 2}\|_\infty   \le  C_{k,j,M} \\ \label{est:PR:bar:v}
&\| \p_x^j (y\p_y)^k \p_y^l \bar{v}_P \langle z \rangle^M  \langle x \rangle^{j + \frac l 2+ \frac 1 2}\|_\infty   \le  C_{k,j,M}.
\end{align}
\end{assumption}

We will need estimates which amount to showing that $\bar{u}$ remains a small perturbation of $\bar{u}^0_p$. 
\begin{assumption} \label{lemma:bofz} Define a monotonic function $b(z) := \begin{cases} z \text{ for } 0 \le z \le \frac{3}{4} \\ 1 \text{ for } 1 \le z  \end{cases}$, where $b \in C^\infty$. Then 
\begin{align} \label{est:ring:1}
&\| \p_y^j \p_x^k (\bar{u} - \bar{u}^0_p) \langle x \rangle^{\frac j 2 + k + \frac{1}{50}} \|_{\infty} \le C_{k,j} \sqrt{\eps}, \\ \label{samezies:1}
&1 \lesssim \frac{\bar{u}^0_p}{b(z)}  \lesssim 1 \text{ and } 1 \lesssim \Big| \frac{\bar{u}}{\bar{u}^0_p} \Big| \lesssim 1, \\ \label{prime:pos}
&|\bar{u}_y|_{y = 0}(x)| \gtrsim \langle x \rangle^{- \frac 1 2}. 
\end{align}
\end{assumption}

We will need to remember the equations satisfied by the approximate solutions, $[\bar{u}, \bar{v}]$, which we state in the following assumption. 
\begin{assumption} \label{assume:3} Define the auxiliary quantities, 
\begin{align} \label{def:zeta}
&\zeta := \bar{u} \bar{u}_x + \bar{v} \bar{u}_y - \bar{u}^0_{pyy}, \\  \label{def:alpha}
&\alpha := \bar{u} \bar{v}_x + \bar{v} \bar{v}_y, 
\end{align}
For any $j, k, m \ge 0$, the following estimates hold:
\begin{align} \label{S:0}
|(x\p_x)^k (y\p_y)^m \zeta|  \lesssim &\sqrt{\eps} \langle x \rangle^{- (1 + \frac{1}{50})}  \\ \label{est:zeta:2}
|(x \p_x)^k (x^{\frac 1 2} \p_y)^j \zeta| \lesssim & \sqrt{\eps} \langle x \rangle^{- (1 + \frac{1}{50})}  \\ \label{S:1}
|(x \p_x)^k (y \p_y)^m \alpha| \lesssim & \bar{u} \langle x \rangle^{-\frac 3 2}.
\end{align}
\end{assumption}

\subsubsection{System on $[u, v]$}

We are now going to study the nonlinear problem for the remainders, $[u, v]$. We define the linearized operator in velocity form via 
\begin{align}
\begin{aligned} \label{vel:form}
\mathcal{L}[u, v] := \begin{cases} \mathcal{L}_1 :=  \bar{u} u_x + \bar{u}_{y} v + \bar{u}_{x} u + \bar{v} u_y - \Delta_\eps u   \\ \mathcal{L}_2 := \bar{u} v_x + u \bar{v}_{x} + \bar{v} v_y + \bar{v}_{y} v - \Delta_\eps v\end{cases}
\end{aligned}
\end{align}
Our objective is to study the problem 
\begin{align} \label{vel:eqn:1}
&\mathcal{L}[u, v] + \begin{pmatrix} P_x \\ \frac{P_y}{\eps} \end{pmatrix} = \begin{pmatrix} F_R \\ G_R \end{pmatrix} + \begin{pmatrix}  \mathcal{N}_1(u, v) \\ \mathcal{N}_2(u, v) \end{pmatrix}, \qquad u_x + v_y = 0, \text{ on } \mathcal{Q} \\ \label{vel:eqn:2}
&[u, v]|_{y = 0} = [u, v]|_{y \uparrow \infty} = 0, \qquad [u,v]|_{x = 0} = [u, v]|_{x = \infty} = 0. 
\end{align}
Above, the forcing terms $F_R$ and $G_R$ are defined in \eqref{forcing:remainder}, and obey estimates \eqref{est:forcings:part1}. The nonlinear terms are given by 
\begin{align} \label{def:N1:N2}
\mathcal{N}_1(u, v) := \eps^{\frac{N_2}{2}} (uu_x + vu_y), \qquad \mathcal{N}_2(u, v) := \eps^{\frac{N_2}{2}} (uv_x + vv_y).
\end{align}

\noindent In vorticity form, the operator is 
\begin{align} \label{vort:form}
\begin{aligned}
\mathcal{L}_{vort}[u, v] := & - u_{yyy} + 2 \eps v_{xxy} + \eps^2 v_{xxx} - \bar{u} \Delta_\eps v + v \Delta_\eps \bar{u}  - u \Delta_\eps \bar{v} + \bar{v} \Delta_\eps u. 
\end{aligned}
\end{align}

\subsection{The good unknowns}

We first introduce the unknowns 
\begin{align} \label{vm:1}
\qquad q = \frac{\psi}{\bar{u}}, \qquad U = \p_y q, \qquad V := -\p_x q,
\end{align}
from which it follows that 
\begin{align} \label{formula:1}
u = \bar{u} U + \bar{u}_y q , \qquad v = \bar{u} V - \bar{u}_x q.
\end{align}
An algebraic computation using \eqref{formula:1} yields the following 
\begin{align}  \label{inserting}
\bar{u} u_x + \bar{u}_y v + \bar{v} u_y + \bar{u}_x u = \bar{u}^2 U_x  + \bar{u} \bar{v} U_y + (2 \bar{u} \bar{u}_x + 2 \bar{v} \bar{u}_y) U + (\bar{u} \bar{u}_x + \bar{v} \bar{u}_y)_y q. 
\end{align}
%We will now use the equation satisfied by $[\bar{u}, \bar{v}]$ to simplify the coefficients above. Recalling the definition of $[\bar{u}, \bar{v}]$ from \eqref{def:bar:u:n}, we obtain 
%\begin{align} \label{expression:1}
%\bar{u} \bar{u}_x + \bar{v} \bar{u}_y = \bar{u}^0_{pyy} + r^{(0)}_1 +  r^{(1)}_1,  
%\end{align}
%where $r_1^{(0)}$ is of order $\sqrt{\eps}$, and is specifically defined by 
%\begin{align} \label{def:r10}
%r_1^{(0)} := \sqrt{\eps} u^1_{Ex} + \sqrt{\eps} u^1_{pyy}, 
%\end{align}
%and where $r_1^{(1)}$ is of order $\eps$, and is defined by 
%\begin{align} \label{def:r11}
%r^{(1)}_1 := & \text{\sameer{add this expression.}}
%\end{align}
%To compactify the notation, we will now define 
%\begin{align}
%\zeta := r_1^{(0)} + r_1^{(1)}.  
%\end{align}
Recalling \eqref{def:zeta}, we obtain the identity 
\begin{align} \n
\bar{u} u_x + \bar{u}_y v + \bar{v} u_y + \bar{u}_x u = \mathcal{T}_1[U] + \bar{u}^0_{pyyy} q + 2\zeta U + \zeta_y q,
\end{align}
where we define the operator $\mathcal{T}_1[U]$ via  
\begin{align} \label{def:T1}
&\mathcal{T}_1[U] := \bar{u}^2 U_x + \bar{u} \bar{v} U_y + 2 \bar{u}^0_{pyy} U.
\end{align}

We now perform a similar computation for the transport terms in $\mathcal{L}_2$. Again, a computation using \eqref{formula:1} yields the following
\begin{align} \n
\bar{u} v_x + u \bar{v}_x + \bar{v} v_y + \bar{v}_y v = & \bar{u} \p_x ( \bar{u} V - \bar{u}_x q ) + \bar{v}_x ( \bar{u} U + \bar{u}_yq ) + \bar{v} \p_y (\bar{u} V - \bar{u}_x q) + \bar{v}_y (\bar{u} V - \bar{u}_x q) \\ \n
= & \bar{u}^2 V_x + \bar{u} \bar{v} V_y + (2 \bar{u} \bar{u}_x + \bar{u}_y v + \bar{u} \bar{v}_y) V + (\bar{u} \bar{v}_x - \bar{u}_x \bar{v}) U \\ \n
&+ (- \bar{u} \bar{u}_{xx} + \bar{u}_y \bar{v}_x - \bar{u}_{xy} \bar{v} - \bar{u}_x \bar{v}_y)q \\ \label{vM:exp:eq:2}
= & \bar{u}^2 V_x + \bar{u} \bar{v} V_y + (\bar{u} \bar{u}_x + \bar{v} \bar{u}_y) V + \alpha U + \p_y \alpha q, 
\end{align}
where we have defined the coefficients $\alpha$ in \eqref{def:alpha}. We now again use \eqref{def:zeta} to simplify the coefficient of $V$ in \eqref{vM:exp:eq:2}, which yields 
\begin{align} \n
\bar{u} v_x + u \bar{v}_x + \bar{v} v_y + \bar{v}_y v = & \mathcal{T}_2[V] + \alpha U + \alpha_y q + \zeta V, 
\end{align}
where we have defined the operator $\mathcal{T}_2[V]$ via 
\begin{align}  \label{def:T2}
&\mathcal{T}_2[V] := \bar{u}^2 V_x + \bar{u} \bar{v} V_y + \bar{u}^0_{pyy}V.
\end{align}

We thus write our simplified system as 
\begin{align} \label{sys:sim:1}
&\mathcal{T}_1[U]  + 2 \zeta U + \zeta_y q - \Delta_\eps u + \bar{u}^0_{pyyy} q + P_x = F_R + \mathcal{N}_1, \\  \label{sys:sim:2}
&\mathcal{T}_2[V] + \alpha U + \alpha_y q + \zeta V + \frac{P_y}{\eps} - \Delta_\eps v = G_R + \mathcal{N}_2, \\  \label{sys:sim:3}
&U_x + V_y = 0,
\end{align}
and we note crucially that due to division by a factor of $\bar{u}$, we do not get a Dirichlet boundary condition at $\{y = 0\}$ for $U$, although we retain that $V|_{y = 0} = 0$. Summarizing the boundary conditions on $[U, V]$, we have 
\begin{align} \label{BC:UVYW}
U|_{x = 0} = V|_{x = 0} = 0, \qquad U|_{y = \infty} = V|_{y = \infty} = 0, \qquad U|_{x = \infty} = V|_{x = \infty} = 0, \qquad V|_{y = 0} = 0. 
\end{align}

%Expanding the operators $\tilde{\mathcal{L}}_1, \tilde{\mathcal{L}}_2$ in terms of $[U, V, q]$ yields 
%\begin{align} \n
%\tilde{\mathcal{L}}_1 = & \bar{u} \tilde{u}_s U_x + \bar{u} \tilde{v}_s U_y + \Big( 2 \bar{u}_y \tilde{v}_s + \bar{u} \tilde{u}_{sx}  + \bar{u}_x \tilde{u}_s \Big) U \\ \label{S1:form}
%& + \Big( \bar{u} \tilde{u}_{sy} - \tilde{u}_s \bar{u}_y \Big) V + \Big( \tilde{u}_s \bar{u}_{xy} + \tilde{u}_{sx} \bar{u}_y - \bar{u}_x \tilde{u}_{sy} + \bar{u}_{yy} \tilde{v}_s \Big) q, \\ \n
%\tilde{\mathcal{L}}_2 = & \bar{u} \tilde{u}_s V_x + \bar{u} \tilde{v}_s V_y + \Big( \bar{u} \tilde{v}_{sy} + \bar{u}_y \tilde{v}_s + 2 \bar{u}_x \tilde{u}_s \Big) V \\  \label{S2:form}
%& + \Big( \bar{u} \tilde{v}_{sx} - \bar{u}_x \tilde{v}_s \Big) U + \Big( - \bar{u}_x \tilde{v}_{sy} - \tilde{v}_s \bar{u}_{xy} + \bar{u}_y \tilde{v}_{sx}  - \tilde{u}_s \bar{u}_{xx} \Big) q.
%\end{align}

It will be convenient also to introduce the vorticity formulation, which we will use to furnish control over the $Y_{n + \frac 1 2}$ norms, which reads 
\begin{align} \n
&\p_y \mathcal{T}_1[U] - \eps \p_x \mathcal{T}_2[V] - u_{yyy} - 2 \eps u_{xxy} + \eps^2 v_{xxx} + \p_y^4 (\bar{u}^0_{pyyy}q) \\ \label{eq:vort:pre}
=& 2 (\zeta U)_y + (\zeta_y q)_y - \eps (\alpha U)_x - \eps (\alpha_y q)_x - \eps (\zeta V)_x + \p_y F_R - \eps \p_x G_R + \p_y \mathcal{N}_1 - \eps \p_x \mathcal{N}_2. 
\end{align}

We also now apply $\p_x^{(n)}$ to \eqref{sys:sim:1} - \eqref{sys:sim:3}, which produces the system for $U^{(n)} := \p_x^n U, V^{(n)} := \p_x^n V$, 
\begin{align} \label{sys:sim:n1}
&\mathcal{T}_1[U^{(n)}]  + 2 \zeta U^{(n)} + \zeta_y q^{(n)} - \Delta_\eps u^{(n)} + \bar{u}^0_{pyyy} q^{(n)} + P^{(n)}_x = \p_x^n F_R + \p_x^n \mathcal{N}_1 -  \mathcal{C}_1^n , \\  \label{sys:sim:n2}
&\mathcal{T}_2[V^{(n)}] + \alpha U^{(n)} + \alpha_y q^{(n)} + \zeta V^{(n)} + \frac{P^{(n)}_y}{\eps} - \Delta_\eps v^{(n)} = \p_x^n G_R + \p_x^n \mathcal{N}_2 - \mathcal{C}_2^n, \\  \label{sys:sim:n3}
&U^{(n)}_x + V^{(n)}_y = 0,
\end{align}
where the quantities $\mathcal{C}_1^n, \mathcal{C}_2^n$ contain lower order commutators, and are specifically defined by 
\begin{align} \label{def:C1n}
\mathcal{C}_1^n := & \sum_{k = 0}^{n-1} \binom{n}{k} ( \p_x^{n-k} \zeta U^{(k)} - \p_x^{n-k} \zeta_y q^{(k)} ), \\  \label{def:C2n}
\mathcal{C}_2^n := & \sum_{k = 0}^{n-1} \binom{n}{k} ( \p_x^{n-k} \alpha U^{(k)} + \p_x^{n-k} \alpha_y q^{(k)} + \p_x^{n-k} \zeta V^{(k)}  ).
\end{align}

\section{The Space $\mathcal{X}$} \label{remainder:section:2}

In this section, we provide the basic functional framework for the analysis of the remainder equation, \eqref{vel:eqn:1} - \eqref{vel:eqn:2}. In particular, we define our space $\mathcal{X}$ and develop the associated embedding theorems that we will need. 

\subsection{Definition of Norms}

To define the basic energy norm, we will define the following weight function 
\begin{align}
g(x)^2 := 1 + \langle x \rangle^{-\frac{1}{100}}. 
\end{align}
The purpose of $g$ is to act like $1$ as $x$ gets large, but as $g' < 0$, this will provide extra control for $(U, V)$ near $x = 0$, due to the presence of the final two terms in \eqref{def:X0} below.  

Define the basic energy norm via 
\begin{align} \n
\|U, V \|_{X_0}^2 := &\| \sqrt{\bar{u}} U_y g  \|^2 + \| \sqrt{\eps} \sqrt{\bar{u}} U_x g \|^2 + \| \eps \sqrt{\bar{u}} V_x  g\|^2 \\  \n
+& \| \sqrt{-\bar{u}_{yy}} U g  \|^2 + \eps \| \sqrt{-\bar{u}_{yy}} V g \|^2+ \| \sqrt{\bar{u}_y} U g \|_{y = 0}^2 \\  \label{def:X0}
+& \| \bar{u} U \langle x \rangle^{-\frac{1}{2} - \frac{1}{200} } \|^2 + \| \sqrt{\eps} \bar{u} V \langle x \rangle^{-1 - \frac{1}{200}} \|. 
\end{align}

To define higher-order norms, we need to define increasing cut-off functions, $\phi_n(x)$, for $n = 1,...,12$, where $0 \le \phi_n \le 1$, and which satisfies 
\begin{align} \label{def:phi:j}
\phi_n(x) = \begin{cases} 0 \text{ on } 0 \le x \le 200 + 10n \\ 1 \text{ on } x \ge 205 + 10n.  \end{cases}
\end{align}
The ``half-level" norms will be defined as (for $n = 0,...,10$),
\begin{align} \label{def:half:norm}
&\| U, V \|_{X_{n + \frac 1 2}} := \| \bar{u} U^{(n)}_x x^{n + \frac 1 2} \phi_{n+1} \| + \sqrt{\eps} \| \bar{u} V^{(n)}_x x^{n + \frac 1 2} \phi_{n+1}  \|, \\ \n
&\| U, V \|_{Y_{n+ \frac 1 2}} := \| \sqrt{\bar{u}} U^{(n)}_{yy} x^{n + \frac 1 2} \phi_{n+1} \| + \| \sqrt{\bar{u}} \sqrt{\eps} U^{(n)}_{xy} x^{n + \frac 1 2} \phi_{n+1} \| \\ \label{def:half:norm:Y}
& \qquad \qquad \qquad + \| \sqrt{\bar{u}} \eps U^{(n)}_{xx}  x^{n + \frac 1 2} \phi_{n+1} \|  + \| \sqrt{\bar{u}_y} U^{(n)}_y x^{n+\frac 1 2} \phi_{n+1}  \|_{y = 0}, \\ 
&\| U, V \|_{X_{n + \frac 1 2} \cap Y_{n + \frac 1 2}} := \| U, V \|_{X_{n + \frac 1 2}} + \| U, V \|_{Y_{n + \frac 1 2}}.
\end{align}

\begin{remark} The motivation for the above space (and the corresponding notation) is that we think of the quantities measured by the $X_{\frac 1 2} \cap Y_{\frac 1 2}$ norms as ``half $x \p_x$ derivative" more than those in $X_0$. Indeed, if we (on a \textit{very} formal level) consider the heat equation scaling of $\p_x \approx \p_{yy}$, then this is the case. The requirement of including two norms $X_{\frac 1 2}$ and $Y_{\frac 1 2}$ is that, if we had a scalar equation, we could immediately deduce properties of $U_{yy}$ from those on $U_x$. However, since we have a complicated system, this is not possible. Information about $U_{yy}$ needs to be obtained through separate control of the norm $Y_{\frac 1 2}$. 
\end{remark}

We would now like to define higher order versions of the $X_0$ norm, which we do via 
\begin{align} \n
\|  U, V \|_{X_n} := &\| \sqrt{\bar{u}} U^{(n)}_{y} x^n \phi_n \|^2 + \| \sqrt{\eps} \sqrt{\bar{u}} U^{(n)}_{x} x^n \phi_n \|^2 + \| \eps \sqrt{\bar{u}} V^{(n)}_{x}  x^n \phi_k\|^2 \\  \label{def:Xn}
+& \| \sqrt{-\bar{u}_{yy}} U^{(n)} x^n \phi_n  \|^2 + \eps \| \sqrt{-\bar{u}_{yy}} V^{(n)} x^n \phi_n \|^2+ \| \sqrt{\bar{u}_y} U^{(n)} x^n \phi_n \|_{y = 0}^2,
\end{align}

We will need ``local" versions of the higher-order norms introduced above. According to \eqref{def:phi:j}, since $\phi_1 = 1$ only on $x \ge 215$, we will need higher regularity controls for $0 \le x \le 215$. Define now a sequence of parameters, $\rho_j$, according to  
\begin{align}
\rho_2 = 0, \qquad \rho_j = \rho_{j-1} + 5
\end{align}
Set now the cut-off functions $\psi_2(x) = 1$, and 
\begin{align}
\psi_j(x) := \begin{cases} 0 \text{ for } x < \rho_j \\ 1 \text{ for } x \ge \rho_{j} + 1 \end{cases} \text{ for } 3 \le j \le 11
\end{align}

Our complete norm will be 
\begin{align}  \label{X:norm}
\| U, V \|_{\mathcal{X}} := &  \sum_{n = 0}^{10} \Big( \| U, V \|_{X_n} + \| U, V \|_{X_{n + \frac 1 2}} + \| U, V \|_{Y_{n + \frac 1 2}} \Big) + \| U, V \|_{X_{11}} + \| U, V \|_E,
\end{align}
where quantity $\|U, V\|_E$ will be defined below, in \eqref{def:E:norm}. We will also set the parameter $M_1 = 24$. 
%We also introduce the notation for $n \ge 1$, 
%\begin{align}
%&\|U, V \|_{\mathcal{X}_{n}} := \sum_{j = 0}^n \| U, V \|_{X_j} + \sum_{j = 0}^{n-1} \| U, V \|_{\tilde{X}_{j + \frac 1 2}}, \\
%&\|U, V \|_{\mathcal{X}_{n - \frac 1 2}} := \sum_{j = 0}^{n-1} \| U, V \|_{X_j} + \sum_{j = 0}^{n-1} \| U, V \|_{\tilde{X}_{j + \frac 1 2}}
%\end{align}

It will be convenient to introduce the following notation to simplify expressions, where $k = 1, ..., 11$, 
\begin{align}
\| U, V \|_{\mathcal{X}_{\le k}} := & \sum_{j = 0}^{k} \| U, V \|_{X_j} + \sum_{j = 0}^{k-1} \| U, V \|_{X_{j + \frac 1 2} \cap Y_{j + \frac 1 2}}, \\
\| U, V \|_{\mathcal{X}_{\le k - \frac 1 2}} := & \sum_{j = 0}^{k-1} \| U, V \|_{X_j} + \sum_{j = 0}^{k-1} \| U, V \|_{X_{j + \frac 1 2} \cap Y_{j + \frac 1 2}},
\end{align}
and the ``elliptic" part of the norm, \eqref{X:norm} via 
\begin{align} \label{def:E:norm}
\| U, V \|_{E} := \sum_{k = 1}^{11} \eps^k \| (\p_x^k u_y, \sqrt{\eps} \p_x^k u_x, \eps \p_x^k v_x ) \psi_{k+1} \| + \sum_{k = 1}^{11} \eps \| (\p_x^k u_y, \sqrt{\eps} \p_x^k u_x, \eps \p_x^k v_x ) \gamma_{k-1,k} \|.
\end{align}
Above, the functions $\gamma_{k,k+1}(x)$ are additional cut-off functions defined by 
\begin{align}
\gamma_{k-1,k}(x) := \begin{cases} 0 \text{ on } x \le 197 + 10k \\ x \ge 198 + 10k \end{cases}.
\end{align}
The point of $\gamma_{k-1,k}$ is to satisfy the following two properties: $\gamma_{k-1,k}$ is supported on the set where $\phi_{k-1} =1$ and $\phi_k$ is supported on the set when $\gamma_{k-1,k} = 1$. The inclusion of the $\| U, V \|_E$ norm above is to provide information near $\{x = 0\}$ (comparing the region where $\psi_{k} = 1$ versus where $\phi_k = 1$). The estimation of $\|U, V \|_E$ is through elliptic regularity, and therefore cannot give any useful asymptotic in $x$ information on the solution. 

%\begin{remark} The scale of norms comprising the space $\mathcal{X}$ is similar to those used in Section \ref{Section:approx} to construct solutions to the linearized Prandtl equation, but differs in some key ways. First, we do not control weights of $z$ here, as the solution is no longer expected to be self-similar in this weight (which reflects the parabolic nature of the linearized Prandtl equation). Second, we need to cut-off away from $x = 0$ here, which also explains the presence of the elliptic piece of the norm, $\|U, V \|_E$. The reason is to avoid boundary contributions from the tangential diffusion, $- \eps u_{xx}, - \eps^2 v_{xx}$, which are absent from the linearized Prandtl equations. 
%\end{remark}

\subsection{Hardy-type Inequalities}

We first recall from \eqref{z:choice} that $z = \frac{y}{\sqrt{x + 1}}$. We now prove the following lemma. 
\begin{lemma}\label{lemma:hardy:9} For $0 < \gamma << 1$, and for any function $f \in H^1_y$, for all $x \ge 0$, the following inequality is valid:  
\begin{align} \label{Hardy:1}
\| f \|_{L^2_y}^2 \lesssim \gamma \| \sqrt{ \bar{u}^0_p}  f_y \langle x \rangle^{\frac 1 2} \|_{L^2_y}^2 + \frac{1}{\gamma^2} \| \bar{u}^0_p f \|_{L^2_y}^2. 
\end{align}
\end{lemma}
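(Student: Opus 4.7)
The plan is to exploit the transition behavior of $\bar{u}^0_p$: by Assumption \eqref{samezies:1}, $\bar{u}^0_p(x,y)$ is comparable to $b(z) = b(y/\sqrt{\langle x \rangle})$, which scales like $y/\sqrt{\langle x \rangle}$ for small $z$ and is bounded below by a constant for $z \gtrsim 1$. I will therefore split the $y$-integral at the scale $A := \gamma \sqrt{\langle x \rangle}$, which is precisely the boundary layer scale where $\bar{u}^0_p$ transitions from being degenerately small ($\sim \gamma$) to being of order $1$.

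\textbf{Step 1 (Far region is easy).} On the set where $y$ is larger than the threshold, $\bar{u}^0_p \gtrsim \gamma$ by \eqref{samezies:1} (taking $\gamma$ small enough that the argument of $b$ lies below the plateau region only up to the threshold). Hence pointwise $f^2 \lesssim \gamma^{-2} (\bar{u}^0_p)^2 f^2$, which directly contributes the $\gamma^{-2}\|\bar{u}^0_p f\|_{L^2_y}^2$ term.

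\textbf{Step 2 (Averaging to pick a good slice).} Rather than cutting at $A$ directly, I will choose $A_0 \in [A, 2A]$ via a pigeonhole/averaging argument so that
\begin{equation*}
A_0\, f(A_0)^2 \;\lesssim\; \int_A^{2A} f^2 \,\mathrm{d}y \;\le\; \gamma^{-2}\|\bar{u}^0_p f\|_{L^2_y}^2,
\end{equation*}
where the last inequality uses Step 1 again on the annulus $y \in [A,2A]$. This bypasses the fact that a pointwise value like $f(A)^2$ has no direct $L^2$ meaning.

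\textbf{Step 3 (Hardy-type inequality in the near region).} On $[0, A_0]$, I use the identity obtained from integrating $\frac{\mathrm{d}}{\mathrm{d}s}(s f(s)^2) = f^2 + 2 s f f_y$:
\begin{equation*}
\int_0^{A_0} f^2 \,\mathrm{d}s \;=\; A_0 f(A_0)^2 - 2 \int_0^{A_0} s\, f\, f_y\,\mathrm{d}s.
\end{equation*}
The first term is controlled by Step 2. For the second term, Cauchy--Schwarz followed by Young's inequality gives
\begin{equation*}
\Big| 2\int_0^{A_0} s f f_y \,\mathrm{d}s\Big| \;\le\; \tfrac{1}{2}\int_0^{A_0} f^2 \,\mathrm{d}s + C A_0 \int_0^{A_0} s (f_y)^2 \,\mathrm{d}s,
\end{equation*}
after using $s^2(f_y)^2 \le A_0 \cdot s (f_y)^2$. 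The first piece is absorbed. For the second, since $s \le A_0 \le 2\gamma\sqrt{\langle x \rangle}$, one has $s \lesssim \sqrt{\langle x \rangle}\,\bar{u}^0_p(x,s)$ on this interval by \eqref{samezies:1}, so
\begin{equation*}
A_0 \int_0^{A_0} s (f_y)^2 \,\mathrm{d}s \;\lesssim\; \gamma \int_0^{A_0} \bar{u}^0_p (f_y)^2 \langle x \rangle \,\mathrm{d}s \;\le\; \gamma \,\|\sqrt{\bar{u}^0_p}\, f_y \langle x \rangle^{1/2}\|_{L^2_y}^2.
\end{equation*}
Combining the near and far estimates yields \eqref{Hardy:1}.

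\textbf{Main obstacle.} The only subtlety is Step 2: one must avoid relying on a pointwise boundary trace at the cut location, since we only have $H^1_y$ regularity. The averaging trick to select $A_0$ is what converts a potentially troublesome pointwise term into an $L^2$-average that is then absorbed by the $\gamma^{-2}\|\bar{u}^0_p f\|_{L^2_y}^2$ piece. The rest is bookkeeping of the weights $\bar{u}^0_p \sim y/\sqrt{\langle x \rangle}$, and checking that the scale $A = \gamma\sqrt{\langle x\rangle}$ is indeed where the two terms on the right of \eqref{Hardy:1} balance.
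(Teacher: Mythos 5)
Your argument is correct and follows essentially the same route as the paper: split at the boundary-layer scale $z\sim\gamma$, use $\bar{u}^0_p\gtrsim\gamma$ in the far region, and in the near region integrate by parts against the weight $y$ and exploit $y\lesssim\sqrt{\gamma}\,\sqrt{\langle x\rangle}\,\sqrt{\bar{u}^0_p}$ there. The only difference is implementation of the cut — the paper uses a smooth cutoff $\chi(z/\gamma)$ so the boundary trace is replaced by a $\chi'$ term absorbed like the far region, while you use a sharp cut with a pigeonhole choice of slice $A_0$ — which is a technical variant, not a different proof.
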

\begin{proof} We square the left-hand side of \eqref{Hardy:1} and localize the integral based on $z$ via 
\begin{align}
\int f^2 \ud y = \int f^2 \chi(\frac{z}{\gamma}) \ud y + \int f^2 (1 - \chi(\frac{z}{\gamma})) \ud y. 
\end{align}
For the localized component, we integrate by parts in $y$ via 
\begin{align}
\int f^2 \chi(\frac{z}{\gamma}) \ud y = \int \p_y (y) f^2 \chi(\frac{z}{\gamma}) \ud y = - \int 2 y f f_y \chi(\frac{z}{\gamma}) \ud y - \frac{1}{\gamma} \int \frac{y}{\sqrt{x}} f^2 \chi'(\frac{z}{\gamma}) \ud y. 
\end{align}
We estimate each of these terms via 
\begin{align}
\Big| \int y f f_y \chi(\frac{z}{\gamma})\ud y \Big| \lesssim \| f  \|_{L^2_y} \|   \sqrt{x} \sqrt{\bar{u}^0_p} \sqrt{\gamma} f_y \|_{L^2_y} \le \delta \| f \|_{L^2_y}^2 + C_\delta \gamma x \| \sqrt{\bar{u}^0_p} f_y \|_{L^2_y}^2,
\end{align}
where above, we have used \eqref{samezies:1}. For the far-field term, we estimate again by invoking \eqref{samezies:1} via 
\begin{align}
|\int f^2 (1 - \chi(\frac{z}{\gamma})) \ud y| = |\int \frac{1}{|\bar{u}^0_p|^2} |\bar{u}^0_p|^2 f^2 (1 - \chi(\frac{z}{\gamma})) \ud y| \lesssim \frac{1}{\gamma^2} \| \bar{u}^0_p f \|_{L^2_y}^2.
\end{align}
We have thus obtained 
\begin{align}
\| f \|_{L^2_y}^2 \le \delta \| f \|_{L^2_y}^2 + C_\delta \gamma x \| \sqrt{\bar{u}^0_p} f_y \|_{L^2_y}^2 + \frac{C}{\gamma^2} \| \bar{u}^0_p f \|_{L^2_y}^2, 
\end{align}
and the desired result follows from taking $\delta$ small relative to universal constants and absorbing to the left-hand side. 
\end{proof}

We will often use estimate \eqref{Hardy:1} in the following manner 
\begin{corollary} For any $0 < \gamma << 1$,  
\begin{align} \label{bob:1}
 \| U_y \| + \| \sqrt{\eps} U_x \| + \| \eps V_x \| \le \gamma \| U, V \|_{Y_{\frac 1 2}} + C_\gamma \| U, V \|_{X_0}. 
\end{align}
\end{corollary}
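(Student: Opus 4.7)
The plan is to apply Lemma \ref{lemma:hardy:9} slice-by-slice in $x$ with three choices of test function, integrate in $x$, and match the resulting weighted $L^2$ quantities to the defining components of $X_0$ and $Y_{\frac{1}{2}}$. Concretely, I would take $f$ to be, successively, $U_y$, $\sqrt{\eps}\, U_x$, and $\eps V_x$. The key algebraic input is the divergence-free condition $U_x + V_y = 0$, which rewrites $\p_y(\eps V_x)$ as $-\eps U_{xx}$; consequently the three resulting $f_y$'s are precisely $U_{yy}$, $\sqrt{\eps}\, U_{xy}$, and $-\eps U_{xx}$, which match exactly the three viscous terms appearing in the definition \eqref{def:half:norm:Y} of $Y_{\frac{1}{2}}$.

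Squaring \eqref{Hardy:1} on each slice and integrating in $x$ gives, for each of the three choices,
\begin{equation*}
\|f\|^2 \lesssim \gamma \int \langle x\rangle\, \bar{u}^0_p\, |f_y|^2 \, dy\, dx + \gamma^{-2} \int |\bar{u}^0_p|^2\, |f|^2 \, dy\, dx.
\end{equation*}
For the second term I use $\bar{u}^0_p \approx \bar{u}$ and $\bar{u}^0_p \lesssim 1$ from \eqref{samezies:1}, together with $g \ge 1$, to bound it by $\gamma^{-2}\|\sqrt{\bar{u}}\, f\, g\|^2$; summing over the three choices of $f$ assembles exactly the first three entries of $\|U, V\|_{X_0}^2$ from \eqref{def:X0}, yielding the $C_\gamma \|U, V\|_{X_0}^2$ half of the estimate. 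For the first term restricted to the large-$x$ region $\{x \ge 215\}$, where $\phi_1 \equiv 1$ and $\langle x\rangle \approx x \approx x \phi_1^2$, the bound $\bar{u}^0_p \lesssim \bar{u}$ produces exactly $\gamma \|U, V\|_{Y_{\frac{1}{2}}}^2$.

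The main, essentially bookkeeping, obstacle is the compact strip $\{0 \le x \le 215\}$ on which $\phi_1$ vanishes, so that the $Y_{\frac{1}{2}}$ norm carries no information about the viscous derivatives $f_y$ there. On this strip $\langle x\rangle$ is uniformly bounded, so one can revisit the proof of Lemma \ref{lemma:hardy:9} and exploit $\sqrt{x+1} \le C$ directly to obtain an unweighted-in-$x$ Hardy estimate; the resulting compactly-supported piece $\gamma \int_{x \le 215} \bar{u}^0_p\, |f_y|^2$ is then absorbed into $C_\gamma \|U, V\|_{X_0}^2$ by a routine local elliptic estimate, using the non-degeneracy $\bar{u}_y(x,0) \gtrsim \langle x \rangle^{-1/2} \gtrsim 1$ from \eqref{prime:pos} that holds uniformly on any compact $x$-interval. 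Once this harmonization of the Hardy $\langle x\rangle$-weight with the cutoff structure of $\phi_1$ is performed, the three desired estimates for $\|U_y\|$, $\|\sqrt{\eps}\, U_x\|$, and $\|\eps V_x\|$ follow by summation.
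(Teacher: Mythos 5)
Your core step is exactly the paper's proof: the paper's entire argument is the one-line substitution $f = U_y$, $\sqrt{\eps}\,U_x$, $\eps V_x$ into \eqref{Hardy:1}, and your matching of the three resulting $f_y$'s with the three viscous entries of $Y_{\frac{1}{2}}$ (via $\p_y(\eps V_x) = -\eps U_{xx}$ from the divergence-free condition) and of the zero-order terms $\|\bar{u}^0_p f\|$ with the first three entries of $X_0$ (via \eqref{samezies:1} and $g \ge 1$) is the intended bookkeeping.

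The place where your write-up goes beyond the paper is the compact strip $\{0 \le x \le 215\}$ where $\phi_1$ vanishes, and there your proposed repair does not hold up. You claim $\gamma \int_{x \le 215} \bar{u}^0_p\, |f_y|^2$ can be ``absorbed into $C_\gamma \|U,V\|_{X_0}^2$ by a routine local elliptic estimate.'' But $\|U,V\|_{X_0}$ contains only first-order derivatives of $(U,V)$; no purely norm-level (equation-free) argument can dominate a second-derivative quantity such as $\|\sqrt{\bar{u}^0_p}\,U_{yy}\|_{L^2(x \le 215)}$ by it, and the non-degeneracy $\bar{u}_y(x,0) \gtrsim 1$ plays no role in such a bound. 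In this paper, second-derivative control near $\{x=0\}$ is precisely what the separate ``elliptic'' norm $\|U,V\|_E$ of \eqref{def:E:norm} supplies, and obtaining it requires invoking the Stokes system together with the forcing $F_R, G_R$ (Lemma \ref{lemma:elliptic}) --- it is not available from $X_0$ alone. Note that the paper's own proof is the bare substitution and silently ignores the $\phi_1$ mismatch (in practice the corollary is invoked on quantities already carrying a $\phi_1$, or the missing strip is covered by $\|U,V\|_E$); so you have correctly spotted a real wrinkle, but the fix as you describe it is not valid.
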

\begin{proof} This follows immediately upon taking $f = U_y$, $\sqrt{\eps} U_x$, or $\eps V_x$ in \eqref{Hardy:1}.
\end{proof}

\begin{lemma} Assume $f(0, y) = f(\infty, y) = 0$. Let $\tilde{\chi}$ be an increasing cut-off function, $\tilde{\chi}' \ge 0$. Then for any $\sigma > 0$,  
\begin{align}
\Big\| \frac{1}{\langle x \rangle^{\frac 1 2+\sigma}} f \tilde{\chi}(z) \Big\| \lesssim_\sigma \| f_x \langle x \rangle^{\frac 1 2 -\sigma} \tilde{\chi}(z) \|. 
\end{align}
\end{lemma}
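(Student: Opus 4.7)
The plan is to prove this as a one-dimensional weighted Hardy inequality in $x$ (for each fixed $y$), integrated over $y$, by exploiting integration by parts with the weight $\langle x\rangle^{-2\sigma}$ whose $x$-derivative precisely produces the singular weight $\langle x\rangle^{-1-2\sigma}$ on the left-hand side. The key structural observation that makes the proof go through is that the extra term generated by differentiating $\tilde{\chi}(z)$ in $x$ carries a favorable sign, thanks to $\tilde\chi' \ge 0$.

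Concretely, set $I := \iint \frac{f^2 \tilde\chi(z)^2}{\langle x\rangle^{1+2\sigma}}\,\mathrm{d}y\,\mathrm{d}x$, and write
\begin{equation*}
\frac{1}{\langle x\rangle^{1+2\sigma}} = -\frac{1}{2\sigma}\,\partial_x\frac{1}{\langle x\rangle^{2\sigma}}.
\end{equation*}
I would then integrate by parts in $x$. The boundary contributions at $x = 0$ and $x = \infty$ vanish by the assumption $f(0,y) = f(\infty,y) = 0$ (and the boundedness of $\tilde\chi$). Using $\partial_x z = -z/(2\langle x\rangle)$, this yields
\begin{equation*}
I = \frac{1}{\sigma}\iint \frac{f f_x\,\tilde\chi^2}{\langle x\rangle^{2\sigma}}\,\mathrm{d}y\,\mathrm{d}x \;-\; \frac{1}{2\sigma}\iint \frac{f^2\,\tilde\chi(z)\,\tilde\chi'(z)\,z}{\langle x\rangle^{1+2\sigma}}\,\mathrm{d}y\,\mathrm{d}x.
\end{equation*}
Because $\tilde\chi \ge 0$, $\tilde\chi' \ge 0$, and $z \ge 0$, the second term on the right-hand side is $\le 0$, so I may simply discard it (equivalently, move it to the left with a nonnegative sign) and obtain
\begin{equation*}
I \le \frac{1}{\sigma}\iint \frac{f f_x\,\tilde\chi^2}{\langle x\rangle^{2\sigma}}\,\mathrm{d}y\,\mathrm{d}x.
\end{equation*}

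Finally, I would apply Cauchy--Schwarz by splitting the weight $\langle x\rangle^{-2\sigma} = \langle x\rangle^{-\frac12 - \sigma}\cdot\langle x\rangle^{\frac12 - \sigma}$ and $\tilde\chi^2 = \tilde\chi\cdot\tilde\chi$, giving
\begin{equation*}
I \le \frac{1}{\sigma}\,I^{1/2}\,\bigl\| f_x\,\tilde\chi(z)\,\langle x\rangle^{\frac12-\sigma}\bigr\|,
\end{equation*}
from which dividing by $I^{1/2}$ and squaring yields the claimed bound with implicit constant $1/\sigma^2$. There is no serious obstacle here; the only subtle point is recognizing that the $\tilde\chi'$ commutator term generated by the $x$-dependence of $z$ has the right sign, which is precisely why the hypothesis $\tilde\chi' \ge 0$ is imposed in the statement.
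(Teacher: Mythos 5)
Your proof is correct and is essentially identical to the paper's: the same integration by parts against $\partial_x\langle x\rangle^{-2\sigma}$, the same observation that the $\tilde{\chi}'$ term has a favorable sign because $\partial_x z \le 0$, and the same concluding Cauchy--Schwarz with the weight split $\langle x\rangle^{-2\sigma} = \langle x\rangle^{-\frac12-\sigma}\langle x\rangle^{\frac12-\sigma}$. You have in fact been slightly more careful than the paper in carrying the factor $\tilde{\chi}^2$ rather than $\tilde{\chi}$ through the squared norm.
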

\begin{proof} We square the left-hand side via 
\begin{align} \n
\Big\| \frac{1}{\langle x \rangle^{\frac 1 2+\sigma}} f \tilde{\chi}(z) \Big\|^2 = & \int \langle x \rangle^{- 1 - 2\sigma} f^2 \tilde{\chi}(z) = - \int \frac{\p_x }{ 2 \sigma} \langle x \rangle^{-2\sigma} f^2 \tilde{\chi}(z) \\ \label{local:hardy:1}
= & \frac{1}{\sigma} \int \langle x \rangle^{-2\sigma} f f_x \tilde{\chi}(z) + \frac{1}{2\sigma} \int \langle x \rangle^{-2\sigma} f^2 \p_x \tilde{\chi}(z). 
\end{align}
Next, we observe that the right-most term in \eqref{local:hardy:1} is signed negative and can thus be moved to the left-side, as 
\begin{align}
\p_x \tilde{\chi}(z) = \p_x \tilde{\chi}(\frac{y}{\sqrt{x}}) = - \frac{z}{2x} \tilde{\chi}'(z) \le 0. 
\end{align}
We thus may conclude by estimating the first term from \eqref{local:hardy:1} by Cauchy-Schwartz. 
\end{proof}

This will often be used in conjunction with the Hardy-type inequality \eqref{Hardy:1}, via:
\begin{corollary} For any $k \ge 0$, and for any $0 < \sigma << 1$, any $0 < \gamma << 1$, 
\begin{align} \label{Hardy:three:a}
\| \frac{1}{\langle x \rangle^{\frac 1 2+\sigma}} U \| & \le \gamma \| \sqrt{\bar{u}} U_y \| + C_{\gamma, \sigma} \| \bar{u} U_x \langle x \rangle^{\frac 1 2} \|, \\ \label{Hardy:four:a}
\| \frac{1}{\langle x \rangle^{\frac 1 2+\sigma}} \sqrt{\eps}V \| &\le \gamma \| \sqrt{\bar{u}} \sqrt{\eps} V_y \| + C_{\gamma, \sigma} \| \sqrt{\eps} \bar{u} V_x \langle x \rangle^{\frac 1 2} \|, \\ \label{Hardy:three}
\| U^{(k)}_x \langle x \rangle^{k + \frac 1 2} \| &\le \gamma \| \sqrt{\bar{u}} U^{(k+1)}_y \langle x \rangle^{k+1} \| + C_{\gamma, \sigma} \| \bar{u} U^{(k)}_x \langle x \rangle^{k +\frac 1 2} \|, \\ \label{Hardy:four}
\| \sqrt{\eps}V^{(k)}_x  \langle x \rangle^{k + \frac 1 2}   \| & \le \gamma \| \sqrt{\bar{u}} \sqrt{\eps} V^{(k+1)}_y \langle x \rangle^{k+1} \| + C_{\gamma, \sigma} \| \sqrt{\eps} \bar{u} V^{(k)}_x \langle x \rangle^{k+\frac 1 2} \|
\end{align}
\end{corollary}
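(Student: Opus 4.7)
The four inequalities split naturally into two classes: the higher-order bounds \eqref{Hardy:three}--\eqref{Hardy:four}, which are pure applications of Lemma~\ref{lemma:hardy:9}, and the weighted bounds \eqref{Hardy:three:a}--\eqref{Hardy:four:a}, which require combining Lemma~\ref{lemma:hardy:9} with the preceding $x$-Hardy lemma.

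For \eqref{Hardy:three} and \eqref{Hardy:four}, I would simply apply Lemma~\ref{lemma:hardy:9} slicewise in $y$ with $f=U^{(k)}_x$ (respectively $f=\sqrt{\eps}V^{(k)}_x$). The resulting slicewise inequality,
\[
\|f\|_{L^2_y}^2 \lesssim \gamma\,\|\sqrt{\bar{u}^0_p}\,f_y\,\langle x\rangle^{\frac12}\|_{L^2_y}^2 + \tfrac{1}{\gamma^2}\|\bar{u}^0_p f\|_{L^2_y}^2,
\]
is multiplied by the weight $\langle x\rangle^{2k+1}$ and integrated in $x$; the identifications $\p_y U^{(k)}_x = U^{(k+1)}_y$ and $\bar{u}^0_p \sim \bar{u}$ (from \eqref{samezies:1}) deliver the stated form. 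In this class, the constant $C_{\gamma,\sigma}$ is really just $C/\gamma^2$, independent of $\sigma$.

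For \eqref{Hardy:three:a} I would proceed in two steps. First, apply Lemma~\ref{lemma:hardy:9} slicewise with $f=U$, multiply by $\langle x\rangle^{-1-2\sigma}$ and integrate in $x$ to obtain
\[
\|\langle x\rangle^{-\frac12-\sigma} U\|^2 \lesssim \gamma\,\|\sqrt{\bar{u}}\,U_y\|^2 + \tfrac{C}{\gamma^2}\|\bar{u} U\,\langle x\rangle^{-\frac12-\sigma}\|^2,
\]
where the harmless factor $\langle x\rangle^{-2\sigma}$ on the $U_y$ term is discarded. Second, control the residual by integration by parts in $x$: writing $\langle x\rangle^{-1-2\sigma}=-(2\sigma)^{-1}\p_x\langle x\rangle^{-2\sigma}$ and using the boundary conditions $U|_{x=0}=U|_{x=\infty}=0$ from \eqref{BC:UVYW},
\[
\|\bar{u} U\,\langle x\rangle^{-\frac12-\sigma}\|^2 = \tfrac{1}{\sigma}\!\int\! \bar{u}\bar{u}_x U^2 \langle x\rangle^{-2\sigma} + \tfrac{1}{\sigma}\!\int\! \bar{u}^2 U U_x \langle x\rangle^{-2\sigma}.
\]
The second integral is handled by Cauchy--Schwarz and Young, producing $\tfrac12\|\bar{u} U\,\langle x\rangle^{-\frac12-\sigma}\|^2+\tfrac{C}{\sigma^2}\|\bar{u} U_x\,\langle x\rangle^{\frac12}\|^2$, whose first piece absorbs into the LHS. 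Statement \eqref{Hardy:four:a} follows by the identical argument with $\sqrt{\eps}V$ in place of $U$ (using $V|_{x=0}=V|_{x=\infty}=0$ and the extra $\sqrt{\eps}$ tracked throughout).

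The main obstacle is the first integral on the previous display: the crude estimate $|\bar{u}_x|\lesssim \bar{u}/\langle x\rangle$ from \eqref{prof:u:est} gives a contribution of exactly the same shape as the LHS (with constant $O(1/\sigma)$), which cannot be absorbed for the small $\sigma$ we care about. The way out is to exploit the monotonicity $\bar{u}_x\le 0$: for the Blasius background $\bar{u}_\ast(x,y)=f'(y/\sqrt{x+1})$ this is immediate since $z$ is strictly decreasing in $x$, and by \eqref{est:ring:1} the full profile $\bar{u}$ inherits this sign up to an $O(\sqrt{\eps}\langle x\rangle^{-1/50})$ correction. Thus the leading part of $\int\bar{u}\bar{u}_x U^2\langle x\rangle^{-2\sigma}$ is signed negative and can be dropped, while the remaining error is of size $\sqrt{\eps}/\sigma$ times the LHS and hence absorbable once $\eps$ is taken small relative to $\sigma$. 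This is the only place where the specific structure of the background profile enters, and it accounts for the $\sigma$-dependence of the final constant $C_{\gamma,\sigma}=C/(\gamma^2\sigma^2)$.
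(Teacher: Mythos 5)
Your treatment of \eqref{Hardy:three}--\eqref{Hardy:four} is exactly the intended argument: Lemma \ref{lemma:hardy:9} applied slicewise to $f=U^{(k)}_x$ (resp. $\sqrt{\eps}V^{(k)}_x$), multiplied by $\langle x\rangle^{2k+1}$, integrated in $x$, and closed with \eqref{samezies:1}; nothing more is needed there. For \eqref{Hardy:three:a}--\eqref{Hardy:four:a}, the paper gives no written proof (the corollary is presented as an immediate combination of the two preceding lemmas), and your two-step reconstruction --- Lemma \ref{lemma:hardy:9} to produce the $\gamma\|\sqrt{\bar u}U_y\|$ term, then an $x$-integration by parts \`a la the second lemma to convert the residual $\|\bar u U\langle x\rangle^{-1/2-\sigma}\|$ into $\|\bar u U_x\langle x\rangle^{1/2}\|$ --- is the right one. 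You correctly identify that the commutator $\sigma^{-1}\int\bar u\bar u_x U^2\langle x\rangle^{-2\sigma}$ is the genuine obstruction: the crude bound $|\bar u_x|\lesssim\bar u\langle x\rangle^{-1}$ returns the left-hand side with a constant $O(1/\sigma)$ that cannot be absorbed, and the paper's alternative device for such terms (the Prandtl/Euler splitting used around \eqref{precision:1}) does not apply here because it ultimately requires vanishing at $y=0$, which $U$ does not satisfy. So some structural input on $\bar u_x$ is unavoidable, and your use of $\p_x\bar u_\ast\le 0$ (immediate from $f''\ge 0$ and $\p_x z\le 0$) is a legitimate way to supply it.

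One step in your sign-transfer argument is stated too loosely. Estimate \eqref{est:ring:1} only compares $\bar u$ to $\bar u^0_p$; to conclude that $\bar u_x$ inherits the sign of $\p_x\bar u_\ast$ up to controllable errors you also need \eqref{blas:conv:1}, which controls $\p_x(\bar u^0_p-\bar u_\ast)$ by $\delta_\ast\langle x\rangle^{-5/4+\sigma_\ast}$. The resulting error in the commutator is therefore $O\bigl((\delta_\ast+\sqrt{\eps})\sigma^{-1}\bigr)$ times $\|U\langle x\rangle^{-1/2-\sigma}\|^2$ (note: without the $\bar u$ weight, so it must be absorbed into the original left-hand side of \eqref{Hardy:three:a} after the Step-1 reduction, which forces $\delta_\ast+\sqrt{\eps}\lesssim\sigma\gamma^2$). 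This is consistent with the paper's standing smallness hypotheses on $\delta_\ast$ and $\eps$, but the $\delta_\ast$ contribution should be tracked explicitly rather than attributed solely to $\sqrt{\eps}$. Also, for the record, the pointwise size of the correction from \eqref{est:ring:1} is $\sqrt{\eps}\langle x\rangle^{-1-1/50}$, not $\sqrt{\eps}\langle x\rangle^{-1/50}$; the extra $\langle x\rangle^{-1}$ is what makes it subordinate to $\langle x\rangle^{-1-2\sigma}\bar u^2$ after integration. With these points made precise, your proof is complete.
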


We now need to record a Hardy-type inequality in which the precise constant is important. More precisely, the fact that the first coefficient on the right-hand side below is very close to $1$ will be important. 
\begin{lemma} For any function $f(x): \mathbb{R}_+ \rightarrow \mathbb{R}$ satisfying $f(0) = 0$ and $f \rightarrow 0$ as $x \rightarrow \infty$, there exists a $C > 0$ such that
\begin{align} \label{precise:1}
\int \langle x \rangle^{-3.01} \bar{u}^2 f^2 \ud x \le \frac{1}{1.01}  \int \langle x \rangle^{-1.01} \bar{u}^2 f_x^2 \ud x + \frac{2}{1.01} \int \langle x \rangle^{-2.01} \bar{u} \bar{u}_x f^2 \ud x.  
\end{align}
\end{lemma}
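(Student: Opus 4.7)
The plan is a one-dimensional Hardy estimate proved by integration by parts in $x$, based on the identity
\[
\langle x \rangle^{-3.01} = -\frac{1}{2.01}\,\partial_x \langle x \rangle^{-2.01}.
\]
Writing $A$, $B$, $C$ for the three integrals appearing in \eqref{precise:1}, I would integrate by parts once in $x$, observing that the boundary contributions vanish at $x=0$ (since $f(0)=0$) and at $x=\infty$ (since $\langle x\rangle^{-2.01}\to 0$, $\bar{u}$ is bounded by \eqref{prof:u:est}, and $f\to 0$). This produces
\[
A \;=\; \frac{1}{2.01}\int \langle x\rangle^{-2.01}\,\partial_x(\bar{u}^2 f^2)\,\ud x \;=\; \frac{2}{2.01}\,C \;+\; \frac{2}{2.01}\int \langle x\rangle^{-2.01}\bar{u}^2 f f_x\,\ud x.
\]

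Next I would apply Cauchy--Schwarz (equivalently $2|ab|\le a^2+b^2$) to the cross term, distributing the weight symmetrically by writing $\langle x\rangle^{-2.01}=\langle x\rangle^{-0.505}\cdot \langle x\rangle^{-1.505}$. This gives
\[
\Bigl|\int \langle x\rangle^{-2.01}\bar{u}^2 f f_x\,\ud x\Bigr|\;\le\;\tfrac{1}{2}B+\tfrac{1}{2}A.
\]
Substituting into the displayed equation yields $A \le \frac{2}{2.01}C + \frac{1}{2.01}B+\frac{1}{2.01}A$, and absorbing the $\frac{1}{2.01}A$ to the left produces $\frac{1.01}{2.01}A \le \frac{1}{2.01}B+\frac{2}{2.01}C$, which is exactly \eqref{precise:1}.

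There is no serious obstacle, only an arithmetic point worth highlighting: the exponents $(-3.01,-1.01,-2.01)$ are calibrated precisely so that the unweighted AM--GM split ($\lambda=1$) achieves the constant $\frac{1}{1.01}$ in front of $B$. More generally, the parameter-$\lambda$ split gives a coefficient $\frac{\lambda^2}{2.01\lambda-1}$ on $B$, and minimizing over $\lambda>1/2.01$ via $\partial_\lambda$ yields the optimum at $\lambda=1$ with value $\frac{1}{1.01}$. This sharpness is the whole point of the lemma: as flagged in item (3) of Section \ref{section:ideas}, the Cauchy--Kovalevskaya multiplier $(U\langle x\rangle^{-\delta},\,\eps V\langle x\rangle^{-\delta}+\eps \delta \frac{\psi}{\bar u}\langle x\rangle^{-\delta-1})$ generates a quadratic form with terms of competing sign, and only a Hardy constant this tight permits coercivity.
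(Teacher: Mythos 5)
Your proof is correct and is essentially identical to the paper's: the same integration by parts against $\partial_x \langle x \rangle^{-2.01}/2.01$, the same AM--GM split of the cross term into $\tfrac12 A+\tfrac12 B$, and the same absorption yielding the constants $\tfrac{1}{1.01}$ and $\tfrac{2}{1.01}$. The extra remark on optimizing the splitting parameter is a nice observation but not needed.
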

\begin{proof} We compute the quantity on the left-hand side of above via 
\begin{align} \n
\int \langle x \rangle^{-3.01} \bar{u}^2 f^2 \ud x = &- \int \frac{\p_x}{2.01} \langle x \rangle^{-2.01} \bar{u}^2 f^2 \ud x \\ \n
=& \frac{2}{2.01} \int \langle x \rangle^{-2.01} \bar{u}^2 f f_x \ud x + \frac{2}{2.01} \int \langle x \rangle^{-2.01}  \bar{u} \bar{u}_x f^2 \ud x \\ 
\le & \frac{1}{2.01} \int \langle x \rangle^{-3.01} \bar{u}^2 f^2 \ud x + \frac{1}{2.01} \int \langle x \rangle^{-1.01} \bar{u}^2 f_x^2 \ud x + \frac{2}{2.01} \int \langle x \rangle^{-2.01} \bar{u} \bar{u}_x f^2 \ud x,
\end{align}
which, upon bringing the first term on the right-hand side to the left, gives the inequality \eqref{precise:1} with the precise constants. 
\end{proof}

\subsection{$L^p_x L^q_y$ Embeddings} \label{Lpq:embed:section}

We will now state some $L^p_x L^q_y$ type embedding theorems on $(U, V)$ using the specification of $\| U, V \|_{\mathcal{X}}$.  
\begin{lemma} For $1 \le j \le 10$, 
\begin{align} \label{Lpq:emb:V:1}
\eps^{\frac 1 4}\| V^{(j)} \langle x \rangle^j \phi_{j+1} \|_{L^2_x L^\infty_y} \lesssim & \| U, V \|_{\mathcal{X}_{\le j+1}}, \\ \label{Lpq:emb:V:2}
\eps^{\frac 1 4}\|  \bar{u} V^{(j)} \langle x \rangle^j \phi_{j+1} \|_{L^2_x L^\infty_y} \lesssim & \| U, V \|_{\mathcal{X}_{\le j + \frac 1 2}}
\end{align}
\end{lemma}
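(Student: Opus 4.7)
\medskip\noindent\emph{Proof sketch.} The plan is to apply a Gagliardo--Nirenberg inequality in $y$ to reduce $L^\infty_y$ to a product of $L^2_y$ quantities, use the divergence-free relation \eqref{sys:sim:3} to replace $\p_y V^{(j)}$ by $-U^{(j+1)}$, split by weighted Cauchy--Schwarz in $x$, and match each factor against components of $\mathcal{X}$ via the Hardy inequality \eqref{Hardy:1}.

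\textbf{Proof of \eqref{Lpq:emb:V:1}.} Since $V^{(j)}|_{y=0}=V^{(j)}|_{y=\infty}=0$ by \eqref{BC:UVYW}, Gagliardo--Nirenberg gives, at each $x$,
\[
\|V^{(j)}\|_{L^\infty_y}^2\le 2\|V^{(j)}\|_{L^2_y}\|U^{(j+1)}\|_{L^2_y}.
\]
Multiplying by $\sqrt\eps\langle x\rangle^{2j}\phi_{j+1}^2$, integrating in $x$, and applying Cauchy--Schwarz with the split $\sqrt\eps\langle x\rangle^{2j}=\sqrt\eps\langle x\rangle^{j-\frac12}\cdot\langle x\rangle^{j+\frac12}$ yields
\[
\eps^{\frac12}\|V^{(j)}\langle x\rangle^j\phi_{j+1}\|_{L^2_xL^\infty_y}^2\lesssim\|\sqrt\eps V^{(j)}\langle x\rangle^{j-\frac12}\phi_{j+1}\|\cdot\|U^{(j+1)}\langle x\rangle^{j+\frac12}\phi_{j+1}\|.
\]
For the second factor, Hardy \eqref{Hardy:1} applied slicewise (then integrated in $x$ with $\langle x\rangle^{2j+1}\phi_{j+1}^2$) produces $\|\sqrt{\bar{u}^0_p}\,U^{(j+1)}_y\langle x\rangle^{j+1}\phi_{j+1}\|$ and $\|\bar{u}^0_p U^{(j+1)}\langle x\rangle^{j+\frac12}\phi_{j+1}\|$; using $\bar{u}^0_p\sim\bar u$ from \eqref{samezies:1}, these are controlled by the $X_{j+1}$ and $X_{j+\frac12}$ components respectively. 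For the first factor, Hardy combined with $V^{(j)}_y=-U^{(j+1)}$ produces $\|\sqrt\eps\sqrt{\bar u}\,U^{(j+1)}\langle x\rangle^j\phi_{j+1}\|$ (the $X_j$ component) and $\|\sqrt\eps\,\bar u V^{(j)}\langle x\rangle^{j-\frac12}\phi_{j+1}\|$ (the $X_{j-\frac12}$ component), where we use $\phi_{j+1}\le\phi_j$ to dominate the support cutoffs.

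\textbf{Proof of \eqref{Lpq:emb:V:2}.} Applying Gagliardo--Nirenberg to $\bar u V^{(j)}$, which still vanishes at $y=0,\infty$, gives $\|\bar u V^{(j)}\|_{L^\infty_y}^2\lesssim\|\bar u V^{(j)}\|_{L^2_y}\|\p_y(\bar u V^{(j)})\|_{L^2_y}$ with $\p_y(\bar u V^{(j)})=\bar u_y V^{(j)}-\bar u U^{(j+1)}$. Proceeding as above, the main term becomes
\[
\|\sqrt\eps\,\bar u V^{(j)}\langle x\rangle^{j-\frac12}\phi_{j+1}\|\cdot\|\bar u U^{(j+1)}\langle x\rangle^{j+\frac12}\phi_{j+1}\|,
\]
whose two factors are \emph{directly} the corresponding entries of $X_{j-\frac12}$ and $X_{j+\frac12}$, both within $\mathcal{X}_{\le j+\frac12}$. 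The $\bar u$ weight thus bypasses the Hardy step that forced the appearance of $X_{j+1}$ in the first part. The cross term arising from $\bar u_y V^{(j)}$ is lower order: by \eqref{prof:u:est}, $|\bar u_y|\lesssim\langle x\rangle^{-\frac12}$, and combining this decay with Hardy \eqref{Hardy:1} applied to $V^{(j)}$ (plus Young's inequality) reduces it to a small multiple of the same main-term constituents already in $\mathcal{X}_{\le j+\frac12}$.

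The principal obstacle is the delicate matching of the weights $(\eps,\bar u,\langle x\rangle)$ in the Cauchy--Schwarz split against the available $X_n$, $X_{n+\frac12}$ components; in particular, the $\bar u$ factor in \eqref{Lpq:emb:V:2} is essential to access the tighter bound $\mathcal{X}_{\le j+\frac12}$ without invoking the $X_{j+1}$ regularity needed in \eqref{Lpq:emb:V:1}.
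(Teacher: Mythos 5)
Your proof of \eqref{Lpq:emb:V:1} is essentially the paper's: write $|V^{(j)}|^2$ as $2\int_y^\infty V^{(j)}V^{(j)}_y$, use $V^{(j)}_y=-U^{(j)}_x$, split the weights by Cauchy--Schwarz, and convert the two unweighted $L^2$ factors into $X_{j-\frac12},X_j,X_{j+\frac12},X_{j+1}$ components via \eqref{Hardy:three}--\eqref{Hardy:four}. That part is fine.

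For \eqref{Lpq:emb:V:2} there is a genuine gap in your treatment of the cross term $\bar u_yV^{(j)}$, and I do not see how to repair it along the route you describe. After the Cauchy--Schwarz split the cross factor you must control is $\|\bar u_yV^{(j)}\langle x\rangle^{j+\frac12}\phi_{j+1}\|$ (the companion factor $\|\sqrt\eps\,\bar uV^{(j)}\langle x\rangle^{j-\frac12}\phi_{j+1}\|$ having absorbed all of the $\eps^{\frac12}$ and the lower $x$-weight). Using $|\bar u_y|\lesssim\langle x\rangle^{-\frac12}$ leaves $\|V^{(j)}\langle x\rangle^{j}\phi_{j+1}\|$, and Hardy \eqref{Hardy:1} then produces $\gamma^{\frac12}\|\sqrt{\bar u}\,U^{(j)}_x\langle x\rangle^{j+\frac12}\phi_{j+1}\|+\gamma^{-1}\|\bar uV^{(j)}\langle x\rangle^{j}\phi_{j+1}\|$. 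Neither piece lives in $\mathcal{X}_{\le j+\frac12}$: the $X_{j+\frac12}$ component carries the weight $\bar u$, not $\sqrt{\bar u}$, and upgrading $\sqrt{\bar u}$ to $\bar u$ via \eqref{Hardy:three} reintroduces $\|\sqrt{\bar u}\,U^{(j+1)}_y\langle x\rangle^{j+1}\|\in X_{j+1}$; while $\|\bar uV^{(j)}\langle x\rangle^{j}\|=\|\bar uV^{(j-1)}_x\langle x\rangle^{(j-1)+1}\|$ exceeds the available $X_{j-\frac12}$ control $\|\sqrt\eps\,\bar uV^{(j-1)}_x\langle x\rangle^{j-\frac12}\phi_j\|$ by a factor $\eps^{-\frac12}\langle x\rangle^{\frac12}$. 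The alternative of writing $\|\bar u_yV^{(j)}\|_{L^2_y}\lesssim\|y\bar u_y\|_\infty\|V^{(j)}/y\|_{L^2_y}\lesssim\|U^{(j)}_x\|_{L^2_y}$ (Hardy in $y$) again lands on $\|U^{(j)}_x\langle x\rangle^{j+\frac12}\|$, which by \eqref{Hardy:three} costs a small multiple of $X_{j+1}$ --- precisely the norm that \eqref{Lpq:emb:V:2} is designed to avoid relative to \eqref{Lpq:emb:V:1}. So your cross term is not ``lower order''; it is exactly borderline and fails.

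The paper sidesteps the cross term entirely: instead of differentiating the weighted quantity $\bar uV^{(j)}$, one writes $\bar u_\ast(y)^2|V^{(j)}(y)|^2=2\bar u_\ast(y)^2\bigl|\int_y^\infty V^{(j)}V^{(j)}_y\,dy'\bigr|$ and uses the monotonicity of the Blasius profile, $\bar u_\ast(y')\ge\bar u_\ast(y)\ge0$ for $y'\ge y$, to move $\bar u_\ast^2$ \emph{inside} the integral as $\bar u_\ast(y')^2$. This distributes one factor of $\bar u_\ast$ onto each of $V^{(j)}$ and $V^{(j)}_y=-U^{(j)}_x$ without ever producing a $\bar u_y$ term, and the two resulting factors are directly the $X_{j-\frac12}$ and $X_{j+\frac12}$ components after invoking $\bar u\sim\bar u_\ast$ from \eqref{samezies:1}. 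You should replace your Leibniz step with this monotonicity argument.
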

\begin{proof} We begin with \eqref{Lpq:emb:V:1}. For this, we first freeze $x$ and integrate from $y = \infty$ to obtain 
\begin{align} \n
\eps^{\frac 1 2}|V^{(j)}|^2 \langle x \rangle^{2j} \phi_{j+1}^2 = & 2 \eps^{\frac 1 2} |\int_y^\infty V^{(j)} V^{(j)}_y\langle x \rangle^{2j} \phi_{j+1}^2 \ud y' | \lesssim \| \eps^{\frac 1 2} V^{(j)} \langle x \rangle^{j - \frac 1 2} \phi_{j+1} \|_{L^2_y} \| U^{(j)}_x \langle x \rangle^{j+ \frac 1 2} \phi_{j+1} \|_{L^2_y} \\
= & \| \eps^{\frac 1 2} V^{(j-1)}_x \langle x \rangle^{(j-1)+ \frac 1 2} \phi_{j+1} \|_{L^2_y} \| U^{(j)}_x \langle x \rangle^{j + \frac 1 2} \phi_{j+1} \|_{L^2_y}.
\end{align}
We now take $L^2_x$ and appeal to \eqref{Hardy:three} - \eqref{Hardy:four}.

Similarly, we compute 
\begin{align} \n
\eps^{\frac 1 2}\bar{u}_\ast^2 |V^{(j)}|^2 \langle x \rangle^{2j} \phi_{j+1}^2 \le & 2 \eps^{\frac 1 2} \bar{u}_\ast^2 |\int_y^\infty V^{(j)} V^{(j)}_y\langle x \rangle^{2j} \phi_{j+1}^2 \ud y' | \lesssim \eps^{\frac 1 2}  \int_y^\infty \bar{u}_\ast^2| V^{(j-1)}_x ||U^{(j)}_x|\langle x \rangle^{2j} \phi_{j+1}^2 \ud y' \\
\lesssim & \eps^{\frac 1 2} \| \bar{u}_\ast V^{(j-1)}_x \langle x \rangle^{j - \frac 1 2} \phi_{j+1}\| \| \bar{u}_\ast U^{(j)}_x \langle x \rangle^{j + \frac 1 2} \phi_{j+1} \| 
\end{align}
where above we have used that $\bar{u}_\ast(y') \ge \bar{u}_\ast(y) \ge 0$ when $y' \ge y$ to bring the $\bar{u}_\ast^2$ factor inside the integral. We now use \eqref{samezies:1} to conclude. 
\end{proof}

We will now need to translate the information on $(U, V)$ from the norms stated above to information regarding $(u, v)$. 
\begin{lemma} For $2 \le j \le 10$, $0 \le k \le 10$, $1 \le m \le 11$, and for any $0 < \delta << 1$,   
\begin{align} \label{L2:uv:eq}
\| u^{(k)}_x x^{k+ \frac 1 2} \phi_{k+1} \| + \| \sqrt{\eps} v^{(k)}_x x^{k + \frac 1 2} \phi_{k+1} \| \le &  C_\delta \| U, V \|_{\mathcal{X}_{\le k + \frac 1 2}} + \delta \| U, V\|_{\mathcal{X}_{\le k+1}} \\ \label{L2:uv:eq:2}
\| u^{(m)}_y x^{m} \phi_m \| +\sqrt{\eps} \| u^{(m)}_x x^m \phi_{m} \| + \eps \| v^{(m)}_x x^{m} \phi_{m} \|  \lesssim &  \| U, V \|_{\mathcal{X}_{\le m}} \\ \label{mixed:L2:orig:1}
\| \frac{1}{\bar{u}} \p_x^j v \langle x\rangle^{j } \phi_{j+1} \|_{L^2_x L^\infty_y}  \lesssim &  \| U, V \|_{\mathcal{X}_{\le j + 1}}, \\ \label{mL2again}
\| \p_x^j v \langle x\rangle^{j } \phi_{j+1} \|_{L^2_x L^\infty_y}  \lesssim &  \| U, V \|_{\mathcal{X}_{\le j + \frac 1 2 }}. 
\end{align}
\end{lemma}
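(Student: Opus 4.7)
The key move is to convert every $(u,v)$-derivative into $(U,V)$-derivatives by Leibniz expansion of the identities $u = \bar u U + \bar u_y q$ and $v = \bar u V - \bar u_x q$ from \eqref{formula:1}, combined with the crucial substitution
\[
q^{(\ell)} = -V^{(\ell-1)} \quad \text{for } \ell \ge 1,
\]
which follows from $q_x = -V$ together with $q|_{x=0} = \psi|_{x=0}/\bar u(0,\cdot) = 0$ (since $u|_{x=0}=0$). Each resulting term is a product of a derivative of $\bar u$ (or $\bar u_y$, $\bar u_x$) times some $U^{(\ell)}$, $U^{(\ell)}_y$, or $V^{(\ell)}$, and the weighted bounds in Assumption \ref{assume:1} such as $|\p_x^m\bar u| \lesssim \bar u/x^m$ and $|\p_x^m\bar u_y| \lesssim x^{-1/2-m}$ will be used to convert surplus $x$-derivatives into extra powers of $x^{-1}$.

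\textbf{Proof of \eqref{L2:uv:eq} and \eqref{L2:uv:eq:2}.} For $u^{(k+1)} = \p_x^{k+1}(\bar u U) + \p_x^{k+1}(\bar u_y q)$, the top-order contribution from the first sum, at $\ell = k+1$, is $\bar u U^{(k+1)} = \bar u(U^{(k)})_x$, with weight $x^{k+1/2}\phi_{k+1}$, exactly matching the $X_{k+1/2}$ norm. The top-order contribution from the second sum at $\ell = k+1$ is $\bar u_y q^{(k+1)} = -\bar u_y V^{(k)}$. The analogous top orders for $\sqrt{\eps} v^{(k)}_x$ are $\sqrt{\eps}\bar u V^{(k+1)}$ (matching $X_{k+1/2}$) and $\sqrt{\eps}\bar u_x V^{(k)}$. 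All sub-leading Leibniz terms carry factors $\p_x^{k+1-\ell}\bar u$ or $\p_x^{k+1-\ell}\bar u_y$, which by Assumption \ref{assume:1} absorb like $\bar u/x^{k+1-\ell}$ or $x^{-1/2-(k+1-\ell)}$; using $\phi_{k+1}\le \phi_k \le \phi_{\ell+1}$ on the support of $\phi_{k+1}$ (which follows from the nesting in \eqref{def:phi:j}), each sub-leading term slots into some $X_{\ell + 1/2}$ or $X_\ell$ with $\ell \le k$, and the $\ell = 0$ term involving $q$ is controlled via \eqref{Hardy:three:a}--\eqref{Hardy:four:a} by $\|U, V\|_{X_0}$. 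The estimate \eqref{L2:uv:eq:2} is handled identically: the top-order of $u^{(m)}_y$ is $\bar u U^{(m)}_y$, bounded by $\|\sqrt{\bar u} U^{(m)}_y x^m\phi_m\|$ (since $\bar u\le 1$), while the next order $\bar u_y U^{(m)}$ is comparable in the boundary-layer region to $\sqrt{-\bar u_{yy}} U^{(m)}$ and is controlled by the $X_m$ norm directly; no $\delta$-borrowing is required because the $x^m$ weight matches the $X_m$ homogeneity.

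\textbf{Origin of the $\delta$/$C_\delta$ split in \eqref{L2:uv:eq}.} The borderline terms are those of the form $\bar u_y V^{(k)}$ (from the top $q$-contribution) and $\bar u_x V^{(k)}$ (from the $v$ expansion), which are unweighted by $\bar u$ and, for the $u^{(k)}_x$ estimate, also unweighted by $\sqrt{\eps}$. Using the Hardy inequality \eqref{Hardy:three} on $V^{(k)} = V^{(k-1)}_x$ (or more directly a Hardy-in-$y$ argument exploiting $V|_{y=0}=0$) yields a bound
\[
\bigl\| V^{(k)} x^k \phi_{k+1}\bigr\| \le \gamma \bigl\| \sqrt{\bar u}\, V^{(k+1)}_y \,x^{k+1}\phi_{k+1}\bigr\| + C_\gamma \bigl\| \bar u V^{(k)}_x x^{k+1/2}\phi_{k+1}\bigr\|,
\]
where the first term on the right is exactly a piece of $\|U,V\|_{Y_{k+1} \cap X_{k+1}}$, hence contributes the $\delta\|U,V\|_{\mathcal X_{\le k+1}}$ part upon choosing $\gamma = \delta$, and the second term is a piece of $X_{k+1/2}$, hence contributes to $C_\delta \|U,V\|_{\mathcal X_{\le k+1/2}}$.

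\textbf{Mixed-norm estimates \eqref{mixed:L2:orig:1}, \eqref{mL2again} and the main obstacle.} For these, expand $\frac{1}{\bar u}\p_x^j v = V^{(j)} + \sum_{\ell<j}c_\ell \frac{\p_x^{j-\ell}\bar u}{\bar u} V^{(\ell)} + \text{terms in }q$ and invoke \eqref{Lpq:emb:V:1}; the top-order $V^{(j)}$ costs one power of $\eps^{1/4}$, but the factor of $1/\bar u$ already absorbed in the statement means we gain a cancellation so that \eqref{Lpq:emb:V:1} applies at level $\mathcal X_{\le j+1}$. For \eqref{mL2again}, the extra factor of $\bar u$ promotes us to \eqref{Lpq:emb:V:2}, which lives at level $\mathcal X_{\le j+1/2}$. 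The lower order terms in both are handled as before. The main obstacle throughout is the treatment of the pure $q$ term (that is, the $\ell = 0$ contribution $\p_x^{j+1}\bar u \cdot q$), where $q$ itself must be bounded without any derivative substitution; this requires a weighted Hardy-in-$x$ argument combining $q|_{x=0}=0$ with $q_x = -V$ to trade $q$ for a weighted norm of $V$ (and ultimately of $U$ via \eqref{Hardy:three:a}), and precise tracking of $\bar u$-weights via \eqref{samezies:1} to prevent a blow-up near the boundary-layer scale $z \le 1$. This Hardy step is where the $C_\delta$ constant blows up as $\delta \downarrow 0$ in \eqref{L2:uv:eq}, but it is harmless for \eqref{L2:uv:eq:2}, \eqref{mixed:L2:orig:1}, \eqref{mL2again} because those estimates do not ask for an $x^{1/2}$ surplus weight beyond the number of $x$-derivatives.
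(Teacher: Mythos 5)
Your overall strategy is the same as the paper's: Leibniz-expand $u=\bar u U+\bar u_y q$, $v=\bar u V-\bar u_x q$, substitute $q^{(\ell)}=-V^{(\ell-1)}$, absorb the coefficient derivatives via Assumption \ref{assume:1}, use Hardy in $y$ (from $q|_{y=0}=V|_{y=0}=0$) together with \eqref{Hardy:three}--\eqref{Hardy:four} to generate the $\delta/C_\delta$ split, and use \eqref{Lpq:emb:V:1}--\eqref{Lpq:emb:V:2} plus an Agmon-in-$y$ interpolation for the mixed $L^2_xL^\infty_y$ bounds. That is the correct architecture and it closes.

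One step as written would fail: in \eqref{L2:uv:eq:2} you claim that $\bar u_y U^{(m)}$ is ``comparable in the boundary-layer region to $\sqrt{-\bar u_{yy}}\,U^{(m)}$'' and hence controlled directly by $X_m$. This comparison is false near $y=0$: since $\bar u^0_{pyy}(x,0)=0$ (Prandtl equation with zero pressure gradient), $\sqrt{-\bar u_{yy}}$ degenerates at the boundary, whereas $\bar u_y(x,0)\gtrsim\langle x\rangle^{-1/2}$ by \eqref{prime:pos}; the bulk weight $\sqrt{-\bar u_{yy}}$ therefore cannot dominate $\bar u_y$, and the boundary-trace piece $\|\sqrt{\bar u_y}U^{(m)}\|_{y=0}$ of $X_m$ does not help in the interior. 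The correct treatment (the paper's) is to use $m\ge 1$ to write $\p_x^{m-l}U=U^{(m-l-1)}_x$ for $l<m$ and invoke \eqref{Hardy:three}, and for the $l=m$ term $\p_x^m\bar u_y\,U$ to split $U=U(x,0)+(U-U(x,0))$, handling the first piece with the trace norm and the second with Hardy in $y$. Relatedly, the displayed inequality in your ``$\delta/C_\delta$'' paragraph is mis-stated ($\|\sqrt{\bar u}V^{(k+1)}_y x^{k+1}\|$ without a $\sqrt\eps$ is not a piece of $X_{k+1}$, and the weights do not match \eqref{Hardy:four}); the parenthetical route you mention — Hardy in $y$ to convert $V^{(k)}$ into $V^{(k)}_y=-U^{(k)}_x$, then \eqref{Hardy:three} — is the one that actually works. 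Finally, for the pure $q$-term in \eqref{mixed:L2:orig:1}--\eqref{mL2again} you propose a Hardy-in-$x$ argument; the paper instead bounds $\|q/y\|_{L^2_xL^\infty_y}$ by Hardy in $y$ and the same Agmon interpolation used for $V/y$, which is simpler and avoids tracking $\bar u$-weights in $x$.
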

\begin{proof} To prove these estimates, we simply use \eqref{formula:1} to express $(u, v)$ in terms of $(U, V)$. We do this now, starting with \eqref{L2:uv:eq}. Differentiating \eqref{formula:1} $k+1$ times in $x$, we obtain 
\begin{align} \n
\| u^{(k)}_x x^{k+ \frac 1 2} \phi_{k+1} \| \le &\sum_{l = 0}^{k+1} \binom{k+1}{l} ( \| \p_x^l \bar{u} \p_x^{k - l} U_x x^{k+\frac 1 2} \phi_{k+1} \| + \| \p_x^{l} \bar{u}_y \p_x^{k-l}V x^{k + \frac 1 2} \phi_{k+1} \| ) \\ \n
\lesssim & \sum_{l = 0}^{k+1} \| \frac{\p_x^l \bar{u}}{ \bar{u}} x^l \|_\infty \| \bar{u} \p_x^{k-l} U_x \langle x \rangle^{k - l + \frac 1 2} \phi_{k+1} \| + \| \p_x^l \bar{u}_y y x^l \|_\infty \| \p_x^{k-l} V_y x^{k-l + \frac 1 2} \phi_{k+1} \| \\ \n
\lesssim & \sum_{l = 0}^{k+1}  \| \bar{u} \p_x^{k-l} U_x \langle x \rangle^{k - l + \frac 1 2} \phi_{k+1} \| + \| \bar{u} \p_x^{k-l} V_y x^{k-l + \frac 1 2} \phi_{k+1} \| +\|  \sqrt{\bar{u}} \p_x^{k+1-l} U_y x^{k+1 -l} \phi_{k+1} \|   \\
\lesssim & \sum_{l = 0}^{k+1} (\| U, V \|_{\mathcal{X}_{\le k-l + \frac 1 2}} + \|  U, V \|_{\mathcal{X}_{\le k+1-l}} ),
\end{align}
which establishes \eqref{L2:uv:eq}. The analogous proof works for the second quantity in \eqref{L2:uv:eq}. 

For the first quantity in \eqref{L2:uv:eq:2}, we obtain the identity 
\begin{align}
u_y^{(m)} = \sum_{l =0}^m \binom{m}{l} (\p_x^l \bar{u} \p_x^{m-l} U_y + 2 \p_x^l \bar{u}_y \p_x^{m-l} U + \p_x^l \bar{u}_{yy} \p_x^{m-l} q).
\end{align}
We now estimate 
\begin{align} \n
\| u_y^{(m)} x^m \phi_m \| \lesssim &\sum_{l = 0}^m \Big\| \frac{\p_x^l \bar{u}}{\bar{u}} \langle x \rangle^l \Big\|_\infty \| \sqrt{\bar{u}} \p_x^{m-l} U_y \langle x \rangle^{m-l} \phi_{m-l} \| \\ \n
&+ \sum_{l = 0}^{m-1} \| \p_x^l \bar{u}_y \langle x \rangle^{l + \frac 1 2}\|_\infty \| \p_x^{m-1-l} U_x \langle x \rangle^{m-1-l +\frac 1 2} \phi_{m-1-l} \| \\ \n
& + \| \p_x^m \bar{u}_y y x^m \|_\infty \Big\| \frac{U - U(x, 0)}{y} \Big\|  + \| \p_x^m \bar{u}_y \langle x \rangle^{m+\frac 1 4} \|_{L^\infty_x L^2_y} \| U(x, 0) \langle x \rangle^{-\frac 1 4}  \|_{L^2_x} \\ \n
&+ \sum_{l = 0}^{m-1} \| \p_x^l \bar{u}_{yy} \langle x \rangle^{l + \frac 1 2} y \|_\infty \| \p_x^{m-1-l}  \frac{q_x}{y} \langle x \rangle^{m-1-l +\frac 1 2} \phi_{m-1-l} \| \\
& + \| \p_x^m \bar{u}_{yy} y^2 x^m \|_\infty \| \frac{q - y U(x, 0)}{\langle y \rangle^2} \|, 
\end{align}
where we use above that $m \ge 1$ in \eqref{L2:uv:eq:2}.

We now arrive at the mixed norm estimates in \eqref{mixed:L2:orig:1}. For this, we first record the identity 
\begin{align}
\p_x^j v = \sum_{l = 0}^{j} \binom{j}{l} ( \p_x^l \bar{u} \p_x^{j-l} V - \p_x^l \bar{u}_x \p_x^{j-l} q ).
\end{align}
From here, we compute 
\begin{align} \n
\| \frac{1}{\bar{u}} \p_x^j v \langle x \rangle^j \phi_j \|_{L^2_x L^\infty_y} \lesssim &\sum_{l = 0}^{j-1} \Big\| \frac{\p_x^l \bar{u}}{ \bar{u}} \langle x \rangle^l \Big\|_\infty \| \p_x^{j-l} V \langle x \rangle^{j-l} \phi_{j} \|_{L^2_x L^\infty_y} \\ \n
&+ \sum_{l = 0}^{j-2} \Big\| \frac{\p_x^{l+1} \bar{u}}{ \bar{u}} \langle x \rangle^{l+1} \Big\|_\infty \| \p_x^{j-l} q \langle x \rangle^{j-l-1} \phi_{j} \|_{L^2_x L^\infty_y} \\ \n
& + \| \p_x^j \bar{u} \langle x \rangle^{j - \frac 1 2} y \|_\infty \Big\| \frac{V}{y} \langle x \rangle^{\frac 1 2} \phi_j\Big\|_{L^2_x L^\infty_y} + \| \p_x^{j+1} \bar{u} y \langle x \rangle^{j + \frac 1 2} \|_\infty \Big\|  \frac{q}{y} \langle x \rangle^{- \frac 1 2} \phi_j \Big\|_{L^2_x L^\infty_y} \\ \label{RAC:1}
\lesssim & \sum_{l = 0}^{j-1} \| U, V \|_{\mathcal{X}_{\le l+1}}  + \Big\| \frac{V}{y} \langle x \rangle^{\frac 1 2} \phi_j \Big\|_{L^2_x L^\infty_y} +\Big\|  \frac{q}{y} \langle x \rangle^{- \frac 1 2}  \phi_j \Big\|_{L^2_x L^\infty_y},
\end{align}
where we have invoked \eqref{Lpq:emb:V:1}. To conclude, we need to estimate the final two terms appearing above. First, we have by using $V|_{y = 0} = 0$, 
\begin{align}
\Big\| \frac{V}{y} \langle x \rangle^{\frac 1 2} \phi_j \Big\|_{L^2_x L^\infty_y} \lesssim \| U_x \langle x \rangle^{\frac 1 2} \phi_j \|_{L^2_x L^\infty_y} \lesssim \| U, V \|_{\mathcal{X}_{\le 1.5}},
\end{align}
where above we have used the estimate 
\begin{align} \n
\langle x \rangle U_x^2 = & |\int_y^\infty \langle x \rangle U^{(1)} U^{(1)}_{y} \ud y'| \lesssim \| U_x \langle x \rangle^{\frac 1 2} \|_{L^2_y} \| U_{xy} \langle x \rangle \|_{L^2_y} \\ \n
\lesssim & (\| \bar{u} U_x \langle x \rangle^{\frac 1 2} \|_{L^2_y} + \| \sqrt{\bar{u}} U_{xy} \langle x \rangle \|_{L^2_y} )(\| \sqrt{\bar{u}} U^{(1)}_y \langle x \rangle \|_{L^2_y} + \| \sqrt{\bar{u}} U^{(1)}_{yy} \langle x \rangle^{\frac 3 2} \|_{L^2_y} ), 
\end{align}
which, upon taking supremum in $y$ and subsequently integrating in $x$, yields 
\begin{align}
\| U_x \langle x \rangle^{\frac 1 2}\phi_j \|_{L^2_x L^\infty_y}^2 \lesssim \| U, V \|_{\mathcal{X}_{\le 1.5}}^2.
\end{align}
An analogous estimate applies to the third term from \eqref{RAC:1}. The estimate \eqref{mL2again} works in a nearly identical manner, invoking \eqref{Lpq:emb:V:2} instead of \eqref{Lpq:emb:V:1}. 
\end{proof}

\subsection{Pointwise Decay Estimates} \label{pw:section:1}

A crucial feature of space $\mathcal{X}$ is that it is strong enough to control sharp pointwise decay rates of various quantities, which are in turn used to control the nonlinearity. To be precise, we need to treat large values of $x$ (the more difficult case) in a different manner than small values of $x$. Large values of $x$ will be treated through the weighted norms in \eqref{X:norm}, whereas small values of $x$ will be treated with the $\dot{H}^k$ component of \eqref{X:norm}, at an expense of $\eps^{-M_1}$. We recall from \eqref{def:phi:j} that $\phi_{12}(x)$ is only non-zero when $\phi_{j} = 1$ for $1 \le j \le 11$. 

\begin{lemma} For $0 \le k \le 8$, and for $j = 0, 1$,  
\begin{align} \label{UV:decay}
&\| \p_y^j U^{(k)} \langle x \rangle^{k + \frac 1 4 + \frac j 2} \phi_{12}\|_{L^\infty_y} \lesssim \| U, V \|_{\mathcal{X}}, &&\| \sqrt{\eps} V^{(k)} \langle x \rangle^{k + \frac 1 2} \phi_{12} \|_{L^\infty_y} \lesssim \| U, V \|_{\mathcal{X}}.  
\end{align}
\end{lemma}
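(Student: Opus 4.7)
The plan is to obtain all three pointwise bounds by a two-step 1D Sobolev embedding (first in $y$, then in $x$), combined with the Hardy-type inequalities proven earlier and the incompressibility identity $V^{(k)}_y=-U^{(k+1)}$. I sketch the argument for the first inequality with $j=0$; the cases $j=1$ and $V^{(k)}$ will proceed along the same lines.

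\emph{Sobolev embedding in $y$.} Since $U^{(k)}(x,\infty)=0$, for fixed $x$ the fundamental theorem of calculus and Cauchy--Schwarz give
\[
|U^{(k)}(x,y)|^{2}=-2\int_{y}^{\infty}U^{(k)}U^{(k)}_{y}\,\ud y'\le 2\|U^{(k)}(x,\cdot)\|_{L^{2}_{y}}\|U^{(k)}_{y}(x,\cdot)\|_{L^{2}_{y}}.
\]
Multiplying by $\langle x\rangle^{2k+\frac12}\phi_{12}^{2}$ and splitting the weight as $\langle x\rangle^{k}\cdot\langle x\rangle^{k+\frac12}$ yields
\[
|U^{(k)}\langle x\rangle^{k+\frac14}\phi_{12}|^{2}\le 2\,\|U^{(k)}\langle x\rangle^{k}\phi_{12}\|_{L^{2}_{y}}\|U^{(k)}_{y}\langle x\rangle^{k+\frac12}\phi_{12}\|_{L^{2}_{y}}.
\]

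\emph{Sobolev embedding in $x$.} Because $U^{(k)},U^{(k)}_{y}\to 0$ as $x\to\infty$, the standard inequality $\|f\|_{L^{\infty}_{x}L^{2}_{y}}^{2}\le 2\|f\|_{L^{2}_{xy}}\|f_{x}\|_{L^{2}_{xy}}$ applied to each factor, followed by taking $\sup_{x,y}$, gives
\[
\sup_{x,y}|U^{(k)}\langle x\rangle^{k+\frac14}\phi_{12}|^{4}\lesssim \|U^{(k)}\langle x\rangle^{k}\phi_{12}\|\,\|\p_{x}(U^{(k)}\langle x\rangle^{k}\phi_{12})\|\,\|U^{(k)}_{y}\langle x\rangle^{k+\frac12}\phi_{12}\|\,\|\p_{x}(U^{(k)}_{y}\langle x\rangle^{k+\frac12}\phi_{12})\|.
\]

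\emph{Bounding the four $L^{2}_{xy}$ factors.} On $\mathrm{supp}(\phi_{12})\subset\{x\ge 320\}$ every cut-off $\phi_{j}$, $\psi_{j}$ with $j\le 12$ equals $1$, so each norm entering $\|U,V\|_{\mathcal X}$ up to level $X_{11}\cap Y_{10+\frac12}$ is available. Factors involving $U^{(k+1)}$ or $U^{(k)}$ at weight $\langle x\rangle^{k}$ or $\langle x\rangle^{k+\frac12}$ are controlled through \eqref{Hardy:three} (which yields $\|U^{(\ell+1)}\langle x\rangle^{\ell+\frac12}\|\lesssim\|U,V\|_{\mathcal X_{\le\ell+1}}$) combined with the representation $U^{(k)}(x,y)=-\int_{x}^{\infty}U^{(k+1)}(x',y)\,\ud x'$ and Cauchy--Schwarz in $x'$, which trades one integration for a half-power of $\langle x\rangle$ in the surviving weight. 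Factors involving $U^{(k)}_{y}$ at unweighted-in-$\bar u$ level are handled by the Hardy inequality \eqref{Hardy:1} in $y$, which converts $\|\sqrt{\bar u}\,U^{(k)}_{y}\langle x\rangle^{k}\|$ (from $X_{k}$) together with $\|\sqrt{\bar u}\,U^{(k)}_{yy}\langle x\rangle^{k+\frac12}\|$ (from $Y_{k+\frac12}$) into $\|U^{(k)}_{y}\langle x\rangle^{k+\frac12}\|$. The cases $j=1$ and $V^{(k)}$ are analogous: the former applies the same machinery to $|U^{(k)}_{y}|^{2}\le 2\|U^{(k)}_{y}\|_{L^{2}_{y}}\|U^{(k)}_{yy}\|_{L^{2}_{y}}$ and invokes $Y_{k+\frac32}$ to extract the extra $\sqrt{\bar u}$-weighted $U^{(k+1)}_{yy}$ control; the latter uses $V^{(k)}_{y}=-U^{(k+1)}$ to replace the $L^{2}_{y}$ norm of $V^{(k)}_{y}$ by that of $U^{(k+1)}$, which is already under control via the preceding step, while the prefactor $\sqrt{\eps}$ is inherited from the $\sqrt{\eps}\bar uV_x$ terms appearing in the $X_{j+\frac12}$ and $X_{0}$ norms.

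\emph{Main obstacle.} The principal technical difficulty is the bookkeeping of $x$- and $y$-weights: the $L^{2}_{xy}$ norms in $\mathcal X$ all carry a $\sqrt{\bar u}$ (or $\bar u$) weight which degenerates at $y=0$, so Hardy \eqref{Hardy:1} must be invoked to trade this weight for extra $y$-derivatives, and similarly the representation $U^{(k)}=-\int_{x}^{\infty}U^{(k+1)}$ must be iterated to gain additional powers of $\langle x\rangle$. Each such trade consumes one unit of regularity in $x\p_{x}$, which is precisely why the permissible range $k\le 8$ in the lemma is two units short of the highest regularity level $X_{10}$, $Y_{10+\frac12}$, $X_{11}$ contained in $\mathcal X$.
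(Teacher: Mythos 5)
Your overall strategy — an anisotropic Sobolev embedding in $y$ then $x$, combined with the Hardy inequalities \eqref{Hardy:1} and \eqref{Hardy:three}--\eqref{Hardy:four} to strip the degenerate $\bar u$ weights, and the divergence-free relation to handle $V^{(k)}$ — is the same mechanism the paper uses. The paper's version writes $|f|^2\langle x\rangle^{2\alpha}\phi_{12}^2=\iint\p_x\p_y(f^2\langle x\rangle^{2\alpha}\phi_{12}^2)$ and Cauchy--Schwarzes each resulting term, so that within each product the factor carrying the extra $x$-derivative automatically absorbs an extra $\langle x\rangle^{1/2}$ of the weight; the $k=0$ cases are then obtained by integrating the $k=1$ pointwise bound from $x=\infty$.

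The gap is in your displayed four-factor estimate. After the $y$-step you fix the weights $\langle x\rangle^{k}$ and $\langle x\rangle^{k+\frac12}$ on the two $L^2_y$ factors, and the unweighted $x$-Sobolev inequality $\|g\|_{L^\infty_x}^2\le 2\|g\|\,\|g_x\|$ then forces you to control $\|U^{(k)}\langle x\rangle^{k}\phi_{12}\|_{L^2_{xy}}$ and $\|U^{(k)}_y\langle x\rangle^{k+\frac12}\phi_{12}\|_{L^2_{xy}}$. Neither is controlled by $\mathcal{X}$: the norms \eqref{def:Xn}, \eqref{def:half:norm:Y} together with \eqref{Hardy:1}, \eqref{Hardy:three} yield only $\|U^{(k)}\langle x\rangle^{k-\frac12}\|$ and $\|U^{(k)}_y\langle x\rangle^{k}\|$, each half a power of $\langle x\rangle$ weaker, and since the decay rates in \eqref{UV:decay} are sharp there is no slack to absorb this. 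The structure of $\mathcal{X}$ gains exactly $\langle x\rangle^{1/2}$ per $x$-derivative, so the weight must be redistributed \emph{asymmetrically} between $g$ and $g_x$: either apply the fundamental theorem of calculus to the fully weighted square, $|g|^2\langle x\rangle^{2\alpha}=-\int_x^\infty\p_{x'}\bigl(g^2\langle x'\rangle^{2\alpha}\bigr)\lesssim\|g\langle x\rangle^{\alpha-\frac12}\|_{L^2_{x'}}\|g_{x}\langle x\rangle^{\alpha+\frac12}\|_{L^2_{x'}}+\|g\langle x\rangle^{\alpha-\frac12}\|_{L^2_{x'}}^2$, or use the representation $g=-\int_x^\infty g_{x'}$ with Cauchy--Schwarz, which you do mention in passing for $U^{(k)}$ but do not actually use in the displayed chain. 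With that correction (and the corresponding relabelling of which norms are invoked: $X_{\le k+1}$, $Y_{\le k+\frac32}$ via \eqref{Hardy:1}, \eqref{Hardy:three}), your argument closes and coincides with the paper's; as written, two of your four factors are infinite-looking quantities the space does not control.
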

\begin{proof} We first establish the $U$ decay via 
\begin{align} \n
U_x^2 \langle x \rangle^{\frac 5 2} \phi_{12}^2 = &\Big| - \int_y^\infty 2 U_x U_{xy} \langle x \rangle^{\frac 5 2} \phi_{12}^2 \Big|\\ \n
\le & \Big| \int_y^\infty \int_x^\infty 2 U_{xx} U_{xy} \langle x \rangle^{\frac 5 2} \phi_{12}^2 \Big| + \Big| 2 \int_y^\infty \int_x^\infty U_x U_{xxy} \langle x \rangle^{\frac 5 2} \phi_{12}^2\Big| \\ \n
&+  5 \Big| \int_y^\infty \int_x^\infty U_x U_{xy} \langle x \rangle^{\frac 3 2}\phi_{12}^2 \Big| + \Big| \int_y^\infty \int_x^\infty 4 U_x U_{xy} \phi_{12} \phi_{12}' \Big| \\   \n
\lesssim & \| U_{xx} \langle x \rangle^{\frac 3 2} \phi_{12} \| \| U_{xy} \langle x \rangle  \phi_{12} \| + \| U_x \langle x \rangle^{\frac 1 2} \phi_{12} \| \| U_{yxx} \langle x \rangle^2 \phi_{12} \| \\ 
&+ \| U_x \langle x \rangle^{\frac 1 2} \phi_{12}\| \| U_{xy} \langle x \rangle \phi_{12} \| + \| U_x \phi_{11} \| \| U_{xy} \phi_{11} \|  .
\end{align}

We now perform the same calculation for the $V$ decay in \eqref{UV:decay}. We begin with $V_x$, via
\begin{align} \n
\eps V_x^2 \langle x \rangle^{3} \phi_{12}^2 = &\Big| \int_y^\infty 2 \eps V_x V_{xy} \langle x \rangle^3 \phi_{12}^2 \ud y' \Big| \\ \n
\le & \int_y^\infty \int_x^\infty 2 \eps | V_{xx} | | V_{xy} | \langle x \rangle^{3} \phi_{12}^2 \ud y' \ud x' + \int_y^\infty \int_x^\infty 2 \eps | V_{x} | | V_{xxy} | \langle x \rangle^{3} \phi_{12}^2 \ud y' \ud x' \\ \n
& + \int_y^\infty \int_x^\infty 6 \eps | V_{x} | | V_{xy} | \langle x \rangle^{2} \phi_{12}^2 \ud y' \ud x' + |\int_y^\infty \int_x^\infty 4\eps |V_x| |V_{xy}| \langle x \rangle^{3} \phi_{12} \phi_{12}' \ud y' \ud x' \\ \n
\lesssim & \sqrt{\eps} \| \sqrt{\eps} V^{(1)}_x \langle x \rangle^{\frac 3 2} \phi_{12}\| \| U^{(1)}_x \langle x \rangle^{\frac 3 2} \phi_{12} \| + \sqrt{\eps} \| \sqrt{\eps} V_x \langle x \rangle^{\frac 1 2} \phi_{12} \| \| U^{(2)}_x \langle x \rangle^{2.5} \phi_{12} \| \\
& + \sqrt{\eps} \| \sqrt{\eps} V_x \langle x \rangle^{\frac 1 2} \phi_{12} \| \| U^{(1)}_x \langle x \rangle^{\frac 3 2} \phi_{12} \| + \sqrt{\eps} \| \sqrt{\eps} V_x \langle x \rangle^{\frac 1 2} \phi_{11} \| \| U^{(1)}_x \langle x \rangle^{\frac 3 2} \phi_{11} \|. 
\end{align}
From here the result follows upon invoking \eqref{Hardy:three} - \eqref{Hardy:four}. The same computation can be done for higher derivatives, and for $U, V$ themselves we use the estimate 
\begin{align}
U \phi_{12} = \phi_{12} \int_x^\infty U_x \lesssim \|U, V \|_{\mathcal{X}} \int_x^\infty \langle x' \rangle^{- \frac 5 4} \ud x' \lesssim \langle x \rangle^{- \frac 1 4} \| U, V \|_{\mathcal{X}},
\end{align}
and similarly for $V$, we integrate 
\begin{align}
V \phi_{12} = \phi_{12} \int_x^\infty V_x \lesssim \eps^{- \frac 1 2} \|U, V \|_{\mathcal{X}} \int_x^\infty \langle x' \rangle^{- \frac 3 2} \ud x' \lesssim \langle x \rangle^{- \frac 1 2} \eps^{- \frac 1 2} \| U, V \|_{\mathcal{X}}.
\end{align}
This concludes the proof. 
\end{proof}

%We now establish similar inequalities with the additional weight of $\bar{u}$, which has the effect of ``saving one derivative". 
%\begin{lemma}For $0 \le k \le 10$, 
%\begin{align}
%&\| \sqrt{\bar{u}} U^{(k)} \langle x \rangle^{k + \frac 1 4} \|_{L^\infty_y} \lesssim \| U, V \|_{\mathcal{X}}, &&\| \sqrt{\bar{u}} \sqrt{\eps} V^{(k)} \langle x \rangle^{k + \frac 1 2} \|_{L^\infty_y} \lesssim \| U, V \|_{\mathcal{X}}.  
%\end{align}
%\end{lemma}
%\begin{proof}
%
%\end{proof}

It is also necessary that we establish decay estimates on the original unknowns $(u, v)$. For this purpose, we define another auxiliary cut-off function in the following manner 
\begin{align} \label{psi:twelve:def}
\psi_{12}(x) := \begin{cases} 0 \text{ for } 0 \le x \le 60 \\ 1 \text{ for } x \ge 61 \end{cases}
\end{align}
The main point in specifying $\psi_{12}$ in this manner is so that its support is contained where $\psi_j = 1$ for $j = 2,...,11$ and simultaneously $\psi_{12} = 1$ on the set where $\phi_1$ is supported.  Then, we have the following lemma.  
\begin{lemma} For $1 \le k \le 8$, and $j = 1, 2$, 
\begin{align} \label{pw:dec:u}
\| u^{(k)} x^{k + \frac 1 4} \psi_{12} \|_\infty + \| u^{(k)}_y x^{\frac 3 4} \psi_{12}  \|_\infty + \| \frac{1}{\bar{u}} u^{(k)} x^{k + \frac 1 4}  \psi_{12}\|_\infty \lesssim & \eps^{-M_1} \| U, V \|_{\mathcal{X}}   \\ \label{pw:v:1}
\| v^{(k)} x^{k + \frac 1 2} \psi_{12} \|_\infty  + \| \frac{v^{(k)}}{\bar{u}} x^{k + \frac 1 2} \psi_{12} \|_\infty \lesssim &  \eps^{-M_1} \| U, V \|_{\mathcal{X}}  \\ \label{mixed:emb}
\| \p_y^j u^{(k)} x^{k + \frac j 2} \psi_{12}\|_{L^\infty_x L^2_y} \lesssim &  \eps^{-M_1} \| U, V \|_{\mathcal{X}}.
\end{align}
In addition, 
\begin{align} \label{Linfty:wo}
\| u \langle x \rangle^{\frac 1 4} \|_\infty + \| \sqrt{\eps} v \langle x \rangle^{\frac 1 2} \|_\infty \lesssim  \eps^{-M_1} \| U, V \|_{\mathcal{X}}
\end{align}
\end{lemma}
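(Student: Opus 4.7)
The plan is to decompose the cutoff $\psi_{12}$ via $\psi_{12} = \phi_{12} + (\psi_{12} - \phi_{12})$. On the $\phi_{12}$ piece, supported for $x \gtrsim 320$, the previous lemma \eqref{UV:decay} delivers pointwise decay on $(U, V)$ which I will transfer to $(u, v)$ using the velocity relations \eqref{formula:1} together with the profile bounds of Assumption \ref{assume:1}. On the remaining piece $\psi_{12} - \phi_{12}$, supported on the bounded $x$-strip roughly $[61, 325]$, every cutoff $\psi_{k+1}$ is identically one, so the elliptic component $\|U, V\|_E$ of the $\mathcal{X}$-norm controls many $x$-derivatives of $(u, v)$ in $L^2_{xy}$. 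Standard one-dimensional Sobolev embeddings in $x$ and $y$ will then yield the claimed pointwise bounds at the cost of a power of $\eps^{-1}$ that stays below $\eps^{-M_1}$.

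For the $\phi_{12}$-supported contribution, I differentiate \eqref{formula:1} $k$ times in $x$ to obtain
\[
u^{(k)} = \sum_{l=0}^k \binom{k}{l}\bigl(\p_x^l \bar{u}\, U^{(k-l)} + \p_x^l \bar{u}_y\, q^{(k-l)}\bigr),
\qquad
v^{(k)} = \sum_{l=0}^k \binom{k}{l}\bigl(\p_x^l \bar{u}\, V^{(k-l)} - \p_x^l \bar{u}_x\, q^{(k-l)}\bigr).
\]
The estimates \eqref{prof:u:est}--\eqref{prof:v:est} bound $\p_x^l \bar{u}/\bar{u}$, $y\,\p_x^l \bar{u}_y/\bar{u}$, and $\p_x^l \bar{u}_x/\bar{u}$ with the requisite powers of $x$, while $q^{(k)} = \int_0^y U^{(k)}\,\ud y'$ is controlled pointwise through \eqref{UV:decay}. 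For the quotients $\bar{u}^{-1}u^{(k)}$ and $\bar{u}^{-1}v^{(k)}$, the potentially singular term $\bar{u}_y q/\bar{u}$ is bounded pointwise by $\|U\|_{L^\infty_y}$, since $|q/y| \lesssim \|U\|_{L^\infty_y}$ (using $q|_{y=0}=0$) and $y\bar{u}_y/\bar{u}$ is uniformly bounded by \eqref{samezies:1}. Differentiating once more in $y$ yields the $u^{(k)}_y$ estimate. All decay rates match the weights in \eqref{UV:decay}.

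For the $(\psi_{12} - \phi_{12})$-supported contribution on the bounded strip, \eqref{def:E:norm} gives, for $1 \le k \le 11$, the $L^2_{xy}$ bound $\|(\p_x^k u_y,\, \sqrt{\eps}\,\p_x^k u_x,\, \eps\,\p_x^k v_x)\| \lesssim \eps^{-k}\|U, V\|_E$. On the bounded $x$-interval, the standard one-dimensional Sobolev embedding $\|f\|_{L^\infty_x L^2_y}^2 \lesssim \|f\|_{L^2_{xy}}\|f_x\|_{L^2_{xy}}$ converts these $L^2_{xy}$ controls into $L^\infty_x L^2_y$ bounds (which already yields \eqref{mixed:emb}), paying at most one additional factor of $\eps^{-1/2}$ per extra $x$-derivative through the $\sqrt{\eps}\,\p_x$ pairing in $\|U, V\|_E$. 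A further Sobolev embedding in $y$, namely $\|g\|_{L^\infty_y}^2 \lesssim \|g\|_{L^2_y}\|g_y\|_{L^2_y}$, upgrades these to $L^\infty_{xy}$ bounds; the extra $\p_y$ derivative is supplied either directly from the elliptic controls on $u^{(k)}_y$ or, when two $y$-derivatives are needed, by using the equation \eqref{vel:eqn:1} to trade $u^{(k)}_{yy}$ for lower-order quantities plus a pressure term handled by similar embeddings. The aggregate $\eps$-loss is bounded by $\eps^{-M_1}$, with $M_1 = 24$ chosen very conservatively; polynomial $x$-weights are harmless on the bounded interval. Finally, \eqref{Linfty:wo} follows by integrating $u(x, y) = -\int_x^\infty u_x(x', y)\,\ud x'$ and invoking the $k=1$ case of \eqref{pw:dec:u}, and analogously for $\sqrt{\eps}\,v$.

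The main obstacle is the simultaneous bookkeeping of the three cutoff families $\phi_k, \psi_k, \gamma_{k-1,k}$, ensuring that in the transition region $61 \le x \le 325$ the elliptic norm is active for every derivative we take, together with the careful tracking of $\eps$-losses so the final bound remains $\lesssim \eps^{-M_1}$. A secondary subtlety is the $\bar{u}^{-1}$ factor in the quotient estimates: near $y = 0$ this is of order $\langle x \rangle^{1/2}/y$ by \eqref{prime:pos}, and only the cancellation from $q|_{y=0} = 0$ (which turns $\bar{u}_y q/\bar{u}$ into an $L^\infty$-bounded quantity) makes the quotient estimates possible.
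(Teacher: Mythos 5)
Your proposal is correct and follows essentially the same route as the paper: split $\psi_{12} = \psi_{12}\phi_{12} + \psi_{12}(1-\phi_{12})$, transfer the pointwise decay \eqref{UV:decay} to $(u,v)$ on the far region via \eqref{formula:1}, the profile bounds, and the $L^\infty_y$ Hardy inequality for $q/y$ (admissible since $q|_{y=0}=0$), and handle the bounded strip by Sobolev embedding off the elliptic component of the $\mathcal{X}$-norm at a cost of $\eps^{-M_1}$. The paper compresses the bounded-strip step into a single $H^2$-embedding sentence, which your more detailed interpolation bookkeeping simply makes explicit.
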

\begin{proof} We note that  standard Sobolev embeddings gives $\| u^{(k)} \psi_{12} \|_\infty \lesssim \| u^{(k)} \psi_{12} \|_{H^2} \lesssim \eps^{-M_1} \| U, V \|_{\mathcal{X}}$, and similarly for the remaining quantities in \eqref{pw:dec:u} - \eqref{mixed:emb}. Next, we appeal to \eqref{formula:1} to obtain 
\begin{align} \n
\| u^{(k)} x^{k + \frac 1 4} \phi_{12} \|_{\infty}  \le & \sum_{l = 0}^k \binom{k}{l} (\|  \p_x^l \bar{u} \p_x^{k-l} U x^{k + \frac 1 4} \phi_{12} \|_\infty + \| \p_x^l \bar{u}_y \p_x^{k-l} q x^{k + \frac 1 4} \phi_{12} \|_\infty) \\ \n
\lesssim & \sum_{l = 0}^k \| \p_x^l \bar{u} x^l \|_\infty \| \p_x^{k-l} U x^{k-l + \frac 1 4} \phi_{12} \|_\infty + \| \p_x^l \bar{u}_y y x^{l} \|_\infty \| \p_x^{k-l} \frac{q}{y} x^{k-l + \frac 1 4} \phi_{12} \|_\infty \\ \n
\lesssim &\sum_{l = 0}^k  \| \p_x^{k-l} U x^{k-l + \frac 1 4} \phi_{12} \|_\infty \lesssim \| U, V \|_{\mathcal{X}}, 
\end{align}
where we have invoked the Hardy inequality (in $L^\infty_y$), admissible as $q|_{y = 0} = 0$, as well as estimate \eqref{UV:decay}. To conclude, by using that $x < 400$ is bounded on the set  $\{\psi_{12} = 1 \} \cap \{ \phi_{12} < 1 \}$, we have 
\begin{align} \n
\| u^{(k)} x^{k + \frac 1 4} \psi_{12} \|_\infty \le &  \| u^{(k)} x^{k + \frac 1 4} \psi_{12} (1 - \phi_{12}) \|_\infty +  \| u^{(k)} x^{k + \frac 1 4} \psi_{12} \phi_{12} \|_\infty \\
\lesssim  & \eps^{-M_1} \| U, V \|_{\mathcal{X}} + \| u^{(k)} x^{k + \frac 1 4}  \phi_{12} \|_\infty \lesssim \eps^{-M_1} \| U, V \|_{\mathcal{X}}. 
\end{align}
The remaining estimates in \eqref{pw:dec:u} - \eqref{Linfty:wo} work in largely the same manner. 
\end{proof}

\section{Global \textit{a-priori} Bounds} \label{remainder:section:3}

In this section, we perform our main energy estimates, which control the $\|U, V \|_{X_0}, \|U, V \|_{X_{\frac 1 2}}, \|U, V \|_{Y_{\frac 1 2}}$, and their higher order counterparts, up to $\|U, V \|_{X_{10}}, \|U, V \|_{X_{10.5}}, \|U, V \|_{Y_{10.5}}$. When we perform these estimates, we recall the notational convention for this section, which is that, unless otherwise specified $\int g := \int_0^\infty \int_0^\infty g(x, y) \ud y \ud x$. For this section, we define the operator 
\begin{align}
\text{div}_\eps(\bold{M}) :=  \p_x M_1 + \frac{\p_y}{\eps}  M_2, \text{ where } \bold{M} = (M_1, M_2). 
\end{align}

\subsection{$X_0$ Estimates}

%Delta Estimate%

\begin{lemma} \label{Lem:2} Let $(U, V)$ be a solution to \eqref{sys:sim:1} - \eqref{sys:sim:3}. Then the following estimate is valid, 
\begin{align} \label{basic:X0:est:st}
\| U, V \|_{X_{0}}^2 \lesssim \mathcal{T}_{X_0} + \mathcal{F}_{X_0}, 
\end{align}
where 
\begin{align} \label{def:TX0}
&\mathcal{T}_{X_0} := \int \mathcal{N}_1 U g(x)^2  +  \int \eps \mathcal{N}_2 (V g(x)^2 +  \frac{1}{100} q \langle x \rangle^{-1 - \frac{1}{100}}), \\ \label{def:FX0}
&\mathcal{F}_{X_0} := \int F_R U g(x)^2 + \int \eps G_R (V g(x)^2 +  \frac{1}{100} q \langle x \rangle^{-1 - \frac{1}{100}}). 
\end{align}
\end{lemma}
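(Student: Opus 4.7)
The plan is to test the system \eqref{sys:sim:1}--\eqref{sys:sim:3} against the vector-valued multiplier $\bold{M} = \bigl( U g(x)^2, \; \eps V g(x)^2 + \tfrac{\eps}{100} q \langle x\rangle^{-1-\frac{1}{100}} \bigr)$. The first step is to observe that this multiplier is divergence-free in the appropriate weighted sense: using $q_y = U$ and $(g^2)' = -\tfrac{1}{100}\langle x\rangle^{-1-\frac{1}{100}}$, a direct computation gives $\p_x M_1 + \p_y(M_2/\eps) = (U_x + V_y) g^2 + U\bigl[(g^2)' + \tfrac{1}{100}\langle x\rangle^{-1-\frac{1}{100}}\bigr] = 0$. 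Consequently, upon integration by parts against the pressure term $(P_x, P_y/\eps)$, all pressure contributions vanish, leaving only the transport, diffusion, and lower-order coupling terms on the left-hand side, and the forcing/nonlinear terms $\mathcal{F}_{X_0}, \mathcal{T}_{X_0}$ on the right.

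Next, I would carry out the main cancellation between the transport operator $\mathcal{T}_1[U]$ and the diffusion $-u_{yy}$ described in Idea (1) of the introduction. Testing $\mathcal{T}_1[U]$ against $U g^2$ and integrating by parts yields $\frac{\p_x}{2}\int \bar{u}^2 U^2 g^2 + \tfrac{3}{2}\int \bar{u}^0_{pyy} U^2 g^2 + \tfrac{1}{200}\int \bar{u}^2 U^2 \langle x\rangle^{-1-\frac{1}{100}}$ (modulo harmless $\zeta$ errors from \eqref{S:0}). Expanding $u = \bar{u} U + \bar{u}_y q$ in the diffusion term $-\int u_{yy} U g^2$, integrating by parts twice using $q_y = U$, and combining with $\int \bar{u}^0_{pyyy} q\, U g^2$ produces $\int \bar{u} U_y^2 g^2 - 2\int \bar{u}^0_{pyy} U^2 g^2 + \int_{y=0} \bar{u}_y U^2 g^2\,\ud x$. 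Adding these, the dangerous Rayleigh contribution cancels exactly against the diffusive commutator, leaving the crucial positive combination $\int \bar{u} U_y^2 g^2 - \tfrac{1}{2}\int \bar{u}^0_{pyy} U^2 g^2 + \int_{y=0}\bar{u}_y U^2 g^2\,\ud x$, which is coercive precisely because $\bar{u}^0_{pyy} \le 0$ (property \eqref{Blas:prop:2} plus the smallness assumption \eqref{est:ring:1}). The $\eps$-diffusion $-\eps u_{xx}$ tested against $Ug^2$ and $-\eps \Delta v$ tested against $\eps V g^2$ are handled similarly, contributing $\|\sqrt{\eps}\sqrt{\bar{u}} U_x g\|^2$, $\|\eps \sqrt{\bar{u}} V_x g\|^2$, and $\eps\|\sqrt{-\bar{u}_{yy}} V g\|^2$, together with junk terms proportional to $\eps \|\p_x^k \bar{u}\|_\infty$ absorbable by the main positive quantities using Assumption \ref{assume:1}.

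The transport in the second equation $\eps \mathcal{T}_2[V]$ against $\eps V g^2$, after using the Prandtl identity $\bar{u}\bar{u}_x + \bar{v}\bar{u}_y = \bar{u}^0_{pyy} + O(\zeta)$, yields $\frac{\eps \p_x}{2}\int \bar{u}^2 V^2 g^2 + \tfrac{\eps}{2}\int (-\bar{u}^0_{pyy}) V^2 g^2$ plus the CK contribution. The remaining coupling terms $\zeta_y q$, $\alpha U$, $\alpha_y q$, $\zeta V$ and $2\zeta U$ are treated as perturbations: for the $q$-dependent terms I would rewrite $q = \int_0^y U$ and apply a weighted Hardy inequality in $y$; the estimate $|\alpha| \lesssim \bar{u}\langle x\rangle^{-3/2}$ from \eqref{S:1} and the smallness of $\zeta$ from \eqref{S:0} (with its factor of $\sqrt{\eps}\langle x\rangle^{-1-1/50}$) then guarantee these contributions are absorbed into the already-established positive terms with an arbitrarily small constant.

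The subtlest and main obstacle is the contribution of the extra piece $\tfrac{\eps}{100} q\langle x\rangle^{-1-\frac{1}{100}}$ of $M_2$, introduced precisely to make $\bold{M}$ divergence-free. Paired with $\eps \mathcal{T}_2[V]$, the leading piece $\eps \bar{u}^2 V_x \cdot \tfrac{\eps}{100} q\langle x\rangle^{-1-\frac{1}{100}} = -\tfrac{\eps^2}{100} \bar{u}^2 q_x q_{xx}\langle x\rangle^{-1-\frac{1}{100}}$ produces, upon integration by parts in $x$, a competing-sign quantity of the form $\int \eps^2 \langle x\rangle^{-3.01}\bar{u}^2 q_x^2 - (\text{lower order})$, which is not manifestly positive. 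This is exactly the situation anticipated in the introduction (item (3)), and I would close it by invoking the sharp-constant Hardy inequality \eqref{precise:1} applied to $f = q_x = -V$: this absorbs the negative contribution and leaves the coercive term $\|\sqrt{\eps}\bar{u} V\langle x\rangle^{-1-\frac{1}{200}}\|$ appearing in the $X_0$-norm, with constant strictly less than $1$ to allow closure. All other cross-terms produced by pairing $\tfrac{\eps}{100}q\langle x\rangle^{-1-1/100}$ with the diffusion and lower-order couplings in the $V$-equation are lower order in $\eps$ or in decay weights and are absorbed by the positive quantities already generated. Collecting every positive contribution reconstructs the full $\|U,V\|_{X_0}^2$ norm defined in \eqref{def:X0}, while the right-hand side consists exactly of $\mathcal{T}_{X_0} + \mathcal{F}_{X_0}$, yielding \eqref{basic:X0:est:st}.
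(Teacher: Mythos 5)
Your proposal follows essentially the same route as the paper: the same divergence-free multiplier $[Ug^2,\;\eps Vg^2+\tfrac{\eps}{100}q\langle x\rangle^{-1-\frac{1}{100}}]$, the same cancellation of the Rayleigh term $\tfrac32\int\bar u^0_{pyy}U^2g^2$ against the diffusive commutator $-2\int\bar u_{yy}U^2g^2$, and the same resolution of the competing-sign Cauchy--Kovalevskaya contribution via the sharp-constant Hardy inequality \eqref{precise:1}. One correction to the last step: after integrating the term $\tfrac{\eps}{100}\int\bar u^2V_xq\langle x\rangle^{-1.01}$ by parts, the dangerous quantity is $-\tfrac{(.01)(1.01)(2.01)}{2}\int\eps\bar u^2q^2\langle x\rangle^{-3.01}$ (involving $q^2$, not $q_x^2$, which would already be harmless), so \eqref{precise:1} must be applied with $f=q$ — giving $\int\langle x\rangle^{-1.01}\bar u^2V^2$ on the right, absorbed by the positive $\tfrac32(.01)\int\eps\bar u^2V^2\langle x\rangle^{-1.01}$ thanks to the constant $\tfrac{1}{1.01}<\tfrac32$ — whereas taking $f=q_x$ as written would produce $q_{xx}^2$ on the right and close nothing.
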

\begin{proof} We apply the multiplier 
\begin{align} \label{mult:X0}
\bold{M}_{X_0} := [U g^2, \eps V g^2 - \eps q \p_x (g^2)] = [U g^2, \eps V g^2 + \eps \frac{1}{100} q \langle x \rangle^{-1 - \frac{1}{100}}]
\end{align}
to the system \eqref{sys:sim:1} - \eqref{sys:sim:3}. We note that $\text{div}_\eps(\bold{M}_{X_0}) = 0$, and moreover that the normal component vanishes at $y = 0, y = \infty$. Therefore, the pressure term vanishes via 
\begin{align} \n
&\int P_x U g^2 + \int \frac{P_y}{\eps} (\eps V g^2 + \eps \frac{1}{100} q \langle x \rangle^{-1 - \frac{1}{100}} ) = - \int P \text{div}_\eps (\bold{M}_{X_0})= 0. 
\end{align}

\vspace{3 mm}

\noindent \textit{Step 1: $\mathcal{T}_1[U]$ Terms:} We now arrive at the transport terms from $\mathcal{T}_1$, defined in \eqref{def:T1}, which produce 
\begin{align} \n
\int  \mathcal{T}_1[U] U g(x)^2 \ud y \ud x = &\int (\bar{u}^2 U_x + \bar{u} \bar{v} U_y + 2 \bar{u}^0_{pyy} U) U g^2 \ud y \ud x \\ \n
= & \frac{1}{200}\int \bar{u}^2 U^2 \langle x \rangle^{-1-\frac{1}{100}} - \int \bar{u} \bar{u}_x U^2 g^2 - \frac 12 \int \p_y (\bar{u} \bar{v}) U^2 g^2 + \int 2 \bar{u}^0_{pyy} U^2 g^2 \\ \n
= & \frac{1}{200} \int \bar{u}^2 U^2 \langle x \rangle^{-1 - \frac{1}{100}} - \frac 1 2 \int (\bar{u} \bar{u}_x + \bar{v} \bar{u}_y) U^2 g^2 + \int 2 \bar{u}^0_{pyy} U^2 g^2 \\ \label{T1:cont}
= & \frac{1}{200} \int \bar{u}^2 U^2 \langle x \rangle^{-1 - \frac{1}{100}} + \frac 3 2 \int \bar{u}^0_{pyy} U^2 g^2 - \frac 1 2 \int \zeta U^2 g^2 =: \sum_{i = 1}^3 T^0_i,  
\end{align}
where we have invoked \eqref{def:zeta}. The term $T^0_1$ is a positive contribution towards the $X_0$ norm. The term $T^0_2$ will be cancelled out below, see \eqref{cancel:now:0}, and so we do not need to estimate it now. The third term from \eqref{T1:cont}, $T^0_3$, will be estimated via 
\begin{align}  \label{pourquoi}
\Big| \int \zeta U^2 g^2 \Big| \lesssim & \sqrt{\eps} \int U^2 \langle x \rangle^{-(1+\frac{1}{50})} \lesssim  \sqrt{\eps} \Big( \| \bar{u} U \langle x \rangle^{-\frac 1 2 - \frac{1}{200}} \|^2 +  \| \sqrt{\bar{u}} U_y \|^2  \Big) \lesssim \sqrt{\eps} \| U, V \|_{X_0}^2, 
\end{align}
where we have invoked the pointwise estimates \eqref{S:0}, as well as the Hardy-type inequality \eqref{Hardy:1}. 

We note that an analogous estimate applies to the $q$ term from \eqref{sys:sim:1}, which we record now 
\begin{align} \label{hardy:q:est}
\Big| \int  \zeta_y q U g^2 \Big| \lesssim \sqrt{\eps} \| \frac{q}{y} \langle x \rangle^{- \frac 1 2 - \frac{1}{100}} \|  \| U \langle x \rangle^{- \frac 1 2 - \frac{1}{100}} \| \lesssim \sqrt{\eps} \| U \langle x \rangle^{- \frac 1 2 - \frac{1}{100}} \|^2, 
\end{align}
where we have invoked the pointwise estimates \eqref{S:0} on the quantity $|y \p_y \zeta|$, and used the standard Hardy inequality in $y$, admissible as $q|_{y = 0} = 0$.  Estimate \eqref{hardy:q:est} concludes in the same manner as \eqref{pourquoi}. 

\noindent \textit{Step 2: Diffusive Terms} We would now like to treat the diffusive terms from \eqref{sys:sim:1} - \eqref{sys:sim:2}. We group the term $\bar{u}^0_{pyyy}q$ from \eqref{sys:sim:1} with this treatment for the purpose of achieving a cancellation. More precisely, we will begin by treating the following quantity:
\begin{align} \n
\int \Big( - \p_y^2 u + \bar{u}^0_{pyyy} q \Big)U g^2 = & \int u_y U_y g^2 + \int_{y = 0} u_y U g^2 \ud x - \frac 1 2 \int \bar{u}^0_{pyyyy} q^2 g^2 \\ \n
= & \int \p_y (\bar{u} U + \bar{u}_y q) U_y g^2 + \int_{y = 0} u_y U g^2 \ud x -  \frac 1 2 \int \bar{u}^0_{pyyyy} q^2  g^2 \\ \n
= & \int \bar{u} U_y^2 g^2 + 2 \bar{u}_y U U_y g^2 + \bar{u}_{yy} q U_y g^2 + \int_{y = 0} u_y U g^2  \ud x \\ \n
& - \frac 1 2 \int \bar{u}^0_{pyyyy} q^2 g^2\\ \n
= & \int \bar{u} U_y^2 g^2 - 2\bar{u}_{yy} U^2 g^2+ \frac 1 2 \bar{u}_{yyyy} q^2 g^2 - \frac 1 2 \bar{u}^0_{pyyyy}q^2 g^2\\ \n
& - \int_{y = 0} \bar{u}_y U^2 g^2 + \int_{y = 0} u_y Ug^2 \ud x \\  \n
= & \int \bar{u} U_y^2 g^2- 2\int \bar{u}_{yy} U^2 g^2 + \frac 1 2 \int \p_y^4 (\bar{u} - \bar{u}^0_p) q^2 g^2+ \int_{y = 0} \bar{u}_y U^2 g^2 \ud x \\ \label{earl:2}
= & D^0_1 + D^0_2 + D^0_3 + D^0_4. 
\end{align}
We have used that 
\begin{align}
u_y|_{y = 0} = ( \bar{u} U_y + 2 \bar{u}_y U + \bar{u}_{yy}q)|_{y = 0} = 2 \bar{u}_y U|_{y = 0}. 
\end{align}
Both $D^0_1, D^0_4$ are positive contributions, thanks to \eqref{prime:pos}. We now note that the main contribution from the $D^0_2$ term cancels the contribution $T^0_2$, and generates a positive damping term of 
\begin{align} \n
T^0_2 + D^0_2 = & \frac 3 2 \int \bar{u}^0_{pyy} U^2 g^2 - 2 \int \bar{u} U^2 g^2 = - \frac 1 2 \int \bar{u}^0_{pyy} U^2 g^2 - 2 \int (\bar{u}_{yy} - \bar{u}^0_{pyy}) U^2 g^2 \\ \label{cancel:now:0}
= &- \frac 1 2 \int \p_{yy} \bar{u}_\ast U^2 g^2 - \frac 1 2 \int \p_y^2 (\bar{u}^0_{p} - \bar{u}_\ast) U^2 g^2 - 2 \int (\bar{u}_{yy} - \bar{u}^0_{pyy}) U^2 g^2 \\ \n
= :& D^0_{2,1} + D^0_{2,2} + D^0_{2,3}. 
\end{align}
The first term on the right-hand side above, due to $\bar{u}_\ast$, is a positive contribution due to \eqref{Blas:prop:2}. For the term $D^0_{2,2,}$, we estimate via 
\begin{align*}
| \frac 1 2 \int \p_y^2 (\bar{u}^0_{p} - \bar{u}_\ast) U^2 g^2| \lesssim \delta_\ast \int \langle x \rangle^{- \frac 5 4+ \sigma_\ast} U^2 \lesssim \delta_\ast \|U, V \|_{X_0}^2,
\end{align*}
where we have appealed to estimate \eqref{blas:conv:1}. 

The remaining contribution, $D^0_{2,2}$, we estimate by invoking the pointwise estimate $|\bar{u}_{yy} - \bar{u}^0_{pyy}| \lesssim \sqrt{\eps} \langle x \rangle^{-1-\frac{1}{50}}$ due to \eqref{est:ring:1}. The third term from \eqref{earl:2}, $D^0_3$, is estimated by 
\begin{align}
|\int \p_y^4 (\bar{u} - \bar{u}^0_p) q^2 g^2| \lesssim \sqrt{\eps} \| \frac{q}{y} \langle x \rangle^{- \frac 1 2 - \frac{1}{100}} \|^2 \lesssim \sqrt{\eps} \| U \langle x \rangle^{- \frac 1 2 - \frac{1}{100}} \|^2 \lesssim \sqrt{\eps} \| U, V \|_{X_0}^2, 
\end{align}
where we have invoked estimate \eqref{est:ring:1} and subsequently the standard Hardy inequality in $y$, as $q|_{y = 0} = 0$.  

The next diffusive term is 
\begin{align} \label{home:home:1}
- \int \eps u_{xx} U g(x)^2 = \int \eps u_x U_x g(x)^2 + 2 \int \eps u_x U g(x) g'(x). 
\end{align}
upon using that $U|_{x = 0} = 0$. The $g'$ term above is easily controlled by $\sqrt{\eps} \| U \langle x \rangle^{- \frac 1 2 - \frac{1}{200}} \|^2 + \sqrt{\eps} \| \sqrt{\eps} \sqrt{\bar{u}} U_x g \|^2 + \sqrt{\eps} \| \sqrt{\eps}V \langle x \rangle^{- \frac 1 2 - \frac{1}{200}} \|^2$ upon consulting the definition of $g$ to compute $g'$, and definition \eqref{formula:1} to expand $u_x$ in terms of $(U, V, q)$. 

%We estimate the $g'$ term above as an error term by expanding with the help of \eqref{formula:1} to obtain 
%\begin{align} \n
%\int \eps u_x U g g'(x) = & \int \eps \p_x (\bar{u} U + \bar{u}_y q) U g g'(x) \\ \n
%= & \int \eps \bar{u} U_x U g g'(x) + \int \eps \bar{u}_x U^2 g g'(x) - \int \eps \bar{u}_y U V g g'(x)+ \int \eps \bar{u}_{xy} q Ug g'(x) \\ 
%= & - \frac 1 2 \int \bar{u} \eps U^2 (g^2)'' + \frac 1 2 \int \eps \bar{u}_x U^2 gg'(x)  - \int \eps \bar{u}_y UV gg'(x) - \frac 1 2 \int \eps \bar{u}_{xyy} q^2 gg'(x). 
%\end{align}

We now address the first term on the right-hand side of \eqref{home:home:1}, which yields, 
\begin{align} \n
 \int \eps u_x U_x g^2 =& \int \eps \p_x (\bar{u} U + \bar{u}_y q) U_x g^2 \\ \n
= &\int \eps \bar{u} U_x^2 g^2 + \int \eps \bar{u}_x U U_x g^2 + \int \eps \bar{u}_{xy} q U_x g^2 - \int \eps \bar{u}_y U_xV g^2 \\ \n
= & \int \eps \bar{u} U_x^2 g^2- \frac 1 2 \int \eps \bar{u}_{xx} U^2g^2 - \eps  \int \bar{u}_x gg' U^2 - \int \eps \bar{u}_{xy} q V_y g^2 + \int \eps \bar{u}_y VV_y g^2 \\ \n
= & \int \eps \bar{u} U_x^2g^2 - \frac 1 2 \int \eps \bar{u}_{xx} U^2 g^2 - \eps  \int \bar{u}_x gg' U^2 + \int \eps \bar{u}_{xyy} q V g^2 + \int \eps \bar{u}_{xy} UV g^2 \\ \n
& - \frac 1 2\int \eps \bar{u}_{yy} V^2 g^2\\ \n
= &  \int \eps \bar{u} U_x^2 g^2- \frac 1 2 \int \eps \bar{u}_{xx} U^2g^2  + \frac 1 2 \int \eps \bar{u}_{xxyy} q^2 g^2 + \int \eps \bar{u}_{xy} UV g^2 \\ \label{juice:0}
&- \frac 1 2\int \eps \bar{u}_{yy} V^2 g^2  +  \int \eps \bar{u}_{xyy} q^2 gg'  - \eps  \int \bar{u}_x g g' U^2 = \sum_{i = 1}^7 D^1_i. 
\end{align} 
We observe that $D^1_1$ is a positive contribution. We estimate $D^1_7$ via 
\begin{align}
|\eps \int \bar{u}_x gg' U^2| \lesssim \eps \| U \langle x \rangle^{-1} \|^2 \lesssim \eps \| U, V \|_{X_0}^2, 
\end{align}
and similarly for $D^1_6$, where we have invoked the pointwise decay estimate on $\bar{u}_x$ in \eqref{prof:u:est}. The remaining terms, $D^1_2,...,D^1_5$ will be placed into the term $\mathcal{J}_1$ defined below in \eqref{error:J1}.

We now arrive at the $v_{yy}$ diffusive term from \eqref{sys:sim:2}, which reads after integrating by parts in $y$, 
\begin{align}
- \int v_{yy} (\eps V g^2 + \eps \frac{1}{100} q \langle x \rangle^{-1 - \frac{1}{100}}) = \int \eps v_y V_y g^2 -  \frac{1}{100} \int \eps v U_y \langle x \rangle^{- 1 - \frac{1}{100}}.
\end{align}
The second contribution on the right-hand side above is easily estimated by appealing to \eqref{formula:1} and estimate \eqref{prof:v:est}, which generates
\begin{align}
|\int \eps (\bar{u} V - \bar{u}_x q) U_y \langle x \rangle^{- 1- \frac{1}{100}}| \lesssim \sqrt{\eps} \Big( \|  V \langle x \rangle^{- \frac 1 2 - \frac{1}{200}} \| + \sqrt{\eps} \| \bar{u}_{x} y \|_\infty \| U \langle x \rangle^{- \frac 1 2 - \frac{1}{200}} \| \Big)\| \sqrt{\bar{u}} U_y \|. 
\end{align}
For the first contribution on the right-hand side, we have 
\begin{align} \n
 \int \eps v_y V_y g^2= & \int \eps \p_y (\bar{u} V - \bar{u}_x q) V_y g^2 \\ \n
= & \int \eps \bar{u} V_y^2 g^2+ \int \eps \bar{u}_y V V_y g^2- \int \eps \bar{u}_{xy} q V_y g^2 - \int \eps \bar{u}_x U V_y g^2 \\ \n
= & \int \eps \bar{u} V_y^2 g^2 - \frac 1 2 \int \eps \bar{u}_{yy} V^2 g^2 + \frac 1 2 \int \eps \bar{u}_{xxyy} q^2 g^2+ \int \eps \bar{u}_{xy} UV g^2 \\ \label{tennis:1}
&- \frac 1 2 \int \eps \bar{u}_{xx} U^2 g^2 + \int \eps \bar{u}_{xyy} q^2 gg' - \int \eps \bar{u}_x U^2 gg' = \sum_{i = 1}^7 D^2_i. 
\end{align}
We observe that $D^2_1$ is a positive contribution, whereas $D^2_6, D^2_7$ are estimated identically to $D^1_6, D^1_7$. The remaining terms, $D^2_2,...,D^2_5$ will be placed into the term $\mathcal{J}_1$, defined below in \eqref{error:J1}.

We now arrive at the final diffusive term, for which we first integrate by parts using the boundary condition $V|_{x = 0} = q|_{x = 0} = 0$, 
\begin{align}  \label{bup:1}
&\int - \eps^2 v_{xx} V g^2 - 2 \int \eps^2 v_{xx} q gg'  = \int \eps^2 v_x V_x g^2 + \int 2 \eps^2 v_x q (gg')'.  
\end{align}
The second term above, which contains a $g'$ factor, is easily controlled by a factor of $\sqrt{\eps} \| U, V \|_{X_0}^2$ by again appealing to the definition of $g$ and \eqref{formula:1}. For the first term on the right-hand side of \eqref{bup:1}, we have
\begin{align} \n
 \int \eps^2 v_x V_x g^2= &\int \eps^2 \p_x (\bar{u} V - \bar{u}_x q) V_x g^2\\ \n 
 = & \int \eps^2 (\bar{u} V_x + 2 \bar{u}_x V - \bar{u}_{xx}q ) V_x g^2 \\ \n
= & \int \eps^2 \bar{u} V_x^2 g^2 - 2 \int \eps^2 \bar{u}_{xx} V^2 g^2+ \frac 1 2\int \eps^2 \bar{u}_{xxxx} q^2 g^2 \\ \label{juice:1} 
& + \int \eps^2 \bar{u}_{xx} q^2 (gg')' + \int 2 \eps^2 \bar{u}_{xxx} q^2 gg' - \int 2 \eps^2 \bar{u}_{xx} V^2 g^2 = \sum_{i = 1}^6 D^3_i.
\end{align}
The terms with $g'$ above are easily controlled by a factor of $\sqrt{\eps} \| U, V \|_{X_0}^2$ by again appealing to the definition of $g$ and estimate \eqref{prof:u:est}. 

We now expand the damping terms, which is are terms $D^1_5$ and $D^2_2$, 
\begin{align} \n
D^1_5 + D^2_2 = &- \int \eps \bar{u}_{yy} V^2 g^2 = - \int \eps \bar{u}^0_{pyy} V^2 g^2 - \int \eps^2 (\bar{u}_{yy} - \bar{u}^0_{pyy}) V^2 g^2.
\end{align} 
We estimate the latter term above via an appeal to \eqref{est:ring:1}, which gives 
\begin{align}
\Big| \int \eps^2 (\bar{u}_{yy} - \bar{u}^0_{pyy})  V^2 g^2 \Big| \lesssim \eps \| \sqrt{\eps} V \langle x \rangle^{- \frac 1 2 - \frac{1}{100}} \|^2 \lesssim \eps \| U, V \|_{X_0}^2. 
\end{align}

We now consolidate the remaining terms from \eqref{juice:0}, \eqref{tennis:1}, \eqref{juice:1}. Specifically, we obtain
\begin{align*}
D^1_2 + D^1_3 + D^1_4 + D^2_3 + D^2_4 + D^2_5 + D^3_2 + D^3_3 + D^3_6 = \mathcal{J}_1, 
\end{align*}
where we have defined
\begin{align} \label{error:J1}
\mathcal{J}_1 := - \int \eps \bar{u}_{xx} U^2  g^2 - 2 \int \eps^2 \bar{u}_{xx} V^2  g^2+ \int 2 \eps \bar{u}_{xy} UV  g^2+ \int \Big( \eps \bar{u}_{xxyy} + \frac 1 2 \eps^2 \bar{u}_{xxxx} \Big) q^2 g^2.
\end{align}
To estimate these contributions, we simply use the fact that $\| \bar{u}_{xx} \langle x \rangle^2 \|_\infty \lesssim 1$, $\| \bar{u}_{xy} \langle x \rangle^{\frac 3 2} \|_\infty  \lesssim 1$, $\| y^2 (\bar{u}_{xxyy} + \bar{u}^P_{xxxx}) \langle x \rangle^2 \|_\infty \lesssim 1$, and $\| \bar{u}_{E xxxx} \|_{L^\infty_y} \lesssim \sqrt{\eps} \langle x \rangle^{-\frac 9 2}$ according to the estimates \eqref{prof:u:est} - \eqref{prof:v:est}. 

\vspace{2 mm}

\noindent \textit{Step 3: $\mathcal{T}_2[V]$ Terms} We now treat the terms arising from $\mathcal{T}_2[V]$, which has been defined in \eqref{def:T2}. Specifically, the result of applying the multiplier \eqref{mult:X0} is
\begin{align} \n
&\int \mathcal{T}_2[V] (\eps V g^2 + \eps (.01) q \langle x \rangle^{-1.01} )  \\ \n
=& \int \eps \mathcal{T}_2[V] V g^2 + \eps (.01)  \int \bar{u}^2 V_x q \langle x \rangle^{-1.01} + \eps (.01) \int (\bar{u} \bar{v} V_y + \bar{u}^0_{pyy} V) q \langle x \rangle^{-1.01} \\  \label{out:1}
= :& \tilde{T}^1_1 + \tilde{T}^1_2 + \tilde{T}^1_3.
\end{align}
For the first term on the right-hand side of \eqref{out:1}, $\tilde{T}^1_1$, we have 
\begin{align} \n
\tilde{T}^1_1 = \int \eps \mathcal{T}_2[V] V g^2 = & \int \eps \bar{u}^2 V_x V g^2 + \int \eps \bar{u} \bar{v} V_y V g^2 + \int \eps \bar{u}^0_{pyy} V^2 g^2 \\ \n
= & - \int \eps \bar{u} \bar{u}_x V^2 g^2 - \frac 1 2 \int \eps (\bar{u} \bar{v})_y V^2 g^2 + \int \eps \bar{u}^0_{pyy} V^2 g^2 - \int \eps \bar{u}^2 V^2 gg' \\ \n
= & - \frac 1 2 \int \eps (\bar{u} \bar{u}_x + \bar{v} \bar{u}_y) V^2 g^2 + \int \eps \bar{u}^0_{pyy} V^2 g^2 - \int \eps \bar{u}^2 V^2 gg'  \\ \label{juice:2}
= & \frac 1 2 \int \eps \bar{u}^0_{pyy} V^2 g^2 - \frac 1 2 \int \eps \zeta V^2 g^2 + \frac{1}{200} \int \eps \bar{u}^2 V^2 \langle x \rangle^{-1 - \frac{1}{100}}  = \sum_{i = 1}^3 T^1_i,
\end{align}
where we have invoked \eqref{def:zeta}. The term $T^1_2$ is estimated in an analogous manner to \eqref{pourquoi}, whereas $T^1_3$ is a positive contribution to the $X_0$ norm. 

We now need to address the contribution of $\tilde{T}^1_2$. Integrating by parts, we get
\begin{align} \n
\tilde{T}^1_2 = &.01  \int \eps \bar{u}^2 V^2 \langle x \rangle^{-1.01} -2 (.01)  \int \eps \bar{u} \bar{u}_x V q \langle x \rangle^{-1.01} + (.01)(1.01) \int \eps \bar{u}^2 V q \langle x \rangle^{-2.01} \\ \n
= & .01  \int \eps \bar{u}^2 V^2 \langle x \rangle^{-1.01} -2 (.01)  \int \eps \bar{u} \bar{u}_x V q \langle x \rangle^{-1.01} \\ \label{juice:3}
& - \frac 1 2 (.01)(1.01) (2.01) \int \eps \bar{u}^2 q^2 \langle x \rangle^{-3.01} - \frac 1 2 (.01)(1.01) (2.01) \int \eps \bar{u} \bar{u}_x q^2 \langle x \rangle^{-2.01} \\ \n
= & \tilde{T}^1_{2,1} +  \tilde{T}^1_{2,2} +  \tilde{T}^1_{2,3} +  \tilde{T}^1_{2,4}.
\end{align}
Of these, the third term, $\tilde{T}^1_{2,3}$ is very dangerous due to a lack of decay in $z$ for the coefficient. To treat it, we combine $\tilde{T}^1_{2,1}, \tilde{T}^1_{2,3}$ and $T^1_3$ to obtain the expression 
\begin{align} \n
\tilde{T}^1_{2,1} + \tilde{T}^1_{2,3} + T^1_3 = &\frac{3}{2}(.01) \int \eps \bar{u}^2 V^2 \langle x \rangle^{-1.01} - \frac{(.01)(1.01)(2.01)}{2} \int \eps \bar{u}^2 q^2 \langle x \rangle^{-3.01} \\  \n
\ge &\Big(\frac{3}{2}(.01) - \frac{(.01)(1.01)(2.01)}{2} \frac{1}{1.01}  \Big) \int  \eps \bar{u}^2 V^2 \langle x \rangle^{-1.01}  -  \frac{(.01)(1.01)(2.01)}{2} \frac{2}{1.01} \int \langle x \rangle^{-2.01} \bar{u} \bar{u}_x q^2 \\ \label{precision:1}
\ge & \frac{.01}{2} \int \eps \bar{u}^2 V^2 \langle x \rangle^{-1.01} - (.01)(2.01) \int \eps \langle x \rangle^{-2.01} \bar{u} \bar{u}_x q^2,  
\end{align}
where we have used the precise constants appearing in \eqref{precise:1}. 

We now estimate the error term from \eqref{precision:1} by now splitting $\bar{u} = \bar{u}_P + \bar{u}_E$, according to \eqref{split:split:1}. First, we have 
\begin{align}
|\int \eps \langle x \rangle^{-2.01} \bar{u} \bar{u}_{Px} q^2| \lesssim \| \bar{u}_{Px} y^2 \|_\infty \eps \| \frac{q}{y} \langle x \rangle^{-1.01} \|^2 \lesssim \eps \| U \langle x \rangle^{-1.01} \|^2 \lesssim \eps \| U, V \|_{X_0}^2, 
\end{align}
where we have used estimate \eqref{est:Pr:piece}. For the $\bar{u}_E$ component, we may use the small amplitude and importantly the enhanced decay in $x$ from \eqref{est:Eul:piece} to obtain 
\begin{align} 
|\int \eps \langle x \rangle^{-2.01} \bar{u} \bar{u}_{Ex} q^2| \lesssim \eps^{\frac 3 2} | \int \langle x \rangle^{- 3.51} q^2|  \lesssim \eps^{\frac 1 2} \| \sqrt{\eps}V \langle x \rangle^{- \frac 3 4} \|^2 \lesssim \sqrt{\eps} \| U, V \|_{X_0}^2.
\end{align}
The error terms $\tilde{T}^1_{2,2}$ and $\tilde{T}^1_{2,4}$ are estimated in a nearly identical manner. 

We now address the third terms from \eqref{out:1}, $\tilde{T}^1_3$, which upon integration by parts in $y$ gives 
\begin{align} \n
&\eps (.01) |\int (\bar{u} \bar{v} V_y + \bar{u}^0_{pyy} V) q \langle x \rangle^{-1.01}| \lesssim  \eps | \int  (\bar{u}^0_{pyy} - (\bar{u} \bar{v})_y) V q \langle x \rangle^{-1.01}| + \eps | \int \bar{u} \bar{v} V U \langle x \rangle^{-1.01}| \\
\lesssim & \sqrt{\eps} \Big( \| (\bar{u}^0_{pyy} - (\bar{u} \bar{v})_y) y \langle x \rangle^{\frac 1 2} \|_\infty + \| \bar{v} \langle x \rangle^{\frac 1 2} \|_\infty  \Big) \| U \langle x \rangle^{- \frac 3 4} \| \| \sqrt{\eps} V \langle x \rangle^{- \frac 3 4} \| \lesssim \sqrt{\eps} \|U, V \|_{X_0}^2,  
\end{align}
where we have appealed to the estimates \eqref{prof:u:est} as well as \eqref{Hardy:three:a} - \eqref{Hardy:four:a}.

\vspace{2 mm}

\noindent \textit{Step 4: Remaining terms in \eqref{sys:sim:2}} We first treat the $\alpha U$ terms from \eqref{sys:sim:2}, which we estimate via 
\begin{align}
\Big| \int \eps \alpha UV g^2\Big| \lesssim \eps \int \langle x \rangle^{- \frac 3 2}|UV| \lesssim \sqrt{\eps} \| U \langle x \rangle^{- \frac 3 4} \| \| \sqrt{\eps} V \langle x \rangle^{- \frac 3 4} \|.
\end{align}
Above we have relied on the coefficient estimate in \eqref{S:1}. The $\p_y \alpha q$ term from \eqref{sys:sim:2} as well as the corresponding contributions from the $q$ term in the multiplier \eqref{mult:X0} work in an identical manner. This concludes the proof of Lemma \ref{Lem:2}. 
\end{proof}

\subsection{$\frac 1 2$ Level Estimates}

We now provide estimates on the two half-level norms, $\|U, V \|_{X_{\frac 1 2}}$ and $\|U, V \|_{Y_{\frac 1 2}}$. 

\begin{lemma} \label{Lem:3} Let $(U, V)$ be a solution to \eqref{sys:sim:1} - \eqref{BC:UVYW}. Then for $0 < \delta << 1$, 
\begin{align} \label{Xh:right}
\| U, V \|_{X_{\frac 1 2}}^2 \lesssim C_\delta \| U, V \|_{X_0}^2 + \delta \| U, V \|_{X_1}^2 + \delta \| U, V \|_{Y_{\frac 1 2}}^2 + \mathcal{T}_{X_{\frac 1 2}} + \mathcal{F}_{X_{\frac 1 2}}, 
\end{align}
where 
\begin{align} \label{TX12:spec}
\mathcal{T}_{X_{\frac 1 2}} := & \int \mathcal{N}_1 U_x x \phi_1^2 +  \int \mathcal{N}_2 ( \eps V_x x \phi_1(x)^2 + \eps V \phi_1(x)^2 + 2\eps V x \phi_1 \phi_1'), \\ \label{FX12:spec}
\mathcal{F}_{X_{\frac 1 2}} := & \int F_R U_x x \phi_1^2 +  \int G_R ( \eps V_x x \phi_1(x)^2 + \eps V \phi_1(x)^2 + 2\eps V x \phi_1 \phi_1')
\end{align}
\end{lemma}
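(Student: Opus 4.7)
The plan is to apply the divergence-free multiplier
$$ \mathbf{M}_{X_{\frac 1 2}} := \big( U_x \, x \phi_1^2, \ \partial_x(\eps V x \phi_1^2) \big) = \big( U_x\, x \phi_1^2,\ \eps V_x x \phi_1^2 + \eps V \phi_1^2 + 2 \eps V x \phi_1 \phi_1' \big) $$
to the system \eqref{sys:sim:1}--\eqref{sys:sim:3}. A direct computation, using $U_x + V_y = 0$, confirms that $\text{div}_\eps(\mathbf{M}_{X_{\frac 1 2}}) = 0$, and the normal components vanish on $\{y=0, \infty\}$. Hence the pressure term integrates to zero exactly as in Lemma \ref{Lem:2}, while the contributions of $F_R, G_R, \mathcal{N}_1, \mathcal{N}_2$ against $\mathbf{M}_{X_{\frac 1 2}}$ produce precisely $\mathcal{F}_{X_{\frac 1 2}}$ and $\mathcal{T}_{X_{\frac 1 2}}$ from \eqref{TX12:spec}--\eqref{FX12:spec}.

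I would then harvest the positive contributions from the transport operators. Testing $\mathcal{T}_1[U]$ against $U_x x \phi_1^2$ yields $\int \bar{u}^2 U_x^2\, x \phi_1^2 = \| \bar{u} U_x x^{1/2} \phi_1 \|^2$, and testing $\mathcal{T}_2[V]$ against $\eps V_x x \phi_1^2$ yields $\eps \| \bar{u} V_x x^{1/2} \phi_1 \|^2$; together these constitute $\|U, V\|_{X_{\frac 1 2}}^2$. The remaining transport pieces $\bar{u}\bar{v} U_y \cdot U_x x \phi_1^2$ and $\bar{u} \bar{v} V_y \cdot V_x x \phi_1^2$ are integrated by parts in $y$, producing coefficients $(\bar{u}\bar{v})_y$ that decay via \eqref{prof:u:est}--\eqref{prof:v:est}. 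The zero-order couplings $\bar{u}^0_{pyy} U \cdot U_x$ are integrated by parts in $x$ to transfer derivatives onto the coefficient, and the $\zeta, \alpha$ terms and all $q$-pieces are handled via Assumption \ref{assume:3} and the Hardy inequalities of Lemma \ref{lemma:hardy:9} and \eqref{Hardy:three:a}--\eqref{Hardy:four:a}; each is absorbed into $C \| U, V \|_{X_0}^2$.

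The harder step is the diffusion. For $-\int u_{yy} \cdot U_x x \phi_1^2$, I integrate by parts in $y$ to produce $\int u_y U_{xy}\, x \phi_1^2$ plus a $\{y=0\}$ boundary term that contributes positively to $\|\sqrt{\bar{u}_y} U_y\|_{y=0}^2$. Then I expand $u_y = \bar{u} U_y + 2 \bar{u}_y U + \bar{u}_{yy} q$. The good piece $\int \bar{u} U_y U_{xy}\, x \phi_1^2$ is integrated by parts in $x$ to give $-\tfrac 1 2\int (\bar{u} \phi_1^2 + \bar{u}_x x \phi_1^2 + 2 \bar{u} x \phi_1 \phi_1') U_y^2$, which is controlled by $\|U, V\|_{X_0}^2$. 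The $\bar{u}_{yy} q$ piece yields $\int \bar{u}_{yy} q U_{xy} x \phi_1^2$, estimated using the Hardy inequality in $y$ and the decay $|\bar{u}_{yy} \langle x \rangle y^2|_\infty \lesssim 1$ from \eqref{prof:u:est}. The $\eps$-diffusive terms $-\eps u_{xx}, -\eps v_{yy}, -\eps^2 v_{xx}$ are treated by the same integration-by-parts pattern as in Lemma \ref{Lem:2}; the critical feature is that the CK-weight $\phi_1$ has $\phi_1' \ge 0$ supported on a bounded interval, so the positive terms from the weighted integration by parts reinforce the $\sqrt{\eps} \sqrt{\bar{u}} U_x g$ and $\eps \sqrt{\bar{u}} V_x g$ norms already controlled in $X_0$.

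The main obstacle, as flagged in the introduction, is the term
$$ I_{\mathrm{sing}} := 2 \int \bar{u}_y U \cdot U_{xy}\, x \phi_1^2, $$
the representative of \eqref{est:Xhalf:loss:deriv}. Because $\bar{u}_y$ does not vanish at $y = 0$ while $\bar{u}$ does, we cannot pair $\bar{u}_y U$ with $\bar{u} U_{xy}$ via Cauchy--Schwarz inside $X_0$ alone. My plan is to distribute this loss: integrate by parts in $y$ to write
$$ I_{\mathrm{sing}} = -2 \int \bar{u}_{yy} U U^{(1)} x \phi_1^2 - 2\int \bar{u}_y U_y U^{(1)} x \phi_1^2 + 2 \int_{y=0} \bar{u}_y U U^{(1)} x \phi_1^2 \, dx, $$
and then apply Cauchy--Schwarz separately on each factor. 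The factor $U^{(1)}$ on the right can be estimated by $\|\bar{u} U^{(1)}_y x\phi_1\|$ via Hardy \eqref{Hardy:three}, with a small parameter $\delta$ that is absorbed into $\delta \|U, V\|_{X_1}^2$. The $U_y$ factor absorbs against $\| \sqrt{\bar{u}} U_{yy} x^{1/2} \phi_1\|$ from the $Y_{\frac 1 2}$-norm at cost $\delta \|U, V\|_{Y_{\frac 1 2}}^2$, while the residual scalar factors land in $C_\delta \|U, V\|_{X_0}^2$ using \eqref{Blas:prop:1} and \eqref{prof:u:est}. The boundary integral at $\{y = 0\}$ is bounded by $\|\sqrt{\bar u_y} U \phi_1\|_{y=0}\cdot \|\sqrt{\bar u_y} U^{(1)} x \phi_1\|_{y=0}$ and similarly absorbed. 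Collecting all estimates closes \eqref{Xh:right}.
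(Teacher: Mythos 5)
Your proposal follows essentially the same route as the paper: the same divergence-free multiplier, the same harvesting of $\|\bar u U_x x^{1/2}\phi_1\|^2$ and $\eps\|\bar u V_x x^{1/2}\phi_1\|^2$ from $\mathcal{T}_1,\mathcal{T}_2$, and the same resolution of the singular term — after your integration by parts in $y$, your $I_{\mathrm{sing}}$ reproduces exactly the product $\int x\bar u_y U_x U_y\phi_1^2$ of \eqref{est:Xhalf:loss:deriv}, which you then close via Hardy \eqref{Hardy:three} (costing $\delta\|U,V\|_{X_1}^2$) and \eqref{bob:1} (costing $\delta\|U,V\|_{Y_{\frac 1 2}}^2$), just as the paper does; the paper's additional $\chi(z)$ localization only serves to treat the far field more cheaply and is not essential. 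One claim is wrong, though easily repaired: with the multiplier $U_x x\phi_1^2$ the boundary contribution from $-\int u_{yy}U_x x\phi_1^2$ is $2\int_{y=0}\bar u_y U U_x\, x\phi_1^2$, which is \emph{not} a positive square and does not reinforce $\|\sqrt{\bar u_y}U_y\|_{y=0}^2$ (that square appears only in the $Y_{\frac 1 2}$ estimate with the vorticity multiplier, and the $X_{\frac 1 2}$ norm contains no boundary term at all); it must instead be integrated by parts in $x$ to give $-\int_{y=0}\p_x(x\bar u_y\phi_1^2)U^2$, which is then absorbed into $C_\delta\|U,V\|_{X_0}^2$ using \eqref{prime:pos} and \eqref{prof:u:est}. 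Similarly, several lower-order terms you assign to $C\|U,V\|_{X_0}^2$ (e.g.\ $-\int\bar u_{yy}U^2\phi_1^2$ after the $x$-integration by parts, and the $\zeta U U_x x$ term) in fact also require small multiples of $\|U,V\|_{Y_{\frac 1 2}}^2$, $\|U,V\|_{X_{\frac 1 2}}^2$, or $\|U,V\|_{X_1}^2$, all of which are permitted by \eqref{Xh:right}.
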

\begin{proof} We apply the weighted in $x$ vector-field 
\begin{align} \label{mult:2}
\bold{M}_{X_{\frac 1 2}} := [U_x x \phi_1(x)^2 , \eps V_x x \phi_1(x)^2 + \eps V \phi_1(x)^2 + 2\eps V x \phi_1 \phi_1']
\end{align}
as a multiplier to \eqref{sys:sim:1} - \eqref{sys:sim:3}. We first of all notice that $\text{div}_\eps(\bold{M}_{X_{\frac 1 2}}) = 0$, and thus, 
\begin{align} \n
&\int P_x U_x x \phi_1^2 + \int \frac{P_y}{\eps} (\eps V_x x \phi_1^2 + \eps V \phi_1^2 + 2 \eps V x \phi_1 \phi_1') = - \int P \text{div}_\eps(\bold{M}_{X_{\frac 1 2}}) = 0, 
\end{align}
where we use that $V|_{y = 0} = V_x|_{y = 0} = 0$ and $\phi_1$ to eliminate any contributions from $\{x = 0\}$.  
\vspace{3 mm}

\noindent \textit{Step 1: $\mathcal{T}_1[U]$ terms:} We address the terms from $\mathcal{T}_1$ which produces 
\begin{align} \n
\int \mathcal{T}_1[U] U_x x \phi_1^2 = & \int \bar{u}^2 U_x^2 x \phi_1^2 + \int \bar{u} \bar{v} U_y U_x x \phi_1^2+ \int 2 \bar{u}_{yy} U U_x x \phi_1^2 \\ \label{speak:1}
= & \int \bar{u}^2 U_x^2 x \phi_1^2+ \int \bar{u} \bar{v} U_y U_x x \phi_1^2- \int \bar{u}_{yy} U^2 \phi_1^2- \int \bar{u}_{xyy} x U^2 \phi_1^2 \\ \n
& - 2\int \bar{u}_{yy} U^2 x \phi_1 \phi_1' = :\sum_{i = 1}^5 T^{(2)}_i.
\end{align}
First, we observe $T^{(2)}_1$ is a positive contribution. We estimate $T^{(2)}_2$ via 
\begin{align} 
&|\int \bar{v} \bar{u} U_y U_x x \phi_1^2 | \lesssim \| \frac{\bar{v}}{\bar{u}} x^{\frac 1 2} \|_\infty \| \sqrt{\bar{u}} U_y \| \| \bar{u} U_x x^{\frac 1 2} \phi_1 \| \le \delta \| \bar{u} U_x x^{\frac 1 2} \phi_1 \|^2 + C_\delta \| \sqrt{\bar{u}} U_y \|^2,
\end{align}
where above we have invoked estimate \eqref{prof:v:est} for $\bar{v}$.
 
For $T^{(2)}_3$, we need to split $U = U(x, 0) + (U - U(x, 0))$, and subsequently estimate via 
\begin{align} \n
|\int \bar{u}_{yy} U^2 \phi_1^2 | \lesssim & \int |\bar{u}_{yy}| (U - U(x, 0))^2 \phi_1^2 + \int |\bar{u}_{yy}| U(x, 0)^2 \phi_1^2 \\ \n
\lesssim & \| y^2 \bar{u}_{yy} \|_\infty \Big\| \frac{U - U(x, 0)}{y} \phi_1 \Big\|^2 +\Big( \sup_x \int |\bar{u}_{yy} | x^{\frac 1 2} \ud y \Big) \| U x^{-\frac 1 4} \|_{L^2(x = 0)}^2 \\ \label{fray:1}
\lesssim & \| U_y \phi_1\|^2 + \| U, V \|_{X_0}^2 \le C_\delta \| U, V \|_{X_0}^2 + \delta \| U, V \|_{Y_{\frac 1 2}}^2, 
\end{align}
where above, we used Hardy inequality in $y$, which is admissible as $(U - U(x, 0))|_{y = 0} = 0$, as well as the inequality \eqref{bob:1}. $T^{(2)}_4$ works in an analogous manner, and so we omit it. For $T^{(2)}_5$, we note that the support of $\phi_1'$ is bounded in $x$, and so this term can trivially be controlled by $\| U, V \|_{X_0}^2$.

\vspace{2 mm}

\noindent \textit{Step 2: $\mathcal{T}_2[V]$ terms:} We now address the contributions from $\mathcal{T}_2[V]$. For this, we first note that, examining the multiplier \eqref{mult:2}, the contribution from $\eps V$ has already been treated in Lemma \ref{Lem:2}, and we therefore just need to estimate the contribution from the principal term, $\eps V_x x$. More precisely, we have already established the following estimate,
\begin{align}
|\int \mathcal{T}_2[V] \eps (V \phi_1^2 + 2 V x \phi_1 \phi_1'  )|  \lesssim \|U, V \|_{X_0}^2.  
\end{align}
We now estimate the contribution of the principal term, $\eps V_x x$. For this, recall the definition \eqref{def:T2}, 
\begin{align} \n
\int \eps \mathcal{T}_2[V] V_x x \phi_1^2 = & \int \eps \Big( \bar{u}^2 V_x + \bar{u} \bar{v} V_y + \bar{u}_{yy} V  \Big) V_x x \phi_1^2 \\ \n
= & \int \eps \bar{u}^2 V_x^2 x \phi_1^2 + \int \eps \bar{u} \bar{v} V_y V_x x \phi_1^2  - \frac 1 2 \int \eps \p_x (x \bar{u}_{yy}) V^2 \phi_1^2 - \int \eps x \bar{u}_{yy} V^2 \phi_1 \phi_1' \\ \n
= & T^{(3)}_1 + ... + T^{(3)}_4. 
\end{align}
We observe that $T^{(3)}_1$ is a positive contribution. The integrand in the term $T^{(3)}_4$ has a bounded support of $x$ and so can immediately be controlled by $\|U, V \|_{X_0}^2$. We may estimate $T^{(3)}_2$, and $T^{(3)}_3$ via 
\begin{align}
&|\int \eps \bar{u} \bar{v} V_y V_x x \phi_1^2| \lesssim \sqrt{\eps} \| \frac{ \bar{v}}{\bar{u}} x^{\frac 1 2}  \|_\infty \| \bar{u} U_x x^{\frac 1 2} \phi_1 \| \sqrt{\eps} \bar{u} V_x x^{\frac 1 2}\phi_1 \| \lesssim \sqrt{\eps} \| U, V \|_{X_{\frac 1 2}}^2, \\
&|\int \eps \p_x (x \bar{u}_{yy}) V^2 \phi_1^2| \lesssim \| y^2 \p_x (x \bar{u}_{yy}) \|_\infty \eps \Big\| \frac{V}{y} \phi_1\Big\|^2 \lesssim \eps \| V_y \phi_1\|^2  \le C_\delta \| U, V \|_{X_0}^2 + \delta \| U, V \|_{Y_{\frac 1 2}}^2,
\end{align}
where we have invoked the Hardy inequality \eqref{Hardy:1}. 

\vspace{2 mm}

\noindent \textit{Step 3: Diffusive Terms} We now address the main diffusive term, which is the contribution of $- u_{yy}$ in \eqref{sys:sim:1}. We, again, group the term $\bar{u}^0_{pyyy}q$ from \eqref{sys:sim:1} with this term. More precisely, we have 
\begin{align} \n
 \int \Big(- u_{yy} +& \bar{u}^0_{pyyy} q \Big) U_x x\phi_1^2 \\ \n
 = &  \int u_y U_{xy} x \phi_1^2+ \int_{y = 0} u_y U_x x\phi_1^2 \ud x + \int \bar{u}^0_{pyyy} q U_x x\phi_1^2 \\ \n
 = & \int \p_y (\bar{u} U + \bar{u}_y q) U_{xy} x \phi_1^2+ \int_{y = 0} \bar{u}_y U_x x \phi_1^2 \ud x + \int \bar{u}^0_{pyyy} q U_x x \phi_1^2\\ \n
 = & \int (2 \bar{u}_y U + \bar{u} U_y + \bar{u}_{yy} q \Big) U_{xy} x \phi_1^2+ \int_{y = 0} \p_y (\bar{u} U + \bar{u}_y q) U_x x \phi_1^2 \ud x \\ \n
 & +  \int \bar{u}^0_{pyyy} q U_x x \phi_1^2 \\ \n
 = & - \int U_y \p_x (2 x \bar{u}_y U) \phi_1^2 - \frac 1 2 \int U_y^2 \p_x (x \bar{u}) \phi_1^2 - \int \bar{u}_{yyy} q U_x x \phi_1^2 \\ \n
 & + \int \bar{u}^0_{pyyy} q U_x x \phi_1^2 - \int \bar{u}_{yy} UU_x x\phi_1^2+ 2 \int_{y = 0} \bar{u}_y U U_x x\phi_1^2 \\ \n
 & -  \int \bar{u} x U_y^2 \phi_1 \phi_1' - \int 4 \bar{u}_y UU_y x \phi_1 \phi_1' \\ \n
 = & - \int 2 x \bar{u}_y U_x U_y \phi_1^2- \int 2 x \bar{u}_{xy} U U_y \phi_1^2- \int 2 \bar{u}_y UU_y \phi_1^2- \frac 1 2 \int \bar{u} U_y^2\phi_1^2 \\ \n
 & - \frac 1 2 \int x \bar{u}_x U_y^2\phi_1^2 + \frac 1 2 \int \p_x (x \bar{u}_{yy}) U^2\phi_1^2 - \int_{y = 0} \p_x (x \bar{u}_y) U^2\phi_1^2 - \int (\bar{u}_{yyy} - \bar{u}^0_{pyyy}) qU_x x\phi_1^2 \\ \n
 & + \int x \bar{u}_{yy} U^2 \phi_1 \phi_1' -2 \int_{y = 0} U^2 x \bar{u}_y \phi_1 \phi_1'  -  \int \bar{u} x U_y^2 \phi_1 \phi_1' - \int 4 \bar{u}_y UU_y x \phi_1 \phi_1' \\ \n
 = & - \int 2 x \bar{u}_y U_x U_y \phi_1^2+ \int x \bar{u}_{xyy} U^2\phi_1^2 + \int \bar{u}_{yy} U^2\phi_1^2 - \frac 1 2 \int \bar{u} U_y^2 \phi_1^2\\ \n
 & - \frac 1 2 \int x \bar{u}_x U_y^2 \phi_1^2+ \frac 1 2 \p_x (x \bar{u}_{yy}) U^2\phi_1^2 - \int_{y = 0} \p_x (x \bar{u}_{y}) U^2\phi_1^2 - \int (\bar{u}_{yyy} - \bar{u}^0_{pyyy}) qU_x x\phi_1^2 \\ \n
 &  + \int_{y = 0} \p_x ( x \bar{u}_y ) U^2 \phi_1^2 + \int x \bar{u}_{yy} U^2 \phi_1 \phi_1' -2 \int_{y = 0} U^2 x \bar{u}_y \phi_1 \phi_1'  -  \int \bar{u} x U_y^2 \phi_1 \phi_1' \\ \n
 &- \int 4 \bar{u}_y UU_y x \phi_1 \phi_1'  \\ \n
 = & - \int 2 x \bar{u}_y U_x U_y\phi_1^2 + \frac 3 2 \int \p_x (x \bar{u}_{yy}) U^2 \phi_1^2- \frac 1 2 \int (\bar{u} + x \bar{u}_x ) U_y^2\phi_1^2  \\ \n
 & - \int_{y = 0} \p_x (x \bar{u}_y) U^2 \phi_1^2 \ud x -\int (\bar{u}_{yyy} - \bar{u}^0_{pyyy}) qU_x x \phi_1^2 + \int x \bar{u}_{yy} U^2 \phi_1 \phi_1' \\ \label{million}
 & -2 \int_{y = 0} U^2 x \bar{u}_y \phi_1 \phi_1'  -  \int \bar{u} x U_y^2 \phi_1 \phi_1' - \int 4 \bar{u}_y UU_y x \phi_1 \phi_1' = \sum_{i = 1}^{9} D^{(4)}_i.
\end{align}
For the first term above, $D^{(4)}_1$, we localize in $z$ using the cutoff function $\chi(\cdot)$ (see \eqref{def:chi}), via 
\begin{align}
D_1^{(4)} = - \int 2 x \bar{u}_y U_x U_y\phi_1^2 (1 - \chi(z)) - \int 2 x \bar{u}_y U_x U_y\phi_1^2 \chi(z).
\end{align}
The far-field component is controlled easily via 
\begin{align}
|\int x \bar{u}_y U_x U_y (1- \chi(z)) \phi_1^2| \lesssim \| \bar{u}_y x^{\frac 1 2} \| \| \bar{u} U_x x^{\frac 1 2} \phi_1 \| \| \sqrt{\bar{u}} U_y \| \le \delta \| U, V \|_{X_{\frac 1 2}}^2 + C_\delta \| U, V \|_{X_0}^2,
\end{align}
where we have used $\bar{u} \gtrsim 1$ when $z \gtrsim 1$, according to \eqref{samezies:1}. 

The localized piece requires the use of higher order norms, and we estimate it via  
\begin{align} \n
|\int x \bar{u}_y U_x U_y \chi(z) \phi_1^2| \lesssim & \| \sqrt{x} \bar{u}_y \|_\infty \| U_x x^{\frac 1 2} \chi(z) \phi_1 \| \| U_y \phi_1 \|  \\ \n
\lesssim & ( \| \sqrt{\bar{u}} U_{xy} x\phi_1 \| + \| \bar{u} U_x \sqrt{x} \phi_1\|   ) ( \delta \| U, V \|_{Y_{\frac 1 2}} + C_\delta \| U, V \|_{X_0} ) \\ \label{est:Xhalf:loss:deriv}
\lesssim & \delta \| U, V \|_{X_1}^2 + \delta \| U, V \|_{X_{\frac 1 2}}^2 + \delta \| U, V \|_{Y_{\frac 1 2}}^2 + C_\delta \| U, V \|_{X_0}^2,
\end{align}
where above, we have appealed to \eqref{bob:1}. For $D^{(4)}_2$, we estimate in the same manner as \eqref{fray:1}, whereas $D^{(4)}_3$ can easily be controlled upon using $\| \bar{u} + x \p_x \bar{u} \|_\infty \lesssim 1$, according to \eqref{prof:u:est}. The term $D^{(4)}_4$ is immediately controlled by $\|U, V \|_{X_0}^2$. The term $D^{(4)}_5$ we estimate via 
\begin{align} \n
|\int (\bar{u}_{yyy} - \bar{u}^0_{pyyy}) q U_x x \phi_1^2| \lesssim & \| (\bar{u}_{yyy} - \bar{u}^0_{pyyy}) y x^{1.01} \|_\infty \| U \langle x \rangle^{-1.01} \| \| U_x \langle x \rangle^{\frac 1 2} \phi_1 \| \\
\lesssim & \sqrt{\eps} (\|U, V \|_{X_0}^2 + \| U, V \|_{X_{\frac 1 2}}^2 + \| U, V \|_{X_1}^2),
\end{align}
where we have invoked \eqref{est:ring:1}. Finally, for the remaining four terms from \eqref{million}, $D^{(4)}_k$, $k = 6,7,8,9$, due to the presence of $\phi_1'$, the $x$ weights are all bounded, and these terms can thus be easily controlled by $\|U, V \|_{X_0}^2$. 

We now move to the contribution of the tangential diffusive term, $-\eps u_{xx}$, which produces
\begin{align} \n
- \int  \eps u_{xx} U_x x \phi_1^2 = &\int  \eps u_x U_{xx} x \phi_1^2 + \int \eps u_x U_x \phi_1^2 + 2 \int \eps u_x U_x x \phi_1 \phi_1' \\ \n
= & \int  \eps \p_x (\bar{u} U + \bar{u}_y q) U_{xx} x \phi_1^2+ \int \eps \p_x (\bar{u} U + \bar{u}_y q) U_x \phi_1^2 + 2 \int \eps u_x U_x x \phi_1 \phi_1' \\ \n
= & \int \eps (\bar{u} U_x + \bar{u}_x U + \bar{u}_{xy}q - \bar{u}_y V) U_{xx} x \phi_1^2+ \int \eps (\bar{u} U_x + \bar{u}_x U + \bar{u}_{xy}q - \bar{u}_y V) U_x \phi_1^2 \\ \n
& + 2 \int \eps ( \bar{u} U_x + \bar{u}_x U + \bar{u}_{xy}q - \bar{u}_y V ) U_x x \phi_1 \phi_1' \\ \n 
= & \frac 1 2 \int \eps \bar{u} U_x^2 \phi_1^2 - \frac 3 2 \int \eps x \bar{u}_x U_x^2 \phi_1^2 + \int \eps (\bar{u}_{xx} + \frac{1}{2} \bar{u}_{xxx}x) U^2 \phi_1^2 + \int \eps \bar{u}_{xyy} q V_x \phi_1^2 \\ \n
& + \int \eps \bar{u}_{xy} U V_x x \phi_1^2 - \int \eps \bar{u}_x U V_y \phi_1^2 - \int \eps \bar{u}_{xy} q V_y \phi_1^2 + \frac{\eps}{2} \int \bar{u}_{xyy} x V^2 \phi_1^2 \\ \label{umbrella:1}
& + \int \eps \bar{u}_y V_y V_x x \phi_1^2 + E_{loc}^{(1)} =: \sum_{i = 1}^{9} D^{(5)}_i + E^{(1)}_{loc},
\end{align}
where 
\begin{align}
E_{loc}^{(1)} := - 2\int \eps \bar{u} x \phi_1 \phi_1' U_x^2 + \int \eps (\bar{u}_x + \bar{u}_{xx} x) U^2 \phi_1 \phi_1' - 2 \int \eps x \bar{u}_x UU_x \phi_1 \phi_1' + \int \eps \bar{u}_{yy} V^2 x \phi_1 \phi_1'.
\end{align}

First, it is evident that $|E_{loc}^{(1)}| \lesssim \|U, V \|_{X_0}^2$ as $|x| \lesssim 1$ on the support of $\phi_1'$. We now estimate each of the remaining terms in \eqref{umbrella:1}. $D^{(5)}_1$ and $D^{(5)}_2$ are controlled by the right-hand side of \eqref{Xh:right} upon invoking \eqref{bob:1} and upon using $\| \bar{u} + x \bar{u}_x \|_\infty \lesssim 1$. We estimate $D^{(5)}_3$ by noting that $|\bar{u}_{xx}| + |x \bar{u}_{xxx}| \lesssim \langle x \rangle^{-2}$, after which it is easily controlled by $\| U, V \|_{X_0}$. 

For the fourth term, we estimate via first localizing in $z$ using the cut-off function $\chi$, defined in \eqref{def:chi}, via 
\begin{align*}
D^{(5)}_4 =  \int \eps \bar{u}_{xyy} q V_x \phi_1^2 \chi(z) +  \int \eps \bar{u}_{xyy} q V_x \phi_1^2 (1- \chi(z)). 
\end{align*}
First for the far-field component, we have 
\begin{align} \n
| \int \eps \bar{u}_{xyy} q V_x x \phi_1^2 (1- \chi(z))| \lesssim & \sqrt{\eps} \| \bar{u}_{xyy} y x^{\frac 3 2} \|_\infty \| \sqrt{\eps} \bar{u} V_x x^{\frac 1 2} \phi_1\| \| \frac{q}{y} \langle x \rangle^{-1} \| \\
\lesssim & \sqrt{\eps} \| U, V \|_{X_{\frac 1 2}}  \| U \langle x \rangle^{-1} \|, 
\end{align}
which is an admissible contribution according to Hardy type inequality \eqref{Hardy:three}. For the localized component, we have 
\begin{align} \n
| \int \eps \bar{u}_{xyy} q V_x x \phi_1^2  \chi(z)| = & | \int \eps \bar{u}_{xyy} \frac{q}{y} \frac{y}{\sqrt{x}} V_x x^{\frac 32} \phi_1^2  \chi(z)| \lesssim \int \eps |\bar{u}_{xyy}| |\frac{q}{y}| \bar{u} V_x x^{\frac 32} \phi_1^2| \\
\lesssim & \sqrt{\eps}  \| U \langle x \rangle^{-1} \| \| \sqrt{\eps} \bar{u} V_x \langle x \rangle^{\frac 1 2} \phi_1 \| \lesssim \sqrt{\eps} \| U, V \|_{X_0}^2 + \sqrt{\eps} \| U, V \|_{X_{\frac 12}}^2, 
\end{align}
where we have used the pointwise decay estimate $|\bar{u}_{xyy} \langle x \rangle^2| \lesssim 1$, according to \eqref{prof:u:est}. The fifth term, $D^{(5)}_5$, follows by a nearly identical calculation. 

For the sixth term from \eqref{umbrella:1}, $D^{(5)}_6$, it is convenient to integrate by parts in $x$, which produces 
\begin{align} \n 
|\int \eps \bar{u}_{xy} U V_x x \phi_1^2| = &| \int \eps \bar{u}_{xxy} U V x \phi_1^2 + \int \frac{\eps}{2} \bar{u}_{xyy} V^2 x \phi_1^2 + \int \eps \bar{u}_{xy} UV \phi_1^2 - \int \eps \bar{u}_{xy} UV x 2 \phi_1 \phi_1' | \\ \n
\lesssim & \sqrt{\eps} (\| \bar{u}_{xxy} x^{2+2\sigma} \|_\infty + \| \bar{u}_{xy} x^{1+2\sigma} \|_\infty ) \| U \langle x \rangle^{- \frac 1 2 - \sigma} \| \| \sqrt{\eps} V \langle x \rangle^{-\frac 1 2 - \sigma} \| \\ \n
& + \| \bar{u}_{xyy} y^2 x \|_\infty \Big\| \sqrt{\eps} \frac{V}{y} \phi_1 \Big\|^2 \\ \n
\lesssim & \sqrt{\eps} ( \| U, V \|_{X_0} + \| U, V \|_{X_{\frac 1 2}} ) + C_\delta \| U, V \|_{X_0} + \delta \| U, V\|_{Y_{\frac 1 2}}^2. 
\end{align}

We estimate $D^{(5)}_7$ via 
\begin{align} 
|\int \eps \bar{u}_{xy} q V_y \phi_1^2| \lesssim & \sqrt{\eps} \| y x \bar{u}_{xy} \|_\infty \| \sqrt{\eps} U_x \phi_1 \| \| \frac{q}{y} \langle x \rangle^{-1} \|  \lesssim  \sqrt{\eps} \| \sqrt{\eps}U_x \phi_1\| \| U \langle x \rangle^{-1} \|, 
\end{align}
which is an admissible contribution according to \eqref{Hardy:three}. 

We estimate $D^{(5)}_8$ via 
\begin{align}
|\int \eps x \bar{u}_{xyy} V^2 \phi_1^2| \le \| x \bar{u}_{xyy} y^2 \|_\infty \Big\| \sqrt{\eps} \frac{V}{y} \phi_1 \Big\|^2 \lesssim \| \sqrt{\eps} V_y \phi_1\|^2,  
\end{align}
upon which we invoke \eqref{bob:1}. 

Finally, we estimate $D^{(5)}_9$ via  
\begin{align} \n
|\int \eps \bar{u}_y V_y V_x x \phi_1^2| \lesssim & \| \bar{u}_y x^{\frac 1 2} \|_\infty \| \sqrt{\eps} U_x \phi_1 \| \| \sqrt{\eps} V_x x^{\frac 1 2}\phi_1  \| \\ \n
\le & (C_{\delta_1} \| U, V \|_{X_0} + \delta_1 \| U, V \|_{Y_{\frac 1 2}} )( C_{\delta_2} \| U, V \|_{X_{\frac 1 2}} + \delta_2 \| U, V \|_{X_1} ) \\
\le & \delta \| U, V \|_{Y_{\frac 1 2}}^2 + \delta \| U, V \|_{X_1}^2 + C_\delta \| U, V \|_{X_0}^2 + \delta \| U, V \|_{X_{\frac 1 2}}^2. 
\end{align}

We move to the third diffusive term, by which we mean 
\begin{align}
- \eps \int v_{yy} (V_x x \phi_1^2 + V \phi_1^2 + 2 V x \phi_1 \phi_1') = - \int \eps v_{yy} V_x x \phi_1^2 + \int \eps v_y V_y \phi_1^2 + \int 2 \eps v_y V_y x \phi_1 \phi_1'.  
\end{align}
We easily estimate the final two terms above via 
\begin{align}
|\int \eps v_y V_y \phi_1^2 + \int 2 \eps v_y V_y x \phi_1 \phi_1' | \lesssim \|U, V \|_{X_0}^2. 
\end{align}
We thus deal with the principal contribution, which gives 
\begin{align} \n
- \eps \int v_{yy} V_x x \phi_1^2 = & \int \eps v_y V_{xy} x \phi_1^2 = \int \eps \p_y (\bar{u} V - \bar{u}_x q) V_{xy} x \phi_1^2\\ \n
= & \int \eps (\bar{u} V_y + \bar{u}_y V - \bar{u}_{xy}q - \bar{u}_x U) V_{xy} x \phi_1^2 \\  \n
= & - \int \frac{\eps}{2} \p_x (x \bar{u}) V_y^2 \phi_1^2 - \int \eps \bar{u}_y  V_y V_x x\phi_1^2 + \frac 1 2 \int \eps \p_x (x \bar{u}_{yy}) V^2 \phi_1^2\\ \n
& + \int \eps \bar{u}_{xyy} q V_x x \phi_1^2 + \int \eps \bar{u}_{xy} U V_x x \phi_1^2+ \int \eps \bar{u}_x U_y V_x x \phi_1^2 \\ \label{coffee:2} 
& - \int \eps x \bar{u} V_y^2 \phi_1 \phi_1' + \frac 1 2 \int \eps x \bar{u}_{yy} V^2 \phi_1 \phi_1'. 
\end{align}
These terms are largely identical to those in \eqref{umbrella:1}. The only slightly different term is the sixth term of \eqref{coffee:2}, which is estimated as 
\begin{align}
|\eps \int \bar{u}_x U_y V_x x| \lesssim \| \frac{\bar{u}_x}{\bar{u}} x \|_\infty \| \sqrt{\bar{u}} U_y \| \| \eps \sqrt{\bar{u}} V_x \| \lesssim \| U, V \|_{X_0}^2. 
\end{align}  

We now move to the fourth and final diffusive term, by which we mean 
\begin{align}
- \eps^2 \int v_{xx} V_x x \phi_1^2 - \int \eps^2 v_{xx} V \phi_1^2 -2  \int \eps^2 v_{xx} V x \phi_1 \phi_1',
\end{align}
An integration by parts in $x$ demonstrates that the final two terms above are estimated above by $\|U, V \|_{X_0}^2$. The first term above gives
\begin{align} \n
- \eps^2 \int v_{xx} V_x x \phi_1^2 = & \int \eps^2 v_x V_{xx} x \phi_1^2 + \int \eps^2 v_x V_x \phi_1^2 + 2\int \eps^2 v_x V_x x \phi_1 \phi_1'  \\  \n
= & \int \eps^2 \p_x (\bar{u} V - \bar{u}_x q) V_{xx} x \phi_1^2+ \int \eps^2 \p_x (\bar{u} V - \bar{u}_x q) V_x \phi_1^2 + 2 \int \eps^2 v_x V_x x \phi_1 \phi_1' \\ \label{econ}
= & \tilde{D}^{(6)}_1 + \tilde{D}^{(6)}_2 + \tilde{D}^{(6)}_3. 
\end{align}
The term $\tilde{D}^{(6)}_3$ is easily controlled by a factor of $\|U, V \|_{X_0}^2$. A few integrations by parts produces for the first term, $\tilde{D}^{(6)}_1$, above 
\begin{align} \n
\tilde{D}^{(6)}_1 = &\int \eps^2 \p_x (\bar{u} V - \bar{u}_x q) V_{xx} x \phi_1^2= \int \eps^2 (\bar{u} V_x + 2 \bar{u}_x V - \bar{u}_{xx}q) V_{xx} x  \phi_1^2 \\ \n
= & - \frac 1 2 \int \eps^2 \p_x(\bar{u} x) V_x^2  \phi_1^2 - \int \eps^2 \bar{u} x V_x^2 \phi_1 \phi_1' - 2 \eps^2 \int  (\bar{u}_x x V)_x V_x \phi_1^2  \\ \n
& + \int \eps^2 V_x \p_x (\bar{u}_{xx} qx)  \phi_1^2 - 4 \int \eps^2 \bar{u}_x x V V_x \phi_1 \phi_1' + 2 \int \eps^2 \bar{u}_{xx} q V_x x \phi_1 \phi_1'\\ \n
= & - \frac{\eps^2}{2} \int (\bar{u} x)_x V_x^2 \phi_1^2 - \int 2 \eps^2 x \bar{u}_x V_x^2 \phi_1^2 - 2 \eps^2 \int (x \bar{u}_x)x VV_x \phi_1^2  \\ \n
& + \eps^2 \int q V_x (x \bar{u}_{xx})_x \phi_1^2 - \eps^2 \int x \bar{u}_{xx} VV_x \phi_1^2 \\ \n
= & - \frac{\eps^2}{2} \int (x \bar{u})_x V_x^2 \phi_1^2- 2 \eps^2 \int x \bar{u}_x V_x^2 \phi_1^2 + \eps^2 \int (x \bar{u}_x)_{xx} V^2 \phi_1^2 \\ \n
& + \int \eps^2 V_x q \p_x (x \bar{u}_{xx}) \phi_1^2+ \eps^2 \int \p_x (x \bar{u}_{xx}) V^2 \phi_1^2  - 4 \int \eps^2 \bar{u}_x x V V_x \phi_1 \phi_1' \\ \label{coffee:3}
& + 2 \int \eps^2 \bar{u}_{xx} q V_x x \phi_1 \phi_1' =: \sum_{i = 1}^{7} D^{(6)}_i.
\end{align}

We now proceed to estimate all the terms above. The first term of \eqref{coffee:3}, $D^{(6)}_1$ ,we estimate via 
\begin{align}
|\frac{\eps^2}{2} \int (x \bar{u})_x V_x^2| \lesssim \| \p_x (x \bar{u}) \|_\infty \eps^2 \| V_x \|^2 \lesssim \| U, V \|_{X_0}^2. 
\end{align}
$D^{(6)}_2$ and $D^{(6)}_3$ are estimated in an analogous manner. For $D^{(6)}_4$ and $D^{(6)}_6$, we invoke the Hardy type inequality \eqref{Hardy:four} coupled with the estimate $\| \p_x^2(x \bar{u}_x)  x^2\|_\infty \lesssim 1$. We estimate $D^{(6)}_5$ via 
\begin{align} \n
|\int \eps^2 V_x q \p_x (x \bar{u}_{xx})| \lesssim & \eps \Big\| \frac{\p_x (x \bar{u}_{xx})}{\bar{u}} x y \Big\|_\infty \| \| \frac{q}{y} x^{-1} \| \| \eps \sqrt{\bar{u}} V_x \| \lesssim \eps \| U \langle x \rangle^{-1} \| \| \eps \sqrt{\bar{u}} V_x \| \\
\lesssim & \eps ( \| U, V \|_{X_0} + \| U, V \|_{X_{\frac 1 2}} ) \| U, V \|_{X_0}, 
\end{align}
where we have invoked the Hardy-type inequality \eqref{Hardy:three}. 

For the second term from \eqref{econ}, $\tilde{D}^{(6)}_2$, we expand and integrate by parts to generate
\begin{align} \n
|\int \eps^2 \p_x (\bar{u} V - \bar{u}_x q) V_x | =&| \int \eps^2 \bar{u} V_x^2 - \int 2 \eps^2 \bar{u}_{xx} V^2 + \frac 1 2 \int \eps^2 \bar{u}_{xxxx} q^2 | \\ \n
\lesssim & \| \sqrt{\bar{u}} \eps V_x \|^2 + \eps \| \bar{u}_{xx} x^2 \|_\infty \| \sqrt{\eps} V \langle x \rangle^{-1} \|^2 + \eps^2 \| \bar{u}_{xxxx} y^2 x^2 \| \frac{q}{y} \langle x \rangle^{-1} \|^2 \\
\lesssim & \| U, V \|_{X_0}^2 + \eps (\| U, V \|_{X_0}^2 + \| U, V \|_{X_{\frac 1 2}}^2), 
\end{align}
where we have invoked \eqref{Hardy:three} -  \eqref{Hardy:four}.

\vspace{2 mm}

\noindent \textit{Step 4: Error Terms} We now move to the remaining error terms, the first of which is the $\zeta U$ term from \eqref{sys:sim:1}. For this, we estimate via
\begin{align} \n
\Big| \int \zeta U U_x x \phi_1^2 \Big| \lesssim & \sqrt{\eps} \int \langle x \rangle^{- (1 + \frac{1}{50})} |U| |U_x| x \phi_1^2 \lesssim \sqrt{\eps} \| U \langle x \rangle^{- \frac 1 2 - \frac{1}{50}} \|  \| U_x x^{\frac 1 2} \phi_1 \| \\ \n
\lesssim & \sqrt{\eps} ( \| \bar{u} U \langle x \rangle^{- \frac 1 2 - \frac{1}{200}} \| + \| \sqrt{\bar{u}} U_y \| ) ( \| \bar{u} U_x x^{\frac 1 2} \phi_1 \| + \| \sqrt{\bar{u}} U_{xy} x \phi_1 \| ) \\
\lesssim & \sqrt{\eps} \| U, V \|_{X_0} (\| U, V\|_{X_{\frac 1 2}} + \| U, V \|_{X_1} ),
\end{align}
where we have invoked estimate \eqref{S:0} for pointwise decay of $\zeta$. The $\zeta_y q$ term from \eqref{sys:sim:1} and $\zeta V$ term from \eqref{sys:sim:2} is estimated in an identical manner. 

We now address the remaining error terms in equation \eqref{sys:sim:2}. The first of these is the term 
\begin{align} \n
\Big| \int \eps \alpha U (V_x x + V ) \phi_1^2 \Big| \lesssim \sqrt{\eps} \| U \langle x \rangle^{-\frac 3 4} \| ( \| \sqrt{\eps} \bar{u} V_x x^{\frac 1 2} \phi_1 \| + \| \sqrt{\eps} \bar{u} V \langle x \rangle^{- \frac 3 4} \| ) \lesssim \sqrt{\eps} \| U, V \|_{X_0} \| U, V \|_{X_{\frac 1 2}}, 
\end{align}
where we have invoked the pointwise decay estimate on $\alpha$ from \eqref{S:1}. The estimate on the $\alpha_y q$ term follows in an identical manner. This concludes the proof of Lemma \ref{Lem:3}. 
\end{proof}

\begin{lemma} \label{lemma:y:half} Let $(U, V)$ be a solution to \eqref{sys:sim:1} - \eqref{BC:UVYW}. Then for $0 < \delta << 1$,
\begin{align}  \label{basic:Yhalf:est:st}
\| U, V \|_{Y_{\frac 1 2}}^2 \lesssim C_\delta \| U, V \|_{X_0}^2 + C_{\delta} \|U, V \|_{E}^2+ \delta \| U, V \|_{X_{\frac 1 2}}^2 + \eps \| U, V \|_{X_1}^2 + \mathcal{T}_{Y_{\frac 1 2}} + \mathcal{F}_{Y_{\frac 1 2}},
\end{align}
where 
\begin{align} \label{TY12spec}
\mathcal{T}_{Y_{\frac 1 2}} := & \int ( \p_y \mathcal{N}_1 - \eps \p_x \mathcal{N}_2) U_y x \phi_1^2 \\ \label{FY12spec}
\mathcal{F}_{Y_{\frac 1 2}} := & \int (\p_y F_R - \eps \p_x G_R) U_y x \phi_1^2.   
\end{align}
\end{lemma}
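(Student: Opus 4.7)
The strategy is to obtain the $Y_{\frac 1 2}$ estimate by applying the multiplier $U_y x \phi_1^2$ to the vorticity formulation \eqref{eq:vort:pre}, which is equivalent to computing $\partial_y$\eqref{sys:sim:1}$ - \eps \partial_x$\eqref{sys:sim:2} and pairing against $U_y x \phi_1^2$. This choice is dictated by three facts: (i) the pressure gradient drops automatically since $\partial_y P_x - \eps \partial_x (P_y/\eps) = 0$; (ii) the contribution from $\partial_y \mathcal{N}_1 - \eps \partial_x \mathcal{N}_2$ paired against $U_y x \phi_1^2$ is exactly $\mathcal{T}_{Y_{\frac 1 2}}$ (and likewise for $\mathcal{F}_{Y_{\frac 1 2}}$); (iii) the diffusive block $-u_{yyy} - 2\eps u_{xxy} + \eps^2 v_{xxx}$ integrates by parts against $U_y$ to produce precisely the quadratic forms measured by $Y_{\frac 1 2}$.

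First I would extract the positive contributions. Differentiating \eqref{formula:1} yields $u_{yy} = \bar{u} U_{yy} + 3\bar{u}_y U_y + 3\bar{u}_{yy} U + \bar{u}_{yyy} q$, so one integration by parts of $-u_{yyy}$ against $U_y x \phi_1^2$ gives the principal term $\int \bar{u} U_{yy}^2 x \phi_1^2$, while the cross-term $3\int \bar{u}_y U_y U_{yy} x \phi_1^2$, integrated once more by parts in $y$, produces the boundary contribution $\tfrac{3}{2}\int_{y=0}\bar{u}_y U_y^2 x \phi_1^2\,\ud x$, positive by \eqref{prime:pos}, plus an interior term $-\tfrac{3}{2}\int \bar{u}_{yy} U_y^2 x \phi_1^2$ which is absorbed into the positive $Y_{\frac 1 2}$ contributions or controlled by $\|U,V\|_{X_0}^2$ via Hardy \eqref{Hardy:1}. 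The tangential diffusion $-2\eps u_{xxy}$ contributes $\int \bar{u}\eps U_{xy}^2 x \phi_1^2$, and $\eps^2 v_{xxx}$, combined with $V_{xy} = -U_{xx}$ from divergence-freeness, yields $\int \bar{u}\eps^2 U_{xx}^2 x \phi_1^2$. The term $\partial_y^4(\bar{u}^0_{pyyy} q)$ is controlled by $\sqrt{\eps}\|U,V\|_{\mathcal{X}_{\le 1/2}}^2$ via \eqref{est:ring:1} and weighted Hardy, exactly as in the analogous step of Lemma \ref{Lem:3}.

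The remaining work is to estimate the transport and commutator errors from $\partial_y \mathcal{T}_1[U] - \eps \partial_x \mathcal{T}_2[V]$ and from the $\zeta, \alpha$ terms. The cleanest term, $\int 2\bar{u}\bar{u}_x U_{xy} U_y x \phi_1^2$ coming from $\partial_y(\bar{u}^2 U_x)$, becomes after integration by parts $-\tfrac{1}{2}\int \partial_x(\bar{u}^2 x \phi_1^2) U_y^2$, absorbed into $\|U,V\|_{X_0}^2$. Terms containing bare $\bar{u}_y$ (lacking $y$-decay) are split by the cut-off $1 = \chi(z) + (1-\chi(z))$ as in \eqref{est:Xhalf:loss:deriv}: the far-field uses $\bar{u}\gtrsim 1$ and the near-field uses Hardy together with the $\delta\|U,V\|_{X_{\frac 1 2}}^2$ absorption. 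Commutators with $\zeta$ and $\alpha$ decay like $\sqrt{\eps}\langle x\rangle^{-1-1/50}$ and $\langle x \rangle^{-3/2}$ by \eqref{S:0}--\eqref{S:1} and are absorbed into $\|U,V\|_{X_0}^2$ modulo a $\sqrt{\eps}$ prefactor.

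The main obstacle, and the origin of the two loss terms in \eqref{basic:Yhalf:est:st}, is the $-\eps \partial_x \mathcal{T}_2[V]$ contribution. Expanding $-\eps \partial_x(\bar{u}^2 V_x) = -\eps \bar{u}^2 V_{xx} - 2\eps \bar{u}\bar{u}_x V_x$ and pairing against $U_y x \phi_1^2$, the worst term is $\int \eps \bar{u}^2 V_{xx} U_y x \phi_1^2$; Cauchy--Schwarz against $\|\sqrt{\bar{u}}U_y\|\cdot \|\sqrt{\bar{u}}\eps V_{xx} x \phi_1\|$ and invoking $V_{xx} = -\p_y^{-1} U_{xxx}$-type relations forces a loss at the $X_1$ level, yielding exactly the $\eps\|U,V\|_{X_1}^2$ on the right. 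Separately, integrations by parts in $x$ of terms like $\int \eps u_{xxy} U_y x \phi_1^2$ will drop $\{x=0\}$ traces on the support where $\phi_1 \neq 0$; since $U_x, U_{xx}$ need not vanish there, these traces must be controlled by the purely elliptic $\|U,V\|_E$ norm defined in \eqref{def:E:norm}, generating the $C_\delta\|U,V\|_E^2$ term. With these absorptions and the standing choice of small $\delta$, the loop closes into \eqref{basic:Yhalf:est:st}.
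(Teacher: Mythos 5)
Your overall strategy coincides with the paper's: the multiplier $U_y x \phi_1^2$ applied to the vorticity formulation \eqref{eq:vort:pre}, the positive blocks $\int \bar{u} U_{yy}^2 x \phi_1^2$, $\int \eps \bar{u} U_{xy}^2 x\phi_1^2$, $\int \eps^2 \bar{u} U_{xx}^2 x \phi_1^2$ and the trace $\tfrac32\int_{y=0}\bar{u}_y U_y^2 x \phi_1^2$, and the treatment of the $\zeta,\alpha$ commutators. However, your accounting of precisely the two distinctive right-hand-side terms is wrong, and one of the two errors would prevent the estimate from closing. First, the $\eps \|U,V\|_{X_1}^2$ term does \emph{not} come from $\int \eps \bar{u}^2 V_{xx} U_y x \phi_1^2$. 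If you treat that term by the Cauchy--Schwarz you propose, the factor $\|\eps \bar{u}^{3/2} V_{xx} x \phi_1\|$ is controlled only by $\|U,V\|_{X_1}$ with \emph{no} gain of $\eps$, so you produce $\|U,V\|_{X_0}\|U,V\|_{X_1}$ (or $\|U,V\|_{Y_{1/2}}\|U,V\|_{X_1}$), which cannot be absorbed by an $\eps\|U,V\|_{X_1}^2$ allowance --- there is no $\delta\|U,V\|_{X_1}^2$ on the right of \eqref{basic:Yhalf:est:st}. The $\mathcal{T}_2$ block must instead be integrated by parts in $x$ \emph{first}, converting $-\eps\int \p_x\mathcal{T}_2[V]\, U_y x\phi_1^2$ into $\eps\int \mathcal{T}_2[V]\, U_{xy} x \phi_1^2$ plus cutoff commutators, and then one uses $U_{xy}=-V_{yy}$; after this every resulting term lands in $X_0$, $X_{1/2}$, $Y_{1/2}$ and no $X_1$ is needed there at all. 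The true source of $\eps\|U,V\|_{X_1}^2$ is elsewhere: expanding $u=\bar{u}U+\bar{u}_y q$ inside the tangential diffusion $-2\eps u_{xxy}$ and integrating by parts produces the $\{y=0\}$ boundary term $-\int_{y=0}\eps\bar{u}_y U_x(x,0)^2\, x\,\phi_1^2\,\ud x$, which is exactly $\eps$ times the trace component $\|\sqrt{\bar{u}_y}\,U^{(1)} x\phi_1\|_{y=0}^2$ of the $X_1$ norm.

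Second, your explanation of the $C_\delta\|U,V\|_E^2$ term via ``$\{x=0\}$ traces'' cannot be right: $\phi_1$ vanishes identically for $x\le 200$, so integrations by parts in $x$ against the weight $x\phi_1^2$ generate no boundary terms at $x=0$. What they generate are interior commutators carrying $\phi_1\phi_1'$, supported in the bounded region $200\le x\le 205$, of the form $\int \eps u_{xy} U_y x \phi_1\phi_1'$. These involve $\eps u_{xy}$, which is one $x$-derivative beyond what $X_0$ controls, and that is why the purely elliptic norm $\|U,V\|_E$ (whose cutoffs $\gamma_{0,1}$ are designed to cover the support of $\phi_1'$) is invoked. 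You should also flag that the cutoff argument you borrow from \eqref{est:Xhalf:loss:deriv} for the $\bar{u}_y U_x U_y$-type terms here only needs to absorb into $\delta\|U,V\|_{Y_{1/2}}^2$ and $\delta\|U,V\|_{X_{1/2}}^2$, both of which are available on the right-hand side, so that part of your argument is fine.
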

\begin{proof} For this proof, it is convenient to work in the vorticity formulation, \eqref{eq:vort:pre}. We apply the multiplier $U_y x \phi_1(x)^2$ to \eqref{eq:vort:pre}. 

\vspace{2 mm}

\noindent \textit{Step 1: $\mathcal{T}_1$ Terms}  We first note that since $\mathcal{T}_1[U](x, 0) = 0$, we may integrate by parts in $y$ to view the product in the velocity form, and subsequently integrate by parts several times in $y$ and $x$ to produce 
\begin{align} \n
\int \p_y \mathcal{T}_1[U] U_y x \phi_1^2=  &- \int \mathcal{T}_1[U] U_{yy} x \phi_1^2 \\ \n
= & - \int \bar{u}^2 U_x U_{yy} x \phi_1^2 - \int \bar{u} \bar{v} U_y U_{yy} x \phi_1^2 - \int 2 \bar{u}^0_{pyy} U U_{yy} x \phi_1^2 \\ \n
= & \int 2 \bar{u} \bar{u}_y U_x U_y x \phi_1^2 - \frac 1 2 \int \bar{u}^2 U_y^2 \phi_1^2 - \int \bar{u} \bar{u}_x U_y^2 x \phi_1^2 + \frac 1 2 \int (\bar{u} \bar{v})_y U_y^2 x \phi_1^2 \\ \label{form:1}
& + \int 2 \bar{u}^0_{pyy} U_y^2 x - \int \p_y^4 \bar{u}^0_p U^2 x - \int \bar{u}^2 U_y^2 x \phi_1 \phi_1' =: \sum_{i = 1}^7 A^{(1)}_i. 
\end{align}
Note that above, we used the integration by parts identity 
\begin{align} \n
- \int 2 \bar{u}^0_{pyy}  U U_{yy} x \phi_1^2= & \int 2 \bar{u}^0_{pyy} U_y^2 x \phi_1^2 + \int 2 \bar{u}^0_{pyyy} U U_y x \phi_1^2 \\ \label{form:2}
= &  \int 2 \bar{u}^0_{pyy} U_y^2 x \phi_1^2 - \int \bar{u}^0_{pyyyy} U^2 x \phi_1^2, 
\end{align}
which is available due to the condition that $\bar{u}^0_{pyy}|_{y = 0} = 0$ and $\bar{u}^0_{pyyy}|_{y = 0} = 0$. 

We now estimate each of the terms in \eqref{form:1}, starting with $A^{(1)}_1$, which is controlled by
\begin{align} \n
|\int 2\bar{u} \bar{u}_y U_x U_y x \phi_1^2| \lesssim & \| \bar{u}_y x^{\frac 1 2} \|_\infty \| \bar{u} U_x x^{\frac 1 2} \phi_1\|  \| U_y \phi_1 \| \le C_{\delta_1} \| U_y \phi_1 \|^2 + \delta_1 \| U, V \|_{X_{\frac 1 2}}^2 \\ \n
\le & C_{\delta_1} C_{\delta_2} \| \sqrt{\bar{u}} U_y \|^2 + C_{\delta_1}  \delta_2 \| U, V \|_{Y_{\frac 1 2}}^2 + \delta_1 \| U, V \|_{X_{\frac 1 2}}^2 \\
\le & C_\delta \| U, V \|_{X_0}^2 + \delta \| U, V \|_{Y_{\frac 1 2}}^2 + \delta \| U, V \|_{X_{\frac 1 2}}^2, 
\end{align}
where we have invoked \eqref{bob:1}. 
 
For $A^{(1)}_2$, $A^{(1)}_3$, $A^{(1)}_4$, and $A^{(1)}_5$, we appeal to the coefficient estimate 
\begin{align} \label{coeff:1}
\|\frac{1}{2} \bar{u}^2\|_\infty + \| \bar{u} \bar{u}_x x \|_\infty + \frac 1 2 \| x \p_y (\bar{u} \bar{v}) \|_\infty + \| 2 \bar{u}_{yy} x \|_\infty \lesssim 1, 
\end{align} 
to control these terms by $C_\delta \| U, V \|_{X_0}^2 + \delta \| U, V \|_{Y_{\frac 1 2}}^2$. 
 
We estimate $A^{(1)}_6$ via 
\begin{align} \n
|\int \p_y^4 \bar{u}^0_p U^2 x \phi_1^2| \lesssim & |\int \p_y^4 \bar{u}^0_p \mathring{U}^2 x \phi_1^2| + |\int \p_y^4 \bar{u}^0_p U(x, 0)^2 x \phi_1^2| \\ \n
\lesssim & \| \p_y^4 \bar{u}^0_p x y^2 \|_\infty \| U_y \phi_1 \|^2 + \| \p_y^4 \bar{u}^0_p x^{\frac 12} \|_{L^\infty_x L^1_y} \| U(x, 0) x^{\frac 1 4} \|_{x = 0}^2 \\
\le & C_\delta \| U, V \|_{X_0}^2 + \delta \| U, V \|_{Y_{\frac 1 2}}^2.
\end{align}
 The final term in \eqref{form:1}, $A^{(1)}_7$, is localized in $x$, and is clearly bounded above a factor of $\| U, V \|_{X_0}^2$. 
 
\vspace{2 mm}

\noindent \textit{Step 2: $\mathcal{T}_2$ Terms:} We now estimate the contributions from $\mathcal{T}_2$ via first integrating by parts in $x$ to produce 
\begin{align}
- \eps \int \p_x \mathcal{T}_2[V]  U_y x \phi_1^2 = \int \eps \mathcal{T}_2[V] U_{xy} x \phi_1^2 + 2\int \eps \mathcal{T}_2[V] U_y \phi_1 \phi_1'.
\end{align} 
We now appeal to the definition of $\mathcal{T}_2[V]$ in \eqref{def:T2} to produce 
\begin{align} \n
\int \eps \mathcal{T}_2[V] U_{xy} x \phi_1^2 = - & \int \eps ( \bar{u}^2 V_x + \bar{u} \bar{v} V_y + \bar{u}_{yy}V )V_{yy} x \phi_1^2  \\ \n
= & \int 2 \eps \bar{u} \bar{u}_y V_x V_y x \phi_1^2 + \int \eps \bar{u}^2 V_{xy} V_y x \phi_1^2 + \frac 1 2 \int \eps \p_y (\bar{u} \bar{v}) V_y^2 x \phi_1^2 \\ \n
& + \int \eps \bar{u}_{yy} V_y^2 x \phi_1^2 - \frac 1 2 \int \eps \bar{u}_{yyyy} V^2 x \phi_1^2 \\ \n
= & \int 2 \eps \bar{u} \bar{u}_y V_x V_y x \phi_1^2 - \int \frac 1 2 \eps \bar{u}^2 V_y^2 \phi_1^2 - \int \eps \bar{u} \bar{u}_x V_y^2 x \phi_1^2 \\ \n
& + \frac 1 2 \int \eps (\bar{u} \bar{v})_y V_y^2 x \phi_1^2 + \int \eps \bar{u}_{yy} V_y^2 x \phi_1^2 - \int \frac{\eps}{2} \bar{u}_{yyyy} V^2 x \phi_1^2 - \int \eps \bar{u}^2 V_y^2 x \phi_1 \phi_1' \\ \label{wolf:2}
= & A^{(2)}_1 + ... + A^{(2)}_7.
\end{align}

For the first term, $A^{(2)}_1$, we estimate via 
\begin{align} \n 
| \int \eps \bar{u} \bar{u}_y V_x V_y x \phi_1^2 | \lesssim &\| \bar{u}_y x^{\frac 1 2} \|_\infty \| \sqrt{\eps} V_y \phi_1 \| \| \sqrt{\eps} \bar{u} V_x x^{\frac 1 2} \phi_1 \| \\
\le & C_\delta \| U, V \|_{X_0}^2 + \delta \| U, V \|_{Y_{\frac 1 2}}^2 + \delta \| U, V \|_{X_{\frac 1 2}}^2.
\end{align}
For $A^{(2)}_2, A^{(2)}_3, A^{(2)}_4, A^{(2)}_5$, we estimate using the same coefficient estimate as \eqref{coeff:1}. We estimate $A^{(2)}_6$ via 
\begin{align} \n
|\int \eps \bar{u}_{yyyy} V^2 x| \lesssim \| \bar{u}_{yyyy} xy^2 \|_\infty \| \sqrt{\eps} V_y \|^2 \le C_\delta \| U, V \|_{X_0}^2 + \delta \| U, V \|_{Y_{\frac 1 2}}^2. 
\end{align}
The final term, $A^{(2)}_7$, can be controlled by $\|U, V \|_{X_0}^2$ upon invoking the bounded support of $\phi_1'$. 

\vspace{2 mm}

\noindent \textit{Step 3: Diffusive Terms:}  We now compute (in the vorticity form) via a long series of integrations by parts the following identity 
\begin{align}  \n
- \int \p_y^3 u U_y x \phi_1^2= & \int u_{yy} U_{yy} x \phi_1^2+ \int_{y = 0} u_{yy} U_y x \phi_1^2 \ud  x \\ \n
= & \int \p_y^2 (\bar{u} U + \bar{u}_y q) U_{yy} x\phi_1^2 + \int_{y = 0} \p_y^2 (\bar{u} U + \bar{u}_y q) U_{y} x \phi_1^2 \ud x \\ \n
= & \int \bar{u} U_{yy}^2 x \phi_1^2 - \frac 9 2 \int \bar{u}_{yy} U_y^2 x \phi_1^2+ \int 3 \bar{u}_{yyyy} U^2 x \phi_1^2- \int \frac 1 2 \p_y^6 \bar{u} q^2 x \phi_1^2 \\ \label{likeyou}
& + \frac 3 2 \int_{y = 0} \bar{u}_y U_y^2 x \phi_1^2 \ud x + \frac 3 2 \int_{y = 0} \bar{u}_{yy} UU_y x \phi_1^2 = \sum_{i = 1}^6 B^{(1)}_i.
\end{align} 
We first notice that $B^{(1)}_1$ and $B^{(1)}_5$ are positive contributions. $B^{(1)}_2$ is easily estimated by $\| U, V \|_{X_0}^2$ upon using $\|\frac{\bar{u}_{yy}}{\bar{u}}x \|_\infty \lesssim 1$. We estimate $B^{(1)}_3$ by 
\begin{align} \n
|\int 3 \bar{u}_{yyyy} U^2 x \phi_1^2| \lesssim & |\int \bar{u}_{yyyy} (U - U(x, 0))^2 x \phi_1| + |\int \bar{u}_{yyyy} U(x, 0)^2 x| \\ \n
\lesssim & \| \bar{u}_{yyyy} x y^2 \|_\infty \Big\| \frac{U - U(x, 0)}{y} \phi_1 \Big\|^2 + \| \bar{u}_{yyyy} x^{\frac 1 2} \|_{L^\infty_x L^1_y} \|  U(x, 0) x^{\frac 1 4} \|_{y = 0}^2 \\ \label{rockL}
\lesssim & \| U_y \|^2 + \| U(x, 0) x^{\frac 1 4} \|_{y = 0}^2 \le C_\delta \|U, V \|_{X_0}^2 + \delta \|U, V \|_{Y_{\frac 1 2}}^2. 
\end{align}
The term $B^{(1)}_4$ is estimated in an entirely analogous manner. The term $B^{(1)}_6$ is estimated by 
\begin{align}
|\int_{y = 0} \bar{u}_{yy} UU_y x \phi_1^2| \lesssim \eps^{\frac 1 2} \| U \langle x \rangle^{- \frac 1 4} \|_{y = 0} \| U_y \phi_1 \langle x \rangle^{\frac 1 4}  \|_{y = 0} \lesssim \eps^{\frac 1 2} \| U, V\|_{X_0} \|U, V \|_{Y_{\frac 1 2}},
\end{align}
upon invoking the bound $|\bar{u}_{yy}(x, 0)| \lesssim \eps^{\frac 1 2} \langle x \rangle^{-1}$ due to  \eqref{prof:u:est} coupled with the fact that $\bar{u}^0_{pyy}(x, 0) = 0$. 

In addition to this term, we need to estimate the term $\bar{u}^0_{p yyy}q$ from \eqref{sys:sim:1}, we do so via 
\begin{align}  \n
\int \p_y (\bar{u}^0_{pyyy} q) U_y x =& \int \bar{u}^0_{p yyyy} q U_y x + \int \bar{u}^0_{p yyy} UU_y x =   - \int \p_y^5 \bar{u}^0_p q U x - \frac 1 2 \int \p_y^4 \bar{u}^0_p U^2 x \\
= & \frac 1 2 \int \p_y^6 \bar{u}^0_p q^2 x - \frac 1 2 \int \p_y^4 \bar{u}^0_p U^2 x, 
\end{align}
which we estimate in an identical manner to \eqref{rockL}.

The next diffusive term is 
\begin{align}  \label{grey}
- 2 \int \eps u_{xxy} U_y x \phi_1^2= & \int 2 \eps u_{xy} U_{xy} x \phi_1^2 + \int 2 \eps u_{xy} U_y \phi_1^2 + \int 4 \eps u_{xy} U_y x \phi_1 \phi_1' \\ \n
= & \int 2 \eps \p_{xy} (\bar{u} U + \bar{u}_y q) U_{xy} x \phi_1^2 + \int 2 \eps \p_x (\bar{u} U + \bar{u}_y q) U_y \phi_1^2\\ \n
&+ \int 4 \eps u_{xy} U_y x \phi_1 \phi_1' =: \sum_{i = 1}^{5} \tilde{B}^{(2)}_i.  
\end{align}
Due to the localization in $x$ of $\phi_1'$ that the term $\tilde{B}^{(2)}_5$ above is estimated by  
\begin{align} \label{use:E:norm}
|\int \eps u_{xy} U_y x \phi_1 \phi_1'| \lesssim \| \eps u_{xy} \phi_1' \| \| U_y \phi_1 \| \lesssim \|U, V \|_E (C_\delta \|U, V \|_{X_0} + \delta \|U, V \|_{Y_{\frac 1 2}}).
\end{align}
 
Due to the length of the forthcoming expressions, we handle each of the remaining four terms in \eqref{grey}, $\tilde{B}^{(2)}_k$, $k = 1,2,3,4$, individually.  First, integration by parts yields for $\tilde{B}^{(2)}_1$,
\begin{align} \n
\int 2 \eps \p_{xy}(\bar{u} U) U_{xy} x \phi_1^2 = & \int 2 \eps \bar{u} U_{xy}^2 x\phi_1^2 - \int 4 \eps \bar{u}_{xxy} UU_y x \phi_1^2 - \int 4 \eps \bar{u}_{xy} U_x U_y x \phi_1^2- \int 4 \eps \bar{u}_{xy} U U_y \phi_1^2\\ \n
&- \int \eps \p_x (x \bar{u}_x) U_y^2 \phi_1^2 - \int 4 \eps \bar{u}_{yy} U_x^2 x \phi_1^2- \int_{y = 0} 2 \eps \bar{u}_y U_x(x, 0)^2 x \phi_1^2 \ud x  \\ \label{park1}
& + \int \eps \bar{u}_{yyyy} V^2 x \phi_1^2+ \int 2 \eps \bar{u}_{xyy} q U_{xy}x \phi_1^2 + E_{loc}^{(2)} =: \sum_{i = 1}^{9} B^{(2)}_i + E_{loc}^{(2)},  
\end{align}
where $E_{loc}^{(2)}$ are localized contributions that can be controlled by a large factor of $\| U, V \|_{X_0}^2 + \|U, V \|_E^2$. 

The first term, $B^{(2)}_1$, is a positive contribution. The terms $B^{(2)}_2$ and $B^{(2)}_4$ are estimated via 
\begin{align} \n
| \int 4 \eps \bar{u}_{xxy} UU_y x \phi_1^2| + |\int 4 \eps \bar{u}_{xy} UU_y \phi_1^2| \lesssim &  \eps \Big( \| \bar{u}_{xxy} x^2 \|_\infty + \| \bar{u}_{xy} x \|_\infty  \Big) \| U \langle x \rangle^{-1}  \phi_1 \| \| U_y \phi_1  \|  \\
\lesssim & \eps (\| U, V \|_{X_0}^2 + \| U, V \|_{Y_{\frac 1 2}}^2 + \| U, V \|_{X_{\frac 1 2}}^2), 
\end{align}
where we have appealed to \eqref{bob:1} and \eqref{Hardy:three}. 

The terms $B^{(2)}_3$, $B^{(2)}_5$, $B^{(2)}_6$, and $B^{(2)}_8$ are estimated via 
\begin{align} \n
&|\int 4 \eps \bar{u}_{xy} U_x U_y x \phi_1^2| \lesssim  \sqrt{\eps} \| \bar{u}_{xy} x \|_\infty \| \sqrt{\eps} U_x \phi_1 \| \| U_y \phi_1 \| \lesssim \sqrt{\eps} \Big( \| U, V \|_{X_0}^2 + \| U, V \|_{Y_{\frac 1 2}}^2 \Big), \\ \n
&|\int \eps \p_x (x \bar{u}_x) U_y^2 \phi_1^2| \lesssim \| \p_x (x \bar{u}_x) \|_\infty \eps \| U_y \phi_1 \|^2 \lesssim \eps \Big( \| U, V \|_{X_0}^2 + \| U, V \|_{Y_{\frac 1 2}}^2 \Big)  \\ \n
&|\int 4 \eps \bar{u}_{yy} U_x^2 x \phi_1^2 | \lesssim \| \bar{u}_{yy} x \|_\infty \| \sqrt{\eps} U_x \phi_1 \|^2 \le C_\delta \| U, V \|_{X_0}^2 + \delta \| U, V \|_{Y_{\frac 1 2}}^2, \\ \n
&|\int \eps \bar{u}_{yyyy} V^2 x \phi_1^2| \lesssim \eps \| \bar{u}_{yyyy} x y^2 \|_\infty \Big\| \frac{V}{y} \phi_1 \Big\|^2 \le C_\delta \| U, V \|_{X_0}^2 + \delta \| U, V \|_{Y_{\frac 1 2}}^2, 
\end{align}
and $B^{(2)}_9$ is estimated via 
\begin{align} \n
| \int\eps \bar{u}_{xyy} q U_{xy} x \phi_1^2| \lesssim &\sqrt{\eps} \| \bar{u}_{xyy} x^{\frac 3 2} y \|_\infty \Big\| \frac{q}{y} \langle x \rangle^{-1} \phi_1 \Big\| \| \sqrt{\eps} \sqrt{\bar{u}} U_{xy} x^{\frac 1 2} \phi_1 \| \\ \n
\lesssim & \sqrt{\eps} \| U \langle x \rangle^{-1} \| \| \sqrt{\eps} \sqrt{\bar{u}} U_{xy} x^{\frac 1 2} \phi_1 \| \\ \label{whyb}
 \lesssim & \sqrt{\eps} ( \| U, V \|_{X_0} + \| U, V \|_{X_{\frac 1 2}} ) \| U, V \|_{Y_{\frac 1 2}}.
\end{align}
The term $B^{(2)}_7$ requires us to use the $X_1$ norm, albeit with a pre-factor of $\eps$ and with a weaker weight in $x$: 
\begin{align}
|\int_{y = 0} \eps \bar{u}_y U_x(x, 0)^2 x \phi_1^2 \ud x| \lesssim \eps \| \sqrt{\bar{u}_y} U_x(x, 0) x \phi_1 \|_{x = 0}^2 \lesssim \eps \| U, V \|_{X_1}^2,
\end{align}
where we use that the choice of $\phi_1$ is the same as that of $\| \cdot \|_{X_1}$. This concludes treatment of $\tilde{B}^{(2)}_1$. 

The second term from \eqref{grey}, $\tilde{B}^{(2)}_2$, gives 
\begin{align} \n
\int 2 \eps \p_{xy}( \bar{u}_y q) U_{xy} x \phi_1^2 = &\int 2\eps \Big( \bar{u}_{xyy} q + \bar{u}_{xy} U - \bar{u}_{yy} V + \bar{u}_y U_x \Big) U_{xy} x \phi_1^2 \\ \n
= & \int 2 \eps \bar{u}_{xyy} q U_{xy} x \phi_1^2 - \int 2 \eps \bar{u}_{xy} U_x U_y x \phi_1^2 - \int 2 \eps \bar{u}_{xy} UU_y \phi_1^2 \\ \n
& - \int 2 \eps \bar{u}_{xxy} x UU_y \phi_1^2 - \int 2 \eps \bar{u}_{yy} V_y^2 x \phi_1^2 + \int \eps \bar{u}_{yyyy} V^2 x \phi_1^2\\ \label{jup}
&  - \int_{y = 0} \eps \bar{u}_y U_x(x, 0)^2 x \phi_1^2 \ud x - 4 \int \eps \bar{u}_{xy} U U_y x \phi_1 \phi_1' = : \sum_{i = 1}^8 J_i. 
\end{align}
The final term above, $J_8$, is a localized in $x$ contribution, can easily be controlled by $\| U, V \|_{X_0}^2$.  $J_1$ is treated in the same manner as \eqref{whyb}. $J_2, J_3$, and $J_4$ are estimated by 
\begin{align} \n
&|\int \eps \bar{u}_{xy} U_x U_y x \phi_1^2| \lesssim \sqrt{\eps} \| \bar{u}_{xy} x \|_\infty \| \sqrt{\eps} U_x \phi_1 \| \| U_y \phi_1 \| \lesssim \sqrt{\eps} (\| U, V \|_{X_0}^2 + \| U, V \|_{Y_{\frac 1 2}}^2), \\ \n
&|\int \eps (x \bar{u}_{xy})_x UU_y \phi_1| \lesssim \eps \| \bar{u}_{xy} x \|_\infty \| U \langle x \rangle^{-1} \phi_1 \| \| U_y \phi_1 \| \lesssim \eps (\| U, V \|_{X_0}^2 + \| U, V \|_{X_{\frac 1 2}}^2+ \| U, V \|_{Y_{\frac 1 2}}^2).
\end{align}
Terms $J_5$, $J_6$, $J_7$ are identical to $B^{(2)}_6$, $B^{(2)}_8$, and $B^{(2)}_7$. This concludes treatment of $\tilde{B}^{(2)}_2$.

The third and fourth terms from \eqref{grey}, $\tilde{B}^{(2)}_3$ and $\tilde{B}^{(2)}_4$ together give
\begin{align}  \n
\int 2\eps \p_x (\bar{u} U + \bar{u}_y q) U_y \phi_1^2 = &\int 2 \eps \bar{u} U_x U_y \phi_1^2 + \int 2 \eps \bar{u}_x U U_y \phi_1^2+ \int 2 \eps \bar{u}_{xy} q U_y - \int 2 \eps \bar{u}_y V U_y \phi_1^2\\ \n
= & \int 2 \eps \bar{u} U_x U_y \phi_1^2- \int \eps \bar{u}_{xy} U^2 \phi_1^2+ \int 2 \eps \bar{u}_{xy} q U_y \phi_1^2 + \int 2 \eps \bar{u}_y V_y U \phi_1^2\\ \n
&  + \int 2 \eps \bar{u}_{yy} UV\phi_1^2 \\ \n
= & \int 2 \eps \bar{u} U_x U_y \phi_1^2 - \int \eps \bar{u}_{xy} U^2 \phi_1^2+ \int 2 \eps \bar{u}_{xy} q U_y  \phi_1^2+ \int  \eps \bar{u}_{xy} U^2 \phi_1^2\\ \n
&  + \int 2 \eps \bar{u}_{yy} UV \phi_1^2 + \int 2 \eps \bar{u}_y U^2 \phi_1 \phi_1' \\ \label{hein}
= &  \int 2 \eps \bar{u} U_x U_y \phi_1^2+ \int 2 \eps \bar{u}_{xy} q U_y  \phi_1^2+ \int 2 \eps \bar{u}_{yy} UV \phi_1^2 + \int 2 \eps \bar{u}_y U^2 \phi_1 \phi_1'. 
\end{align}

The first and final terms from \eqref{hein} can easily be estimated by $\sqrt{\eps} \| U, V \|_{X_0}^2$, while the second term \eqref{hein}
\begin{align} \n
|\int 2 \eps \bar{u}_{xy} q U_y \phi_1^2| \lesssim \eps \| \bar{u}_{xy} y x \|_\infty \| \frac{q}{y} \langle x \rangle^{-1} \phi_1 \|  \| U_y \phi_1 \| \lesssim \eps \| U \langle x \rangle^{-1} \| \| U_y \phi_1 \| 
\end{align}
and the third term from \eqref{hein} can be estimated via 
\begin{align} 
|\int 2 \eps \bar{u}_{yy} UV \phi_1^2| \lesssim & \eps | \int \bar{u}_{yy} \mathring{U} V \phi_1^2| + \eps |\int \bar{u}_{yy} U(x, 0) V \phi_1^2| \\ \n
\lesssim & \sqrt{\eps} \| \bar{u}_{yy} y^2 \|_\infty \| U_y \phi_1 \| \| \sqrt{\eps} V_y \phi_1 \| + \sqrt{\eps} \| \bar{u}_{yy} y x^{- \frac 1 4} \|_{L^\infty_x L^1_y} \| U(x, 0) x^{\frac 1 4}  \|_{L^2(x = 0)} \| \sqrt{\eps} V_y \phi_1 \|. 
\end{align}

We now arrive at the final diffusive term, which we integrate by parts in $x$ via 
\begin{align} \n
\int \eps^2 v_{xxx} U_y x \phi_1^2 = & - \eps^2 \int v_{xx} U_{xy} x \phi_1^2- \int \eps^2 v_{xx} U_y \phi_1^2 - 2 \eps^2 \int v_{xx} U_y x \phi_1 \phi_1' \\  \n
= & \int \eps^2 v_{xx} V_{yy} x \phi_1^2 - \int \eps^2 v_x V_{yy} \phi_1^2 + 2 \int \eps^2 v_x U_y \phi_1 \phi_1' - 2 \eps^2 \int v_{xx} U_y x \phi_1 \phi_1' \\ \label{misfit}
= & \tilde{P}_1 + \tilde{P}_2 + \tilde{P}_3 + \tilde{P}_4. 
\end{align}
Again, the terms with $\phi_1'$ above, $\tilde{P}_3, \tilde{P}_4$, are easily estimated above by a factor of $\| U, V \|_{X_0}^2 + \|U, V \|_E^2$ due to the localization in $x$, in an analogous manner to \eqref{use:E:norm}. 

For the second term on the right-hand side of \eqref{misfit}, $\tilde{P}_2$, we produce the following identity 
\begin{align} \n
- \int \eps^2 v_x V_{yy}  \phi_1^2= & - \int \eps^2 \p_x (\bar{u} V - \bar{u}_x q) V_{yy} \phi_1^2 \\ \n
= & - \int \eps^2 (\bar{u} V_x + 2 \bar{u}_x V - \bar{u}_{xx}q) V_{yy} \phi_1^2 \\ \n
= & \int \eps^2 \bar{u}_y V_x V_y \phi_1^2 + \int \eps^2 \bar{u} V_{xy} V_y \phi_1^2 + \int 2\eps^2 \bar{u}_x V_y^2 \phi_1^2 + \int 2 \eps^2 \bar{u}_{xy} V V_y \phi_1^2 \\ \n
& - \int \eps^2 \bar{u}_{xx} U V_y \phi_1^2 + \int \eps^2 \bar{u}_{xxy} UV \phi_1^2+ \int \frac{\eps^2}{2} \bar{u}_{xxxyy} q^2 \phi_1^2+ \int \eps^2 \bar{u}_{xxyy} q^2 \phi_1 \phi_1' \\ \n
= & \int \eps^2 \bar{u}_y V_x V_y \phi_1^2- \frac 3 2 \int \eps^2 \bar{u}_x V_y^2\phi_1^2 - \int \eps^2 \bar{u}_{xyy} V^2 \phi_1^2- \frac{\eps^2}{2} \int \bar{u}_{xxx} U^2 \phi_1^2\\ \n
& + \int \eps^2 \bar{u}_{xxy} UV \phi_1^2 + \frac{\eps^2}{2} \int \bar{u}_{xxxyy} q^2 \phi_1^2- \int \eps^2 \bar{u} V_y^2 \phi_1 \phi_1' + \int \eps^2 \bar{u}_{xxyy} q^2 \phi_1 \phi_1' \\
& - \int \eps^2 \bar{u}_{xx} U^2 \phi_1 \phi_1' = \sum_{i = 1}^{9} P^{(1)}_i. 
\end{align}
Again, the terms with a $\phi_1'$, $P^{(1)}_7, P^{(1)}_8, P^{(1)}_9$, are easily controlled by a factor of $\| U, V \|_{X_0}^2$. 

We now proceed to estimate each of the remaining terms above via 
\begin{align*}
&|\int \eps^2 \bar{u}_y V_x V_y \phi_1^2| \lesssim \sqrt{\eps} \| \eps V_x \phi_1\| \| \sqrt{\eps} V_y \phi_1 \| \lesssim \sqrt{\eps} (\| U, V \|_{X_0}^2 + \| U, V \|_{Y_{\frac 1 2}}^2), \\
&|\int \eps^2 \bar{u}_x V_y^2 \phi_1^2| \lesssim \eps \| \sqrt{\eps} V_y \phi_1 \|^2 \lesssim \eps (\| U, V \|_{X_0}^2 + \| U, V \|_{Y_{\frac 1 2}}^2), \\
&|\int \eps^2 \bar{u}_{xyy} V^2 \phi_1^2| \lesssim \eps \| \bar{u}_{xyy} y^2 \|_\infty \Big\| \sqrt{\eps} \frac{V}{y} \phi_1 \Big\|^2 \lesssim \eps \| \sqrt{\eps} V_y \phi_1\|^2 \lesssim \eps (\| U, V \|_{X_0}^2 + \| U, V \|_{Y_{\frac 1 2}}^2), \\
&|\int \eps^2 \bar{u}_{xxx} U^2 \phi_1^2| \lesssim \eps^2 \| \bar{u}_{xxx} x^2 \|_\infty \| U \langle x \rangle^{-1} \|^2 \lesssim \eps^2 (\| U, V \|_{X_0}^2 + \| U, V \|_{X_{\frac 1 2}}^2), \\
&|\int \eps^2 \bar{u}_{xxy} UV \phi_1^2| \lesssim \eps^{\frac 32} \| U \langle x \rangle^{-1} \| \| \sqrt{\eps} V_y \phi_1\| \lesssim \eps^{\frac 3 2} (\| U, V \|_{X_0}^2 + \| U, V \|_{X_{\frac 1 2}}^2 + \| U, V \|_{Y_{\frac 1 2}}^2), \\
&| \frac{\eps^2}{2} \int \bar{u}_{xxxyy} q^2 \phi_1^2| \lesssim \eps^2 \| \bar{u}_{xxxyy} x^2 y^2 \|_\infty \| U \langle x \rangle^{-1} \|^2 \lesssim \eps^2 (\| U, V \|_{X_0}^2 + \| U, V \|_{X_{\frac 1 2}}^2).
\end{align*}
This concludes the treatment of $\tilde{P}_2$.

We now treat $\tilde{P}_1$. We further integrate by parts using that $v = \bar{u} V - \bar{u}_x q$, which produces the following identity 
\begin{align} \label{mis2}
\tilde{P}_1 = \int \eps^2 v_{xx} V_{yy} x \phi_1^2= & \int \eps^2 \p_{xx} (\bar{u} V - \bar{u}_x q) V_{yy} x \phi_1^2\\  \n
= & \int \eps^2 (\bar{u} V_{xx} + 3 \bar{u}_x V_x + 3 \bar{u}_{xx} V - \bar{u}_{xxx} q) V_{yy} x \phi_1^2 =: \tilde{P}_{1,1} + \tilde{P}_{1,2} + \tilde{P}_{1,3} + \tilde{P}_{1,4}.
\end{align}

For $\tilde{P}_{1,1}$, we integrate by parts several times in $x$ and $y$ to produce
\begin{align} \n
\int \eps^2 \bar{u} V_{xx} V_{yy} x \phi_1^2 = & - \int \eps^2 \bar{u} V_{xxy} V_y x \phi_1^2 - \int \eps^2 \bar{u}_y V_{xx} V_y x \phi_1^2 \\ \n
= & \int \eps^2 \bar{u} V_{xy}^2 x \phi_1^2+ \int \eps^2 x \bar{u}_x V_{xy} V_y \phi_1^2+ \int \eps^2 \bar{u} V_{xy} V_y \phi_1^2\\ \n
& + \int \eps^2 \bar{u}_y V_x V_y \phi_1^2+ \int \eps^2 \bar{u}_{xy} V_x V_y x \phi_1^2+ \int \eps^2 \bar{u}_y V_x V_{xy}x \phi_1^2\\ \n
& +  2\int \eps^2 \bar{u} V_{xy} V_y x \phi_1 \phi_1' + 2 \int \eps^2 \bar{u}_y V_x V_y x \phi_1 \phi_1'  \\ \n
= & \int \eps^2 \bar{u} V_{xy}^2 x \phi_1^2 - \eps^2 \frac 1 2 \int \p_x (x \bar{u}_x) V_y^2 \phi_1^2 - \frac 1 2 \int \eps^2 \bar{u}_x V_y^2 \phi_1^2 \\ \n
& + \int \eps^2 \bar{u}_y V_x V_y \phi_1^2+ \int \eps^2 \bar{u}_{xy} V_x V_y x\phi_1^2 - \int \frac{\eps^2}{2} \bar{u}_{yy} V_x^2 x \phi_1^2 \\ \n
& - \int \eps^2 \bar{u} V_y^2 \phi_1 \phi_1' - \int \eps^2 x \bar{u}_x V_y^2 \phi_1 \phi_1' +   2\int \eps^2 \bar{u} V_{xy} V_y x \phi_1 \phi_1' \\ \label{clay}
& + 2 \int \eps^2 \bar{u}_y V_x V_y x \phi_1 \phi_1' =: \sum_{i = 1}^{10} H^{(1)}_i. 
\end{align} 
All of the terms with $\phi_1'$ can again be controlled by a factor of $\| U, V \|_{X_0}^2 + \|U, V \|_E^2$. The first term, $H^{(1)}_1$, is a positive contribution. $H^{(1)}_2$ and $H^{(1)}_3$ are estimated by 
\begin{align*}
|\int \frac 1 2 \p_x (x \bar{u}_x) V_y^2 \phi_1^2| + |\int \frac 1 2 \eps^2 \bar{u}_x V_y^2 \phi_1^2| \lesssim &(\| \p_x (x \bar{u}_x) \|_\infty + \| \bar{u}_x \|_\infty ) \eps \| \sqrt{\eps} V_y \phi_1 \|^2 \\
\lesssim & \eps (\| U, V \|_{X_0}^2 + \| U, V \|_{Y_{\frac 1 2}}^2),
\end{align*}
while the $H^{(1)}_4, H^{(1)}_5$ and $H^{(1)}_6$ are estimated via 
\begin{align*} 
&| \int \eps^2 \p_x (x \bar{u}_y) V_x V_y\phi_1^2 | \lesssim \sqrt{\eps} \| \p_x (x \bar{u}_y) \|_\infty \| \eps V_x \| \| \sqrt{\eps} V_y \| \lesssim \sqrt{\eps} (\| U, V \|_{X_0}^2 + \| U, V \|_{Y_{\frac 1 2}}^2), \\
&|\int \eps^2 \bar{u}_{yy} V_x^2 x\phi_1^2| \lesssim \| \bar{u}_{yy} x \|_\infty \| \eps V_x \|^2 \le \delta \| U, V \|_{Y_{\frac 1 2}}^2 + \| U, V \|_{X_0}^2.  
\end{align*}
This concludes the treatment of $\tilde{P}_{1,1}$. 

The terms $\tilde{P}_{1,k}$, $k = 2, 3, 4$, are equivalent to  
\begin{align} \n
&\eps^2 \int (3  \bar{u}_x V_x + 3 \bar{u}_{xx} V - \bar{u}_{xxx} q) V_{yy} x \phi_1^2\\ \n
= & - \int 3 \eps^2 \bar{u}_{xy} V_x V_y x \phi_1^2+ \frac 3 2 \int \eps^2 \p_x (x\bar{u}_{x}) V_y^2 x \phi_1^2 - \int 3 \eps^2 \bar{u}_{xx} V_y^2 x \phi_1^2 \\ \label{clay2}
& + \frac 3 2 \int \eps^2 \bar{u}_{xxyy} V^2 x \phi_1^2 + \int \eps^2 \bar{u}_{xxxy} q V_y x + \int \eps^2 \bar{u}_{xxx} U V_y x \phi_1^2. 
\end{align}

We estimate each of these contributions in a nearly identical fashion to the terms from \eqref{clay}, and so omit repeating these details. 

\vspace{2 mm}

\noindent \textit{Step 4: Error Terms} We now estimate the error terms on the right-hand side of \eqref{eq:vort:pre}, starting with
\begin{align} \n
\int \p_y(\zeta U) U_y x \phi_1^2 = & \int \zeta U_y^2 x \phi_1^2 + \int \p_y \zeta U U_y x \phi_1^2 \\ \label{riwiu}
= & \int \zeta U_y^2 x \phi_1^2 - \frac 1 2 \int \p_y^2 \zeta U^2 x - \frac 1 2 \int_{y = 0} \p_y \zeta U^2 x \phi_1^2.   
\end{align}
The first term above is estimated via 
\begin{align} \n
|\int \zeta U_y^2 x \phi_1^2| \lesssim \sqrt{\eps} \| U_y \phi_1 \|^2 \lesssim \sqrt{\eps} (\| U, V \|_{X_0}^2 + \| U, V \|_{Y_{\frac 1 2}}^2),  
\end{align}
where we have appealed to the estimate \eqref{S:0} as well as the Hardy type inequality \eqref{bob:1}. 

For the second and third terms from \eqref{riwiu}, we estimate via 
\begin{align}
|\int \p_y^2 \zeta U^2 x| + |\int_{y = 0} \p_y \zeta U^2 x \phi_1^2| \lesssim \sqrt{\eps} \| U \langle x \rangle^{- \frac 1 2 - \frac{1}{100}} \|^2 + \sqrt{\eps} \| U \langle x \rangle^{- \frac 1 2} \|_{y = 0}^2 \lesssim \sqrt{\eps} \| U, V \|_{X_0}^2,
\end{align}
where we have appealed to \eqref{est:zeta:2}.  
  
 The $(\zeta_y q)_y$ and $(\zeta V)_x$ terms on the right-hand side of \eqref{eq:vort:pre} are estimated in a completely analogous manner.  We now estimate the term 
 \begin{align} \n
| \int \eps (\alpha U)_x U_y x \phi_1^2 | \le & |\int \eps \alpha U_x U_y x \phi_1^2| + |\int \eps \alpha_x U U_y x \phi_1^2| \\ \n
\lesssim & \sqrt{\eps} \| \sqrt{\eps} U_x \phi_1 \| \| U_y \phi_1 \| + \eps \| U \langle x \rangle^{-1} \| \| U_y \phi_1 \| \\
\lesssim & \sqrt{\eps} (\| U, V \|_{X_0}^2 + \| U, V \|_{Y_{\frac 1 2}}^2) + \eps \| U, V \|_{X_0}^2, 
 \end{align}
 where we have appealed to estimate \eqref{S:1} to estimate the coefficient $\alpha$. The remaining term with $(\alpha_y q)_x$ is estimated in a completely analogous manner. This concludes the proof of Lemma \ref{lemma:y:half}. 
 \end{proof}

\subsection{$X_n$ Estimates, $1 \le n \le 10$}

It is convenient to estimate the commutators, $\mathcal{C}_1^n, \mathcal{C}_2^n$, defined in \eqref{def:C1n} - \eqref{def:C2n}.
\begin{lemma} The quantities $\mathcal{C}_1^{n}, \mathcal{C}_2^{n}$ satisfy the following estimates 
\begin{align} \label{twins:1}
\| \p_y^j \mathcal{C}_1^{n} \langle x \rangle^{n + \frac 1 2 + \frac j 2} \phi_n \| + \| \sqrt{\eps} \p_y^j \mathcal{C}_2^{n} \langle x \rangle^{n + \frac 1 2 + \frac j 2} \phi_n \| \lesssim \sqrt{\eps} \| U, V \|_{\mathcal{X}_{\le n-1 + \frac j 2}},
\end{align}
for $j = 0, 1$.
\end{lemma}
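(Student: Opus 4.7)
The plan is to expand both $\mathcal{C}_1^n$ and $\mathcal{C}_2^n$ via Leibniz and bound each summand individually. The $\sqrt{\eps}$ on the right side arises in two distinct ways: for $\mathcal{C}_1^n$ it is supplied by the coefficient decay through Assumption \ref{assume:3}, while for $\mathcal{C}_2^n$ it comes from the $\sqrt{\eps}$ prefactor in the claim. More precisely, from \eqref{S:0} and \eqref{est:zeta:2}, on the support of $\phi_n$ (where $x \gtrsim 1$) one has the pointwise bounds $|\p_x^{n-k}\p_y^i \zeta| \lesssim \sqrt{\eps}\langle x\rangle^{-(n-k) - i/2 - 1 - 1/50}$ for $i=0,1,2$, together with $|y^i \p_x^{n-k}\p_y^i \zeta| \lesssim \sqrt{\eps}\langle x\rangle^{-(n-k) - 1 - 1/50}$; similarly, \eqref{S:1} gives $|\p_x^{n-k}\alpha| \lesssim \bar{u}\langle x\rangle^{-(n-k) - 3/2}$. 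Substituting these bounds reduces each summand to a weighted $L^2$ norm of $U^{(k)}$, $V^{(k)}$ or $q^{(k)}$ which must be absorbed into $\mathcal{X}_{\le n - 1 + j/2}$.

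The main building block is the estimate
\begin{align*}
\| U^{(k)} \langle x\rangle^{k - 1/2 - 1/50}\phi_n\| \lesssim \|U,V\|_{\mathcal{X}_{\le n-1}}, \qquad 0 \le k \le n-1,
\end{align*}
which one obtains by applying the Hardy inequality \eqref{Hardy:1} pointwise in $y$ to $f = U^{(k)}$, then integrating in $x$ with weight $\langle x\rangle^{2k - 1 - 1/25}\phi_n^2$. This yields
\begin{align*}
\|U^{(k)} \langle x\rangle^{k - 1/2 - 1/50}\phi_n\|^2 \lesssim \gamma\|\sqrt{\bar{u}^0_p} U^{(k)}_y \langle x\rangle^{k - 1/50}\phi_n\|^2 + \gamma^{-2}\|\bar{u}^0_p U^{(k)}\langle x\rangle^{k - 1/2 - 1/50}\phi_n\|^2.
\end{align*}
The first summand is $\lesssim \|U,V\|_{X_k}$ after using $\phi_n \le \phi_k$, $\bar{u}^0_p \sim \bar{u}$ (from \eqref{samezies:1}), and the $\langle x\rangle^{-1/50}$ margin. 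For the second summand, when $k \ge 1$ one rewrites $U^{(k)} = U^{(k-1)}_x$ and uses $\bar{u}^0_p \le C\bar{u}$ together with $\phi_n \le \phi_k$ to bound it by $\|\bar{u} U^{(k-1)}_x \langle x\rangle^{k - 1/2}\phi_k\| = \|U,V\|_{X_{k - 1/2}}$, which lies in $\mathcal{X}_{\le n-1}$ precisely when $k \le n-1$; for $k = 0$ the remainder is dominated by the $X_0$ quantity $\|\bar{u} U \langle x\rangle^{-1/2 - 1/200}\|$. The $\zeta_y q^{(k)}$ piece of $\mathcal{C}_1^n$ reduces to the same quantity after using the standard Hardy bound $\|q^{(k)}/y\|_{L^2_y} \le 2 \|q^{(k)}_y\|_{L^2_y} = 2\|U^{(k)}\|_{L^2_y}$ (valid since $q^{(k)}|_{y=0} = 0$), and the analogous $V^{(k)}$ estimate for $\mathcal{C}_2^n$ proceeds identically. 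The $\alpha U^{(k)}$ and $\alpha_y q^{(k)}$ terms in $\mathcal{C}_2^n$ gain a factor of $\bar{u}$ from \eqref{S:1}, so the analogous Hardy reduction lands on $\|\bar{u} U^{(k-1)}_x \langle x\rangle^{k-1}\phi_k\|$, which is controlled by $X_{k-1/2}$ with room to spare.

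For $j = 1$, a pleasant cancellation occurs in $\p_y \mathcal{C}_1^n$: the Leibniz rule produces $\p_x^{n-k}\zeta_y U^{(k)}$ and $\p_x^{n-k}\zeta_y q^{(k)}_y = \p_x^{n-k}\zeta_y U^{(k)}$ with opposite signs, leaving only $\p_x^{n-k}\zeta \, U^{(k)}_y$ and $\p_x^{n-k}\zeta_{yy}\, q^{(k)}$. For the former one applies \eqref{Hardy:1} to $f = U^{(k)}_y$, which reduces to the $Y_{k+1/2}$ term $\|\sqrt{\bar{u}}U^{(k)}_{yy}\langle x\rangle^{k+1/2}\phi_{k+1}\|$ plus a $\sqrt{\bar{u}}U^{(k)}_y$ remainder in $X_k$; these now sit inside $\mathcal{X}_{\le n - 1/2}$ because $Y_{n-1/2}$ is included in that norm. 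For the $\zeta_{yy} q^{(k)}$ term one exploits the $(x^{1/2}\p_y)^2$ form of \eqref{est:zeta:2} together with iterated Hardy $\|q^{(k)}/y^2\|$ restricted to the support in $z$ of the rapidly decaying coefficient. The most delicate step is the endpoint $k = n-1$ in the $j = 0$ case: $\mathcal{X}_{\le n-1}$ excludes $X_{n-1/2}$, so the final reduction must land in $X_{n-3/2}$ via the rewriting $U^{(n-1)} = U^{(n-2)}_x$, and the $\langle x\rangle^{-1/50}$ margin inherited from the decay of $\zeta$ is precisely what makes the weights align (it is harmless at lower $k$ but essential here). This bookkeeping between the orders of commutation and the hierarchy of norms in $\mathcal{X}$ is the main subtlety of the proof.
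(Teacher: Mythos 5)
Your proof is correct and follows essentially the same route as the paper: Leibniz expansion, extraction of the weighted $L^\infty$ coefficient bounds on $\p_x^{n-k}\zeta$ and $\p_x^{n-k}\alpha$ from Assumption \ref{assume:3}, and Hardy-type inequalities in $y$ and in $x$ to reduce the remaining weighted $L^2$ norms of $U^{(k)}$, $q^{(k)}/y$, $V^{(k)}$ to the norms contained in $\mathcal{X}_{\le n-1+\frac j2}$. You supply more detail than the paper (which dismisses $j=1$ as "identical"), including the genuine cancellation of the $\p_x^{n-k}\zeta_y U^{(k)}$ terms in $\p_y\mathcal{C}_1^n$; the only imprecision is the "iterated Hardy $\|q^{(k)}/y^2\|$" step, which fails as stated since $q^{(k)}_y=U^{(k)}$ does not vanish at $y=0$ — a single Hardy combined with the $y$-weighted coefficient bound $|y\,\p_x^{n-k}\zeta_{yy}|\lesssim\sqrt{\eps}\langle x\rangle^{-(n-k)-\frac32-\frac{1}{50}}$ (interpolating \eqref{S:0} and \eqref{est:zeta:2}) does the job.
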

\begin{proof} We start with the estimation of $\mathcal{C}_1^n$, defined in \eqref{def:C1n}, which we do via 
\begin{align}  \n
\| \mathcal{C}_1^n  \langle x \rangle^{n + \frac 1 2} \phi_n \| \lesssim& \sum_{k = 0}^{n-1} \| \p_x^{n-k} \zeta \langle x \rangle^{(n-k) + 1.01} \|_\infty \| U^{(k)} \langle x \rangle^{k - \frac 1 2 - .01} \| \\  \label{dan:bek}
&+ \| \p_x^{n-k} \p_y \zeta \langle x \rangle^{(n-k) + 1.01} y \|_\infty \| \frac{q^{(k)}}{y} \langle x \rangle^{k - \frac 1 2 - .01} \|   \lesssim \sqrt{\eps} \|U, V \|_{\mathcal{X}_{\le n-1}},
\end{align}
where we have appealed to estimate \eqref{S:0} for the coefficient of $\zeta$. 

We now address the terms in $\mathcal{C}_2^{n}$ via 
\begin{align} \n
\| \sqrt{\eps} \mathcal{C}_2^n \langle x \rangle^{n + \frac 1 2} \phi_n \| \lesssim& \sum_{k = 0}^{n-1} \sqrt{\eps} \| \p_x^{n-k} \alpha \langle x \rangle^{(n-k) + \frac 3 2}  \|_\infty \|  U^{(k)} \langle x \rangle^{k-1} \phi_n \| \\ \n
& + \sqrt{\eps} \| \p_x^{n-k} \alpha_y \langle x \rangle^{(n-k) + \frac 3 2}  y \|_\infty \| \frac{ q^{(k)} }{y} \langle x \rangle^{k-1} \phi_n \| \\
& + \| \p_x^{n-k} \zeta \langle x \rangle^{(n-k) + 1.01} \|_\infty \| \sqrt{\eps} V^{(k)} \langle x \rangle^{k - \frac 1 2 - .01} \| \lesssim \sqrt{\eps} \|U, V \|_{\mathcal{X}_{\le n-1}},
\end{align}
where we have appealed to estimate \eqref{S:1} to estimate the coefficient $\alpha$. The higher order $y$ derivative works in an identical manner. 
\end{proof}

\begin{lemma} For any $n \ge 1$, 
\begin{align} \label{estXnnorm}
\| U, V \|_{X_n}^2 \lesssim \| U, V \|_{\mathcal{X}_{\le n-\frac 1 2}}^2 + \mathcal{T}_{X_{n}} + \mathcal{F}_{X_{n}},
\end{align}
where
\begin{align} \n
\mathcal{T}_{X_n} := &\int \p_x^{n} \mathcal{N}_1(u, v) U^{(n)} \langle x \rangle^{2n} \phi_n^2 + \int \eps \p_x^n \mathcal{N}_2(u, v) \Big( \eps V^{(n)} \langle x \rangle^{2n} \phi_n^2 + 2n \eps V^{(n-1)} \langle x \rangle^{2n-1} \phi_n^2 \\ \label{def:TXn}
&+ 2 \eps V^{(n-1)} \langle x \rangle^{2n} \phi_n \phi_n' \Big), \\ \n
\mathcal{F}_{X_n} := & \int \p_x^n F_R U^{(n)} \langle x \rangle^{2n} \phi_n^2 + \int \eps \p_x^n G_R \Big(  \eps V^{(n)} \langle x \rangle^{2n} \phi_n^2 + 2n \eps V^{(n-1)} \langle x \rangle^{2n-1} \phi_n^2 \\ \label{def:FXn}
&+ 2 \eps V^{(n-1)} \langle x \rangle^{2n} \phi_n \phi_n' \Big).
\end{align}
\end{lemma}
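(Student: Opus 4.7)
The strategy mirrors the $X_0$ estimate of Lemma \ref{Lem:2}, applied to the differentiated system \eqref{sys:sim:n1}--\eqref{sys:sim:n3} for $(U^{(n)},V^{(n)})$. Specifically, I would apply the weighted multiplier
\begin{align*}
\bold{M}_{X_n} := \bigl[\,U^{(n)} x^{2n} \phi_n^2,\; \eps V^{(n)} x^{2n}\phi_n^2 + 2n\,\eps V^{(n-1)} x^{2n-1}\phi_n^2 + 2\eps V^{(n-1)} x^{2n}\phi_n \phi_n'\,\bigr],
\end{align*}
which has been designed precisely so that $\mathrm{div}_\eps(\bold{M}_{X_n}) = 0$. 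Indeed, using $V^{(n)}_y = -U^{(n+1)}$ and $V^{(n-1)}_y = -U^{(n)}$ together with $\p_x(x^{2n}\phi_n^2) = 2n\,x^{2n-1}\phi_n^2 + 2x^{2n}\phi_n\phi_n'$, all terms cancel. Consequently the pressure $P^{(n)}$ integrates to zero against $\bold{M}_{X_n}$, as in the proof of Lemma \ref{Lem:2}. Note also that $V^{(n-1)}|_{y=0}=0$ and the presence of $\phi_n$ eliminates any boundary contributions at $\{x=0\}$.

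The transport operator $\mathcal{T}_1[U^{(n)}]$, paired with $U^{(n)} x^{2n}\phi_n^2$, is treated exactly as in Step~1 of Lemma \ref{Lem:2}: it produces the damping term $\tfrac{3}{2}\int \bar{u}^0_{pyy}\,(U^{(n)})^2 x^{2n}\phi_n^2$ plus an error of the form $\int \zeta (U^{(n)})^2 x^{2n}\phi_n^2$ (bounded by $\sqrt{\eps}\,\|U,V\|_{X_n}^2$ via \eqref{S:0} and Hardy \eqref{Hardy:1}) plus localized errors from $\p_x(x^{2n}\phi_n^2)$ that are supported either on $\{\phi_n' \ne 0\}$ or which produce the positive CK-type contribution $n\int \bar{u}^2 (U^{(n)})^2 x^{2n-1}\phi_n^2$. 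The diffusion $-\Delta_\eps u^{(n)}$ combined with the good variable relation $u^{(n)} = \bar{u} U^{(n)} + \bar{u}_y q^{(n)}$ yields, after the sequence of integrations by parts from \eqref{earl:2}, \eqref{juice:0}, \eqref{tennis:1}, \eqref{juice:1}, the main positive contributions
\begin{align*}
\|\sqrt{\bar u}\,U^{(n)}_y x^n \phi_n\|^2 + \|\sqrt{\eps}\sqrt{\bar u}\,U^{(n)}_x x^n \phi_n\|^2 + \|\eps\sqrt{\bar u}\,V^{(n)}_x x^n \phi_n\|^2 + \|\sqrt{\bar u_y}\,U^{(n)} x^n \phi_n\|_{y=0}^2
\end{align*}
together with the cancellation $-2\bar{u}_{yy}(U^{(n)})^2$ absorbing the $\tfrac 3 2 \bar{u}^0_{pyy}$ term and leaving the positive damping $-\tfrac 1 2 \p_{yy}\bar{u}_\ast (U^{(n)})^2 x^{2n}\phi_n^2$, which is non-negative by \eqref{Blas:prop:2}. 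The $\mathcal{T}_2[V^{(n)}]$ contribution is handled exactly as in \eqref{juice:2}--\eqref{precision:1}, including the delicate Hardy inequality \eqref{precise:1} used to dominate the $\tilde T^1_{2,3}$-type term by the positive $\eps \bar u^2 (V^{(n)})^2 x^{2n-\text{something}}$ coming from the $2n$-correction in $\bold{M}_{X_n}$.

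The remaining error terms are absorbed either into $\|U,V\|_{\mathcal{X}_{\le n-\frac 12}}^2$ or into a small multiple of $\|U,V\|_{X_n}^2$. Precisely: the coefficient errors $\zeta_y q^{(n)}$, $\alpha U^{(n)}$, $\alpha_y q^{(n)}$, $\zeta V^{(n)}$ are estimated as in \eqref{hardy:q:est} and the end of the proof of Lemma \ref{Lem:2}, using the pointwise decay \eqref{S:0}--\eqref{S:1} and Hardy-type inequalities \eqref{Hardy:three:a}--\eqref{Hardy:four:a}; each such term gains a factor of $\sqrt{\eps}$ which allows absorption. The commutators $\mathcal{C}_1^n, \mathcal{C}_2^n$ are controlled by \eqref{twins:1}, which yields
\begin{align*}
\Bigl|\int \mathcal{C}_1^n U^{(n)} x^{2n}\phi_n^2 \Bigr| + \Bigl|\int \eps \mathcal{C}_2^n\bigl(V^{(n)} x^{2n}\phi_n^2 + \text{corrections}\bigr)\Bigr| \lesssim \sqrt{\eps}\,\|U,V\|_{\mathcal{X}_{\le n-1}}\|U,V\|_{X_n},
\end{align*}
absorbable into the left side. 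Finally, the localized terms arising from $\phi_n'$ are compactly supported in $x$ and contained in the region where $\phi_{n-1}=1$; they are therefore controlled by $\|U,V\|_{X_{n-1}} + \|U,V\|_{X_{n-\frac 12}\cap Y_{n-\frac 12}}$ (and if any second tangential derivatives appear through diffusive commutators, by $\|U,V\|_E$ via \eqref{use:E:norm}). The forcing and nonlinear contributions combine into $\mathcal{F}_{X_n}+\mathcal{T}_{X_n}$ as defined in \eqref{def:TXn}--\eqref{def:FXn}.

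The only real novelty compared to the $n=0$ case is bookkeeping of the weight $x^{2n}\phi_n^2$ and the divergence correction, but since every error structurally matches one from Lemma \ref{Lem:2} (translated by $n$ derivatives and scaled by $x^{2n}$), no new estimate is required. The main obstacle to watch for is ensuring that each time an $x$-weight is differentiated -- producing a factor $2n\,x^{2n-1}$ -- the resulting term is controlled at the correct scaling level by $\|U,V\|_{\mathcal{X}_{\le n-\frac 12}}$ rather than requiring $\|U,V\|_{X_n}$ itself; this is precisely why the CK-type weight $\langle x\rangle^{-1/100}$ in $g^2$ of the $X_0$ estimate is replaced here by the polynomial weight $x^{2n}\phi_n^2$, whose derivative produces one lower order in $x$ and hence falls into the lower-order norm.
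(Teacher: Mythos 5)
Your proposal follows the paper's proof essentially verbatim: the paper applies the same divergence-free multiplier to the differentiated system \eqref{sys:sim:n1}--\eqref{sys:sim:n3}, declares the interaction with the left-hand side to be ``essentially identical'' to Lemma \ref{Lem:2}, and then estimates only the new commutators $\mathcal{C}_1^n, \mathcal{C}_2^n$ via \eqref{twins:1} and the localized $\phi_n'$ term, exactly as you do. One small slip in your middle paragraph: the term $n\int \bar{u}^2 (U^{(n)})^2 x^{2n-1}\phi_n^2$ produced by differentiating the \emph{increasing} weight $x^{2n}$ carries the unfavorable sign (unlike the Cauchy--Kovalevskaya gain from $g'<0$ in the $X_0$ case), so it is not a ``positive CK-type contribution''; but, as you correctly observe in your final paragraph, it equals $n\|\bar{u}\,U^{(n-1)}_x x^{(n-1)+\frac12}\phi_n\|^2$ and is therefore absorbed into $\|U,V\|_{\mathcal{X}_{\le n-\frac12}}^2$.
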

\begin{proof} We apply the multiplier 
\begin{align} \label{mult:Xn}
[U^{(n)} \langle x \rangle^{2n} \phi_{n}^2, \eps V^{(n)} \langle x \rangle^{2n} \phi_n^2 + 2n \eps V^{(n-1)} \langle x \rangle^{2n-1} \phi_n^2+ 2 \eps V^{(n-1)} \langle x \rangle^{2n} \phi_n \phi_n']
\end{align}
to the system \eqref{sys:sim:n1} - \eqref{sys:sim:n3}. The interaction of the multipliers \eqref{mult:Xn} with the the left-hand side of \eqref{sys:sim:n1} - \eqref{sys:sim:n2} is essentially identical to that of Lemma \ref{Lem:2}. As such, we treat the new commutators arising from the $\mathcal{C}_1^n, \mathcal{C}_2^n$ terms, defined in \eqref{def:C1n} - \eqref{def:C2n}. First, we have 
\begin{align} \n
|\int \mathcal{C}_1^{n} U^{(n)} \langle x \rangle^{2n} \phi_n^2| \lesssim & \| \mathcal{C}_1^{n} \langle x \rangle^{n + \frac 1 2} \phi_n \| \| U^{(n)} \langle x \rangle^{n - \frac 1 2} \phi_n \| \\
\lesssim & \sqrt{\eps} \|U, V \|_{\mathcal{X}_{\le n - 1}} ( \|U, V \|_{\mathcal{X}_{\le n - 1}} + \| U, V \|_{X_n} ).
\end{align}
Next, 
\begin{align} \n
|\int \eps \mathcal{C}_2^{n} V^{(n)} \langle x \rangle^{2n} \phi_n^2| \lesssim & \| \sqrt{\eps} \mathcal{C}_2^{n} \langle x \rangle^{n + \frac 1 2} \phi_n \| \| \sqrt{\eps} V^{(n)} \langle x \rangle^{n - \frac 1 2} \phi_n \| \\
\lesssim & \sqrt{\eps} \|U, V \|_{\mathcal{X}_{\le n - 1}} ( \|U, V \|_{\mathcal{X}_{\le n - 1}} + \| U, V \|_{X_n} ).
\end{align}
The identical estimate works as well for the middle term from the multiplier in \eqref{mult:Xn}, whereas the final term with $\phi_n'$ is localized in $x$ and lower order, and therefore trivially bounded by $\|U, V\|_{\mathcal{X}_{\le n - \frac 1 2}}^2$. 

\end{proof}

\subsection{$X_{n + \frac 1 2} \cap Y_{n + \frac 1 2}$ Estimates, $1 \le n \le 10$}

We now provide estimates on the higher order $X_{n + \frac 1 2}$ and $Y_{n + \frac 1 2}$ norms. Notice that these estimates still ``lose a derivative", due to degeneracy at $y = 0$. 
\begin{lemma} For any $0 < \delta << 1$, 
\begin{align} \label{esthalfnX}
\| U, V \|_{X_{n + \frac 1 2 }}^2 \le &C_\delta \|U, V \|_{\mathcal{X}_{\le n}}^2 + \delta \| U, V \|_{X_{n+1}}^2 + \delta \| U, V \|_{Y_{n + \frac 1 2}}^2  +  \mathcal{T}_{X_{n+ \frac 1 2}} + \mathcal{F}_{X_{n+ \frac 1 2}}, 
\end{align}
where we define 
\begin{align} \n
\mathcal{T}_{X_{n + \frac 1 2}} := & \int \p_x^{n} \mathcal{N}_1(u, v)U^{(n)}_x \langle x \rangle^{1+2n} \phi_{n+1}^2 + \int \eps \p_x^n \mathcal{N}_2(u, v) \Big( V^{(n)}_x \langle x \rangle^{1+2n} \phi_{n+1}^2 \\  \label{def:TXnp12}
&+  (1 + 2n) V^{(n)} \langle x \rangle^{2n} \phi_{n+1}^2 + 2 V^{(n)} \langle x\rangle^{1+2n} \phi_{n+1} \phi_{n+1}' \Big)   \\ \n
\mathcal{F}_{X_{n + \frac 1 2}} := & \int \p_x^{n}F_R U^{(n)}_x \langle x \rangle^{1+2n} \phi_{n+1}^2 + \int \eps \p_x^n G_R \Big( V^{(n)}_x \langle x \rangle^{1+2n} \phi_{n+1}^2 \\  \label{def:FXnp12}
&+  (1 + 2n) V^{(n)} \langle x \rangle^{2n} \phi_{n+1}^2 + 2 V^{(n)} \langle x\rangle^{1+2n} \phi_{n+1} \phi_{n+1}' \Big). 
\end{align}
\end{lemma}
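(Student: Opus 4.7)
The plan is to mimic the proof of Lemma \ref{Lem:3} (the $n = 0$ case), but applied to the $\partial_x^n$-differentiated system \eqref{sys:sim:n1}--\eqref{sys:sim:n3} in place of \eqref{sys:sim:1}--\eqref{sys:sim:3}. Concretely, I would test that system against the vector-field multiplier
\begin{align*}
\bold{M}_{X_{n+\frac 1 2}} := \Big[\,U^{(n)}_x \langle x\rangle^{1+2n}\phi_{n+1}^2,\ \eps V^{(n)}_x \langle x\rangle^{1+2n}\phi_{n+1}^2 + (1+2n)\,\eps V^{(n)}\langle x\rangle^{2n}\phi_{n+1}^2 + 2\,\eps V^{(n)}\langle x\rangle^{1+2n}\phi_{n+1}\phi_{n+1}'\,\Big],
\end{align*}
the second component being designed precisely so that $\mathrm{div}_\eps(\bold{M}_{X_{n+\frac 1 2}}) = 0$. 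Combined with $V^{(n)}|_{y=0}=0$ and the cut-off $\phi_{n+1}$ vanishing near $x = 0$, this kills the pressure contribution $\int P^{(n)}_x\, M_1 + \int (P^{(n)}_y/\eps)\, M_2 = -\int P^{(n)}\,\mathrm{div}_\eps(\bold{M}_{X_{n+\frac 1 2}}) = 0$, exactly as in Lemma \ref{Lem:3}.

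The transport terms $\mathcal{T}_1[U^{(n)}]$ and $\mathcal{T}_2[V^{(n)}]$ produce, after the same integrations by parts carried out in the proof of Lemma \ref{Lem:3}, the principal positive contributions $\|\bar u U^{(n)}_x x^{n+\frac12}\phi_{n+1}\|^2$ and $\|\sqrt{\eps}\,\bar u V^{(n)}_x x^{n+\frac12}\phi_{n+1}\|^2$ which build up $\|U,V\|_{X_{n+\frac 1 2}}^2$. The lower-order Rayleigh coefficients $2\bar u^0_{pyy}U^{(n)}$ and $\bar u^0_{pyy}V^{(n)}$, together with the $\zeta U^{(n)}$, $\zeta_y q^{(n)}$, $\alpha U^{(n)}$, $\alpha_y q^{(n)}$, $\zeta V^{(n)}$ error terms from \eqref{sys:sim:n1}--\eqref{sys:sim:n2}, are handled exactly as in Lemma \ref{Lem:3} by a combination of the pointwise decay estimates \eqref{S:0}--\eqref{S:1}, the Hardy inequalities \eqref{Hardy:1}, \eqref{Hardy:three}--\eqref{Hardy:four}, and the bound \eqref{bob:1}; they contribute to the $C_\delta \|U,V\|_{\mathcal{X}_{\le n}}^2$ piece on the right-hand side. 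The four diffusive pieces $-u^{(n)}_{yy}$ (grouped with $\bar u^0_{pyyy}q^{(n)}$), $-\eps u^{(n)}_{xx}$, $-\eps v^{(n)}_{yy}$, and $-\eps^2 v^{(n)}_{xx}$ repeat the long integration-by-parts chains \eqref{earl:2}, \eqref{umbrella:1}, \eqref{coffee:2}, \eqref{coffee:3}; the principal remaining error terms (e.g. commutators with $\bar u, \bar u_y$, the $(\bar u_{yyy}-\bar u^0_{pyyy})q^{(n)}$ correction) are bounded using \eqref{prof:u:est}--\eqref{est:ring:1}, and the $\phi_{n+1}'$-localised pieces are absorbed by $\|U,V\|_{\mathcal{X}_{\le n}}$ thanks to the definition \eqref{def:phi:j}. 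Finally, the commutators $\mathcal{C}_1^n$ and $\mathcal{C}_2^n$ generated by passing $\partial_x^n$ through the coefficients $\zeta, \alpha$ are controlled by the already-proved estimate \eqref{twins:1}, giving an $\eps^{1/2}\|U,V\|_{\mathcal{X}_{\le n-1}}(\|U,V\|_{\mathcal{X}_{\le n}}+\|U,V\|_{X_{n+\frac 1 2}})$ contribution that is absorbed into $C_\delta \|U,V\|_{\mathcal{X}_{\le n}}^2 + \delta\|U,V\|_{X_{n+\frac 1 2}}^2$.

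The main obstacle is the \emph{loss of a half $x\partial_x$-derivative} in the diffusion chain, coming from the $n$-th analogue of the singular term \eqref{est:Xhalf:loss:deriv}, namely
\begin{align*}
I^{(n)}_{\mathrm{sing}} := \int \langle x\rangle^{1+2n}\,\bar u_y\,U^{(n)}_x U^{(n)}_y\,\phi_{n+1}^2\,\mathrm{d}y\,\mathrm{d}x,
\end{align*}
which appears after integrating $\int \partial_y^2 u^{(n)} \cdot U^{(n)}_x\langle x\rangle^{1+2n}\phi_{n+1}^2$ by parts in $y$ and re-expressing $u^{(n)}_y$ via \eqref{formula:1}. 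Splitting $I^{(n)}_{\mathrm{sing}}$ into a far-field piece (where $\bar u\gtrsim 1$ by \eqref{samezies:1}, and hence controllable purely by $\|\sqrt{\bar u}U^{(n)}_y x^n\phi_n\|$ and $\|\bar u U^{(n)}_x x^{n+\frac 1 2}\phi_{n+1}\|$) and a near-field piece localised by $\chi(z)$, the near-field part cannot be estimated by the $X_{\le n}$ and $Y_{n+\frac 1 2}$ norms alone because the weight $\bar u_y$ is non-degenerate while $\bar u$ is. Following \eqref{est:Xhalf:loss:deriv}, one bounds it by $\|\sqrt{\bar u}U^{(n+1)}_{y}\langle x\rangle^{n+1}\phi_{n+1}\| + \|\bar u U^{(n)}_x\langle x\rangle^{n+\frac 1 2}\phi_{n+1}\|$ (from the Hardy-type inequality applied at the $(n{+}1)$-st level) times a factor involving $\|U,V\|_{Y_{n+\frac 1 2}}$, which forces the appearance of $\delta \|U,V\|_{X_{n+1}}^2 + \delta \|U,V\|_{Y_{n+\frac 1 2}}^2$ on the right-hand side of \eqref{esthalfnX}. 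This is precisely the mechanism behind the ``lose $x\partial_x$, but not $\sqrt{\eps}\partial_x$'' principle highlighted in the introduction, and it is the reason the $X_{n+\frac 1 2}$ estimate must be coupled to (rather than derived independently from) the $Y_{n+\frac 1 2}$ and $X_{n+1}$ estimates and cannot be pushed beyond $n = 10$.
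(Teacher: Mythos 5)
Your proposal is correct and follows essentially the same route as the paper: the paper's proof applies exactly the multiplier $\bold{M}_{X_{n+\frac12}}$ you wrote down, declares the interaction with the left-hand side of \eqref{sys:sim:n1}--\eqref{sys:sim:n3} to be nearly identical to Lemma \ref{Lem:3}, and then only writes out the new commutator bounds for $\mathcal{C}_1^n, \mathcal{C}_2^n$ via \eqref{twins:1}, precisely as you indicate. Your explicit identification of the singular term $I^{(n)}_{\mathrm{sing}}$ as the source of the $\delta\|U,V\|_{X_{n+1}}^2 + \delta\|U,V\|_{Y_{n+\frac12}}^2$ terms matches the mechanism of \eqref{est:Xhalf:loss:deriv} that the paper implicitly reuses.
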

\begin{proof} We apply the multiplier 
\begin{align}
[U^{(n)}_x \langle x \rangle^{1+2n} \phi_{n+1}^2, \eps V^{(n)}_x \langle x \rangle^{1+2n} \phi_{n+1}^2 + \eps (1 + 2n) V^{(n)} \langle x \rangle^{2n} \phi_{n+1}^2 + 2\eps V^{(n)} \langle x\rangle^{1+2n} \phi_{n+1} \phi_{n+1}']
\end{align}
to the system \eqref{sys:sim:n1} - \eqref{sys:sim:n3}. Again, the interaction of these multipliers with the left-hand side of  \eqref{sys:sim:n1} - \eqref{sys:sim:n3} is nearly identical to that of Lemma \ref{Lem:3}, and so we proceed to treat the commutators arising from $\mathcal{C}_1^{n}, \mathcal{C}_2^{n}$. We also may clearly estimate the contribution of the $\phi_{n+1}'$ term by a factor of $\|U, V \|_{\mathcal{X}_{\le n}}$. We have 
\begin{align} \n
|\int \mathcal{C}_1^{n} U^{(n)}_x \langle x \rangle^{1+2n} \phi_{n+1}^2| \lesssim &  \| \mathcal{C}_1^{n} \langle x \rangle^{n + \frac 1 2} \phi_{n+1} \| \| U_x^{(n)} \langle x \rangle^{n + \frac 1 2} \phi_{n+1} \| \\
\lesssim & \sqrt{\eps} \|U \|_{\mathcal{X}_{\le n-1}} ( \|U \|_{X_{n+\frac 1 2}} + \|U, V \|_{X_{n+1}} )
\end{align}
and similarly 
\begin{align} \n
&|\int \mathcal{C}_2^{n} (\eps V^{(n)}_x \langle x \rangle^{1+2n} \phi_{n+1}^2 + \eps (1 + 2n) V^{(n)} \langle x \rangle^{2n} \phi_{n+1}^2 )| \\ \n
\lesssim & \| \sqrt{\eps} C_2^{n} \langle x \rangle^{n + \frac 12} \phi_{n+1} \| ( \| \sqrt{\eps} V^{(n)}_x \langle x \rangle^{n + \frac 1 2} \phi_{n+1} \| + \| \sqrt{\eps} V^{(n)} \langle x \rangle^{n - \frac 12} \phi_{n+1} \| ) \\
\lesssim& \sqrt{\eps} \|U \|_{\mathcal{X}_{\le n-1}} ( \|U \|_{X_{n+\frac 1 2}} + \|U, V \|_{X_{n+1}} ),
\end{align}
where above we have invoked estimate \eqref{twins:1}. 
\end{proof}

\begin{lemma}For any $0 < \delta << 1$, 
\begin{align} \label{esthalfnY}
\| U, V \|_{Y_{n + \frac 1 2 }}^2 \le &C_\delta \|U, V \|_{\mathcal{X}_{\le n}}^2 + C_\delta \|U, V \|_E^2+ \delta \| U, V \|_{X_{n+1}}^2 + \delta \| U, V \|_{X_{n + \frac 1 2}}^2  +  \mathcal{T}_{Y_{n+ \frac 1 2}} + \mathcal{F}_{Y_{n+ \frac 1 2}},
\end{align}
where
\begin{align} \label{def:TYn12}
\mathcal{T}_{Y_{n + \frac 1 2}} := & \int \Big( \p_x^n \p_y \mathcal{N}_1(u, v) - \eps \p_x^{n+1} \mathcal{N}_2(u, v) \Big) U^{(n)}_y \langle x \rangle^{1 + 2n} \phi_{n+1}^2, \\  \label{def:TXn12}
\mathcal{F}_{Y_{n + \frac 1 2}} := & \int \Big( \p_x^n \p_y F_R - \eps \p_x^{n+1} G_R \Big) U^{(n)}_y \langle x \rangle^{1 + 2n} \phi_{n+1}^2. 
\end{align}
\end{lemma}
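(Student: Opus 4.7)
The plan is to mirror the proof of Lemma \ref{lemma:y:half} (the $n=0$ case) by working in the vorticity formulation, differentiating $n$ times in $x$, and applying the multiplier $U^{(n)}_y \langle x \rangle^{1+2n} \phi_{n+1}^2$. Starting from \eqref{eq:vort:pre} and using \eqref{sys:sim:n1}--\eqref{sys:sim:n3} to package the commutator corrections, one arrives at the $n$-th vorticity equation
\[
\p_y \mathcal{T}_1[U^{(n)}] - \eps \p_x \mathcal{T}_2[V^{(n)}] - u^{(n)}_{yyy} - 2\eps u^{(n)}_{xxy} + \eps^2 v^{(n)}_{xxx} + \p_y(\bar{u}^0_{pyyy} q^{(n)}) = \mathrm{RHS}_n,
\]
where $\mathrm{RHS}_n$ collects the $(\zeta, \alpha)$-error terms already present in \eqref{eq:vort:pre}, the forcings $\p_y \p_x^n F_R - \eps \p_x^{n+1} G_R$, the nonlinear pieces $\p_y \p_x^n \mathcal{N}_1 - \eps \p_x^{n+1} \mathcal{N}_2$, and the new commutator contributions $-\p_y \mathcal{C}_1^n + \eps \p_x \mathcal{C}_2^n$ induced by \eqref{def:C1n}--\eqref{def:C2n}.

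The transport pieces (corresponding to Steps 1--2 of Lemma \ref{lemma:y:half}) are handled by pulling the outer $\p_y$ or $\eps\p_x$ back onto the multiplier, expanding $u^{(n)}, v^{(n)}$ via \eqref{formula:1}, and appealing to the coefficient estimates \eqref{prof:u:est}--\eqref{prof:v:est}, which have precisely the right $x$-scaling to absorb the weight $\langle x \rangle^{1+2n}$. The integration-by-parts chains \eqref{likeyou}, \eqref{park1}, \eqref{clay}, carried out on $-u^{(n)}_{yyy}$, $-2\eps u^{(n)}_{xxy}$, $\eps^2 v^{(n)}_{xxx}$, generate the four positive contributions
\[
\|\sqrt{\bar{u}} U^{(n)}_{yy} \langle x \rangle^{n+\frac 12}\phi_{n+1}\|^2,\; \|\sqrt{\bar{u}} \sqrt{\eps} U^{(n)}_{xy}\langle x \rangle^{n+\frac 12}\phi_{n+1}\|^2,\; \|\sqrt{\bar{u}} \eps U^{(n)}_{xx}\langle x \rangle^{n+\frac 12}\phi_{n+1}\|^2,\; \|\sqrt{\bar{u}_y} U^{(n)}_y \langle x \rangle^{n+\frac 12}\phi_{n+1}\|_{y=0}^2,
\]
exactly matching the definition \eqref{def:half:norm:Y} of $\|U, V\|_{Y_{n+\frac 12}}^2$. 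All zeroth-order coefficient commutators (those involving $\bar{u}_{yy}, \bar{u}_{xy}$, and so on) and the $(\zeta, \alpha)$-errors are absorbed into $C_\delta \|U, V\|_{\mathcal{X}_{\le n}}^2 + \delta \|U, V\|_{Y_{n+\frac 12}}^2 + \delta \|U, V\|_{X_{n+\frac 12}}^2$ via \eqref{S:0}--\eqref{S:1} and the Hardy inequality \eqref{bob:1} applied at the $n$-th derivative level.

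The new ingredient relative to Lemma \ref{lemma:y:half} is the treatment of the commutators. The contribution $\int \p_y \mathcal{C}_1^n \cdot U^{(n)}_y \langle x \rangle^{1+2n} \phi_{n+1}^2$ is controlled by Cauchy--Schwarz using the $j=1$ case of \eqref{twins:1}, while $\int \eps \p_x \mathcal{C}_2^n \cdot U^{(n)}_y \langle x \rangle^{1+2n} \phi_{n+1}^2$ is first integrated by parts in $x$ to distribute a $\p_x$ onto the multiplier, producing terms of the shape $\sqrt{\eps}\mathcal{C}_2^n \cdot \sqrt{\eps}U^{(n)}_{xy}\langle x \rangle^{1+2n}\phi_{n+1}^2$ and $\sqrt{\eps}\mathcal{C}_2^n \cdot \sqrt{\eps}U^{(n)}_{y}\langle x \rangle^{2n}\phi_{n+1}^2$, both absorbable via the $j=0$ version of \eqref{twins:1}. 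The explicit $\sqrt{\eps}$ prefactor in \eqref{twins:1} provides the smallness needed to convert these bounds into $C_\delta \|U, V\|_{\mathcal{X}_{\le n}}^2 + \delta\|U, V\|_{X_{n+\frac 12}}^2$.

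The main obstacle -- and the reason the right-hand side of \eqref{esthalfnY} contains the two ``lossy'' terms $C_\delta \|U, V\|_E^2$ and $\delta \|U, V\|_{X_{n+1}}^2$ -- lies in two unavoidable boundary/cutoff contributions. Whenever the $x$-cutoff $\phi_{n+1}$ is differentiated (which occurs after integrating by parts in $x$ against $u^{(n)}_{xxy}$, $v^{(n)}_{xxx}$, etc.), one produces an $(n{+}1)$-derivative integrand localized to the bounded transition region $\{\phi_n = 1,\ \phi_{n+1} < 1\}$; such terms carry no useful $\langle x \rangle$-decay and must instead be dominated by the elliptic-regularity component $\|U, V\|_E$ defined in \eqref{def:E:norm}, exactly as in \eqref{use:E:norm}. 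Separately, the boundary trace at $y=0$ generated in the chain \eqref{jup}, namely $\int_{y=0} \eps \bar{u}_y (U^{(n+1)}(x, 0))^2 \langle x \rangle^{1+2n} \phi_{n+1}^2\, \ud x$, is only controllable by $\eps \|U, V\|_{X_{n+1}}^2$, which is absorbed as $\delta \|U, V\|_{X_{n+1}}^2$. Both features are manifestations of the ``lose $x\p_x$-derivative, but do not lose $\sqrt{\eps}\p_x$-derivative'' principle discussed in point (5) of Section \ref{section:ideas}, and are precisely what force the loop structure \eqref{turn:1}--\eqref{turn:3}.
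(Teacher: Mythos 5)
Your proposal is correct and follows essentially the same route as the paper: the paper's own proof simply declares the interaction of the multiplier $U^{(n)}_y \langle x \rangle^{1+2n}\phi_{n+1}^2$ with the left-hand side to be identical to the $n=0$ case (Lemma \ref{lemma:y:half}) and then disposes of the commutators $\p_y \mathcal{C}_1^n - \eps \p_x \mathcal{C}_2^n$ by a single Cauchy--Schwarz using \eqref{twins:1}, which is exactly the structure you reproduce (your extra integration by parts in $x$ on the $\eps\p_x\mathcal{C}_2^n$ term, so as to invoke only the $j=0$ case of \eqref{twins:1}, is a minor and if anything more careful variant). Your identification of the origin of the $C_\delta\|U,V\|_E^2$ and $\delta\|U,V\|_{X_{n+1}}^2$ terms (the $\phi_{n+1}'$ transition region handled as in \eqref{use:E:norm}, and the boundary trace $\int_{y=0}\eps\bar{u}_y (U^{(n+1)})^2$ as in the term $B^{(2)}_7$) matches the paper.
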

\begin{proof} We again only need to estimate the commutator terms, which are  
\begin{align} \n
|\int ( \p_y \mathcal{C}_1^n - \eps \p_x \mathcal{C}_2^n) U^{(n)}_y\langle x \rangle^{1 + 2n} \phi_{n+1}^2| \lesssim  &\| ( \p_y \mathcal{C}_1^n - \eps \p_x \mathcal{C}_2^n) \langle x \rangle^{n + 1} \phi_{n+1}  \| U^{(n)}_y \langle x \rangle^{n } \phi_{n+1} \| \\
\lesssim & \sqrt{\eps} \|U, V \|_{\mathcal{X}_{\le n - \frac 1 2}} \|U, V \|_{\mathcal{X}_{\le n + \frac 1 2}},
\end{align}
with the help again of estimate \eqref{twins:1}.
\end{proof}

\section{Top Order Estimates} \label{section:top:order}

In this section, we obtain top order control over the solution, more specifically we provide an estimate for $\| U, V \|_{X_{11}}$, defined in \eqref{def:Xn}. To establish this, we need to first perform a \textit{nonlinear change of variables} and to define auxiliary norms which are nonlinear (these will eventually control the $\| U, V \|_{X_{11}}$).    

\subsection{Nonlinear Change of Variables}

We group the linearized and nonlinear terms from \eqref{vel:eqn:1} via 
\begin{align} \label{id:1L1}
\mathcal{L}_1[u, v] + \mathcal{N}_1(u, v) = \mu_s u_x + \mu_{sy} v + \bar{v} u_y + \bar{u}_x u , 
\end{align}
where we have denoted the nonlinear coefficients by 
\begin{align} \label{def:mu:s:nu:s}
\mu_s := \bar{u} + \eps^{\frac{N_2}{2}}u, \qquad \nu_s := \bar{v} + \eps^{\frac{N_2}{2}}v. 
\end{align}

We now apply $\p_x^{11}$ to \eqref{id:1L1}, which produces the identity 
\begin{align} \label{sum:1}
\p_x^{11} ( \mathcal{L}[u, v] + \mathcal{N}_1(u, v) ) = \mu_s u^{(11)}_x + \mu_{sy} v^{(11)} + \sum_{i = 1}^3 \mathcal{R}^{(i)}_1[u, v]. 
\end{align}
where we have isolated those terms with twelve $x$ derivatives, and the remainder terms above have fewer than twelve $x$ derivatives on $u$, and are defined by 
\begin{align} \label{R11} 
\mathcal{R}_1^{(1)}[u, v] :=&  \sum_{j = 1}^{10} \binom{11}{j} ( \p_x^j \mu_s \p_x^{11-j} u_x + \p_x^{11-j} \bar{u}_x \p_x^j u + \p_x^j \nu_s \p_x^{11-j} u_y + \p_x^{11-j} \bar{u}_y \p_x^j v ), \\ \label{R12}
\mathcal{R}_1^{(2)}[u, v] := & \mu_{sx} u^{(11)} + \nu_s u^{(11)}_y, \\  \label{R13}
\mathcal{R}_1^{(3)}[u, v] := & u \p_x^{12} \bar{u} + u_y \p_x^{11} \bar{v} + v \p_y \bar{u}^{(11)} + u_x \p_x^{11} \bar{u}. 
\end{align}

We now introduce the change of variables, which is adapted to the first two terms on the right-hand side of \eqref{sum:1}. The basic objects are 
\begin{align} \label{Chan:var:2}
Q := \frac{\psi^{(11)}}{\mu_s}, \qquad \tilde{U} := \p_y Q, \qquad \tilde{V} := - \p_x Q. 
\end{align}

From here, we derive the identities 
\begin{align} \label{move:2}
&u^{(11)} = \p_x^{11} u = \p_y \psi^{(11)} = \p_y (\mu_s Q) = \mu_s \tilde{U} + \p_y \mu_s Q, \\ \label{move:3}
&v^{(11)} = \p_x^{11} v = - \p_x \psi^{(11)} = - \p_x (\mu_s Q) = \mu_s \tilde{V} - \p_x \mu_s Q. 
\end{align}
We thus rewrite the primary two terms from \eqref{sum:1} as 
\begin{align} \n
\mu_s u^{(11)}_x + \mu_{sy} v^{(11)} = & \mu_s \mu_{sx} \tilde{U} + \mu_s^2 \tilde{U}_x + \mu_s \mu_{sxy} Q - \mu_s \mu_{sy} \tilde{V} + \mu_s \mu_{sy} \tilde{V} - \mu_{sy} \mu_{sx} Q \\
= & \mu_s^2 \tilde{U}_x + \mu_s \mu_{sx} \tilde{U} + (\mu_s \mu_{sxy} - \mu_{sx} \mu_{sy}) Q. 
\end{align}

We may subsequently rewrite \eqref{sum:1} via 
\begin{align}
\p_x^{11} ( \mathcal{L}_1[u, v] + \mathcal{N}_1(u, v) ) = \mu_s^2 \tilde{U}_x + \mu_s \mu_{sx} \tilde{U} + (\mu_s \mu_{sxy} - \mu_{sx} \mu_{sy}) Q + \sum_{i = 1}^3 \mathcal{R}_1^{(i)}[u, v]. 
\end{align}

We now address the second equation, for which we similarly record the identity 
\begin{align} \label{move:1}
\p_x^{11} (\mathcal{L}_{2}[u, v] + \mathcal{N}_2(u, v)) = & \mu_s v^{(11)}_x + \nu_s v^{(11)}_y +  \mathcal{R}_2^{(1)}[u, v] + \mathcal{R}_2^{(2)}[u, v] + \mathcal{R}_2^{(3)}[u, v],  
\end{align}
where we again define the lower order terms appearing above via  
\begin{align}  \label{def:R21:def}
\mathcal{R}^{(1)}_2[u, v] := & \sum_{j = 1}^{10} \binom{11}{j} ( \p_x^j \mu_s \p_x^{11-j} v_x + \p_x^{11-j} \bar{v}_x \p_x^j u + \p_x^j \nu_s \p_x^{11-j} v_y + \p_x^{11-j} \bar{v}_y \p_x^j v  ), \\ \label{def:R22}
\mathcal{R}_2^{(2)}[u, v] := &\nu_{sx} u^{(11)} + \nu_{sy} v^{(11)}, \\ \label{def:R23}
\mathcal{R}_2^{(3)}[u, v] := & v \p_x^{11} \bar{v}_y  + u \p_x^{12} \bar{v} + v_x \p_x^{11} \bar{u} + v_y  \p_x^{11}\bar{v}.   
\end{align}

We will now rewrite the first two terms from \eqref{move:1} by using \eqref{move:2} - \eqref{move:3} so as to produce 
\begin{align} \n
\mu_s v^{(11)}_x + \nu_s v^{(11)}_y = & \mu_s^2 \tilde{V}_x + \mu_s \nu_s \tilde{V}_y + (2 \mu_s \mu_{sx} + \nu_s \mu_{sy}) \tilde{V} - \mu_{sx} \nu_s \tilde{U} - (\mu_s \mu_{sxx} + \nu_s \mu_{sxy})Q.
\end{align}
Continuing then from \eqref{move:1}, we obtain 
\begin{align} \n
\p_x^{11} (\bar{\mathcal{L}}_{2}[u, v] + \mathcal{N}_2(u, v)) = & \mu_s^2 \tilde{V}_x + \mu_s \nu_s \tilde{V}_y + (2 \mu_s \mu_{sx} + \nu_s \mu_{sy}) \tilde{V} - \mu_{sx} \nu_s \tilde{U} \\
& -(\mu_s \mu_{sxx} + \nu_s \mu_{sxy})Q + \sum_{i = 1}^3 \mathcal{R}_2^{(i)}[u, v]. 
\end{align}

We now summarize the full nonlinear equation upon introducing these new quantities:
\begin{align} \label{eq:11:first}
\mu_s^2 \tilde{U}_x + & \mu_s \mu_{sx} \tilde{U} - \Delta_\eps (\p_x^{11} u) + (\mu_s \mu_{sxy} - \mu_{sx} \mu_{sy}) Q  + \sum_{i = 1}^3 \mathcal{R}_1^{(i)}[u, v] + \p_x^{11} P_x = \p_x^{11} F_R, 
\end{align}
and the second equation which reads
\begin{align} \n
\mu_s^2 \tilde{V}_x& + \mu_s \nu_s \tilde{V}_y -  \Delta_\eps (\p_x^{11} v) + (2 \mu_s \mu_{sx} + \nu_s \mu_{sy}) \tilde{V} - \mu_{sx} \nu_s \tilde{U} - (\mu_s \mu_{sxx} + \nu_s \mu_{sxy})Q \\ \label{eq:11:second}
&+ \sum_{i = 1}^3 \mathcal{R}_2^{(i)}[u, v] +  \p_x^{11}\frac{P_y}{\eps} = \p_x^{11} G_R. 
\end{align}

\subsection{Nonlinearly Modified Norms} \label{subsection:NLMN}

While our objective is to control $\| U, V \|_{X_{11}}$, we will need to change the weights appearing in this norm from $\bar{u}$ to $\mu_s$. Define thus
\begin{align} \label{def:Theta:11}
\| \tilde{U}, \tilde{V} \|_{\Theta_{11}} := \| \sqrt{\mu_s} \tilde{U}_y x^{11} \phi_{11} \| + \sqrt{\eps} \| \sqrt{\mu_s} \tilde{U}_x x^{11} \phi_{11} \| + \eps \| \sqrt{\mu_s} \tilde{V}_x x^{11} \phi_{11} \| + \| \mu_{sy} \tilde{U} x^{11} \phi_{11} \|_{y = 0}. 
\end{align}

We now prove
\begin{lemma} The following estimates are valid, for $j = 0, 1$,
\begin{align}  \label{same:Q}
\| \sqrt{\eps}Q x^{9.5} \phi_{10} \| \lesssim & \|U, V \|_{\mathcal{X}_{\le 10}},  \\ \label{same:Q:2}
\| \sqrt{\eps} Q x^{10} \phi_{11} \|_{L^2_x L^\infty_y} \lesssim & \|U, V \|_{\mathcal{X}}, \\\label{same:same}
\| \mu_s \tilde{U} x^{10.5} \phi_{11} \| + \sqrt{\eps} \| \mu_s \tilde{V} x^{10.5} \phi_{11} \| \lesssim &\eps^{\frac{N_2}{2}- M_1 - 5}\|U, V \|_{\mathcal{X}} + \|U, V \|_{\mathcal{X}_{\le 10.5}}, 
\end{align}
and, for any $0 < \delta <<  1$,
\begin{align} \label{same:same:high} 
\|  \tilde{U} x^{10.5} \phi_{11} \| + \sqrt{\eps} \|  \tilde{V} x^{10.5} \phi_{11} \| \le & \delta \| \tilde{U}, \tilde{V} \|_{\Theta_{11}} + C_\delta \|U, V \|_{\mathcal{X}_{\le 10.5}} + \eps^{\frac{N_2}{2}- M_1 - 5} \|U, V \|_{\mathcal{X}}. 
\end{align}
\end{lemma}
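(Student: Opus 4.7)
The whole lemma rests on two algebraic identities: from $v = -\p_x\psi$ one has $\psi^{(11)} = -v^{(10)}$, and from $V = -\p_x q$ one has $q^{(11)} = -V^{(10)}$. Combined with the splitting $\mu_s = \bar u + \eps^{N_2/2}u$ and the pointwise bound $\|u/\bar u\cdot\langle x\rangle^{1/4}\|_\infty\lesssim \eps^{-M_1}\|U,V\|_{\mathcal{X}}$ from \eqref{pw:dec:u}, one may replace $1/\mu_s$ by $1/\bar{u}$ and $\mu_{sy},\mu_{sx}$ by $\bar{u}_y,\bar{u}_x$ at the price of terms with a pre-factor of size $\eps^{N_2/2-M_1}$. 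This is what produces the $\eps^{N_2/2-M_1-5}\|U,V\|_{\mathcal{X}}$ term on the right-hand side of \eqref{same:same} and \eqref{same:same:high}.

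For \eqref{same:Q:2}, I would simply write $|\sqrt{\eps}Q|\lesssim \sqrt{\eps}|v^{(10)}|/\bar{u}$ using the above reduction and \eqref{samezies:1}, and then apply the mixed-norm embedding \eqref{mixed:L2:orig:1} with $j=10$ (and $\phi_{11}$ on the right). For \eqref{same:Q}, after the same reduction, expanding $v^{(10)} = \p_x^{10}(\bar u V-\bar u_x q)$ by Leibniz produces the leading-order term $\sqrt{\eps}V^{(10)}x^{9.5}\phi_{10}$ in $L^2_{xy}$. The essential feature is that there is \emph{no} $\bar{u}$ weight on $V^{(10)}$, so I would apply the Hardy-type inequality \eqref{Hardy:1} with $f=\sqrt{\eps}V^{(10)}$ and use the divergence-free relation $V^{(10)}_y = -U^{(11)}$. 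The resulting two terms on the right-hand side reduce respectively to $\|\sqrt{\bar u}\sqrt{\eps}U^{(10)}_x x^{10}\phi_{10}\|$, a piece of $\|U,V\|_{X_{10}}$, and $\|\sqrt{\eps}\bar u V^{(9)}_x x^{9.5}\phi_{10}\|$, a piece of $\|U,V\|_{X_{9.5}}$; both lie inside $\|U,V\|_{\mathcal{X}_{\le 10}}$. Commutators with $l\geq 1$ derivatives of $\bar{u}$ absorb factors of $\langle x\rangle^{-l}$ thanks to \eqref{prof:u:est}, reducing to the same Hardy argument at strictly lower orders.

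For \eqref{same:same}, I would start from the identities $\mu_s\tilde U = u^{(11)} - \mu_{sy}Q$ and $\mu_s\tilde V = v^{(11)} + \mu_{sx}Q$ (both implicit in \eqref{move:2}, \eqref{move:3}), and expand $u^{(11)} = \p_x^{11}(\bar u U + \bar u_y q)$, $v^{(11)} = \p_x^{11}(\bar u V - \bar u_x q)$ by Leibniz. The $l=0$, $U$ (resp.\ $V$) contribution is $\bar u U^{(11)} = \bar u U^{(10)}_x$ (resp.\ $\bar u V^{(10)}_x$) paired against the weight $x^{10.5}\phi_{11}$, which is \emph{exactly} one of the components of the $X_{10.5}$ norm. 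The $l=0$, $q$ contribution is $\bar u_y q^{(11)} = -\bar u_y V^{(10)}$, which must be combined with $\mu_{sy}Q = \bar u_y Q + \eps^{N_2/2}u_y Q$ to exploit the cancellation
\[
\bar u_y\bigl(q^{(11)} - Q\bigr) \;=\; \bar u_y\Big(\tfrac{1}{\bar u}-\tfrac{1}{\mu_s}\Big)\psi^{(11)} \;+\; \bar u_y\sum_{l\geq 1}\tbinom{11}{l}\p_x^l\bigl(\tfrac{1}{\bar u}\bigr)\psi^{(11-l)},
\]
where the first piece equals $\bar u_y\cdot \eps^{N_2/2} u (\bar u\mu_s)^{-1}\psi^{(11)}$ and is absorbed into the $\eps^{N_2/2-M_1-5}\|U,V\|_{\mathcal{X}}$ term using \eqref{same:Q:2}, while the second piece is strictly lower order in $\p_x$ and is handled as in \eqref{same:Q}. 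The leftover $\eps^{N_2/2}u_y Q$ is disposed of by pairing \eqref{pw:dec:u} with \eqref{same:Q:2}. This cancellation is the main technical obstacle: without it, the $\bar u_y V^{(10)}$ term alone would force one to pick up a piece of $\|U,V\|_{X_{11}}$ through a Hardy-type estimate, which is not allowed by the statement of \eqref{same:same}.

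Finally, for \eqref{same:same:high} I would apply the Hardy-type inequality \eqref{Hardy:1} directly to $\tilde U$ (and to $\sqrt\eps\tilde V$), weighted by $x^{10.5}\phi_{11}$, using $\bar u^0_p\sim \mu_s$ from \eqref{samezies:1}. The first term on the right becomes $\gamma^{1/2}\|\sqrt{\mu_s}\tilde U_y x^{11}\phi_{11}\|$, a piece of $\|\tilde U,\tilde V\|_{\Theta_{11}}$; for $\tilde V$ I would use the div-free identity $\tilde V_y = -\tilde U_x$ to rewrite $\sqrt{\eps}\sqrt{\mu_s}\tilde V_y$ as $\sqrt{\eps}\sqrt{\mu_s}\tilde U_x$, still a piece of $\Theta_{11}$. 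The second term, weighted by $\gamma^{-1}$, is $\|\mu_s\tilde U x^{10.5}\phi_{11}\|$ (resp.\ $\sqrt\eps\|\mu_s\tilde V x^{10.5}\phi_{11}\|$) and is controlled by \eqref{same:same}, producing both the $C_\delta\|U,V\|_{\mathcal{X}_{\le 10.5}}$ and the $\eps^{N_2/2-M_1-5}\|U,V\|_{\mathcal{X}}$ contributions with $\delta = \gamma^{1/2}$.
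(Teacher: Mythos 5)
Your proposal is correct and takes essentially the same route as the paper: the Leibniz expansion of $\psi^{(11)}=\p_x^{11}(\bar u q)$ combined with the Hardy-type inequalities \eqref{Hardy:1}, \eqref{Hardy:three}--\eqref{Hardy:four} for \eqref{same:Q} and \eqref{same:same:high}, the mixed-norm embedding for \eqref{same:Q:2}, and the $O(\eps^{N_2/2-M_1})$ replacement of $\mu_s$ by $\bar u$. In particular, the cancellation you isolate in $\bar u_y(q^{(11)}-Q)$ is algebraically identical to the paper's key observation that $\p_y(\bar u/\mu_s)=O(\eps^{N_2/2})$ (estimate \eqref{depen:1}), which is indeed what prevents the otherwise uncontrollable $\bar u_y V^{(10)}$ term from appearing in \eqref{same:same}.
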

\begin{proof} We use the formulas \eqref{Chan:var:2} to write 
\begin{align} \label{trans:Q}
\mu_s Q = \p_x^{11} \psi = \p_x^{11} (\bar{u} q) = \bar{u} \p_x^{11} q + \sum_{k = 1}^{11} \binom{11}{k} \p_x^k \bar{u} \p_x^{11-k} q. 
\end{align}
We divide through both sides by $\mu_s$, multiply by $\sqrt{\eps} x^{9.5} \phi_{10}$, and compute the $L^2$ norm, which gives
\begin{align} \n
\| \sqrt{\eps} Q x^{9.5} \phi_{10} \| \lesssim & \| \sqrt{\eps} V^{(9)}_x x^{9.5} \phi_{10} \| + \sum_{k = 1}^9 \| \frac{\p_x^k \bar{u}}{\bar{u}} x^k \|_\infty \| \sqrt{\eps} V^{(9-k)}_x x^{9-k + \frac 1 2} \phi_{11} \| \\ \n
& + \| \p_x^{10} \bar{u}_P x^9 y \|_\infty \|  \sqrt{\eps} \frac{V}{y} x^{\frac 1 2} \phi_{11} \|  + \| \p_x^{10} \bar{u}_E x^{10.5} \|_\infty \| \sqrt{\eps} V \langle x \rangle^{-1} \| \\  \n
& + \| \p_x^{11} \bar{u}_P x^{10.5}y \|_\infty \|  \sqrt{\eps} \frac{q}{y} \langle x \rangle^{-1} \| + \| \p_x^{11} \bar{u}_E x^{11.5} \|_\infty \| \sqrt{\eps} q \langle x \rangle^{-2} \| \\
 \lesssim &  \|U, V \|_{\mathcal{X}_{\le 10}}, 
\end{align}
where we need to treat the lower order terms corresponding to $k = 9, 10$ in the sum \eqref{trans:Q} differently, in order to avoid using the critical Hardy inequality. We have invoked estimates \eqref{Hardy:four}, \eqref{est:Eul:piece}, \eqref{est:Pr:piece}. 

We now address the second inequality, \eqref{same:Q:2}. We divide \eqref{trans:Q} by $\mu_s$, multiply by $\sqrt{\eps} x^{10} \phi_{11}$, and compute the $L^2_x L^\infty_y$ norm, which gives 
\begin{align} \n
\| \sqrt{\eps} Q x^{10} \phi_{11} \|_{L^2_x L^\infty_y} \lesssim & \| V^{(10)} x^{10} \phi_{11} \|_{L^2_x L^\infty_y} + \sum_{k = 1}^9 \| \frac{\p_x^{k} \bar{u}}{\bar{u}} x^{k} \|_{\infty} \| V^{(10-k)} x^{10-k} \phi_k \|_{L^2_x L^\infty_y} \\ \n
& + \| \p_x^{10} \bar{u}_P  x^{9.5} y \|_{\infty} \| \frac{V}{y} x^{\frac 1 2} \|_{L^2_x L^\infty_y} + \| \p_x^{10} \bar{u}_E x^{10.5} \|_\infty \| V \langle x \rangle^{- \frac 1 2} \|_{L^2_x L^\infty_y} \\ \label{bieb:1}
& +  \| \p_x^{11} \bar{u}_P  x^{10.5} y \|_{\infty} \| \frac{q}{y} x^{-\frac 1 2} \|_{L^2_x L^\infty_y} + \| \p_x^{11} \bar{u}_E x^{11.5} \|_\infty \| q \langle x \rangle^{- \frac 3 2} \|_{L^2_x L^\infty_y} \\ \n
\lesssim & \|U, V \|_{\mathcal{X}}, 
\end{align}
where we have again invoked estimates \eqref{est:Eul:piece}, \eqref{est:Pr:piece} for the $\bar{u}$ terms, the mixed norm estimate \eqref{Lpq:emb:V:1},  as well as the following Sobolev interpolation estimates 
\begin{align}
&\| \frac{V}{y} x^{\frac 1 2} \phi_{11} \|_{L^2_x L^\infty_y} \lesssim \| V_y x^{\frac 1 2} \phi_{11} \|_{L^2_x L^\infty_y} \lesssim \| V_y x^{\frac 1 2} \phi_{11} \|^{\frac 1 2} \| U^{(1)}_y x^{\frac 1 2} \phi_{11} \|^{\frac 1 2} \lesssim \|U, V \|_{\mathcal{X}_{\le 1.5}}, \\
&\| \sqrt{\eps} V \langle x \rangle^{- \frac 1 2} \|_{L^2_x L^\infty_y} \lesssim \| \sqrt{\eps} V \langle x \rangle^{-1} \phi_{11} \|^{\frac 1 2} \| \sqrt{\eps} U_x \phi_{11} \| \lesssim \| U, V \|_{\mathcal{X}_{\le 1}},
\end{align}
and the analogous estimates for $q$ instead of $V$ for the final two terms from \eqref{bieb:1}.

Dividing through by $\mu_s$ and differentiating in $y$ yields 
\begin{align}  \label{trans:U}
\tilde{U} = \frac{\bar{u}}{\mu_s} U^{(11)}  - \p_y ( \frac{\bar{u}}{\mu_s} ) V^{(10)} + \sum_{k = 1}^{11} \binom{11}{k} \p_y ( \frac{\p_x^k \bar{u}}{\mu_s} ) \p_x^{11-k} q + \sum_{k = 1}^{11} \binom{11}{k}  \frac{\p_x^k \bar{u}}{\mu_s}  U^{(11-k)},  
\end{align}
and similarly, dividing through by $\mu_s$ and differentiating in $x$ yields 
\begin{align}  \label{trans:V}
\tilde{V} = \frac{\bar{u}}{\mu_s} V^{(11)} + \p_x (\frac{\bar{u}}{\mu_s}) V^{(10)} + \sum_{k = 1}^{11} \binom{11}{k} \frac{\p_x^k \bar{u}}{\mu_s} V^{(11-k)} - \sum_{k = 1}^{11} \binom{11}{k} \p_x (\frac{\p_x^k \bar{u}}{\mu_s}) \p_x^{11-k} q. 
\end{align}
We first establish the following auxiliary estimate, which will be needed in forthcoming calculations due to the second term from \eqref{trans:U}.
\begin{align} \n
\p_y (\frac{\bar{u}}{\mu_s}) = \p_y (\frac{\mu_s - \eps^{\frac{N_2}{2}} u }{\mu_s}) = - \eps^{\frac{N_2}{2}} \p_y (\frac{u}{\mu_s}) = - \eps^{\frac{N_2}{2}} \p_y ( \frac{u}{\bar{u}} \frac{\bar{u}}{\mu_s} ) = - \eps^{\frac{N_2}{2}} \frac{\bar{u}}{\mu_s} \p_y (\frac{u}{\bar{u}}) - \eps^{\frac{N_2}{2}} \frac{u}{\bar{u}} \p_y (\frac{\bar{u}}{\mu_s}),
\end{align}
which, rearranging for the quantity on the left-hand side, yields the identity 
\begin{align}
\p_y (\frac{\bar{u}}{\mu_s}) = - \frac{\eps^{\frac{N_2}{2}}}{1 + \eps^{\frac{N_2}{2}} \frac{u}{\bar{u}} } \frac{\bar{u}}{\mu_s} \p_y (\frac{u}{\bar{u}}),
\end{align}
from which we estimate 
\begin{align}  \label{depen:1}
\| \p_y (\frac{\bar{u}}{\mu_s}) x^{\frac 12} \psi_{12} \|_{L^\infty_x L^2_y} \lesssim & \eps^{\frac{N_2}{2}} \| \frac{\bar{u}}{\mu_s} \|_{L^\infty} \frac{1}{1 - \eps^{\frac{N_2}{2}} \| \frac{u}{\bar{u}} \|_{L^\infty} } \| \p_y (\frac{u}{\bar{u}}) x^{\frac 1 2} \psi_{12} \|_{L^\infty_x L^2_y} \lesssim  \eps^{\frac{N_2}{2}-M_1} \|U,V \|_{\mathcal{X}},
\end{align}
where we have invoked estimates \eqref{mixed:emb} and \eqref{Linfty:wo}. 

From these formulas, we provide the estimate \eqref{same:same:high} via 
\begin{align} \n
\| \mu_s \tilde{U} x^{10.5} \phi_{11} \| & \lesssim  \| \bar{u} U^{(10)}_x x^{10.5} \phi_{11} \| + \| \p_y ( \frac{\bar{u}}{\mu_s}) x^{\frac 1 2} \psi_{12} \|_{L^\infty_x L^2_y} \| V^{(10)} x^{10} \phi_{11} \|_{L^2_x L^\infty_y} \\ \n
 &+ \sum_{k = 1}^{10}  \| \frac{\bar{u}}{\mu_s} \|_{\infty} \| \p_y (\frac{\p_x^{k} \bar{u}}{\bar{u}})  y \langle x \rangle^k \|_{\infty} \| \frac{\p_x^{11-k} q}{y} \langle x \rangle^{11-k - \frac 1 2} \| \\ \n
& + \| \frac{\bar{u}}{\mu_s} \|_\infty \| \p_y (\frac{\p_x^{11} \bar{u}_P}{\bar{u}}) y^2 \langle x \rangle^{10.5} \|_\infty \| \frac{q - y U(x, 0)}{\langle y \rangle^2} \| + \| \frac{\bar{u}}{\mu_s} \|_\infty \| \p_y (\frac{\p_x^{11} \bar{u}_E}{\bar{u}}) y \langle x \rangle^{11.5} \|_\infty \| U \langle x \rangle^{-1} \| \\ \n
&+ \sum_{k = 1}^{10} \| \frac{\bar{u}}{\mu_s} \|_\infty \| \frac{\p_x^k \bar{u}}{\bar{u}} x^k \|_\infty \| U^{(11-k)} \langle x \rangle^{(11-k- \frac 1 2)} \| + \| \frac{\bar{u}}{\mu_s} \|_\infty \| \frac{\p_x^{11} \bar{u}_P}{\bar{u}} y \langle x \rangle^{10.5} \|_\infty \| \frac{U - U(x, 0)}{\langle y \rangle} \| \\ \n
& + \| \frac{\bar{u}}{\mu_s} \|_\infty \| \frac{\p_x^{11} \bar{u}_P}{\bar{u}} \langle x \rangle^{11-\frac 1 4} \|_{L^\infty_x L^2_y} \| U(x, 0) \langle x \rangle^{- \frac 1 4} \|_{L^2_x} + \| \frac{\p_x^{11} u_E}{\bar{u}} \langle x \rangle^{11.5} \|_\infty \| U \langle x \rangle^{-1} \| \\
&  \lesssim  \| U, V \|_{\mathcal{X}_{\le 10.5}} + \eps^{\frac{N_2}{2} - M_1} \| U, V \|_{\mathcal{X}},
\end{align}
where we have invoked estimates \eqref{prof:u:est}, \eqref{est:Eul:piece}, \eqref{est:Pr:piece}, \eqref{depen:1}, and  \eqref{Lpq:emb:V:1}.
An essentially identical proof applies also to the $\tilde{V}$ quantity from \eqref{same:same}, so we omit repeating these details. This establishes estimate \eqref{same:same}, with \eqref{same:same:high} following similarly, upon using the Hardy-type inequality \eqref{Hardy:three}. 
\end{proof}

As long as we have sufficiently strong control on lower-order quantities, it will turn our that the $\Theta_{11}$ norm will control the $X_{11}$ norm. This is the content of the following lemma. 
\begin{lemma} Assume $\| U, V \|_{\mathcal{X}} \le 1$. Then, 
\begin{align}
\| U, V \|_{\mathcal{X}} \lesssim \| \tilde{U}, \tilde{V} \|_{\Theta_{11}}+ \| U, V \|_{\mathcal{X}_{\le 10.5}} \lesssim \| U, V \|_{\mathcal{X}}.
\end{align}
\end{lemma}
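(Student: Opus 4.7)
The proof establishes the equivalence as a direct algebraic consequence of the change-of-variable identities \eqref{move:2}, \eqref{move:3}, \eqref{trans:Q}, \eqref{trans:U}, \eqref{trans:V}. The central observation underlying both directions is that $\mu_s = \bar u + \eps^{N_2/2} u$ and, since $\|U,V\|_{\mathcal X} \le 1$, the pointwise bound \eqref{pw:dec:u} yields $\|u/\bar u\|_\infty \lesssim \eps^{-M_1}$. With $N_2/2 - M_1 = 76 > 0$, this shows $\mu_s/\bar u = 1 + O(\eps^{76})$ uniformly, so the weights $\sqrt{\mu_s}$ and $\sqrt{\bar u}$ may be interchanged freely; moreover, the auxiliary bound \eqref{depen:1} already quantifies the smallness of tangential derivatives of $\bar u/\mu_s$ in the relevant mixed $L^\infty_x L^2_y$ sense. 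The inequality $\|U,V\|_{\mathcal X_{\le 10.5}} \lesssim \|U,V\|_{\mathcal X}$ is trivial from the definition of $\mathcal X$, so the content lies in bounding $\|U,V\|_{X_{11}} + \|U,V\|_E$ by $\|\tilde U,\tilde V\|_{\Theta_{11}} + \|U,V\|_{\mathcal X_{\le 10.5}}$ and, conversely, $\|\tilde U,\tilde V\|_{\Theta_{11}} \lesssim \|U,V\|_{\mathcal X}$.

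For the upper bound direction, I would differentiate \eqref{trans:U} in $y$ and \eqref{trans:V} in $x$, multiply by the $\Theta_{11}$-weights $\sqrt{\mu_s}\,x^{11}\phi_{11}$, $\sqrt{\eps}\sqrt{\mu_s}\,x^{11}\phi_{11}$, and $\eps\sqrt{\mu_s}\,x^{11}\phi_{11}$ respectively, and then estimate. The leading-order contribution in each case is $\sqrt{\mu_s}\cdot(\bar u/\mu_s) U^{(11)}_y = \sqrt{\bar u/\mu_s}\cdot\sqrt{\bar u}\,U^{(11)}_y$, directly comparable to $\|U,V\|_{X_{11}}$. The correction terms involve $\p_y(\bar u/\mu_s)$, $\p_x(\bar u/\mu_s)$, and the Leibniz sums with $\p_x^k \bar u$: these are handled using \eqref{depen:1}, the coefficient estimates \eqref{prof:u:est}--\eqref{est:Pr:piece}, the mixed-norm embeddings \eqref{Lpq:emb:V:1}--\eqref{mL2again}, and the Hardy-type inequalities \eqref{Hardy:three}--\eqref{Hardy:four} to unload weight-of-$y$ onto lower-order components. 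The boundary term $\|\mu_{sy}\tilde U x^{11}\phi_{11}\|_{y=0}$ is handled via the trace identity derived from \eqref{move:2}, $\p_y[u^{(11)}]|_{y=0} = 2\mu_{sy}|_{y=0}\tilde U|_{y=0}$ (using $\mu_s|_{y=0}=0$ and $Q|_{y=0}=0$), and the parallel identity at the linear level giving $\p_y[u^{(11)}]|_{y=0} = 2\bar u_y|_{y=0}U^{(11)}|_{y=0} + (\text{lower order in }x\text{-derivatives at }y=0)$.

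For the lower bound direction, I would invert the change of variables by combining $u^{(11)} = \mu_s\tilde U + \mu_{sy}Q$ with the Leibniz expansion of $u^{(11)} = \p_x^{11}(\bar u U + \bar u_y q)$ to produce an identity of the schematic form $\bar u\,U^{(11)} = \mu_s\tilde U + \mu_{sy}Q - \bar u_y q^{(11)} + (\text{lower order})$, and then eliminate $\bar u q^{(11)}$ via \eqref{trans:Q} which gives $\bar u q^{(11)} = \mu_s Q + (\text{lower order})$. Differentiating in $y$ (resp.\ $x$, $x$) and weighting by $\sqrt{\bar u}\,x^{11}\phi_{11}$ (resp.\ by the $X_{11}$ weights for $U^{(11)}_x, V^{(11)}_x$) recovers the $X_{11}$ components; the $Q$-terms are absorbed using \eqref{same:Q}--\eqref{same:same:high}, while the lower-order Leibniz terms sit inside $\mathcal X_{\le 10.5}$. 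The damping components $\|\sqrt{-\bar u_{yy}}U^{(11)}x^{11}\phi_{11}\|$ and the $V^{(11)}$ analog, which have no direct counterpart in $\Theta_{11}$, are recovered by localizing in $z$ and invoking the Hardy-type inequality \eqref{Hardy:1}, since $\bar u_{yy}$ has effective support in $\{z \lesssim 1\}$. The boundary term $\|\sqrt{\bar u_y}U^{(11)} x^{11}\phi_{11}\|_{y=0}$ is converted from its $\Theta_{11}$ counterpart via the trace identity above. Finally, $\|U,V\|_E$ is controlled by elliptic regularity on the bounded-in-$x$ region it localizes to, driven by data in $\mathcal X_{\le 10.5}$.

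The main obstacle is the $\mu_{sy}Q$ correction in \eqref{move:2}: unlike $\bar u_y q^{(11)}$, this term carries a genuinely nonlinear $\mu_s$-weight around $Q$, which itself encodes eleven $x$-derivatives of $\psi$. Because differentiating the combination $\mu_s\tilde U + \mu_{sy}Q$ in $y$ produces $\mu_{sy}\tilde U$ plus $\mu_{syy}Q$, showing the right-hand side is truly lower order forces one to use the mixed-norm estimate \eqref{same:Q:2} on $Q$, whose proof relied on the \emph{full} strength of $\|U,V\|_{\mathcal X}$ (via the mixed $L^p_x L^q_y$ embeddings of Section \ref{Lpq:embed:section}). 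This nonlinear loop is precisely why the lemma is stated conditionally on the a-priori bound $\|U,V\|_{\mathcal X}\le 1$, and it is the reason why the equivalence cannot be obtained by a direct comparison of norms unaided by the prior analysis.
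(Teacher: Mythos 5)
Your proposal is correct and follows essentially the same route as the paper: differentiate/invert the change-of-variable identities \eqref{trans:Q}--\eqref{trans:V}, exchange the weights $\sqrt{\mu_s}\leftrightarrow\sqrt{\bar u}$ via the $\eps^{N_2/2-M_1}$ smallness coming from \eqref{pw:dec:u} on the support of $\phi_{11}$, and absorb the $Q$-corrections and Leibniz remainders with \eqref{same:Q:2}, the mixed-norm embeddings, and the Hardy inequalities, with the conditional hypothesis $\|U,V\|_{\mathcal X}\le 1$ entering exactly where you say it does. The only difference is that you explicitly spell out the recovery of the damping terms $\|\sqrt{-\bar u_{yy}}U^{(11)}x^{11}\phi_{11}\|$ and the $y=0$ trace comparison, which the paper compresses into ``an essentially identical calculation'' plus a direct comparison of $\mu_{sy}(x,0)$ with $\bar u^0_{py}(x,0)$.
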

\begin{proof} Dividing through equation \eqref{trans:Q} by $\bar{u}$ and computing $\p_y^2$ gives 
\begin{align} \n
U^{(11)}_y = & \frac{\mu_s}{\bar{u}} \tilde{U}_y + 2\p_y (\frac{\mu_s}{\bar{u}}) \tilde{U} + \p_y^2 (\frac{\mu_s}{\bar{u}}) Q   - \sum_{k = 1}^{11} \binom{11}{k} \p_y^2 (\frac{\p_x^k \bar{u}}{\bar{u}}) \p_x^{11-k} q \\
& - 2 \sum_{k = 1}^{11} \binom{11}{k}  \p_y^2 (\frac{\p_x^k \bar{u}}{\bar{u}} ) \p_x^{11-k} U - \sum_{k = 1}^{11} \binom{11}{k} \frac{\p_x^k \bar{u}}{\bar{u}} \p_x^{11-k} U_y 
\end{align}
From here, we obtain the estimate 
\begin{align} \n
\| \sqrt{\bar{u}} U^{(11)}_y x^{11} \phi_{11} \| \lesssim & \| \sqrt{ \frac{\mu_s}{\bar{u}} } \|_\infty \| \sqrt{\mu_s} \tilde{U}_y x^{11} \phi_{11} \| + \| \sqrt{\bar{u}} \p_y (\frac{\mu_s}{\bar{u}}) x^{\frac 1 2} \|_\infty \| \tilde{U} x^{10.5} \phi_{11} \|  \\ \n
& + \eps^{\frac{N_2}{2}} \| u_{yy} x \psi_{12} \|_{L^\infty_x L^2_y} \| Q x^{10} \phi_{11} \|_{L^2_x L^\infty_y} \\  \n
& + \sum_{k = 1}^{11} \| \p_y^2(\frac{\p_x^k \bar{u}}{\bar{u}}) y x^{k + \frac 1 2} \|_\infty \| \frac{\p_x^{11-k} q}{y} x^{11-k - \frac 1 2} \phi_{11} \| \\ \n
& + \sum_{k = 1}^{11} \| \frac{\p_x^k \bar{u}}{\bar{u}} x^k \|_\infty \| U^{(11-k)}_y x^{11-k} \phi_{11} \| \\
\lesssim & \| \tilde{U}, \tilde{V} \|_{\Theta_{11}} + \| U, V \|_{\mathcal{X}_{\le 10.5}} + \eps^{\frac{N_2}{2} - M_1} \| U, V \|_{\mathcal{X}} \| U, V \|_{\mathcal{X}_{\le 10.5}},   
\end{align}
where we have invoked \eqref{Linfty:wo}, \eqref{mixed:emb}, \eqref{same:Q:2}, and \eqref{bob:1}. 

An essentially identical calculation applies to the remaining terms from the $\| U, V \|_{X_{11}}$ norm, and also an essentially identical computation enables us to go backwards. We note, however, that to compare the quantities $ \| \mu_{sy} \tilde{U} x^{11} \phi_{11} \|_{y = 0}$ and $\| \bar{u}_y U^{(11)} x^{11} \phi_{11} \|_{y = 0}$, we also need to demonstrate boundedness of the coefficients $|\frac{\bar{u}_y}{\mu_{sy}}| \phi_{11}$ and $|\frac{\mu_{sy}}{\bar{u}_y}| \phi_{11}$. For this purpose, we estimate 
\begin{align} \n
|\mu_{sy}(x, 0) - \bar{u}^0_{py}(x, 0)| \phi_{11} \le  &\sum_{i = 1}^{N_1} \eps^{\frac i 2} (\sqrt{\eps} \| u^i_{EY} \|_{L^\infty_y} + \| u^i_{py} \|_{L^\infty_y} ) + \eps^{\frac{N_2}{2}} \| u_y \psi_{12} \|_{L^\infty_y} \\
\lesssim & \sum_{i = 1}^{N_1} \eps^{\frac i 2} (\sqrt{\eps} \langle x \rangle^{- \frac 3 2} + \langle x \rangle^{- \frac 3 4 + \sigma_\ast}) + \eps^{\frac{N_2}{2}- M_1} \langle x \rangle^{- \frac 3 4} \| U, V \|_{\mathcal{X}} \lesssim \eps^{\frac 1 2} \langle x \rangle^{- \frac 3 4 + \sigma_\ast}, 
\end{align}
where we have invoked estimates \eqref{water:88}, \eqref{water:65}, \eqref{pw:dec:u}, the identity that $\phi_{11} = \psi_{12} \phi_{11}$, the fact that $\frac{N_2}{2} - M_1 >> 0$, and finally the assumption that $\|U, V \|_{\mathcal{X}} \le 1$. 
\end{proof}

We will need the following interpolation estimates to close the nonlinear estimates below. 
\begin{lemma} \label{lemma:Mixed:here} Let $\tilde{W} \in \{ \tilde{U}, \tilde{V}\}$. The following estimates are valid:
\begin{align} \label{weds:weds:1}
\| \bar{u}^{\frac 1 2} \tilde{W} \langle x \rangle^{10.75} \phi_{11} \|_{L^2_x L^\infty_y}^2 \lesssim & \| \bar{u} \tilde{W} \langle x \rangle^{10.5} \phi_{11} \|^2 + \| \sqrt{\bar{u}} \tilde{W}_y \langle x \rangle^{11} \phi_{11} \|^2, \\ \label{weds:weds:2}
\| \tilde{W} \langle x \rangle^{10.5} \phi_{11} \chi(z) \|_{L^2_x L^4_y}^2 \lesssim & \| \bar{u} \tilde{W} \langle x \rangle^{10.5} \phi_{11} \|^2 + \| \sqrt{\bar{u}} \tilde{W}_y \langle x \rangle^{11} \phi_{11} \|^2.
\end{align}
\end{lemma}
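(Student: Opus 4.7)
Both estimates are one-dimensional Sobolev embeddings with degenerate weights, and my plan is to treat them uniformly by first deriving a pointwise-in-$x$ interpolation bound and then integrating against the $x$-weight. The crucial structural observation is that the weights on the right-hand side, $\langle x\rangle^{10.5}$ on $\bar u\tilde W$ versus $\langle x\rangle^{11}$ on $\sqrt{\bar u}\tilde W_y$, leave exactly a factor of $\sqrt{\langle x\rangle}$ of slack between them, which will absorb the unavoidable $\sqrt{\langle x\rangle}$ losses created by the degeneracy of $\bar u$ near $y=0$ (cf.\ \eqref{samezies:1} and the Blasius linear lower bound $\bar u(y)\gtrsim y/\sqrt{\langle x\rangle}$ for $y\lesssim\sqrt{\langle x\rangle}$).

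For \eqref{weds:weds:1}, the first step is to obtain the pointwise-in-$x$ inequality $\sup_y \bar u\tilde W^2 \lesssim \|\sqrt{\bar u}\tilde W\|_{L^2_y}\|\sqrt{\bar u}\tilde W_y\|_{L^2_y}$ by combining the fundamental theorem of calculus with the monotonicity of $\bar u$, in the same spirit as the proof of \eqref{Lpq:emb:V:2}: write $\bar u(y)\tilde W(y)^2 = -2\bar u(y)\int_y^\infty \tilde W\tilde W_{y'}\,dy'$, use $\bar u(y)\le \bar u(y')$ for $y'\ge y$ to bring one power of $\bar u$ inside, and split via Cauchy--Schwarz as $(\sqrt{\bar u}\tilde W)(\sqrt{\bar u}\tilde W_{y'})$. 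The main obstacle is that the right-hand side contains $\|\sqrt{\bar u}\tilde W\|_{L^2_y}$ rather than $\|\bar u\tilde W\|_{L^2_y}$; to close it I would split the integral at $y_\ast:=\sqrt{\langle x\rangle}$, the far-field $y\ge y_\ast$ being trivial since $\bar u\sim 1$ there (so $\int_{y_\ast}^\infty\bar u\tilde W^2\lesssim \|\bar u\tilde W\|_{L^2_y}^2$), while for the near-field I would interpolate via $\tilde W(y)^2\le 2\tilde W(y_\ast)^2 + 2y_\ast\int_y^{y_\ast}\tilde W_{y'}^2\,dy'$ and invoke $y\lesssim \sqrt{\langle x\rangle}\bar u(y)$ on $y\le y_\ast$ to convert $\int y\tilde W_{y'}^2\,dy'$ into $\sqrt{\langle x\rangle}\|\sqrt{\bar u}\tilde W_y\|_{L^2_y}^2$. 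Writing $A:=\|\bar u\tilde W\|_{L^2_y}$, $B:=\|\sqrt{\bar u}\tilde W_y\|_{L^2_y}$, this produces the pointwise-in-$x$ estimate
\[
\sup_y \bar u\tilde W^2 \;\lesssim\; AB + \langle x\rangle^{1/4}A^{1/2}B^{3/2} + \langle x\rangle^{1/2}B^2,
\]
and after multiplication by $\langle x\rangle^{21.5}\phi_{11}^2$ and integration in $x$, each term is absorbed into $\|A\langle x\rangle^{10.5}\phi_{11}\|_{L^2_x}^2 + \|B\langle x\rangle^{11}\phi_{11}\|_{L^2_x}^2$ by Young's inequality, using the key identity $\sqrt{\langle x\rangle}\langle x\rangle^{21.5}=\langle x\rangle^{22}$ on the support of $\phi_{11}$.

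For \eqref{weds:weds:2} I would use the elementary interpolation $\|f\|_{L^4_y}^2 \le \|f\|_{L^\infty_y}\|f\|_{L^2_y}$ with $f:=\tilde W\chi(z)$. Because $\chi$ vanishes at $y_\ast=2\sqrt{\langle x\rangle}$, the boundary contribution in the analog of the FTC identity for the $L^\infty_y$ factor drops out, so $\|\tilde W\chi\|_{L^\infty_y}^2$ is bounded by the same integral term $\lesssim \|\sqrt{\bar u}\tilde W\|_{L^2_y}\|\sqrt{\bar u}\tilde W_y\|_{L^2_y}$ handled above, while $\|\tilde W\chi\|_{L^2_y}^2$ coincides up to the cutoff with the near-field integral $\int_0^{y_\ast}\bar u\tilde W^2\cdot\bar u^{-1}\,dy$ controlled by the same interpolation step. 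Integrating the product against $\langle x\rangle^{21}\phi_{11}^2$ in $x$ and applying Young's inequality then delivers \eqref{weds:weds:2} through the same weight-matching mechanism. The only genuine subtlety throughout is that $\tilde W$ need not vanish at $y=0$ (while $\tilde V|_{y=0}=0$ via $\psi^{(11)}|_{y=0}=0$, the estimate must also cover $\tilde U$, for which the boundary trace is generically nonzero), which precludes a direct Hardy inequality on $\sqrt{\bar u}\tilde W$ and forces the use of the reference point $y_\ast\sim\sqrt{\langle x\rangle}$ together with the Blasius linear degeneracy $\bar u\sim y/\sqrt{\langle x\rangle}$.
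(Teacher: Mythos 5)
Your argument for \eqref{weds:weds:1} is correct and essentially matches the paper's: both reduce to a fundamental-theorem-of-calculus identity in $y$ for $\bar u\tilde W^2$, Cauchy--Schwarz, and a Hardy-type step (the paper invokes \eqref{Hardy:1}; you re-derive it by splitting at $y_\ast=\sqrt{\langle x\rangle}$ and using $\bar u\gtrsim y/\sqrt{\langle x\rangle}$ there) to convert the intermediate quantity $\|\tilde W\langle x\rangle^{10.5}\|_{L^2_y}$ into the stated right-hand side; the weight bookkeeping $10.5+\tfrac 12 = 11$ is exactly the mechanism you identify.

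The proof of \eqref{weds:weds:2}, however, has a genuine gap. The interpolation $\|f\|_{L^4_y}^2\le\|f\|_{L^\infty_y}\|f\|_{L^2_y}$ with $f=\tilde W\chi(z)$ forces you to control the \emph{unweighted} sup of $\tilde W$ on $\{z\lesssim 1\}$ all the way down to $y=0$, and this is not controlled by the right-hand side. Your claim that $\|\tilde W\chi\|_{L^\infty_y}^2\lesssim\|\sqrt{\bar u}\tilde W\|_{L^2_y}\|\sqrt{\bar u}\tilde W_y\|_{L^2_y}$ fails: writing $\tilde W(y)^2\chi^2=-\int_y^\infty\partial_{y'}(\tilde W^2\chi^2)\,dy'$ and distributing a factor of $\bar u$ between $\tilde W$ and $\tilde W_{y'}$ produces $\|\bar u^{-1/2}\tilde W\|_{L^2_y}$, and $\int_0\bar u^{-1}\,dy\sim\int_0\sqrt{\langle x\rangle}\,y^{-1}\,dy$ diverges logarithmically. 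Concretely, taking $\tilde W_y\sim -c/y$ on $[\epsilon,\delta]$ gives $\|\sqrt{\bar u}\tilde W_y\|_{L^2_y}^2\sim c^2\langle x\rangle^{-1/2}\ln(\delta/\epsilon)$ while $\tilde W(\epsilon)\sim c\ln(\delta/\epsilon)$, so the unweighted sup is not bounded by any power of the available norms. Only the \emph{weighted} sup $\sup_y\bar u\tilde W^2$, i.e.\ \eqref{weds:weds:1}, is accessible, and it cannot be fed into the unweighted Gagliardo--Nirenberg step. The paper avoids this entirely: it writes $\int\tilde W^4\chi\,dy=\int\partial_y(y)\,\tilde W^4\chi\,dy$ and integrates by parts, so that the factor of $y$ so produced is converted into $\sqrt{\langle x\rangle}\,\bar u$ on the support of $\chi$ via $z\lesssim\bar u$ from \eqref{samezies:1}; this regenerated $\bar u$ is then split as $\sqrt{\bar u}\cdot\sqrt{\bar u}$ between $\tilde W$ (placed in $L^\infty_y$ with the $\sqrt{\bar u}$ weight, controlled by \eqref{weds:weds:1}) and $\tilde W_y$ (placed in $L^2_y$). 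That weighted integration by parts is the missing idea; without it, the degeneracy of $\bar u$ at $y=0$ together with $\tilde U|_{y=0}\ne 0$ (which you note, but only as an obstruction to Hardy, not to your $L^\infty_y$ claim) defeats the standard interpolation.
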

\begin{proof} We begin with the first estimate, \eqref{weds:weds:1}. For this, we consider 
\begin{align} \n
\bar{u} \tilde{W}^2 \langle x \rangle^{21.5} \le & \langle x \rangle^{21.5} |\int_y^\infty 2 \bar{u}  \tilde{W} \tilde{W}_y(y') \ud y' | + \langle x \rangle^{21.5} |\int_y^\infty 2 \tilde{W}^2 \bar{u}_y \ud y' | \\
\lesssim&  \| \tilde{W} \langle x \rangle^{10.5} \|_{L^2_y} \| \sqrt{\bar{u}} \tilde{W}_y \langle x \rangle^{11} \|_{L^2_y} + \| \bar{u}_y \langle x \rangle^{\frac 1 2} \|_\infty \|  \tilde{W} \langle x \rangle^{10.5} \|_{L^2_y}^2. 
\end{align}
Multiplying both sides by $\phi_{11}^2$ and placing both sides in $L^2_x$ gives estimate \eqref{weds:weds:1}. 

We turn now to \eqref{weds:weds:2}. To compute the $L^4_y$, we raise to the fourth power and integrate by parts via  
\begin{align} \n
\| \tilde{W} \langle x \rangle^{10.5} \chi(z) \|_{ L^4_y}^4 =  &\int \tilde{W}^4 \chi(z)  \langle x \rangle^{42} \ud y = \int \p_y (y) \tilde{W}^4 \chi(z) \langle x \rangle^{42} \ud y \\ \label{lauvlauv}
=& - \int 4 y \tilde{W}^3 \tilde{W}_y \chi(z) \langle x \rangle^{42} - \int y \tilde{W}^4 \frac{1}{\sqrt{x}} \chi'(z) \langle x \rangle^{42} \ud y.
\end{align}
We will first handle the first integral above. Using that $z \le 1$ on the support of $\chi(z)$, we can estimate via 
\begin{align} \n
|\int 4y \tilde{W}^3 \tilde{W}_y \chi(z) \langle x \rangle^{42}| \lesssim & \int |\tilde{W}|^3 \bar{u} |\tilde{W}_y| \langle x \rangle^{42.5} \ud y \\
\lesssim & \| \sqrt{\bar{u}} \tilde{W} \langle x \rangle^{10.5} \|_{L^\infty_y} \| \tilde{W} \langle x \rangle^{10.5} \|_{L^4_y}^2 \| \sqrt{\bar{u}} \tilde{W}_y \langle x \rangle^{11} \|_{L^2_y},
\end{align}

For the far-field integral from \eqref{lauvlauv}, we estimate it by first noting that $|y/\sqrt{x}| \lesssim 1$ on the support of $\chi'$. Thus,  
\begin{align}
|\int \tilde{W}^4 \langle x \rangle^{42} z \chi'(z) \ud y| \lesssim \| \bar{u} \tilde{W} \langle x \rangle^{10.5}  \|_{L^\infty_y}^2 \| \bar{u} \tilde{W} \langle x \rangle^{10.5} \|_{L^2_y}^2 \lesssim \| \bar{u} \tilde{W} \langle x \rangle^{10.5} \|_{L^2_y}^3 \| \sqrt{\bar{u}} \tilde{W}_y \langle x \rangle^{10.5} \|_{L^2_y}. 
\end{align}
Multiplying by $\phi_{11}^2$, integrating over $x$, and appealing to \eqref{weds:weds:1} gives the desired estimate.
\end{proof}

\subsection{Complete $\|\tilde{U}, \tilde{V} \|_{\Theta_{11}}$ Estimate} \label{subsection:NL:PR}

Before performing our main top order energy estimate in Lemma \ref{lemma:top}, we first record an estimate on the lower-order error terms. 
\begin{lemma} Assume $\| U, V \|_{\mathcal{X}} \le 1$. Let $\mathcal{R}_1^{(1)}$ and $\mathcal{R}_2^{(1)}$ be defined as in \eqref{R11} and \eqref{def:R21:def}. Then the following estimate is valid, for any $0 < \delta << 1$, 
\begin{align} \label{R11:estimate}
\| \mathcal{R}_{1}^{(1)} \langle x \rangle^{11.5} \phi_{11} \| + \| \sqrt{\eps} \mathcal{R}_2^{(1)} \langle x \rangle^{11.5} \phi_{11} \| \le \delta \| U, V \|_{\mathcal{X}} + C_\delta \| U, V \|_{\mathcal{X}_{\le 10.5}}.
\end{align}
\end{lemma}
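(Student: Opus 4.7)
The plan is to decompose each term in $\mathcal{R}_1^{(1)}$ and $\mathcal{R}_2^{(1)}$ according to the splitting $\mu_s = \bar{u} + \eps^{\frac{N_2}{2}} u$ and $\nu_s = \bar{v} + \eps^{\frac{N_2}{2}} v$ from \eqref{def:mu:s:nu:s}, yielding two kinds of contributions: \emph{linear} ones, in which only the background profiles $\bar{u}, \bar{v}$ appear as coefficients, and \emph{nonlinear} ones, which carry a prefactor $\eps^{\frac{N_2}{2}}$ and contain two remainder factors. In both cases the summation index ranges over $1 \le j \le 10$, so at most $11$ $x$-derivatives fall on $u$ or $v$ in each term. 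The weight $\langle x\rangle^{11.5}$ will be distributed so that each coefficient absorbs the $x$-decay from its own profile estimate, leaving the right power for the remainder factor to be controlled by the $\mathcal{X}$-type norms.

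For the linear contributions, the typical term is (taking $\mathcal{R}_1^{(1)}$ for concreteness) $\p_x^j \bar{u}\, \p_x^{11-j} u_x$, $\p_x^{11-j}\bar{u}_x\, \p_x^j u$, $\p_x^j \bar{v}\, \p_x^{11-j} u_y$, or $\p_x^{11-j}\bar{u}_y\, \p_x^j v$. I factor the coefficient through $\bar{u}$, writing it as $\bar{u}\cdot (\p_x^j\bar{u}/\bar{u})$ (or analogously for $\bar{v}$), and apply the pointwise bounds from Assumption~\ref{assume:1}, namely \eqref{prof:u:est}--\eqref{prof:v:est}. For $2\le j \le 10$, at most $10$ $x$-derivatives remain on $u$ or $v$, and the resulting $L^2$ norms are immediately controlled by $\|U,V\|_{\mathcal{X}_{\le 10.5}}$ through the translation estimates \eqref{L2:uv:eq}--\eqref{L2:uv:eq:2} together with the mixed-norm estimates \eqref{mixed:L2:orig:1}--\eqref{mL2again} whenever a product pairing forces a $L^2_xL^\infty_y$ placement of $v$. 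The one exception is $j=1$, where the factor is $u^{(11)}$ or $v^{(11)}$; here I use \eqref{L2:uv:eq} with $k=10$, which by design yields the split $C_\delta\|U,V\|_{\mathcal{X}_{\le 10.5}}+\delta\|U,V\|_{\mathcal{X}_{\le 11}}$, producing exactly the $\delta\|U,V\|_{\mathcal{X}}$ that appears in the right-hand side of \eqref{R11:estimate}.

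For the nonlinear contributions, the typical term is $\eps^{\frac{N_2}{2}}\p_x^j u\,\p_x^{11-j} u_x$ (and the three variants with $v$ playing the role of either factor). Because $1\le j\le 10$, both factors have at most $10$ $x$-derivatives, so I place the lower-order factor in $L^\infty$ using the pointwise decay estimates \eqref{pw:dec:u}, \eqref{pw:v:1}, and the higher-order factor in $L^2$ via \eqref{L2:uv:eq}--\eqref{L2:uv:eq:2}. The embeddings \eqref{pw:dec:u}, \eqref{pw:v:1} cost a factor of $\eps^{-M_1}$, but the prefactor $\eps^{\frac{N_2}{2}}$ dominates since $N_2/2-M_1\gg 0$ by the choice of parameters in Theorem \ref{thm.main}; hence every nonlinear contribution is bounded by $\eps^{\frac{N_2}{2}-M_1-5}\|U,V\|_{\mathcal{X}}^2$, which, under the assumption $\|U,V\|_{\mathcal{X}}\le 1$, is dominated by any positive power of $\eps$ times $\|U,V\|_{\mathcal{X}}$, and is therefore absorbable into the $\delta\|U,V\|_{\mathcal{X}}$ term.

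The main obstacle is the bookkeeping for the $j=1$ linear term: it is the only place where a genuinely top-order factor $u^{(11)}$ (or $v^{(11)}$) appears, so one must verify both that the $x$-weight $\langle x\rangle^{11.5}$ together with the $\langle x\rangle^{-1}$ decay of $\p_x\bar{u}/\bar{u}$ produces precisely the power $\langle x\rangle^{10.5}$ needed by \eqref{L2:uv:eq} with $k=10$, and that this estimate (rather than the cruder $\|U,V\|_{\mathcal{X}}$-bound) is what yields the small-constant part. Everything else is essentially a product/H\"older argument organized around the decomposition $(\mu_s,\nu_s)=(\bar u,\bar v)+\eps^{\frac{N_2}{2}}(u,v)$.
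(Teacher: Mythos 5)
Your overall strategy coincides with the paper's: split $\mu_s,\nu_s$ into background plus $\eps^{N_2/2}$--remainder, put the factor with few $x$-derivatives in a sup-type norm and the other in $L^2$ via \eqref{L2:uv:eq}--\eqref{L2:uv:eq:2}, and extract the $\delta\|U,V\|_{\mathcal{X}}$ from the top-order case of \eqref{L2:uv:eq}. Your bookkeeping for the linear terms and for the $j=1$ case is right.

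There is, however, one concrete place where your recipe for the nonlinear contributions does not close: the term $\eps^{\frac{N_2}{2}}\p_x^j v\,\p_x^{11-j}u_y$ in $\mathcal{R}_1^{(1)}$ for $j=9,10$. You propose to put the lower-order factor in plain $L^\infty$ and the higher-order factor in plain $L^2$. Here the higher-order factor is $\p_x^j v$, which for $j=9,10$ lies outside the range $k\le 8$ of the pointwise estimate \eqref{pw:v:1}, so it must go in $L^2$; but then the lower-order factor $\p_x^{11-j}u_y$ must absorb the remaining $x$-weight in $L^\infty$, and \eqref{pw:dec:u} only provides the weight $x^{3/4}$ for $u^{(k)}_y$ (in contrast to the weight $x^{k+\frac14}$ available for $u^{(k)}$ itself). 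Since the $L^2$ control of $\p_x^{j}v$ from \eqref{L2:uv:eq} carries weight $x^{j-\frac12}$, the total weight you can absorb is $x^{j-\frac12+\frac34}=x^{j+\frac14}$, which falls short of the required $x^{11.5}$ by $x^{11.25-j}\ge x^{1.25}$. The paper closes this case with the mixed-norm pairing
\begin{align*}
\eps^{\frac{N_2}{2}}\big\| \p_x^{11-j}u_y\,\langle x\rangle^{11-j+\frac12}\psi_{12}\big\|_{L^\infty_x L^2_y}\,\big\|\p_x^j v\,\langle x\rangle^{j}\phi_{11}\big\|_{L^2_xL^\infty_y},
\end{align*}
using \eqref{mixed:emb} for the first factor and \eqref{mL2again} for the second; the weights then sum exactly to $11.5$. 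You do invoke \eqref{mixed:L2:orig:1}--\eqref{mL2again}, but only in your discussion of the linear terms, so as written the nonlinear paragraph has a gap for these exponents. (The analogous large-$j$ terms in $\mathcal{R}_2^{(1)}$ are fine with your plain pairing, because after rewriting $\p_x^{11-j}v_y=-\p_x^{12-j}u$ the $L^\infty$ factor is a pure $x$-derivative of $u$ and \eqref{pw:dec:u} supplies the full weight $x^{12-j}$.)
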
 
\begin{proof} We begin first with $\mathcal{R}_1^{(1)}$, defined in \eqref{R11}. For the first term in \eqref{R11}, we assume $1 \le j \le 6$, in which case we bound 
\begin{align} \n
\| \p_x^j \mu_s \p_x^{11-j} u_x \langle x \rangle^{11.5} \phi_{11} \| \lesssim & \| \p_x^j \mu_s \langle x \rangle^j \psi_{12} \|_\infty \| \p_x^{11-j} u_x \langle x \rangle^{11-j+\frac 1 2} \phi_{11} \| \\
\lesssim & (1 + \eps^{\frac{N_2}{2} - M_1} \| U, V \|_{\mathcal{X}})(C_\delta \| U, V \|_{\mathcal{X}_{\le 10.5}} + \delta \| U, V \|_{\mathcal{X}}),
\end{align}
where we have invoked estimates \eqref{prof:u:est}, \eqref{pw:dec:u}, and \eqref{L2:uv:eq}. We have also invoked the identity $\psi_{12} \phi_{11} = \phi_{11}$ to insert the cut-off function $\psi_{12}$ freely above. The remaining case $j > 6$ can be treated symmetrically, as can the second term from \eqref{R11}. For the third term from \eqref{R11}, we first treat the case when $1 \le j \le 6$, which we estimate via 
\begin{align} \n
\| \p_x^j \nu_s \p_x^{11-j} u_y \langle x \rangle^{11.5} \phi_{11} \| \lesssim &\| \p_x^j \nu_s \langle x \rangle^{j + \frac 12} \psi_{12} \|_\infty \| \p_x^{11-j} u_y \langle  x\rangle^{11-j} \phi_{11} \| \\
\lesssim & (1 + \eps^{\frac{N_2}{2}- M_1} \| U, V \|_{\mathcal{X}} ) \| U, V \|_{\mathcal{X}_{\le 10}}, 
\end{align}
where above we have invoked \eqref{prof:v:est}, \eqref{pw:v:1}, and \eqref{L2:uv:eq:2}, and again the ability to insert freely the cut-off $\psi_{12}$ in the presence of $\phi_{11}$. In the case $6 < j \le 10$, we estimate the nonlinear component via 
\begin{align} \n
\eps^{\frac{N_2}{2}} \|  \p_x^j v  \p_x^{11-j} u_y \langle x \rangle^{11.5} \phi_{11} \| \lesssim & \eps^{\frac{N_2}{2}} \| \p_x^{11-j} u_y \langle x \rangle^{11-j + \frac 1 2} \psi_{12} \|_{L^\infty_x  L^2_y} \| \p_x^j v \langle x \rangle^j \phi_{11}\|_{L^2_x L^\infty_y} \\
\lesssim & \eps^{\frac{N_2}{2} - M_1} \|U, V \|_{\mathcal{X}} \| U, V \|_{\mathcal{X}_{\le 10.5}}
\end{align}
where we have used the mixed-norm estimates in \eqref{mL2again} and \eqref{mixed:emb}. 

We now move to $\mathcal{R}_2^{(1)}$. The first term here is estimated, in the case when $j \le 6$, via 
\begin{align} \n
\sqrt{\eps} \| \p_x^j \mu_s \p_x^{11-j} v_x \langle x \rangle^{11.5} \phi_{11} \| \lesssim &\| \p_x^j \mu_s \langle x \rangle^j \psi_{12} \|_\infty \| \sqrt{\eps} \p_x^{11-j} v_x \langle x \rangle^{11-j+ \frac 1 2} \phi_{11}\| \\
\lesssim & (1 + \eps^{\frac{N_2}{2}-M_1} \| U, V \|_{\mathcal{X}})  (C_\delta \|U, V \|_{\mathcal{X}_{\le 10.5}} + \delta \| U, V \|_{\mathcal{X}}),
\end{align}
where we have invoked \eqref{prof:u:est}, \eqref{L2:uv:eq}, \eqref{pw:dec:u}. 

In the case when $6 < j \le 10$, we estimate the nonlinear term via 
\begin{align} \n
\sqrt{\eps} \eps^{\frac{N_2}{2}} \| \p_x^j u \p_x^{11-j} v_x \langle x \rangle^{11.5} \phi_{11} \| \lesssim &\eps^{\frac{N_2}{2}} \| \sqrt{\eps} \p_x^{12-j} v \langle x \rangle^{12-j + \frac 1 2} \psi_{12} \|_\infty \| \p_x^{j-1} u_x \langle x \rangle^{j-1 + \frac 1 2} \phi_{11} \|, \\
\lesssim &  \eps^{\frac{N_2}{2}-M_1} \| U, V \|_{\mathcal{X}} \| U, V \|_{\mathcal{X}_{\le 10}}
\end{align}
where we have invoked \eqref{pw:v:1} and \eqref{L2:uv:eq}.
 
The same estimates work for the second term in $\mathcal{R}_2^{(1)}$, and so we move to the third term. In the case when $j \le 6$, we can estimate the third term via 
\begin{align} \n
\sqrt{\eps} \| \p_x^j \nu_s \p_x^{11-j} v_y \langle x \rangle^{11.5} \phi_{11} \| \lesssim & \sqrt{\eps} \| \p_x^j \nu_s \langle x \rangle^{j + \frac 1 2} \psi_{12}\|_\infty \| \p_x^{12-j} u \langle x \rangle^{11-j + \frac 1 2} \phi_{11} \| \\
\lesssim & (1 + \eps^{\frac{N_2}{2}-M_1} \| U, V \|_{\mathcal{X}}) (C_\delta \| U, V \|_{\mathcal{X}_{\le 10.5}} + \delta \| U, V \|_{\mathcal{X}}),
\end{align}
where we have invoked \eqref{pw:v:1}, \eqref{L2:uv:eq}. 

The same estimate will apply even when $j \ge 7$, for the $\bar{v}$ contribution from $\nu_s$ for this term. It remains to treat the nonlinear contribution when $7 \le j \le 10$, for which we estimate via 
\begin{align}\n
 \eps^{\frac{N_2}{2}}\| \sqrt{\eps} \p_x^j v \p_x^{12-j} u \langle x \rangle^{11.5} \phi_{11} \| \lesssim & \eps^{\frac{N_2}{2}}  \| \p_x^{12-j} u \langle x \rangle^{12-j }  \psi_{12} \|_\infty \| \sqrt{\eps} \p_x^{j-1}v_x \langle  x\rangle^{j-1 + \frac 1 2} \phi_{11}\| \\
 \lesssim & \eps^{\frac{N_2}{2}-M_1} \| U, V \|_{\mathcal{X}} \| U, V \|_{\mathcal{X}_{\le 10}}, 
\end{align}
where we have invoked \eqref{pw:dec:u} and \eqref{L2:uv:eq}. The identical estimate applies also to the fourth term from \eqref{def:R21:def}. This concludes the proof.  
\end{proof}

\begin{lemma} \label{lemma:top} Let $[\tilde{U}, \tilde{V}]$ satisfy \eqref{eq:11:first} - \eqref{eq:11:second}, and suppose that $\| U, V \|_{\mathcal{X}} \le 1$.
\begin{align} \label{energy:theta11}
\| \tilde{U}, \tilde{V} \|_{\Theta_{11}}^2 \lesssim \sum_{k = 0}^{10} \| U, V \|_{X_k}^2 + \| U, V \|_{X_{k + \frac 1 2} \cap Y_{k + \frac 1 2}}^2+ \mathcal{F}_{X_{11}} + \eps^{\frac{N_2}{2}-M_1-5} \|U, V \|_{\mathcal{X}}^2, 
\end{align}
where we define $\mathcal{F}_{X_{11}}$ to contain the forcing terms from this estimate, 
\begin{align} \label{def:FX11}
\mathcal{F}_{X_{11}} := \int \p_x^{11} F_R \tilde{U} x^{22} \phi_{11}^2 +  \int  \p_x^{11}  G_R \Big( \eps \tilde{V} x^{22} \phi_{11}^2- 22 \eps Q x^{21} \phi_{11}^2 - 2 \eps Q x^{22} \phi_{11} \phi'_{11} \Big)x^{22} \phi_{11}^2. 
\end{align}
\end{lemma}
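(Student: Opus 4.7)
The plan is to apply the $\operatorname{div}_\eps$-free multiplier
\[
\mathbf{M}_{11} := \Big[\, \tilde{U}\, x^{22}\, \phi_{11}^2,\; \eps \tilde{V}\, x^{22}\, \phi_{11}^2 \,-\, 22\eps Q\, x^{21}\, \phi_{11}^2 \,-\, 2\eps Q\, x^{22}\, \phi_{11} \phi_{11}' \Big]
\]
to the reformulated system \eqref{eq:11:first}--\eqref{eq:11:second}. Since $\tilde{U} = \partial_y Q$ and $\tilde{V} = -\partial_x Q$ (cf.~\eqref{Chan:var:2}), a direct computation gives $\operatorname{div}_\eps(\mathbf{M}_{11}) = 0$, so the pressure contribution vanishes after integration by parts, using $\phi_{11}|_{x=0}=0$ and the fact that $Q|_{y=0}=0$ (a consequence of $\psi = O(y^2)$ and $\mu_s = O(y)$ near $\{y=0\}$). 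The structure of $\mathbf{M}_{11}$ exactly parallels the lower-order multiplier \eqref{mult:Xn}, with the background profile $\bar u$ replaced by the nonlinear profile $\mu_s$ and with $V^{(n-1)}$ replaced by the auxiliary stream-type quantity $Q$.

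The positive contributions arise from integrating by parts twice the diffusive terms $-\Delta_\eps \partial_x^{11} u$ and $-\Delta_\eps \partial_x^{11} v$, using $\partial_x^{11} u = \mu_s \tilde U + \mu_{sy} Q$ and $\partial_x^{11} v = \mu_s \tilde V - \mu_{sx} Q$ from \eqref{move:2}--\eqref{move:3}. As in the derivation of \eqref{earl:2} at the $X_0$ level, we obtain
\[
\|\sqrt{\mu_s}\, \tilde U_y\, x^{11} \phi_{11}\|^2 + \eps \|\sqrt{\mu_s}\, \tilde U_x\, x^{11} \phi_{11}\|^2 + \eps^2 \|\sqrt{\mu_s}\, \tilde V_x\, x^{11} \phi_{11}\|^2,
\]
together with the boundary term $\|\sqrt{\mu_{sy}}\, \tilde{U}\, x^{11} \phi_{11}\|_{y=0}^2$, which appears because $u^{(11)}_y|_{y=0} = 2 \mu_{sy}(x,0)\, \tilde U(x,0)$ (invoking $Q|_{y=0} = 0$). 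These four quantities are exactly $\|\tilde U, \tilde V\|_{\Theta_{11}}^2$ in \eqref{def:Theta:11}.

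The error terms split into four families: (i) the zeroth-order coefficient terms $(\mu_s \mu_{sxy} - \mu_{sx} \mu_{sy}) Q$ and $(\mu_s \mu_{sxx} + \nu_s \mu_{sxy}) Q$, paired against $\tilde U$ and $\eps \tilde V$ and controlled via \eqref{same:Q}--\eqref{same:Q:2} together with pointwise bounds on $\mu_s, \nu_s$ from \eqref{prof:u:est}--\eqref{prof:v:est} and \eqref{Linfty:wo}; (ii) the middle-order commutators $\mathcal{R}_1^{(1)}, \mathcal{R}_2^{(1)}$, controlled via \eqref{R11:estimate} paired with \eqref{same:same:high}; (iii) the terms $\mathcal{R}_1^{(2)}, \mathcal{R}_2^{(2)}$ containing an eleventh-order factor, rewritten via \eqref{move:2}--\eqref{move:3} as $\mu_s \tilde U + \mu_{sy} Q$ (resp.\ $\mu_s \tilde V - \mu_{sx} Q$) and then absorbed using the decay of $\mu_{sx}, \nu_s$, the identity $\mu_{sx}+\nu_{sy}=0$, and \eqref{same:Q}--\eqref{same:same:high}; and (iv) the linear-in-$(u,v)$ terms $\mathcal{R}_1^{(3)}, \mathcal{R}_2^{(3)}$ carrying highly-differentiated background profiles, controlled directly by \eqref{pw:dec:u}--\eqref{mixed:emb} and \eqref{L2:uv:eq:2}.

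The principal obstacle is the transport term $\int(\mu_s^2 \tilde U_x + \mu_s \mu_{sx} \tilde U)\tilde U\, x^{22} \phi_{11}^2 = \frac{1}{2}\int \partial_x(\mu_s^2 \tilde U^2)\, x^{22} \phi_{11}^2$, which after integration by parts yields $-11 \int \mu_s^2 \tilde U^2\, x^{21} \phi_{11}^2 + \text{(localized)}$, together with an analogous $-11\eps \int \mu_s^2 \tilde V^2\, x^{21} \phi_{11}^2$ from the $\tilde V$-equation after combining the $\tilde V_y$ and $(2\mu_s \mu_{sx} + \nu_s \mu_{sy}) \tilde V$ contributions via $\mu_{sx} + \nu_{sy} = 0$. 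These are exactly $\|\mu_s \tilde U\, x^{10.5} \phi_{11}\|^2 + \eps \|\mu_s \tilde V\, x^{10.5} \phi_{11}\|^2$ appearing with the wrong sign, reflecting the $x\partial_x$-loss described in Section \ref{section:ideas}, point (5); since our scheme terminates at $X_{11}$, they cannot be absorbed into a higher-order norm. Instead, we invoke \eqref{same:same} to convert them back to $\|U, V\|_{\mathcal{X}_{\le 10.5}}^2$ plus the nonlinear correction $\eps^{N_2/2 - M_1 - 5}\|U,V\|_{\mathcal{X}}^2$, which is the final term in \eqref{energy:theta11} and can be absorbed on the left-hand side of the complete $\mathcal{X}$-norm estimate for $N_2$ large. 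This feedback, which ultimately trades a derivative-loss for a small factor of $\eps$ via the change of variables \eqref{Chan:var:2}, is the reason the nonlinear good unknowns had to replace the naive eleventh derivative of the linearized unknowns \eqref{good:variables:2}.
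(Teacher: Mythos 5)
Your proposal is correct and follows essentially the same route as the paper: the identical divergence-free multiplier $[\tilde{U}x^{22}\phi_{11}^2,\,-\eps\p_x(Qx^{22}\phi_{11}^2)]$, the same extraction of the $\Theta_{11}$ positivity from the diffusive terms via \eqref{move:2}--\eqref{move:3}, the same resolution of the non-coercive transport contribution $-\int\mu_s^2\tilde{U}^2 x^{21}\phi_{11}^2$ (and its $\tilde V$ analogue) through \eqref{same:same}, and the same four-family classification of the remainders. The only points your sketch glosses over are quantitative details the paper spends most of its effort on --- notably that the $\eps^{N_2/2}u_{yy}$ part of $\mu_{syy}$ can only be placed in $L^2_y$ and therefore forces the weighted $L^2_xL^4_y$ and $L^2_xL^\infty_y$ interpolations of Lemma \ref{lemma:Mixed:here} together with a localization in $z$ --- but these do not change the architecture of the argument.
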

\begin{proof} We apply the multiplier 
\begin{align}
\int  \eqref{eq:11:first} \times \tilde{U} x^{22} \phi_{11}^2  +  \int  \eqref{eq:11:second} \times (\eps \tilde{V} x^{22} \phi_{11}^2- 22 \eps Q x^{21} \phi_{11}^2 - 2 \eps Q x^{22} \phi_{11} \phi'_{11}).
\end{align}
We note that the multiplier above is divergence free and moreover that $\tilde{V}|_{y = 0} = Q|_{y = 0} = 0$, and hence the pressure contribution will vanish. 

We compute the first two terms from \eqref{eq:11:first}, which yields
\begin{align} \n
|\int (\mu_s^2 \tilde{U}_x + \mu_s \mu_{sx} \tilde{U}) \tilde{U} x^{22} \phi_{11}^2| =& |- 11 \int \mu_s^2 \tilde{U}^2 x^{21}\phi_{11}^2 - \int \mu_s^2 \tilde{U}^2 x^{22} \phi_{11} \phi_{11}'| \\
 \lesssim & \| \mu_s \tilde{U} x^{10.5} \|^2 \lesssim \| U, V \|_{\mathcal{X}_{\le 10.5}}^2 + \eps^{\frac{N_2}{2}-M_1 - 5} \|U, V \|_{\mathcal{X}}^2, 
\end{align}
upon invoking \eqref{same:same}. 

We now compute the $Q$ terms from \eqref{eq:11:first}. The first we split based on the definition of $\mu_s$ 
\begin{align} \n
&\Big| \int \mu_s \mu_{sxy}  Q \tilde{U} x^{22} \phi_{11}^2 \Big| =  \Big| \int \mu_s (\bar{u}_{xy} + \eps^{\frac{N_2}{2}} u_{xy}) Q \tilde{U} x^{22} \phi_{11}^2 \Big| \\ \n
\lesssim& \| \bar{u}_{xy} xy \|_\infty \| \frac{Q}{y} x^{10.5} \phi_{11} \| \| \mu_s \tilde{U} x^{10.5} \phi_{11} \| + \eps^{\frac{N_2}{2}} \| u_{xy} x^{\frac 3 2} \psi_{12} \|_{L^\infty_x L^2_y} \| Q x^{10} \phi_{10} \|_{L^2_x L^\infty_y} \| \mu_s \tilde{U} x^{10.5}  \phi_{11} \| \\ \n
\lesssim& \| \bar{u}_{xy} xy \|_\infty \| \tilde{U} x^{10.5} \phi_{11} \| \| \mu_s \tilde{U} x^{10.5} \phi_{11} \| + \eps^{\frac{N_2}{2}} \| u_{xy} x^{\frac 3 2} \psi_{12} \|_{L^\infty_x L^2_y} \| Q x^{10} \phi_{10} \|_{L^2_x L^\infty_y} \| \mu_s \tilde{U} x^{10.5} \phi_{11} \| \\
\lesssim & (C_\delta\| U, V \|_{\mathcal{X}_{\le 10.5}} + \delta \| \tilde{U}, \tilde{V} \|_{\Theta_{11}}  ) \| U, V \|_{\mathcal{X}_{\le 10.5}} + \eps^{\frac{N_2}{2} - M_1} \| U, V \|_{\mathcal{X}}^3, 
\end{align}
where we have invoked estimate \eqref{prof:u:est} for $\bar{u}$, \eqref{same:same:high}, \eqref{same:same}, \eqref{same:Q:2}, as well as the embedding \eqref{mixed:emb}. Note that we have used that $\phi_{11}^2 = \psi_{12} \phi_{11}^2$, according to the definition \eqref{psi:twelve:def}. The remaining $Q$ term works in an identical manner. 

We now address the terms in $\mathcal{R}_1^{(1)}$, which are defined in \eqref{R11}. For this, we invoke \eqref{R11:estimate} as well as \eqref{same:same:high} to estimate 
\begin{align} \n
\Big| \int \mathcal{R}_1^{(1)} \tilde{U} \langle x \rangle^{22} \phi_{11}^2 \Big| \lesssim & \| \mathcal{R}_1^{(1)} \langle x \rangle^{11.5} \phi_{11} \| \| \tilde{U} \langle x \rangle^{10.5} \phi_{11} \| \lesssim \| U, V \|_{\mathcal{X}} (\delta \| \tilde{U}, \tilde{V} \|_{\Theta_{11}} + C_\delta \| U, V \|_{\mathcal{X}_{\le 10.5}}),
\end{align}
where we have invoked estimates \eqref{same:same:high} and \eqref{R11:estimate}. 

We now address the terms in $\mathcal{R}_1^{(2)}$, which are defined in \eqref{R12}. We estimate these terms easily via 
\begin{align} \n
|\int \mathcal{R}_1^{(2)} \tilde{U} x^{22} \phi_{11}^2| \le & |\int \mu_{sx} u^{(11)} \tilde{U} x^{22} \phi_{11}^2| +   |\int \nu_{s} u^{(11)}_y \tilde{U} x^{22} \phi_{11}^2| \\ \n
\lesssim & \| \mu_{sx} x \psi_{12} \|_\infty \| u^{(11)} x^{10.5} \phi_{11} \| \| \tilde{U} x^{10.5} \phi_{11} \| + \| \frac{\nu_s}{\bar{u}} x^{\frac 1 2} \psi_{12} \|_\infty \| u^{(11)}_y x^{11} \phi_{11} \| \| \bar{u} \tilde{U} x^{10.5} \phi_{11} \| \\ \n 
\lesssim & (1 + \eps^{\frac{N_2}{2} - M_1} \| U, V \|_{\mathcal{X}_{\le 10}} ) \| U, V \|_{\mathcal{X}_{\le 10}} (C_\delta \| U, V \|_{\mathcal{X}_{\le 10}} + \delta \| \tilde{U}, \tilde{V} \|_{\Theta_{11}} ) \\
& + (1 + \eps^{\frac{N_2}{2} - M_1} \| U, V \|_{\mathcal{X}_{\le 10.5}} ) \| U, V \|_{X_{11}} \| U, V \|_{\mathcal{X}_{\le 10}}, 
\end{align}
where we have invoked the estimate \eqref{prof:u:est}, \eqref{prof:v:est}, \eqref{L2:uv:eq},  \eqref{same:same:high}, \eqref{pw:v:1}, and \eqref{same:same}, and again the identity $\phi_{11}^2 = \psi_{12} \phi_{11}^2$.

We now move to $\mathcal{R}_1^{(3)}$, defined in \eqref{R13}, which we estimate the first two terms via 
\begin{align} \n
\Big| \int ( u \p_x^{12} \bar{u} + u_y \p_x^{11} \bar{v} ) \tilde{U} x^{22} \phi_{11}^2 \Big| \lesssim & \| \p_x^{12} \bar{u} x^{11.5} y \|_\infty \| \frac{u}{y} \| \| \tilde{U} x^{10.5} \| + \| \p_x^{11} \bar{v} x^{11.5} \|_\infty \| u_y \| \| \tilde{U} x^{10.5} \| \\
\lesssim & \| u_y \| \| \tilde{U} x^{10.5}\| \le C_\delta \| U, V \|_{\mathcal{X}_{\le 10.5}} + \delta \| U, V \|_{\Theta_{11}}^2, 
\end{align}
where we have invoked \eqref{prof:u:est}, \eqref{L2:uv:eq:2} and \eqref{same:same:high}. The final two terms of $\mathcal{R}_1^{(3)}$ are estimated via 
\begin{align} \n
\Big| \int ( v \p_y  \p_x^{11} \bar{u} + u_x \p_x^{11} \bar{u} ) \tilde{U} x^{22} \phi_{11}^2 \Big| \lesssim &( \| \p_y \p_x^{11} \bar{u} y x^{11} \|_\infty \| \frac{v}{y} x^{\frac 1 2} \| + \| \p_x^{11} \bar{u} x^{11} \|_\infty \| u_x x^{\frac 1 2} \| ) \| \tilde{U} x^{10.5} \phi_{11} \| \\
\lesssim & \| v_y x^{\frac 1 2} \phi_{11} \| \| \tilde{U} x^{10.5} \phi_{11} \| \le C_\delta \| U, V \|_{\mathcal{X}_{\le 10.5}} + \delta \| \tilde{U}, \tilde{V}\|_{\Theta_{11}}^2, 
\end{align}
where we have invoked \eqref{prof:u:est}, \eqref{L2:uv:eq:2}, and \eqref{same:same:high}. 

We now move to the diffusive terms, starting with the $- u^{(11)}_{yy}$ term, for which one integration by parts yields 
\begin{align}  \label{diffuse:1}
- \int u^{(11)}_{yy} \tilde{U} x^{22} \phi_{11}^2 = & \int u^{(11)}_y \tilde{U}_y x^{22} \phi_{11}^2+ \int_{y = 0} u^{(11)}_y \tilde{U} x^{22} \phi_{11}^2 \ud x. 
\end{align}
We now use \eqref{move:2} to expand the first term on the right-hand side of \eqref{diffuse:1}, via 
\begin{align} \n
 \int u^{(11)}_y \tilde{U}_y x^{22} \phi_{11}^2 = & \int ( \mu_s \tilde{U}_y + 2 \mu_{sy} \tilde{U} + \mu_{syy} Q  ) \tilde{U}_y x^{22} \phi_{11}^2 \\
 = & \int \mu_s \tilde{U}_y^2 x^{22} \phi_{11}^2 - \int \mu_{syy} \tilde{U}^2 x^{22}\phi_{11}^2 - \int_{y =0} \mu_{sy} \tilde{U}^2 x^{22} \phi_{11}^2+ \int \mu_{syy} Q \tilde{U}_y x^{22} \phi_{11}^2. 
\end{align}
We also expand the second term on the right-hand side of \eqref{diffuse:1}, again by using \eqref{move:2}, which gives 
\begin{align} 
 \int_{y = 0} u^{(11)}_y \tilde{U} x^{22} \phi_{11}^2 \ud x  = \int_{y = 0} ( \mu_s \tilde{U}_y + 2 \mu_{sy} \tilde{U} + \mu_{syy} Q  ) \tilde{U} x^{22} \phi_{11}^2 \ud x = 2 \int_{y = 0} \mu_{sy} \tilde{U}^2 x^{22}\phi_{11}^2 \ud x. 
\end{align} 
Hence, we obtain 
\begin{align} \n
- \int u_{yy}^{(11)} \tilde{U} x^{22} \phi_{11}^2= &\int \mu_s \tilde{U}_y^2 x^{22} \phi_{11}^2+ \int_{y = 0} \mu_{sy} \tilde{U}^2 x^{22} \phi_{11}^2 \ud x - \int \mu_{syy} \tilde{U}^2 x^{22} \phi_{11}^2\\ \label{ay1}
&+ \int \mu_{syy} Q \tilde{U}_y x^{22} \phi_{11}^2. 
\end{align}
The first two terms from \eqref{ay1} are positive contributions towards the $\Theta_{11}$ norm, whereas the third and fourth terms need to be estimated. We first estimate the third term from \eqref{ay1} via 
\begin{align} \n
\Big| \int \mu_{syy} \tilde{U}^2 x^{22} \phi_{11}^2\Big| \lesssim  &\| \bar{u}_{yy} x \|_\infty \| \tilde{U} x^{10.5}\phi_{11}\|^2 + \eps^{\frac{N_2}{2}} \| u_{yy} x \psi_{12} \|_{L^\infty_x L^2_y} \| \tilde{U} x^{10.5} \phi_{11}\|_{L^2_x L^4_y}^2   \\ \n
\lesssim & \delta \| \tilde{U}, \tilde{V} \|_{\Theta_{11}}^2 + C_\delta \| U, V \|_{\mathcal{X}_{\le 10.5}}^2 + \eps^{\frac{N_2}{2} - M_1 } \| U, V \|_{\mathcal{X}} ( \| \bar{u} \tilde{U} \langle x \rangle^{10.5} \phi_{11} \|^2 \\ \label{keep:damp:1}
&+ \| \sqrt{\bar{u}} \tilde{U}_y \langle x \rangle^{11} \phi_{11} \|^2 ),
\end{align}
where we have invoked \eqref{same:same:high}, as well as \eqref{mixed:emb} and \eqref{weds:weds:2}. 

We now address the fourth term from \eqref{ay1}, for which we split the coefficient $\mu_s$. In the case of $\bar{u}_{yy}$, we may integrate by parts in $y$ to obtain 
\begin{align} \n
\int \bar{u}_{yy} Q \tilde{U}_y x^{22} \phi_{11}^2 = & - \int \bar{u}_{yy} \tilde{U}^2 x^{22} \phi_{11}^2 + \frac 1 2 \int \bar{u}_{yyyy} Q^2 x^{22} \phi_{11}^2,
\end{align}
both of which are estimated in an identical manner to \eqref{keep:damp:1}. In the case of $u_{yy}$, we split into the regions where $z \le 1$ and $z \ge 1$. First, the localized contribution is estimated via 
\begin{align} \n
\eps^{\frac{N_2}{2}} | \int u_{yy} Q \tilde{U}_y x^{22} \phi_{11}^2 \chi(z) | \lesssim & \eps^{\frac{N_2}{2}} \| u_{yy} x \psi_{12}  \|_{L^\infty_x L^2_y} \| \frac{Q}{\sqrt{y}} x^{10.25} \chi(z) \phi_{11}\|_{L^2_x L^\infty_y} \| \sqrt{\bar{u}} \tilde{U}_y x^{11} \| \\
\lesssim & \eps^{\frac{N_2}{2}-M_1} \|U, V \|_{\mathcal{X}}^3,
\end{align}
where we have invoked \eqref{mixed:emb}, as well as the inequality 
\begin{align} \n
|Q| \chi(z) = \chi(z) |\int_0^y \tilde{U} \ud y'| = \chi(z)| \int_0^y \tilde{U} (y')^{\frac 1 2} (y')^{-\frac 1 2} \ud y'| \lesssim \chi(z) y^{\frac 1 2} x^{\frac 1 4} \| \sqrt{\bar{u}} \tilde{U} \|_{L^\infty_y},
\end{align}
from which we obtain $\|\frac{Q}{\sqrt{y}} \chi(z) \langle x \rangle^{10.5}  \phi_{11}\|_{L^2_x L^\infty_y} \lesssim \|U, V \|_{\mathcal{X}}$, after using the interpolation inequality 
\begin{align}
\| \bar{u}^{\frac{1}{2}} \tilde{U} x^{10.75} \phi_{11} \|_{L^2_x L^\infty_y} \lesssim \|\tilde{U} x^{10.5} \phi_{11} \| ^{\frac 1 2} \| \bar{u} \tilde{U}_y x^{11} \phi_{11} \|_{L^2_y}^{\frac 1 2} \lesssim \|U, V \|_{\mathcal{X}}. 
\end{align}
For the far-field contribution, we estimate via 
\begin{align}
\eps^{\frac{N_2}{2}} | \int u_{yy} Q \tilde{U}_y x^{22} \phi_{11}^2 (1- \chi(z)) | \lesssim & \eps^{\frac{N_2}{2}} \| u_{yy} x \psi_{12} \|_{L^\infty_x L^2_y} \| Q x^{10} \phi_{11} \|_{L^2_x L^\infty_y} \| \mu_s \tilde{U}_y x^{11} \phi_{11} \| \\
\lesssim & \eps^{\frac{N_2}{2}- M_1- \frac 1 2} \|U, V \|_{\mathcal{X}}^3, 
\end{align}
where we have invoked the mixed-norm estimates \eqref{mixed:emb}, \eqref{same:Q:2}.

We now address the $- \eps u^{(11)}_{xx}$ terms from \eqref{eq:11:first}. This produces 
\begin{align} \n
&- \int \eps u^{(11)}_{xx}  \tilde{U} x^{22} \phi_{11}^2 - \int \eps v^{(11)}_{yy} \eps \tilde{V} x^{22} \phi_{11}^2  + 22 \int \eps v^{(11)}_{yy} Q x^{21} + 2 \int \eps v^{(11)}_{yy} Q x^{22} \phi_{11} \phi_{11}' \\ \label{Mon:1}
= &2 \int \eps u^{(11)}_x \tilde{U}_x x^{22} \phi_{11}^2 + 44 \int \eps u^{(11)}_x \tilde{U} x^{21} \phi_{11}^2 + 4 \int \eps u^{(11)}_x \tilde{U} x^{22} \phi_{11} \phi_{11}'.
\end{align} 
We first estimate easily the second and third terms from \eqref{Mon:1}. First, 
\begin{align}
|\int \eps u^{(11)}_x \tilde{U} x^{21} \phi_{11}^2| \lesssim & \sqrt{\eps} \| \sqrt{\eps} u^{(11)}_x x^{11} \phi_{11} \| \| \tilde{U} x^{10} \phi_{11} \| \lesssim \sqrt{\eps} \|U, V \|_{\mathcal{X}_{11}}^2, \\ \label{fir}
|\int \eps u^{(11)}_x \tilde{U} x^{22} \phi_{11} \phi_{11}'| \lesssim & \| \sqrt{\eps} u_x^{(11)} x^{11} \phi_{11} \| \| \tilde{U} x^{10} \phi_{10} \| \le \delta \|U, V \|_{\mathcal{X}_{11}} + C_\delta \|U, V \|_{\mathcal{X}_{\le 10}}, 
\end{align}
where we have invoked \eqref{L2:uv:eq:2} and \eqref{same:same:high}, and for estimate \eqref{fir}, we use that $\phi_{10} = 1$ on the support of $\phi_{11}$. 

We now treat the primary term, which is the first term from \eqref{Mon:1}, using the formula \eqref{move:2}, which gives 
\begin{align} \n
2 \int \eps u^{(11)}_x \tilde{U}_x x^{22} \phi_{11}^2 = & 2 \int \eps \p_x (\mu_s \tilde{U} + \p_y \mu_s Q) \tilde{U}_x x^{22} \phi_{11}^2 \\ \n
= & \int 2 \eps (\mu_s \tilde{U}_x + \p_x \mu_s \tilde{U} + \p_{xy} \mu_s Q - \p_y \mu_s \tilde{V}) \tilde{U}_x x^{22} \phi_{11}^2 \\ \n 
= & \int 2 \eps \mu_s \tilde{U}_x^2 x^{22} \phi_{11}^2 - \int \eps \p_{xx} \mu_s \tilde{U}^2 x^{22} \phi_{11}^2 - \int 22 \eps \p_x \mu_s \tilde{U}^2 x^{21} \phi_{11}^2 \\ \n
&- \int 2 \eps \p_x \mu_s \tilde{U}^2 x^{22} \phi_{11} \phi_{11}'  + \int 2 \eps \p_y^2 \p_x \mu_s Q \tilde{V} x^{22} \phi_{11}^2 + \int 2 \eps \p_{xy} \mu_s \tilde{U} \tilde{V} x^{22} \phi_{11} \\ \label{lauv:1}
&  - \int \eps \p_y^2 \mu_s \tilde{V}^2 x^{22} \phi_{11}^2. 
\end{align} 
The first term in \eqref{lauv:1} is a positive contribution. For the second and third terms, we estimate via 
\begin{align}
|\int \eps \p_{xx} \mu_s \tilde{U}^2 x^{22} \phi_{11}^2| + |\int 22 \eps \p_x \mu_s \tilde{U}^2 x^{21} \phi_{11}^2| \lesssim \eps ( \| \frac{ \p_{xx} \mu_s}{\mu_s} x  \psi_{12}\|_\infty + \| \p_x \mu_s \psi_{12}\|_\infty) \| \sqrt{\mu_s} \tilde{U} x^{10.5} \|^2,  
\end{align}
whereas for the fifth term, it is advantageous for us to split up the coefficient via 
\begin{align} \label{best:1}
\int 2 \eps \p_y^2 \p_x \mu_s Q \tilde{V} x^{22} \phi_{11}^2 =  \int 2 \eps \p_y^2 \p_x \bar{u} Q \tilde{V} x^{22} \phi_{11}^2 + \eps^{\frac{N_2}{2} + 1} \int u_{xyy} Q \tilde{V} x^{22} \phi_{11}^2,
\end{align}
after which we estimate the first term from \eqref{best:1} via 
\begin{align}
|\int 2 \eps \bar{u}_{xyy} Q \tilde{V} x^{22} \phi_{11}^2| \lesssim \sqrt{\eps} \| \bar{u}_{xyy} y x^{\frac 3 2} \|_\infty \| \frac{Q}{y} x^{10.5} \| \| \sqrt{\eps} \tilde{V} x^{10.5} \|,
\end{align}
and for the second term from \eqref{best:1}, we obtain 
\begin{align}
\eps^{\frac{N_2}{2}} |\int u_{xyy} Q \tilde{V} x^{22} \phi_{11}^2| \lesssim \eps^{\frac{N_2}{2}} \| u_{xyy} x^2 \psi_{12} \|_{L^\infty_x L^2_y} \| Q x^{10} \phi_{11} \|_{L^2_x L^\infty_y} \| \tilde{V} x^{10.5} \phi_{11} \| 
\end{align}

We now estimate the sixth term from \eqref{lauv:1} via 
\begin{align} \n
| \int 2 \eps \p_{xy} \mu_s \tilde{U} \tilde{V} x^{22} \phi_{11}| \lesssim \| \p_{xy} \mu_s \langle x \rangle^{\frac 3 2} \psi_{12}\|_\infty \| \tilde{U} \langle x \rangle^{10.5} \phi_{11} \| \| \sqrt{\eps} \tilde{V} \langle x \rangle^{10.5} \phi_{11} \|
\end{align}

The seventh term from \eqref{lauv:1} is fairly tricky. First, the contribution arising from the $\bar{u}_{yy}$ component of $\p_y^2 \mu_s$ is straightforward, and we estimate it via 
\begin{align}
|\int \eps \p_y^2 \bar{u} \tilde{V}^2 x^{22} \phi_{11}^2 | \lesssim \| \bar{u}_{yy} \langle x \rangle \|_\infty \| \sqrt{\eps} \tilde{V} \langle x \rangle^{10.5} \phi_{11} \|^2, 
\end{align}
which is an admissible contribution according to \eqref{same:same:high}. 

To handle the $\eps^{\frac{N_2}{2}}u_{yy}$ contribution from $\p_y^2 \mu_s$, we first localize in $z$. The far-field contribution is handled via
\begin{align} \n
|\eps^{\frac{N_2}{2}} \int \eps u_{yy} \tilde{V}^2 x^{22} \phi_{11}^2 (1- \chi(z))| \lesssim & \eps^{\frac{N_2}{2}+1} \| u_{yy} \langle x \rangle \psi_{12} \|_{L^\infty_x L^2_y} \| \tilde{V} \langle x \rangle^{10.5} \phi_{11} (1- \chi(z)) \|_{L^2_x L^\infty_y} \\ \n
& \times \| \tilde{V} \langle x \rangle^{10.5} \phi_{11} (1- \chi(z)) \|\\ \n
\lesssim & \eps^{\frac{N_2}{2}+1 - M_1} \| U, V \|_{\mathcal{X}} \| \bar{u} \tilde{V} \langle x \rangle^{10.5} \phi_{11}\|_{L^2_x L^\infty_y} \| \bar{u} \tilde{V} \langle x \rangle^{10.5} \phi_{11}\| \\ \n
\lesssim & \eps^{\frac{N_2}{2}+1 - M_1} \| U, V \|_{\mathcal{X}} \| \bar{u} \tilde{V}_y \langle x \rangle^{10.5} \phi_{11}\|^{\frac 1 2} \| \bar{u} \tilde{V} \langle x \rangle^{10.5} \phi_{11}\|^{\frac 3 2},
\end{align}
where we have used the presence of $(1 - \chi(z))$ to insert factors of $\bar{u}$ above, as well as estimate \eqref{mixed:emb}.

To handle this same contribution for $z \le 1$, we use Holder's inequality in the following manner 
\begin{align*}
|\eps^{\frac{N_2}{2}} \int \eps u_{yy}  \tilde{V}^2 x^{22} \phi_{11}^2 \chi(z)| \lesssim &\eps^{\frac{N_2}{2}+1} \| u_{yy} \langle x \rangle \psi_{12} \|_{L^\infty_x L^2_y} \| \tilde{V} \langle x \rangle^{10.5} \chi(z) \phi_{11}\|_{L^2_x L^4_y}^2
\end{align*} 
from which the result follows from an application of \eqref{mixed:emb} and \eqref{weds:weds:2}.

We now move to the final diffusive term, which contributes the following 
\begin{align} \n
&- \int \eps^2 v^{(11)}_{xx} (\tilde{V} x^{22} \phi_{11}^2 - 22 Q x^{21} \phi_{11}^2 - 2 Q x^{22} \phi_{11} \phi_{11}' ) \\ \n
= & \int \eps^2 v^{(11)}_x (\tilde{V} x^{22} \phi_{11}^2)_x - 22 \int \eps^2 v^{(11)}_x (Q x^{21} \phi_{11}^2)_x - 2 \int \eps^2 v^{(11)}_x ( Q x^{22} \phi_{11} \phi_{11}')_x \\ \n
= &  \int \eps^2 v^{(11)}_x \tilde{V}_x x^{22} \phi_{11}^2 + 44 \int \eps^2 v^{(11)}_x \tilde{V} x^{21} \phi_{11}^2 - 462 \int \eps^2 v^{(11)}_x Q x^{20} \phi_{11}^2 \\  \label{bkl}
& + 2 \int \eps^2 v^{(11)}_x \tilde{V} x^{22} \phi_{11} \phi_{11}' - 44 \int \eps^2 v^{(11)}_x Q x^{21} \phi_{11} \phi_{11}'- 2 \int \eps^2 v^{(11)}_x ( Q x^{22} \phi_{11} \phi_{11}')_x
\end{align}
We will now analyze the first term from \eqref{bkl}, which gives upon appealing to \eqref{move:2}, 
\begin{align} \n
\int \eps^2 v^{(11)}_x \tilde{V}_x x^{22} \phi_{11}^2  = & \int \eps^2 \p_x (\mu_s \tilde{V} - \p_x \mu_s Q) \tilde{V}_x x^{22} \phi_{11}^2 \\ \label{load:1}
= & \int \eps^2 \mu_s \tilde{V}_x^2 x^{22} \phi_{11}^2 + 2 \int \eps^2 \p_x \mu_s \tilde{V} \tilde{V}_x x^{22} \phi_{11}^2 - \int \eps^2 \p_{xx} \mu_s Q \tilde{V}_x x^{22} \phi_{11}^2 
\end{align}
The first term above in \eqref{load:1} is a positive contribution, whereas the second two can easily be estimated via  
\begin{align} \n
&|2 \int \eps^2 \p_x \mu_s \tilde{V} \tilde{V}_x x^{22} \phi_{11}^2 - \int \eps^2 \p_{xx} \mu_s Q \tilde{V}_x x^{22} \phi_{11}^2 | \\ \n
\lesssim & \sqrt{\eps} \| \p_x \mu_s x \psi_{12} \|_\infty \| \sqrt{\eps} \tilde{V} x^{10.5} \phi_{11} \| \| \eps \sqrt{\mu_s} \tilde{V}_x x^{11} \phi_{11} \| \\
&+ \sqrt{\eps} \| \p_{xx} \mu_s x^2 \psi_{12} \|_\infty \| \sqrt{\eps} Q x^{9.5} \phi_{11}\| \| \eps \sqrt{\mu_s} \tilde{V}_x x^{11} \phi_{11} \|. 
\end{align}
We now estimate the remaining terms in \eqref{bkl}. First, the contributions from $\phi_{11}'$ are supported for finite $x$, and can thus be estimated by lower order norms. The second term from \eqref{bkl} is bounded by 
\begin{align} \n
|\int \eps^2 v_x^{(11)} \tilde{V} x^{21} \phi_{11}^2 | \lesssim \sqrt{\eps} \| \eps v_x^{(11)} \phi_{11} x^{11} \| \| \sqrt{\eps} \tilde{V} \phi_{11} x^{10.5} \|, 
\end{align}
and the third term from \eqref{bkl} by
\begin{align} \n
| \int \eps^2 v^{(11)}_x Q x^{20} \phi_{11}^2 | \lesssim \sqrt{\eps} \| \eps v^{(11)}_x \phi_{11} \langle x \rangle^{11} \| \| \sqrt{\eps}Q \phi_{10} x^{9} \|,
\end{align}
both of which are acceptable contributions according to estimates \eqref{L2:uv:eq:2}, \eqref{same:Q}, and \eqref{same:same:high}. 

We now arrive at the remaining terms from \eqref{eq:11:second}, which will all be treated as error terms. We first record the identity 
\begin{align} \n
&\int \mu_s^2 \tilde{V}_x (\eps \tilde{V} x^{22} \phi_{11}^2 - 2 \eps Q x^{21} \phi_{11}^2 - 2 \eps Q x^{22} \phi_{11} \phi_{11}' ) \\ \n
= & - \eps \int \mu_s \p_x \mu_s \tilde{V}^2 x^{22} \phi_{11}^2 - 33 \int \eps \mu_s^2 \tilde{V}^2 x^{21} \phi_{11}^2 - \int \eps \mu_s^2 \tilde{V}^2 x^{22} \phi_{11} \phi_{11}' \\ \n
& - 44 \int \eps \tilde{V} Q \mu_s \p_x \mu_s x^{21} \phi_{11}^2 + 462 \int \eps \mu_s^2 \tilde{V} Q x^{20} \phi_{11}^2 + 44 \int \eps \mu_s^2 \tilde{V} Q x^{21} \phi_{11} \phi_{11}' \\ \label{bss}
& + 2 \int \eps \tilde{V} \p_x (\mu_s^2 Q x^{22} \phi_{11} \phi_{11}').  
\end{align}
To estimate these, we simply note that due to the pointwise estimates $|\p_x \mu_s \langle x \rangle \psi_{12} | \lesssim \bar{u}$, we have 
\begin{align}
\eps |\int \mu_s \p_x \mu_s \tilde{V}^2 x^{22} \phi_{11}^2| \lesssim \| \sqrt{\eps} \bar{u} \tilde{V} x^{10.5}  \phi_{11} \|^2, 
\end{align} 
and similarly for the second term from \eqref{bss}. An analogous estimate applies to the fourth and fifth terms from \eqref{bss}. 

We next move to 
\begin{align} \n
&\int \mu_s \nu_s \tilde{V}_y (\eps \tilde{V} x^{22} \phi_{11}^2 - 2 \eps Q x^{21} \phi_{11}^2 - 2 \eps Q x^{22} \phi_{11} \phi_{11}' ) \\ \n
= & - \frac 1 2 \int \eps (\mu_s \nu_s)_y \tilde{V}^2 x^{22} \phi_{11}^2 + 2 \int \eps \mu_s \nu_s \tilde{V} \tilde{U} x^{21} \phi_{11}^2 + \int \eps (\mu_s \nu_s)_y \tilde{V} Q x^{21} \phi_{11}^2 \\
& + 2 \int \eps \mu_s \nu_s \tilde{V} \tilde{U} x^{22} \phi_{11} \phi_{11}' + 2 \int \eps (\mu_s \nu_s)_y \tilde{V} Q x^{22} \phi_{11} \phi_{11}'.
\end{align}
To estimate these terms, we proceed via 
\begin{align} \n
&|\int \frac 1 2 \eps (\mu_s \nu_s)_y \tilde{V}^2 x^{22} \phi_{11}^2| + 2| \int \eps \mu_s \nu_s \tilde{V} \tilde{U} x^{21} \phi_{11}^2| + | \int \eps (\mu_s \nu_s)_y \tilde{V} Q x^{21} \phi_{11}^2| \\ \n
 \lesssim & \| \p_y (\mu_s \nu_s) x \psi_{12}\|_\infty \| \sqrt{\eps} \tilde{V} x^{10.5} \phi_{11} \|^2 + \sqrt{\eps}\| \nu_s x^{\frac 1 2} \psi_{12} \|_\infty \| \tilde{U} x^{10.5}  \phi_{11}\| \| \tilde{V} x^{10.5} \phi_{11} \|  \\ \label{analogue:1}
 & + \| (\mu_s \nu_s)_y x \psi_{12} \|_\infty \| \sqrt{\eps} \tilde{V} x^{10.5} \phi_{11} \| \sqrt{\eps} Q x^{9.5} \phi_{11} \|, 
\end{align}
all of which are acceptable contributions according to the pointwise decay estimates \eqref{pw:dec:u} - \eqref{Linfty:wo}, and according to \eqref{same:Q} - \eqref{same:same:high}. 

We now arrive at the three error terms from \eqref{eq:11:second} which are of the form $(2 \mu_s \mu_{sx} + \nu_s \mu_{sy}) \tilde{V} - \mu_{sx} \nu_s \tilde{U} - (\mu_s \mu_{sxx} + \nu_s \mu_{sxy})Q$. To estimate these contributions it suffices to note that the coefficient in front of $\tilde{V}$ satisfies the estimate $|2 \mu_s \mu_{sx} + \nu_s \mu_{sy}| \lesssim \langle x \rangle^{-1}$, and similarly the coefficient in front of $\tilde{U}$ satisfies $|\mu_{sx}\nu_s| \lesssim \langle x \rangle^{-1}$. Third, the coefficient in front of $Q$ satisfies $|\mu_s \mu_{sxx} + \nu_s \mu_{sxy}| \lesssim \langle x \rangle^{-2}$. Thus, we may apply an analogous estimate to \eqref{analogue:1}. 

We now estimate the error terms in $\mathcal{R}_2^{(i)}[u, v]$, for $i = 1,2, 3$, beginning first with those of $\mathcal{R}_2^{(1)}$. For this, we invoke \eqref{R11:estimate} as well as \eqref{same:same:high} to estimate 
\begin{align} \n
\Big| \int \eps \mathcal{R}_2^{(1)} \tilde{V} \langle x \rangle^{22} \phi_{11}^2 \Big| \lesssim & \|\sqrt{\eps} \mathcal{R}_2^{(1)} \langle x \rangle^{11.5} \phi_{11} \| \| \sqrt{\eps} \tilde{V} \langle x \rangle^{10.5} \phi_{11} \| \\
\lesssim & (\delta \|U, V \|_{\mathcal{X}} + C_\delta \|U, V \|_{X_{\le 10.5}})(\delta \|U, V \|_{\mathcal{X}} + C_\delta \|U, V \|_{X_{\le 10.5}}). 
\end{align}

We now estimate the contributions from $\mathcal{R}_2^{(2)}[u, v]$, defined in \eqref{def:R22}, for which we first have 
\begin{align} \n
&|\int \nu_{sx} u^{(11)} (\eps \tilde{V} x^{22} \phi_{11}^2 - 2 \eps Q x^{21} \phi_{11}^2 - 2 \eps Q x^{22} \phi_{11} \phi_{11}' )|  \\ \n
\lesssim & \sqrt{\eps}  \| \nu_{sx} x \psi_{12} \|_\infty  \| u^{(11)}  x^{10.5} \phi_{11}\| ( \| \sqrt{\eps} \tilde{V} x^{10.5} \phi_{11} +  \| \sqrt{\eps} Q x^{9.5} \phi_{10} \|) + \sqrt{\eps} \|U, V \|_{\mathcal{X}_{\le 10.5}}  \\
\lesssim & \sqrt{\eps} (1 + \eps^{\frac{N_2}{2}-M_1} \|U, V \|_{\mathcal{X}})  (C_\delta \| U, V \|_{\mathcal{X}_{\le 10.5}} + \delta \| U, V\|_{\mathcal{X}}) (C_\delta \| U, V \|_{\mathcal{X}_{\le 10.5}} + \delta \| U, V\|_{\mathcal{X}}),
\end{align}
where we have invoked \eqref{pw:v:1}, \eqref{L2:uv:eq}, \eqref{same:same:high}. 

and similarly for the second term from $\mathcal{R}_2^{(2)}$, we have 
\begin{align} \n
&|\int \nu_{sy} v^{(11)} (\eps \tilde{V} x^{22} \phi_{11}^2 - 2 \eps Q x^{21} \phi_{11}^2 - 2 \eps Q x^{22} \phi_{11} \phi_{11}' )|  \\ \n
\lesssim &   \| \nu_{sy} x \|_\infty  \|  \sqrt{\eps} v^{(11)}  x^{10.5} \phi_{11}\| ( \| \sqrt{\eps} \tilde{V} x^{10.5} \phi_{11} +  \| \sqrt{\eps} Q x^{9.5} \phi_{10} \|) +  \sqrt{\eps} \|U, V \|_{\mathcal{X}_{\le 10.5}}  \\
\lesssim & (1 + \eps^{\frac{N_2}{2}-M_1} \| U, V \|_{\mathcal{X}} )( C_\delta \|U, V \|_{\mathcal{X}_{\le 10.5}} + \delta \|U, V \|_{\mathcal{X}_{11}}    ) ( C_\delta \|U, V \|_{\mathcal{X}_{\le 10.5}} + \delta \|U, V \|_{\mathcal{X}_{11}}    ), 
\end{align}
where we have used \eqref{pw:dec:u}, \eqref{L2:uv:eq}, and \eqref{same:same:high}. 

We now move to the error terms from $\mathcal{R}_2^{(3)}$, First, we estimate using the definition \eqref{def:R23}, 
\begin{align} \n
\| \sqrt{\eps} \mathcal{R}_2^{(3)} x^{11.5} \phi_{11}\| \lesssim &\sqrt{\eps} \| \p_x^{11} \bar{v}_y y x^{11.5} \|_\infty \| v_y \phi_{11} \| + \| \p_x^{12} \bar{v} x^{12.5} \|_\infty \| u \langle x \rangle^{-1}\phi_{11} \| \\ \n
& + \| \p_x^{11} \bar{u} x^{11} \|_\infty \| \sqrt{\eps} v_x x^{\frac 1 2}\phi_{11} \| + \| \p_x^{11} \bar{v} x^{11.5} \|_\infty \| v_y \phi_{11}\| \\
\lesssim & \| U, V \|_{\mathcal{X}_{\le 4}}. 
\end{align}
From this, we estimate simply 
\begin{align} \n
&|\int \mathcal{R}_2^{(3)} (\eps \tilde{V} x^{22} \phi_{11}^2 - 2 \eps Q x^{21} \phi_{11}^2 - 2 \eps Q x^{22} \phi_{11} \phi_{11}' )| \\ \n
\lesssim & \| \sqrt{\eps} \mathcal{R}_2^{(3)} x^{11.5} \phi_{11} \| ( \| \sqrt{\eps} \tilde{V} x^{10.5} \phi_{11} \| + \| \sqrt{\eps} Q x^{9.5} \phi_{10} \|+   \sqrt{\eps} \|U, V \|_{\mathcal{X}_{\le 10.5}}) \\
\lesssim & \| U, V \|_{\mathcal{X}_{\le 4}} ( \delta \| \tilde{U}, \tilde{V} \|_{\Theta_{11}} + C_\delta \|U, V \|_{\mathcal{X}_{\le 10.5}} ),
\end{align}
where we have invoked estimate \eqref{same:same:high}. This concludes the proof. 
\end{proof}

\section{Nonlinear Analysis} \label{section:NL}

We first obtain estimates on the ``elliptic" component of the $\mathcal{X}$-norm, defined in \eqref{def:E:norm}. For this component of the norm, the mechanism is entirely driven by elliptic regularity. 
\begin{lemma} \label{lemma:elliptic} Let $(u, v)$ solve \eqref{vel:eqn:1} - \eqref{vel:eqn:2}. Then the following estimate is valid 
\begin{align} \label{elliptic:1}
\|U, V \|_E^2 \lesssim &\| U, V \|_{X_0}^2 + \mathcal{F}_{Ell}, 
\end{align}
where we define 
\begin{align}
\mathcal{F}_{Ell} := &  \sum_{k = 1}^{11} \| \p_x^{k-1} F_R \|^2 + \| \sqrt{\eps} \p_x^{k-1} G_R  \|^2,
\end{align}
\end{lemma}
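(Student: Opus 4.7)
The strategy is a purely elliptic regularity argument exploiting the fact that the principal part $-\Delta_\eps$ of the linearized system \eqref{vel:eqn:1} is elliptic for $\eps > 0$. Because each summand of $\|U,V\|_E$ is localized by either $\psi_{k+1}$ or $\gamma_{k-1,k}$, the $x$-weights are bounded and no large-$x$ decay is needed; all positive contributions will come from integrating by parts against $\Delta_\eps$, while everything else is treated as perturbation. The natural induction is on the order $k$.

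Fix $k \in \{1,\dots,11\}$. The plan is to apply $\p_x^k$ to the two components of \eqref{vel:eqn:1} and test them, respectively, against $\eps^{2k}\p_x^k u\,\psi_{k+1}^2$ and $\eps^{2k+1}\p_x^k v\,\psi_{k+1}^2$. By incompressibility $u_x + v_y = 0$, the vector multiplier $(\eps^{2k}\p_x^k u,\,\eps^{2k+1}\p_x^k v)\psi_{k+1}^2$ is $\mathrm{div}_\eps$-free up to cutoff remainders supported on the bounded strip $\{\psi_{k+1}' \neq 0\}$, so the pressure contribution reduces to a localized boundary term. Integration by parts against $-\Delta_\eps$ then yields precisely
\begin{align*}
\eps^{2k}\|\p_x^k u_y\,\psi_{k+1}\|^2 + \eps^{2k+1}\|\p_x^k u_x\,\psi_{k+1}\|^2 + \eps^{2k+2}\|\p_x^k v_x\,\psi_{k+1}\|^2,
\end{align*}
which is the squared $\psi$-block of $\|U,V\|_E$. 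Repeating this argument with the multipliers $\eps^{2}\p_x^k u\,\gamma_{k-1,k}^2$ and $\eps^{3}\p_x^k v\,\gamma_{k-1,k}^2$ produces the $\gamma$-block. Summing over $k=1,\dots,11$ recovers the full left-hand side of \eqref{elliptic:1}.

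For the remaining terms, I would handle the transport and curvature contributions $\bar u\,\p_x^k u_x + \bar v\,\p_x^k u_y + \bar u_x\,\p_x^k u + \bar u_y\,\p_x^k v$ (and their $v$-equation analogues), together with all commutators $[\p_x^k,\bar u]u_x$ etc., via integration by parts plus Cauchy–Schwarz: each carries at least one $\bar u$- or $\bar v$-derivative, and the pointwise estimates in Assumption \ref{assume:1} convert these into $\|U,V\|_{X_0}^2$ plus a small multiple of the left-hand side (absorbable) and lower-$k$ pieces of $\|U,V\|_E$ (handled by the induction hypothesis). The nonlinear contributions $\p_x^k\mathcal N_i$ carry the prefactor $\eps^{N_2/2}$ from \eqref{def:N1:N2} and can be absorbed using the $L^p_x L^q_y$ embeddings of Section \ref{Lpq:embed:section} together with the a priori bound $\|U,V\|_{\mathcal X}\le 1$; they contribute only higher powers of $\eps$ and hence are subdominant to the viscous controls on the left-hand side. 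The forcing terms produce $\mathcal F_{Ell}$ directly.

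The one genuine obstacle is the pressure remainder $\int 2P\,\eps^{2k}\p_x^k u\,\psi_{k+1}\psi_{k+1}'$ (and its $\gamma$-analogue). The key observation is that on the transition set $\{\psi_{k+1}' \neq 0\}$ the previous cutoff $\psi_k$ is identically $1$ (from the nesting $\rho_k + 1 \le \rho_{k+1}$), so at step $k$ the inductive $(k-1)$-level of the $E$-norm already supplies bounds on $\p_x^{k-1}u$ on that strip. One can then eliminate $P$ locally by reading $\p_x^{k-1}P_x$ off the first equation of \eqref{vel:eqn:1} in terms of quantities controlled by $\|U,V\|_{X_0}$ and the already-closed lower $E$-levels, integrating in $x$ to recover $P$ modulo a constant, and pairing with the cutoff test function. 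Feeding this back closes the cutoff remainder at step $k$ against previously estimated pieces, and iterating from $k=1$ up to $k=11$ completes the proof of \eqref{elliptic:1}.
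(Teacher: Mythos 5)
Your induction-on-$k$ energy scheme breaks down at the base case, and the breakdown is precisely the point of the lemma. Since $\psi_2\equiv 1$, the $k=1$ block of $\|U,V\|_E$ is a \emph{global} second-order estimate on the quadrant, up to the inflow boundary $\{x=0\}$ and up to the corner $(0,0)$. There, $\p_x$ is the \emph{normal} derivative, so the strategy ``apply $\p_x^k$ and test against $\p_x^k u$'' is not a tangential-regularity argument: the integrations by parts in $x$ that you rely on produce boundary terms on $\{x=0\}$ that are not controlled by anything at hand. Concretely, at $k=1$ the tangential viscosity contributes $\eps^3\int_{x=0}\p_x^2u\,\p_x u\,\ud y$, and the pressure contributes $\eps^2\int_{x=0}\p_x P\,\p_x u\,\ud y$ (the Dirichlet data \eqref{Dirichlet} fixes $u|_{x=0}$ but says nothing about $\p_x u|_{x=0}$ or $P|_{x=0}$). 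Your claim that the pressure reduces to ``a localized boundary term on $\{\psi_{k+1}'\neq0\}$'' is false here because there is no transition region; the obstruction is the genuine boundary $\{x=0\}$. This is exactly why the paper does not run an energy estimate at all for this lemma: it rewrites \eqref{vel:eqn:1}--\eqref{vel:eqn:2} as a perturbation of the scaled Stokes operator and invokes the $H^2$ regularity theory for Stokes in a domain with an angular corner (Blum--Rannacher \cite{Blum}) as a black box for the base level, and only then bootstraps higher regularity \emph{away from} $\{x=0\}$. Your argument, where it works, is essentially that bootstrap step: it is plausible for the $\gamma_{k-1,k}$ blocks and for the $k\ge2$ levels, whose cutoffs vanish identically near $\{x=0\}$.

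A secondary, but also real, problem is your treatment of the pressure in the cutoff remainders. You propose to ``read $\p_x^{k-1}P_x$ off the first equation'' and integrate in $x$; but solving the equation for the pressure gradient requires controlling $\Delta_\eps\p_x^{k-1}u$, and in particular $\p_x^{k-1}u_{yy}$, which appears in neither $\|U,V\|_{X_0}$ nor $\|U,V\|_E$ (the $E$-norm carries only first-order derivatives of $\p_x^k u$, $\p_x^k v$, and the $Y_{n+\frac12}$ norms that do control $U^{(n)}_{yy}$ are weighted by $\phi_{n+1}$, supported at large $x$, and are not hypotheses of this lemma). So the pressure elimination as sketched is circular. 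In the elliptic framework the paper uses, the pressure is estimated \emph{together with} the velocity via the Stokes theory (inf--sup/Ne\v{c}as), which is the missing ingredient. To repair your proof you would either have to import the corner Stokes estimate for the $k=1$, $\psi_2$-block anyway --- at which point you have reproduced the paper's argument --- or supply an independent local pressure estimate near $\{x=0\}$, which your proposal does not contain.
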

\begin{proof} This is a consequence of standard elliptic regularity. Indeed, rewriting \eqref{vel:eqn:1} - \eqref{vel:eqn:2} as a perturbation of the scaled Stokes operator, we obtain 
\begin{align}
&- \Delta_\eps u + P_x = F_R + \mathcal{N}_1 - (\bar{u} u_x + \bar{u}_y v + \bar{u}_x u + \bar{v} u_y), \\
&- \Delta_\eps v + \frac{P_y}{\eps} = G_R + \mathcal{N}_2 - (\bar{u} v_x + \bar{v}_y v + \bar{v}_x u + \bar{v} v_y), \\
&u_x + v_y = 0, 
\end{align}
with boundary conditions \eqref{vel:eqn:2}. From here, we apply standard $H^2$ estimates for the Stokes operator on the quadrant, \cite{Blum}, and subsequently bootstrap elliptic regularity for the Stokes operator away from $\{x = 0\}$ in the standard manner (see, for instance, \cite{Iyer2a} - \cite{Iyer2c}), which immediately results in \eqref{elliptic:1}. 
\end{proof}

We now analyze the nonlinear terms. Define the total trilinear contribution via 
\begin{align} \label{def:full:T}
\mathcal{T} := \sum_{k = 0}^{10} \Big( \mathcal{T}_{X_k} + \mathcal{T}_{X_{k + \frac 1 2}} + \mathcal{T}_{Y_{k + \frac 1 2}} \Big), 
\end{align}
where the quantities appearing on the right-hand side of \eqref{def:full:T} are defined in \eqref{def:TX0}, \eqref{TX12:spec}, \eqref{TY12spec}, \eqref{def:TXn}, and \eqref{def:TXnp12}. Our main proposition regarding the trilinear terms will be 
\begin{proposition} \label{prop:trilinear} The trilinear quantity $\mathcal{T}$ obeys the following estimate 
\begin{align} \label{est:T} 
| \mathcal{T} | \lesssim \eps^{\frac{N_2}{2} -M_1 -  5} \| U, V \|_{\mathcal{X}}^3.  
\end{align}
\end{proposition}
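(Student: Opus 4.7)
The plan is to unpack the definition \eqref{def:full:T} and estimate each trilinear integral by distributing derivatives via Leibniz, then placing the lowest--order factor in a pointwise norm controlled by the embeddings of Section \ref{pw:section:1}, the middle factor in the mixed norm \eqref{mixed:emb} (when a second $y$--derivative has to appear), and the remaining factor in $L^2$ paired against the relevant piece of the multiplier. Since each occurrence of $\mathcal{N}_1$ or $\mathcal{N}_2$ carries a prefactor of $\eps^{N_2/2}$ from \eqref{def:N1:N2}, and each invocation of the Sobolev--type embeddings \eqref{pw:dec:u}--\eqref{Linfty:wo} costs a factor of $\eps^{-M_1}$, the final power $\eps^{N_2/2 - M_1 - 5}$ (with a few units of $\eps^{-1}$ of slack) will come out automatically, provided every remaining factor is measured in a norm controlled by $\|U,V\|_{\mathcal{X}}$.

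First I would treat $\mathcal{T}_{X_0}, \mathcal{T}_{X_n}$ and $\mathcal{T}_{X_{n+\frac12}}$ for $0\le n\le 10$. Writing $\p_x^n\mathcal{N}_1 = \eps^{N_2/2}\sum_{j+k=n}\binom{n}{j}(\p_x^j u\,\p_x^{k+1}u + \p_x^j v\,\p_x^k u_y)$ and similarly for $\mathcal{N}_2$, I would, in each resulting trilinear term, identify the factor of lowest total order $n/2$ (rounded down) and place it in $L^\infty$ against the pointwise estimates \eqref{pw:dec:u}, \eqref{pw:v:1}, \eqref{Linfty:wo}, which yield exactly the decay rates $\langle x\rangle^{-1/4-k}$ or $\langle x\rangle^{-1/2-k}$ needed to absorb the weights $\langle x\rangle^{2n}$ or $\langle x\rangle^{1+2n}$ appearing in the multiplier. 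The remaining two factors are then paired as $L^2\times L^2$ with $U^{(n)}$ (resp.\ $U^{(n)}_x$) using the $L^2$ translation estimates \eqref{L2:uv:eq}--\eqref{L2:uv:eq:2}, which are themselves bounded by $\|U,V\|_{\mathcal{X}}$. For ``balanced'' splittings (roughly $j\approx n/2$), I would place $v$ or $u$ in $L^2_xL^\infty_y$ using \eqref{mixed:emb} and \eqref{mL2again}, which again are controlled by $\|U,V\|_{\mathcal{X}}$. The upshot is a bound of the shape $\eps^{N_2/2-M_1-\mathrm{O}(1)}\|U,V\|_{\mathcal{X}}^3$ for each term.

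The somewhat trickier piece is $\mathcal{T}_{Y_{n+\frac12}}$, since an extra derivative must fall on the nonlinearity, producing terms like $\p_y\mathcal{N}_1 \sim u_y u_x + u u_{xy} + v_y u_y + v u_{yy}$ and, after an integration by parts on the $\eps\p_x\mathcal{N}_2$ piece, similar quantities. The dangerous factor is $u_{yy}$ (or $v_{yy}$), which is not controlled in $L^\infty$ by our norm; however, it \emph{is} controlled in the mixed norm $L^\infty_x L^2_y$ by \eqref{mixed:emb}. The strategy is then to always place $u_{yy}$ (or $v_{yy}$) in $L^\infty_x L^2_y$, place the energy factor $U^{(n)}_y$ from the multiplier in $L^2_x L^2_y$ (paying the vertical $L^2$ by the pointwise $L^\infty_y$ of the third factor against the weight), and place the remaining $u$ or $v$ factor in $L^2_x L^\infty_y$ via \eqref{mL2again}. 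The weights in $x$ are arranged so that the combined weight $\langle x\rangle^{1/2+k}$ coming from the mixed--norm estimates matches the $\langle x\rangle^{1+2n}$ weight in \eqref{TY12spec}.

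The main obstacle I anticipate is the case where the derivatives distribute as symmetrically as possible (e.g.\ $\p_x^5$ on one factor and $\p_x^{5}\p_y$ on another in the $Y_{10+\frac12}$ estimate); here neither factor is immediately in $L^\infty$, and one must rely on the interpolation embeddings \eqref{Lpq:emb:V:1}--\eqref{mL2again} together with the Hardy--type inequalities \eqref{Hardy:three}--\eqref{Hardy:four} to recover the necessary decay in $x$. In particular, one of the two factors has to be split via $U = U(x,0) + (U - U(x,0))$ and the boundary trace handled by the $X_0$ contribution $\|\sqrt{\bar u_y}U g\|_{y=0}$, exactly as in the treatment of \eqref{fray:1} and \eqref{rockL}. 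Once these worst cases are dispensed with, the remaining contributions are strictly subcritical and may be bounded by elementary applications of Cauchy--Schwarz. Summing over $0\le n \le 10$ and over the finitely many Leibniz splittings gives the claimed estimate \eqref{est:T}.
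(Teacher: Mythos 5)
Your overall strategy — Leibniz expansion, lowest-order factor in a weighted $L^\infty$ or $L^2_xL^\infty_y$ norm, highest-order factor in $L^2$ against the multiplier — is indeed the engine of the paper's proof, which however organizes it differently: it isolates $\mathcal{T}_{X_0}$ for a bespoke treatment (estimate \eqref{TX0:est}), and compresses everything else into the single weighted bound \eqref{N2:quant} on $\|\frac{1}{\bar u}\p_x^k\mathcal{N}_i\langle x\rangle^{k+\frac12}\phi_1\|$, $k\le 10$, from which \eqref{Trilin:rest} follows by Cauchy--Schwarz. Two features of that organization respond to difficulties your proposal does not engage with. First, the multiplier factors are controlled by $\mathcal{X}$ only with degenerate weights ($\bar u$, $\sqrt{\bar u}$, $\sqrt{-\bar u_{yy}}$), so the pairing necessarily pushes a singular weight $\frac{1}{\bar u}$ onto the nonlinearity; this is precisely why the embeddings \eqref{pw:dec:u}, \eqref{pw:v:1}, \eqref{mixed:L2:orig:1} are formulated with $\frac{1}{\bar u}$ inside. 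Your plan to pair "the remaining factor in $L^2$ against the relevant piece of the multiplier" silently assumes unweighted control of $U^{(n)}$, $U^{(n)}_x$, $V^{(n)}$ with positive $x$-weights, which is only available after a Hardy step \eqref{Hardy:three}--\eqref{Hardy:four} costing half an $x\p_x$ derivative — fine for $n\le 10$ since $X_{11}$ is in the norm, but it must be said. Second, for $\mathcal{T}_{Y_{n+\frac12}}$ the paper never differentiates the nonlinearity in $y$ (nor takes $\p_x^{11}\mathcal{N}_2$): it integrates the extra $\p_y$ (resp.\ $\p_x$) onto the multiplier, landing on $\sqrt{\bar u}U^{(n)}_{yy}$ and $\sqrt{\eps}\sqrt{\bar u}U^{(n)}_{xy}$, which are exactly the $Y_{n+\frac12}$ quantities. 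Your direct route through $u_{yy}$, $v_{yy}$ and $\p_x^{12}v$ can probably be made to work via \eqref{mixed:emb} and \eqref{L2:uv:eq:2}, but it is strictly harder and relies on mixed-norm bounds outside the stated ranges (e.g.\ \eqref{mixed:emb} is stated only for $k\ge1$).

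The one place I would flag a genuine soft spot is your claim that the "main obstacle" is the balanced derivative splitting. Those are in fact the easy cases (both factors carry ample $x$-decay and go into \eqref{mL2again}/\eqref{mixed:emb}, exactly as in the paper's proof of \eqref{N2:quant} for $6<j\le10$). The real obstruction sits at $n=0$: the $X_0$ multiplier carries no positive power of $x$, and the space gives no bound on $\|u_y\langle x\rangle^{a}\|$ or $\|U\langle x\rangle^{b}\|$ with $a,b>-\frac12$ of the kind a generic H\"older splitting of $\int v\,u_y\,U\,g^2$ would require. The paper instead substitutes $u_y=\p_y(\bar uU+\bar u_yq)$ to reach \eqref{N1}, splits the coefficient $\bar u_y$ into its Euler part (which has the enhanced decay \eqref{est:Eul:piece} in $x$) and its Prandtl part (which has self-similar decay in $y$, permitting the trace decomposition $U=U(x,0)+(U-U(x,0))$ against $\|\bar u_{Py}\|_{L^1_y}$), and only then closes. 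You do cite the trace splitting, but attach it to the wrong case; without the coefficient decomposition \eqref{split:split:1} and the structural rewriting of $u_x,u_y$ in terms of $(U,V,q)$, the $X_0$ contribution does not close. With these corrections your argument aligns with the paper's.
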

\begin{proof}[Proof of Proposition] The proposition follows from combining estimate \eqref{TX0:est} and \eqref{Trilin:rest}. 
\end{proof}

\begin{lemma} The quantity $\mathcal{T}_{X_0}$, defined in \eqref{def:TX0}, obeys the following estimate 
\begin{align} \label{TX0:est}
|\mathcal{T}_{X_0}| \lesssim \eps^{\frac{N_2}{2} - M_1 - 5} \| U, V \|_{\mathcal{X}}^3. 
\end{align}
\end{lemma}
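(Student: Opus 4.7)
The plan is to expand $\mathcal{T}_{X_0}$ using \eqref{def:N1:N2}, which gives
\[
\mathcal{T}_{X_0}=\eps^{N_2/2}\!\int\!(uu_x+vu_y)Ug^2\;+\;\eps^{N_2/2+1}\!\int\!(uv_x+vv_y)\Bigl(Vg^2+\tfrac{1}{100}q\langle x\rangle^{-1-\frac{1}{100}}\Bigr),
\]
and then, on each of the four trilinear pieces, to place one convective factor ($u$ or $\sqrt{\eps}\,v$) in $L^\infty(\mathcal Q)$ using the pointwise bounds of Section \ref{pw:section:1} (at a cost of $\eps^{-M_1}$), and to put the remaining two factors in $L^2$ against the weights native to the $X_0$ norm, invoking the Hardy-type inequalities of Section \ref{remainder:section:2} to handle any factors of $q$.

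Concretely, I would treat first the prototype term $I:=\eps^{N_2/2}\!\int\! v\,u_y\,U\,g^2$. I would expand $u_y=\bar u U_y+2\bar u_y U+\bar u_{yy}q$ via \eqref{formula:1}, factor out $\|\sqrt{\eps}\,v\langle x\rangle^{1/2}\|_\infty\lesssim \eps^{-M_1}\|U,V\|_{\mathcal X}$ from \eqref{Linfty:wo}, and bound each resulting quadratic by Cauchy--Schwarz against the $X_0$ quantities $\|\sqrt{\bar u}\,U_y g\|$, $\|\sqrt{-\bar u_{yy}}\,U g\|$, $\|\bar u U\langle x\rangle^{-\frac12-\frac{1}{200}}\|$; on the $\bar u_{yy}q$ piece the Hardy inequality \eqref{Hardy:1} (admissible since $q|_{y=0}=0$) converts $q/y$ into $U$ and keeps everything inside $\|U,V\|_{X_0}$. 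The outcome is $|I|\lesssim \eps^{N_2/2-M_1-\frac12}\|U,V\|_{\mathcal X}^3$.

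The remaining three pieces $\int u u_x U g^2$, $\eps\!\int u v_x V g^2$, $\eps\!\int v v_y V g^2$, together with their companions paired with the Cauchy--Kovalevskaya correction $\eps\frac{q}{100}\langle x\rangle^{-1-\frac{1}{100}}$, follow the same template. One pointwise factor is extracted using \eqref{pw:dec:u}--\eqref{Linfty:wo}; derivatives of the convective velocity are expanded via \eqref{formula:1} into $U,V,q$ and their derivatives, placed in $L^2$ using the $X_0$ weights $\sqrt{\eps}\sqrt{\bar u}\,U_x g$ and $\eps\sqrt{\bar u}\,V_x g$; the multiplier factor is absorbed into $\|\bar u U\langle x\rangle^{-\frac12-\frac{1}{200}}\|$ or, for the $q$-component of the multiplier, into $\|U\langle x\rangle^{-\frac12-\sigma}\|$ via \eqref{Hardy:three:a}--\eqref{Hardy:four:a}. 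The worst configuration is a single $\sqrt{\eps}\,v$ in $L^\infty$ (contributing $\eps^{-M_1-\frac12}$) paired with a derivative factor requiring one further $\eps^{-1/2}$ conversion, so the overall exponent is $\eps^{N_2/2-M_1-O(1)}$, comfortably inside the claimed $\eps^{N_2/2-M_1-5}$.

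The main technical obstacle I anticipate is not an analytic one but a bookkeeping one: keeping the $\langle x\rangle$-weights aligned so that the pointwise decay $\langle x\rangle^{-1/4}$ of $u$, respectively $\langle x\rangle^{-1/2}$ of $\sqrt{\eps}\,v$, combines with the $X_0$-weights $\langle x\rangle^{-1/2-\frac{1}{200}}$ and $\langle x\rangle^{-1-\frac{1}{200}}$ to make each $x$-integral converge at infinity without appealing to the half-integer norms $X_{\frac12}\cap Y_{\frac12}$. A secondary subtlety is the $q$-component of the multiplier: its $\langle x\rangle^{-1-\frac{1}{100}}$ factor is precisely what allows a Hardy inequality in $x$ (in the spirit of \eqref{precise:1}) to absorb the mild $\langle x\rangle$ growth created when the $\bar u$-expansion of $u,v$ is inserted. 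Once these weights are tracked, the estimate \eqref{TX0:est} follows by summing the contributions of the finitely many terms produced by the expansion.
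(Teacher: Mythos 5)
Your overall template is the same as the paper's: expand $u,v,u_x,u_y,\dots$ through \eqref{formula:1}, pull one convective factor into $L^\infty$ via \eqref{pw:dec:u}--\eqref{Linfty:wo} at the cost $\eps^{-M_1}$, and close the remaining quadratic by Cauchy--Schwarz and Hardy. However, two of your specific commitments do not survive contact with the actual terms.

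First, your stated goal of making every $x$-integral close ``without appealing to the half-integer norms'' is both unnecessary and unachievable. Take the very first term the expansion produces, $\eps^{N_2/2}\int \bar u\, u\, U U_x\, g^2$. After extracting $\|u\langle x\rangle^{1/4}\|_\infty$ you are left needing $\| U_x\langle x\rangle^{1/2}\|$ paired against $\|U\langle x\rangle^{-3/4}\|$; the $X_0$ norm only controls $\sqrt{\eps}\sqrt{\bar u}\,U_x$ with \emph{no} growing weight in $x$, so the growing weight $\langle x\rangle^{1/2}$ on $U_x$ at infinity can only be absorbed by the $X_{1/2}$ (and, via \eqref{Hardy:three}, $X_1$) components of $\mathcal{X}$; near $x=0$ one instead pays $\eps^{-1/2}$ against $X_0$. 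The same happens whenever Hardy in $y$ produces an unweighted $\|U_y\|$, which by \eqref{bob:1} costs a piece of $Y_{1/2}$. Since the right-hand side of \eqref{TX0:est} is $\|U,V\|_{\mathcal X}^3$, not $\|U,V\|_{X_0}^3$, you should simply use the full norm; this is exactly what the paper does, and the budget $\eps^{-5}$ in the exponent exists precisely to absorb these $\eps^{-1/2}$ conversions.

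Second, and more seriously, your prototype term $\eps^{N_2/2}\int v\,u_y\,U g^2$ contains, after expanding $u_y$, the piece $2\eps^{N_2/2}\int \bar u_y\, v\, U^2 g^2$, and this piece does \emph{not} submit to ``Cauchy--Schwarz against the $X_0$ quantities.'' The coefficient $\bar u_y$ is bounded below at $y=0$ (by \eqref{prime:pos}) and decays only like $x^{-1/2}$, so $\langle x\rangle^{-1/2}\bar u_y$ is dominated neither by $-\bar u_{yy}$ (which vanishes at $y=0$ since $\bar u^0_{pyy}(x,0)=0$) nor by $\bar u^2\langle x\rangle^{-1-1/100}$ (since $\bar u$ degenerates at $y=0$), and the trace norm only sees $\{y=0\}$. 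The paper's treatment of this term requires two additional decompositions you omit: split $\bar u_y=\p_y\bar u_P+\sqrt{\eps}\,\p_Y\bar u_E$ (the Euler part carries an extra $\sqrt\eps$ and enhanced $x$-decay via \eqref{est:Eul:piece}), and for the Prandtl part split $U=U(x,0)+(U-U(x,0))$, controlling the trace by the $X_0$ component $\|\sqrt{\bar u_y}\,U g\|_{y=0}$ together with $\sup_x\|\bar u_{Py}\|_{L^1_y}<\infty$, and the interior part by the $y$-Hardy inequality applied to $U-U(x,0)$ (note $U$ itself has no Dirichlet condition at $y=0$, so Hardy in $y$ for $U$ always requires this subtraction). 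Without these steps the estimate for this term fails; with them, and with the relaxation of your first constraint, your argument reproduces the paper's proof.
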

\begin{proof} We recall the definition of $\mathcal{T}_{X_0}$ from \eqref{def:TX0}. We first address the terms from $\mathcal{N}_1$, which give 
\begin{align} \n
\int \mathcal{N}_1 U g^2 = & \eps^{\frac{N_2}{2}} \int u u_x U g^2+ \eps^{\frac{N_2}{2}} \int v u_y U g^2\\ \n
= &  \eps^{\frac{N_2}{2}} \int u \p_x (\bar{u} U g^2+ \bar{u}_y q) Ug^2 + \eps^{\frac{N_2}{2}}  \int v \p_y (\bar{u} U + \bar{u}_y q) Ug^2 \\ \n
= & \eps^{\frac{N_2}{2}} \int \bar{u} u U U_x g^2+  \eps^{\frac{N_2}{2}} \int \bar{u}_x u U^2 g^2+  \eps^{\frac{N_2}{2}} \int \bar{u}_{xy} u q Ug^2-  \eps^{\frac{N_2}{2}} \int \bar{u}_y u VUg^2 \\ \label{N1}
& +  \eps^{\frac{N_2}{2}} \int \bar{u} v U_y U g^2+ 2  \eps^{\frac{N_2}{2}} \int \bar{u}_y v U^2 g^2 +  \eps^{\frac{N_2}{2}} \int \bar{u}_{yy} v q U g^2. 
\end{align}
We now proceed to estimate 
\begin{align} \label{walked:1}
\eps^{\frac{N_2}{2}} |\int \bar{u} u U U_x g^2 | \lesssim \eps^{\frac{N_2}{2}} \| u x^{\frac 14} \|_\infty \| U \langle x \rangle^{- \frac 3 4} \| \| U_x \langle x \rangle^{\frac 1 2} \| \lesssim \eps^{\frac{N_2}{2} - M_1} \|U, V \|_{\mathcal{X}}^2 \|\bar{u}U_x \langle x \rangle^{\frac 1 2} \|,  
\end{align}
where we have invoked estimate \eqref{Linfty:wo}. To conclude, we estimate the final term appearing in \eqref{walked:1} by splitting $\| \bar{u}U_x \langle x \rangle^{\frac 1 2} \| \lesssim \| \bar{u}U_x (1 - \phi_{12}) \| + \| \bar{u}U_x \langle x \rangle^{\frac 1 2} \phi_{12}\|$, where we have used that the support of $(1 - \phi_{12})$ is bounded in $x$, and so we can get rid of the weight in $x$ for this term. For the $x$ large piece, we use that $\phi_1 = 1$ in the support of $\phi_{12}$, and so $\| U_x \langle x \rangle^{\frac 1 2} \phi_{12} \| \le \| U_x \langle x \rangle^{\frac 1 2} \phi_{1} \| \lesssim \|U, V \|_{\mathcal{X}}$. For the ``near $x = 0$" case, we simply estimate by using $\| \sqrt{\bar{u}} U_x \| \le \eps^{- \frac 1 2} \|U, V \|_{X_0}$. 

The second and third terms from \eqref{N1} follows in the same manner, via 
\begin{align}
\eps^{\frac{N_2}{2}} | \int \bar{u}_x u U^2 g^2| + |  | \lesssim \eps^{\frac{N_2}{2}} \| \bar{u}_x x \|_\infty \| u x^{\frac 1 4} \|_\infty \| U \langle x \rangle^{- \frac 5 8} \| 
\end{align} 
For the sixth term from \eqref{N1}, we first decompose $\bar{u}$ into its Euler and Prandtl components via 
\begin{align} \label{dec:hard:1}
\eps^{\frac{N_2}{2}} \int \bar{u}_y v U^2 g^2= \eps^{\frac{N_2}{2}} \int \p_y \bar{u}_P v U^2 g^2+ \eps^{\frac{N_2+1 }{2}} \int \p_Y \bar{u}_E v U^2 g^2. 
\end{align}
For the Euler component, we can use the enhanced $x$-decay available from \eqref{est:Eul:piece} to estimate 
\begin{align}
 \eps^{\frac{N_2+1 }{2}} |\int \p_Y \bar{u}_E v U^2 g^2| \lesssim  \eps^{\frac{N_2+1 }{2}} \int \langle x \rangle^{- \frac 3 2} U^2 \lesssim \eps^{\frac{N_2 + 1}{2}} \| U \langle x \rangle^{- \frac 3 4} \|^2. 
\end{align}
For the Prandtl component of \eqref{dec:hard:1}, we do not get strong enough $x$-decay, but rather must rely on self-similarity coupled with the sharp decay of $v$. More specifically, we need to first decompose $U = U(x, 0) + (U - U(x, 0))$, after which we obtain 
\begin{align} \n
&\eps^{\frac{N_2}{2}} | \int \bar{u}_{Py} v U^2 g^2| \le  \eps^{\frac{N_2}{2}} | \int \bar{u}_{Py} v U(x, 0)^2| + \eps^{\frac{N_2}{2}} | \int \bar{u}_{Py} v (U - U(x, 0))^2| \\
\lesssim & \eps^{\frac{N_2}{2}} \sup_x \| \bar{u}_{Py} \|_{L^1_y} \| v \langle x \rangle^{\frac 1 2} \|_\infty \| U(x, 0) \langle x \rangle^{- \frac 1 2} \|_{y = 0}^2 + \eps^{\frac{N_2}{2}} \| \bar{u}_{Py} y^2 x^{- \frac 1 2} \|_\infty \| v \langle x \rangle^{\frac 1 2} \|_\infty \| \frac{U - U(x, 0)}{y} \|^2.
\end{align}

We now address the terms from $\mathcal{N}_2$, which gives 
\begin{align} \n
\int \eps \mathcal{N}_2 ( V g^2 + \frac{1}{100} q \langle x \rangle^{- 1 - \frac{1}{100}} )= & \eps^{\frac{N_2}{2} + 1} \int u v_x V g^2 + \eps^{\frac{N_2}{2} + 1} \int v v_y V g^2 \\
&+ \frac{1}{100} \eps^{\frac{N_2}{2}+ 1} \int u v_x q \langle x \rangle^{- 1 - \frac{1}{100}} +  \frac{1}{100} \eps^{\frac{N_2}{2}+ 1} \int v v_y q \langle x \rangle^{- 1 - \frac{1}{100}}.
\end{align}
We estimate these terms directly via,  
\begin{align}
\eps^{\frac{N_2}{2}+1} |\int u v_x V g^2| \lesssim & \eps^{\frac{N_2}{2}} \| u \langle x \rangle^{\frac 1 4}\|_\infty \| \sqrt{\eps} v_x \langle x \rangle^{\frac 1 2} \| \| \sqrt{\eps} V \langle x \rangle^{- \frac 3 4} \|, \\
\eps^{\frac{N_2}{2}+1} |\int v v_y V g^2| \lesssim & \eps^{\frac{N_2}{2}} \| v \langle x \rangle^{\frac 1 2} \|_\infty \| v_y \langle x \rangle^{\frac 1 2} \| \| \sqrt{\eps} V \langle x \rangle^{-1} \| \\
\eps^{\frac{N_2}{2}+1} |\int u v_x q \langle x \rangle^{-1 - \frac{1}{100}} | \lesssim & \eps^{\frac{N_2}{2}} \| u \langle x \rangle^{\frac 1 4} \|_\infty \| \sqrt{\eps} v_x \langle x \rangle^{\frac 1 2} \| \| \sqrt{\eps} q \langle x \rangle^{- \frac 7 4} \|, \\
 \eps^{\frac{N_2}{2}+ 1} | \int v v_y q \langle x \rangle^{- 1 - \frac{1}{100}}| \lesssim & \eps^{\frac{N_2}{2}} \| v \langle x \rangle^{\frac 1 2} \|_\infty \| v_y \langle x \rangle^{\frac 1 2} \| \| \sqrt{\eps} q \langle x \rangle^{ -2 } \|.  
\end{align}

\end{proof}

We note that in the estimation of the trilinear terms, $\mathcal{T}_{X_{\frac 1 2}}$ and $\mathcal{T}_{Y_{\frac 1 2}}$, we do not need to integrate by parts to find extra structure. In fact, it is a bit more convenient to state a general lemma first, which simplifies the forthcoming estimates.
\begin{lemma} For $0 \le k \le 10$, 
\begin{align} \label{N2:quant}
\| \frac{1}{\bar{u}} \p_x^k \mathcal{N}_1 \langle x \rangle^{k + \frac 1 2} \phi_1 \| + \sqrt{\eps} \| \frac{1}{\bar{u}} \p_x^k \mathcal{N}_2 \langle x \rangle^{k + \frac 1 2} \phi_1 \| \lesssim \eps^{\frac{N_2}{2} - 2M_1} \| U, V \|_{\mathcal{X}}^2. 
\end{align}
\end{lemma}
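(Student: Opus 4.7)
The plan is a direct application of Leibniz's rule followed by H\"older's inequality, leveraging the pointwise decay estimates of Section~\ref{pw:section:1} and the $L^2$ embeddings of Section~\ref{Lpq:embed:section}. Expanding
\[
\p_x^k(uu_x) = \sum_{j=0}^k \binom{k}{j} u^{(j)} u^{(k+1-j)},
\]
and similarly for $vu_y$, $uv_x$, and $vv_y$, I would split each resulting bilinear term according to which factor carries fewer derivatives. By the symmetry $j \leftrightarrow k-j$, it suffices to treat the case $j \le k/2$.

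For a representative term $\frac{1}{\bar{u}} u^{(j)} u^{(k+1-j)} \langle x\rangle^{k+1/2} \phi_1$, the low-derivative factor $\frac{u^{(j)}}{\bar{u}}$ would be placed in $L^\infty_{xy}$ via \eqref{pw:dec:u}, which provides $\|\frac{u^{(j)}}{\bar{u}} \langle x\rangle^{j+1/4}\psi_{12}\|_\infty \lesssim \eps^{-M_1}\|U,V\|_{\mathcal{X}}$. The cutoff $\psi_{12}$ may be inserted freely since $\phi_1 = \phi_1 \psi_{12}$, as follows from comparing the supports in \eqref{def:phi:j} and \eqref{psi:twelve:def}. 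The high-derivative factor $u^{(k+1-j)} = \p_x u^{(k-j)}$ is then controlled in $L^2_{xy}$ with weight $\langle x\rangle^{k-j+1/4}$ by \eqref{L2:uv:eq} applied with index $k-j$, after decomposing $\phi_1 = \phi_1\phi_{k-j+1} + \phi_1(1-\phi_{k-j+1})$: on the support of $\phi_{k-j+1}$ the bound \eqref{L2:uv:eq} applies directly, while on the complement $x$ is bounded, so the contribution is absorbed into $\|U,V\|_E$ via Lemma~\ref{lemma:elliptic} at the cost of a finite negative power of $\eps$. The $x$-weights reassemble to $\langle x\rangle^{j+1/4} \cdot \langle x\rangle^{k-j+1/2} = \langle x\rangle^{k+3/4}$, with a quarter power of decay to spare. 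Terms involving $v$ are handled analogously through \eqref{pw:v:1} and \eqref{mixed:emb}; for $\mathcal{N}_2$, the prefactor $\sqrt{\eps}$ pairs with the $L^2$ estimate on $\sqrt{\eps} v_x^{(k-j)}$ from \eqref{L2:uv:eq}, while mixed-type products (one factor $u$, one factor $v$) are controlled by pairing the $L^2_x L^\infty_y$ bounds \eqref{mixed:L2:orig:1}--\eqref{mL2again} applied to the $v$-factor with the $L^\infty_x L^2_y$ bound \eqref{mixed:emb} applied to the $u$-derivative factor.

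The main obstacle is purely the bookkeeping of the cutoff functions $\phi_1$, $\phi_{k-j+1}$, and $\psi_{12}$ across the Leibniz expansion, together with verifying that the weights align. The loss of $\eps^{-2M_1}$ (rather than the sharper $\eps^{-M_1}$ that a single pointwise invocation would suggest) is a conservative accounting that absorbs the bounded-$x$ contributions through the $\|U,V\|_E$ norm, which itself invokes a finite negative power of $\eps$ via elliptic regularity. Since $N_2 = 200$ is chosen far larger than $2M_1 = 48$, the resulting $\eps^{N_2/2 - 2M_1}$ is a very large positive power of $\eps$, which comfortably closes the nonlinear estimates used in Proposition~\ref{prop:trilinear}. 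The estimate is therefore not sharp and no subtle structure of the linearized system is needed; it is essentially dimensional analysis combined with H\"older.
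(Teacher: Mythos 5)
Your proposal is correct and follows essentially the same route as the paper's proof: Leibniz plus H\"older, placing the low-derivative factor in $L^\infty$ via the pointwise decay estimates \eqref{pw:dec:u}, \eqref{pw:v:1} and the high-derivative factor in weighted $L^2$ via \eqref{L2:uv:eq}--\eqref{L2:uv:eq:2}, with the $L^2_x L^\infty_y$ / $L^\infty_x L^2_y$ pairing of \eqref{mixed:L2:orig:1} and \eqref{mixed:emb} for the non-symmetric terms in which the $v$-factor carries most of the $x$-derivatives. The only cosmetic difference is the cutoff bookkeeping for the bounded-$x$ region (the paper splits $\phi_1 = \phi_1(\psi_{12}-\phi_{12}) + \phi_1\phi_{12}$ once at the outset rather than term by term), which does not change the argument.
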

\begin{proof} First, regarding the cutoff function $\phi_1$ present in \eqref{N2:quant}, we will rewrite it as $\phi_1 = \phi_1  \psi_{12} = \phi_1 (\psi_{12} - \phi_{12} ) + \phi_1 \phi_{12}$, according to the definitions \eqref{def:phi:j} and \eqref{psi:twelve:def}. As a result, we separate the estimation of \eqref{N2:quant} into 
\begin{align} \n
&\| \frac{1}{\bar{u}} \p_x^k \mathcal{N}_1 \langle x \rangle^{k + \frac 1 2} \phi_1 \| + \sqrt{\eps} \| \frac{1}{\bar{u}} \p_x^k \mathcal{N}_2 \langle x \rangle^{k + \frac 1 2} \phi_1 \| \\ \n
\le & \| \frac{1}{\bar{u}} \p_x^k \mathcal{N}_1 \langle x \rangle^{k + \frac 1 2} (\psi_{12} - \phi_{12}) \| + \sqrt{\eps} \| \frac{1}{\bar{u}} \p_x^k \mathcal{N}_2 \langle x \rangle^{k + \frac 1 2} (\psi_{12} - \phi_{12}) \| \\
& +  \| \frac{1}{\bar{u}} \p_x^k \mathcal{N}_1 \langle x \rangle^{k + \frac 1 2}  \phi_{12} \| + \sqrt{\eps} \| \frac{1}{\bar{u}} \p_x^k \mathcal{N}_2 \langle x \rangle^{k + \frac 1 2}  \phi_{12} \|.
\end{align}
The quantities with $\psi_{12} - \phi_{12}$ are supported in a finite region of $x$, and are thus estimated by $\eps^{\frac{N_2}{2} - 2M_1} \|U, V \|_{\mathcal{X}}^2$. We must thus consider the more difficult case of large $x$, in the support of $\phi_{12}$.

We first treat the two terms arising from $\mathcal{N}_1$. Applying the product rule yields 
\begin{align} \label{x:N1:1}
\p_x^k \mathcal{N}_1 = \sum_{j = 0}^k \binom{k}{j} (\p_x^j u \p_x^{k-j+1} u +  \p_x^j v \p_x^{k-j} \p_y u )
\end{align}
Let us first treat the first quantity in the sum. As $0 \le k \le 10$, either $j$ or $k - j + 1$ must be less than $6$. By symmetry of this term, we assume that $j \le 6$ and then $k \le 10$. In this case, note that $k-j+1 \le 11$, and so we estimate via 
\begin{align} 
\eps^{\frac{N_2}{2}}\| \frac{1}{ \bar{u} } \p_x^j u \p_x^{k-j+1} \langle x \rangle^{k+ \frac 1 2} \phi_{12} \| \lesssim & \eps^{\frac{N_2}{2}}\| \frac{1}{\bar{u}} \p_x^j u \langle x \rangle^{j + \frac 1 4} \psi_{12} \|_\infty \| \p_x^{k-j+1} u \langle x \rangle^{k-j+\frac 12} \phi_{12} \| \lesssim  \eps^{\frac{N_2}{2}- M_1} \|U, V \|_{\mathcal{X}}^2,
\end{align} 
where we have invoked \eqref{pw:dec:u} and \eqref{L2:uv:eq}.

We now move to the second term from \eqref{x:N1:1}, which is not symmetric and thus we consider two different cases. First, we assume that $j \le 6$ and $k \le 10$. In this case, we estimate 
\begin{align} \n
\eps^{\frac{N_2}{2}} \| \frac{1}{\bar{u}} \p_x^j v \p_x^{k-j} \p_y u \langle x \rangle^{k + \frac 1 2} \phi_{12}\| \lesssim & \eps^{\frac{N_2}{2}}\| \frac{1}{\bar{u}} \p_x^j v \langle x \rangle^{j + \frac 1 2} \psi_{12}\|_\infty \| \p_x^{k-j} \p_y u \langle x \rangle^{k-j} \phi_{12}\| \\
\lesssim & \eps^{\frac{N_2}{2}-M_1} \|U, V \|_{\mathcal{X}}^2, 
\end{align} 
where we have invoked \eqref{pw:v:1} and \eqref{L2:uv:eq:2}. 

We next consider the case that $0 \le k-j \le 6$ and $6 \le j \le 10$. In this case, we estimate via 
\begin{align} \n
\eps^{\frac{N_2}{2}} \| \frac{1}{\bar{u}}\p_x^j v \p_x^{k-j} \p_y u \langle x \rangle^{k + \frac 1 2} \phi_{12} \| \lesssim &\eps^{\frac{N_2}{2}} \| \p_x^{k-j} \p_y u \langle x \rangle^{k-j + \frac 1 2} \psi_{12}\|_{L^\infty_x L^2_y} \| \frac{1}{\bar{u}} \p_x^j v \langle x\rangle^{j }  \phi_{12} \|_{L^2_x L^\infty_y}  \\
\lesssim & \eps^{\frac{N_2}{2}- M_1} \| U, V \|_{\mathcal{X}}^2
\end{align}
where we have used the mixed-norm estimates in \eqref{pw:v:1} and \eqref{mixed:L2:orig:1} (and crucially that $j \le 10$ to be in the range of admissible exponents for \eqref{mixed:L2:orig:1}). 

We now consider the second quantity in \eqref{N2:quant}, for which we again apply the product rule to obtain 
\begin{align} \label{second:sum}
\p_x^k \mathcal{N}_2 = \sum_{j = 0}^{k} \binom{k}{j} ( \p_x^j u \p_x^{k-j+1} v + \p_x^j v \p_x^{k-j} \p_y v  )
\end{align}

We again treat two cases. First, assume that $j \le 6$, so $1 \le k-j+1 \le 11$. In this case, estimate by 
\begin{align} \n
\eps^{\frac{N_2}{2}} \| \frac{1}{\bar{u}} \p_x^j u \sqrt{\eps} \p_x^{k+\frac 1 2} v \langle x \rangle^{k + \frac 1 2} \phi_{12} \| \lesssim & \eps^{\frac{N_2}{2}}  \|  \frac{1}{\bar{u}} \p_x^j u \langle x \rangle^{j + \frac 1 4} \psi_{12} \|_\infty \| \sqrt{\eps} \p_x^{k-j+1} v \langle x \rangle^{k-j+ \frac 1 2} \phi_{12}\| \\ 
\lesssim & \eps^{\frac{N_2}{2}- M_1} \| U, V \|_{\mathcal{X}}^2, 
\end{align}
where we have invoked \eqref{L2:uv:eq} and \eqref{pw:dec:u}. 

Second, we assume that $1 \le k-j+1 \le 4$ and $j \ge 7$, in which case we estimate by 
\begin{align}
\sqrt{\eps} \| \frac{1}{\bar{u}} \p_x^j u \p_x^{k-j+1} v \langle x \rangle^{k + \frac 1 2} \phi_{12} \| \lesssim \sqrt{\eps} \| \frac{1}{\bar{u}} \p_x^{k-j+1} v \langle  x\rangle^{k-j+1 + \frac 1 2} \psi_{12} \|_\infty \| \p_x^j u \langle x \rangle^{j - \frac 1 2} \phi_{12}\|.
\end{align}

For the second contribution from \eqref{second:sum}, we again split into two cases. For the first case, we assume that $j \le 6$, in which case 
\begin{align}
\sqrt{\eps} \| \frac{1}{\bar{u}} \p_x^j v \p_x^{k-j} \p_y v \langle x \rangle^{k + \frac 1 2} \phi_{12}\| \lesssim \| \frac{1}{\bar{u}} \p_x^j v \langle x \rangle^{j + \frac 1 2} \psi_{12} \|_\infty \| \p_x^{k-j} u_x \langle x \rangle^{k-j+\frac 1 2} \phi_{12}\|. 
\end{align}
In the second case, we assume that $7 \le j \le 10$, in which case $k-j+1 \le 4$, and so we put 
\begin{align}
\sqrt{\eps} \| \frac{1}{\bar{u}} \p_x^{j} v \p_x^{k-j} \p_y v \langle x \rangle^{k + \frac 1 2} \phi_{12}  \| \lesssim \sqrt{\eps} \| \p_x^j v \langle x \rangle^{j - \frac 1 2}\phi_{12} \| \|\frac{1}{\bar{u}}  \p_x^{k-j+1} u \langle x \rangle^{k-j+1 + \frac 1 4} \psi_{12} \|_\infty. 
\end{align} 
This concludes the proof of the lemma. 
\end{proof}

We now establish the following corollary
\begin{corollary} The following estimate is valid: 
\begin{align} \label{Trilin:rest}
\Big| \sum_{k = 1}^{10}  \mathcal{T}_{X_k} + \sum_{k = 0}^{10} \mathcal{T}_{X_{k + \frac 1 2}} + \mathcal{T}_{Y_{k + \frac 1 2}} \Big| \lesssim \eps^{\frac{N_2}{2} - 2M_1-5} \| U, V \|_{\mathcal{X}}^3.  
\end{align}
\end{corollary}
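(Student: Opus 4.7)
The plan is to estimate each trilinear piece individually by a Cauchy--Schwartz pairing, placing the nonlinear factor $\p_x^k \mathcal{N}_j / \bar{u}$ on one side -- which is controlled by the just-established bound \eqref{N2:quant} -- and a linear test-function factor on the other, which is controlled by the $\mathcal{X}$-norm. Summing the resulting bounds over $0 \le n \le 10$ then yields \eqref{Trilin:rest}, with the $\eps^{-5}$ factor acting as buffer to absorb Hardy-type inequality and cut-off localization losses in the test-function factor.

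For $\mathcal{T}_{X_{n+\frac 12}}$, defined in \eqref{def:TXnp12}, the principal pairing is
\begin{align*}
\int \p_x^n \mathcal{N}_1 \, U^{(n)}_x \langle x \rangle^{1+2n} \phi_{n+1}^2 = \int \frac{\p_x^n \mathcal{N}_1}{\bar{u}} \langle x \rangle^{n+\frac 12} \phi_{n+1} \cdot \bar{u} U^{(n)}_x \langle x \rangle^{n+\frac 12} \phi_{n+1},
\end{align*}
where Cauchy--Schwartz bounds the first factor by \eqref{N2:quant} (observing that $\phi_{n+1} \le \phi_1$) and the second is exactly the first term of $\|U,V\|_{X_{n+\frac 12}}$. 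The $\mathcal{N}_2$ pairing absorbs the extra $\eps$ via the $\sqrt{\eps}$ prefactor in \eqref{N2:quant} and the corresponding $\sqrt{\eps}$ in $\|U,V\|_{X_{n+\frac 12}}$. The lower-order $V^{(n)} \langle x\rangle^{2n}\phi_{n+1}^2$ contribution is paired with $\sqrt{\eps}\bar{u} V^{(n)} \langle x \rangle^{n-\frac 12} \phi_{n+1}$ and controlled by Hardy inequalities \eqref{Hardy:three}--\eqref{Hardy:four}, while the $\phi_{n+1}\phi_{n+1}'$ pieces are supported on a bounded $x$-interval and trivially bounded by $\|U,V\|_\mathcal{X}$. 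The estimation of $\mathcal{T}_{X_n}$ proceeds identically except for one bookkeeping issue: the paired factor $\bar{u} U^{(n)} \langle x \rangle^{n-\frac 12} \phi_n$ is not a direct summand of $\|U,V\|_\mathcal{X}$, but upon integrating by parts in $x$ (using $\phi_n(0)=0$) one transfers the $x$-derivative onto $U^{(n)}$ to obtain $\bar{u} U^{(n+1)} \langle x \rangle^{n+\frac 12}\phi_n$, which sits inside $\|U,V\|_{X_{n+\frac 12}}$, with an admissible commutator error coming from $\|\bar{u}_x x\|_\infty \lesssim 1$.

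For $\mathcal{T}_{Y_{n+\frac 12}}$, defined in \eqref{def:TYn12}, we first integrate by parts in $y$ on the $\p_x^n\p_y\mathcal{N}_1$ piece. The boundary term at $y=0$ vanishes since both $u$ and $v$ vanish there, and hence each summand $u u_x$, $v u_y$ of $\mathcal{N}_1$ does too (a property preserved by arbitrary powers of $\p_x$); the boundary at $y=\infty$ vanishes by decay. This produces
\begin{align*}
\int \p_x^n \p_y \mathcal{N}_1 \, U^{(n)}_y \langle x \rangle^{1+2n}\phi_{n+1}^2 = -\int \p_x^n \mathcal{N}_1 \, U^{(n)}_{yy} \langle x \rangle^{1+2n}\phi_{n+1}^2,
\end{align*}
and we Cauchy--Schwartz this as $\|\frac{\p_x^n \mathcal{N}_1}{\sqrt{\bar{u}}} \langle x \rangle^{n+\frac 12}\phi_{n+1}\| \cdot \|\sqrt{\bar{u}} U^{(n)}_{yy}\langle x \rangle^{n+\frac 12}\phi_{n+1}\|$. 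Since $\bar{u} \lesssim 1$, the first factor is dominated by the one appearing in \eqref{N2:quant}, and the second is exactly the first term of $\|U,V\|_{Y_{n+\frac 12}}$. The $\eps\p_x^{n+1}\mathcal{N}_2$ piece is handled by a direct pairing, combined with \eqref{bob:1} to convert the $U^{(n)}_y$ factor into components of the $\mathcal{X}$-norm.

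The main technical friction, rather than being a single hard step, is the bookkeeping needed to ensure that every linear test function appearing after each Cauchy--Schwartz split is actually controlled by $\|U,V\|_\mathcal{X}$ -- most notably for the paired factors in $\mathcal{T}_{X_n}$, which require the short integration-by-parts/Hardy argument indicated above. Once this is verified for each term, combining with \eqref{N2:quant} and summing over $0 \le n \le 10$ yields the claimed estimate \eqref{Trilin:rest}.
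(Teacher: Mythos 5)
Your proposal is correct and follows essentially the same route as the paper: the paper's own proof is the one-line observation that the bound is immediate from \eqref{N2:quant} together with the definitions of the trilinear terms, and your Cauchy--Schwartz pairings (nonlinear factor $\p_x^k \mathcal{N}_j/\bar{u}$ against a linear test function controlled by $\mathcal{X}$, with Hardy inequalities and the $\eps^{-5}$ buffer absorbing weight and cut-off mismatches) simply make that explicit. The only caveat worth recording is that the $n=10$ instance of $\mathcal{T}_{Y_{n+\frac 12}}$ contains $\eps\,\p_x^{11}\mathcal{N}_2$, which sits outside the range $0\le k\le 10$ of \eqref{N2:quant}, so your ``direct pairing'' there needs one additional integration by parts in $x$ (transferring a derivative onto $U^{(10)}_y$ and landing in $X_{11}$) --- a gap the paper's one-sentence proof glosses over as well.
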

\begin{proof} This is an immediate corollary of \eqref{N2:quant} and the definitions of $\mathcal{T}_{X_k},  \mathcal{T}_{X_{k + \frac 1 2}}$, and  $\mathcal{T}_{Y_{k + \frac 1 2}}$. 
\end{proof}

From the above analysis, the proof of the main theorem, Theorem \ref{thm:2}, is essentially immediate. Indeed, we have 
\begin{proof}[Proof of Theorem \ref{thm:2}]  We now add together estimates \eqref{basic:X0:est:st}, \eqref{Xh:right}, \eqref{basic:Yhalf:est:st}, \eqref{estXnnorm}, \eqref{esthalfnX}, \eqref{esthalfnY}, \eqref{energy:theta11}, \eqref{elliptic:1}, from which we obtain 
\begin{align}
\| U, V \|_{\mathcal{X}}^2 \le \sum_{k =0}^{11} \mathcal{F}_{X_k} + \sum_{k = 0}^{10} \mathcal{F}_{X_{k + \frac 1 2}} + \mathcal{F}_{Y_{k + \frac 1 2}} + \mathcal{F}_{Ell} +  \mathcal{T}.  
\end{align}
Appealing to estimate \eqref{est:T} and the established estimates on the forcing quantities, \eqref{est:forcings:part1} gives the main \textit{a-priori} estimate, which reads  
\begin{align}
\| U, V \|_{\mathcal{X}}^2 \lesssim \eps^5 +  \eps^{\frac{N_2}{2} - 2M_1-5} \| U, V \|_{\mathcal{X}}^3. 
\end{align}
From here, the existence and uniqueness follows from a standard contraction mapping argument. 
\end{proof}

\noindent \textbf{Acknowledgements:} S.I is grateful for the hospitality and inspiring work atmosphere at NYU Abu Dhabi, where this work was initiated. The work of S.I is partially supported by NSF grant DMS-1802940. The work of N.M. is supported by NSF grant DMS-1716466 and by Tamkeen under the NYU Abu Dhabi Research Institute grant
of the center SITE.

\def\bibindent{3.5em}

\end{document}